\tikzset{
    dot diameter/.store in=\dot@diameter,
    dot diameter=3pt,
    dot spacing/.store in=\dot@spacing,
    dot spacing=10pt,
    dots/.style={
        line width=\dot@diameter,
        line cap=round,
        dash pattern=on 0pt off \dot@spacing
    }
}
\numberwithin{equation}{section}
\newcommand\@dotsep{4.5}
\def\@tocline#1#2#3#4#5#6#7{\relax
  \ifnum #1>\c@tocdepth 
  \else
    \par \addpenalty\@secpenalty\addvspace{#2}%
    \begingroup \hyphenpenalty\@M
    \@ifempty{#4}{%
      \@tempdima\csname r@tocindent\number#1\endcsname\relax
    }{%
      \@tempdima#4\relax
    }%
    \parindent\z@ \leftskip#3\relax \advance\leftskip\@tempdima\relax
    \rightskip\@pnumwidth plus1em \parfillskip-\@pnumwidth
    #5\leavevmode\hskip-\@tempdima{#6}\nobreak
    \leaders\hbox{$\m@th\mkern \@dotsep mu\hbox{.}\mkern \@dotsep mu$}\hfill
    \nobreak
    \hbox to\@pnumwidth{\@tocpagenum{\ifnum#1=1\fi#7}}\par
    \nobreak
    \endgroup
  \fi}
\renewcommand\csname r@tocindent0\endcsname{0pt}
\def\l@subsection{\@tocline{2}{0pt}{2.5pc}{5pc}{}}
\newtheorem{theorem}{Theorem}[section]
\newtheorem{lemma}[theorem]{Lemma}
\newtheorem{proposition}[theorem]{Proposition}
\newtheorem{corollary}[theorem]{Corollary}
\theoremstyle{definition}
\newtheorem{definition}[theorem]{Definition}
\newtheorem{example}[theorem]{Example}
\theoremstyle{remark}
\newtheorem{remark}[theorem]{Remark}
\renewcommand{\emptyset}{\varnothing}
\newcommand{\nonconsec}[2]{\mathbf{K}_{#1}^{#2}}
\newcommand{\chainset}[2]{{\mathbf{J}_{#1}^{#2}}}
\newcommand{\floor}[1]{\lfloor #1 \rfloor}
\newcommand{\vorder}{\overset{v}{\leqslant}}
\newcommand{\vcover}{\overset{v}{\lessdot}}
\newcommand{\CL}{\mathcal{L}}
\newcommand{\CU}{\mathcal{U}}
\newcommand{\sse}{\mathcal{W}}
\newcommand{\sect}[1]{\sse(#1)}
\newcommand{\synth}{\sect{\mathcal{L}}}
\newcommand{\synthb}{\sect{\mathcal{L}'}}
\newcommand{\relcirc}[1]{(#1_{-} \cup x, #1_{+} \cup y)}
\newcommand{\tvplusx}{\mathcal{T}\backslash v^{+} \ast x}
\newcommand{\tvminusy}{\mathcal{T}\backslash v^{-} \ast y}
\newcommand{\simp}{\mathrm{Int}} 
\newcommand{\longmapsfrom}{\mathrel{\reflectbox{\ensuremath{\longmapsto}}}}
\newcommand{\preim}[1]{\widetilde{#1}} 
\newcommand{\cont}[1]{\overline{#1}}
\newcommand{\obs}[1]{{#1}^{o}} 
\newcommand{\xvy}{[x \rightarrow v \leftarrow y]} 
\newcommand{\contm}{[m - 1 \leftarrow m]} 
\newcommand{\conto}{[1 \rightarrow 2]} 
\newcommand{\conv}{\mathrm{conv}} 
\newcommand{\ve}[1]{\mathbf{#1}} 
\newcommand{\ip}[2]{\langle #1, #2 \rangle} 
\newcommand{\st}{:} 
\newcommand{\tup}[1]{\{#1\}} 
\newcommand{\stup}{\{} 
\newcommand{\etup}{\}} 
\newcommand{\set}[1]{\{\,#1\,\}} 
\newcommand{\bigset}[1]{\bigl\{#1\bigr\}} 
\newcommand{\adjset}[1]{\left\lbrace\, #1 \,\right\rbrace} 
\newcommand{\creal}[2]{\mathfrak{C}(#1, #2)}
\newcommand{\polytope}{\mathfrak{P}} 
\newcommand{\realisation}{|-|\colon \vertices \to \mathbb{R}^{n}}
\newcommand{\combpoly}{P} 
\newcommand{\geomtri}{\mathfrak{T}} 
\newcommand{\geomsimp}{|S|} 
\newcommand{\face}{\mathfrak{F}} 
\newcommand{\vertices}{V} 
\newcommand{\vertex}{v} 
\newcommand{\realset}{\mathfrak{X}} 
\newcommand{\fac}{\mathcal{F}}
\newcommand{\crc}{\mathcal{Z}}
\newcommand{\facets}[1]{\fac_{#1}} 
\newcommand{\circuits}[1]{\crc_{#1}} 
\newcommand{\cfacets}[2]{\fac(#1, #2)} 
\newcommand{\ccircuits}[2]{\crc(#1, #2)} 
\newcommand{\lfacets}[2]{\fac^{l}(#1, #2)} 
\newcommand{\ufacets}[2]{\fac^{u}(#1, #2)} 
\newcommand{\vfacets}[2]{\fac_{v}(#1, #2)} 
\newcommand{\vlfacets}[2]{\fac^{l}_{v}(#1, #2)} 
\newcommand{\vufacets}[2]{\fac^{u}_{v}(#1, #2)} 
\newcommand{\vcircuits}[2]{\crc_{v}(#1, #2)} 
\newcommand{\sm}{\sigma} 
\newcommand{\odd}[1]{#1_{o}} 
\newcommand{\even}[1]{#1_{e}} 
\colorlet{cyc}{gray} 
\colorlet{secclr}{purple} 
\renewcommand{\S}{S} 
\newcommand{\s}{s} 
\newcommand{\R}{R} 
\newcommand{\sR}{r} 
\newcommand{\T}{Q} 
\newcommand{\Tb}{Q'} 
\newcommand{\F}{F} 
\newcommand{\G}{G} 
\newcommand{\U}{H} 
\newcommand{\h}{h} 
\newcommand{\SU}{E} 
\newcommand{\su}{e} 
\newcommand{\J}{J} 
\newcommand{\sJ}{j} 
\newcommand{\K}{K} 
\newcommand{\Ja}{L} 
\newcommand{\sQ}{q} 
\newcommand{\aI}{i} 
\newcommand{\bI}{j} 
\newcommand{\cI}{k} 
\newcommand{\Wa}{W} 
\newcommand{\Wb}{W'} 
\newenvironment{proofenum}{\begin{enumerate}[wide, topsep=0pt,  labelindent=\parindent,
]}{\end{enumerate}}
\newenvironment{caseenum}{\begin{enumerate}[wide,  labelindent=\parindent]}{\end{enumerate}}
\title{The two higher Stasheff--Tamari orders are equal}
\author{Nicholas J. Williams}
\email{williams@ms.u-tokyo.ac.jp}
\address{Graduate School of Mathematical Sciences, The University of Tokyo, 3-8-1 Komaba, Meguro-ku, Tokyo 153-8914, Japan}
\subjclass[2020]{52B05, 05B45, 06A07, 52B12, 05E10}
\keywords{Cyclic polytopes, higher Stasheff--Tamari orders, triangulations}
\thanks{I would like to thank my PhD supervisor Sibylle Schroll for her support and attention during my PhD studies. Thank you also to Hugh Thomas for interesting conversations. I would finally like to thank JSPS, Osamu Iyama, and the University of Tokyo, where I am currently supported by a JSPS International Short-Term Postdoctoral Research Fellowship. }
\begin{document}

\begin{abstract}
The set of triangulations of a cyclic polytope possesses two \emph{a priori} different partial orders, known as the \emph{higher Stasheff--Tamari orders}. The first of these orders was introduced by Kapranov and Voevodsky, while the second order was introduced by Edelman and Reiner, who also conjectured the two to coincide in 1996. In this paper we prove their conjecture, thereby substantially increasing our understanding of these orders. This result also has ramifications in the representation theory of algebras, as established in previous work of the author. Indeed, it means that the two corresponding orders on tilting modules, cluster-tilting objects and their maximal chains are equal for the higher Auslander algebras of type~$A$.
\end{abstract}

\maketitle

\tableofcontents

\section{Introduction}

One of the principal reasons to study triangulations is their ability to encode pertinent information combinatorially. One example here is the famous work of Gel{\cprime}fand, Kapranov, and Zelevinsky showing how extremal terms of $A$-discriminants are described by regular triangulations of Newton polytopes \cite{gkz-book}. Another example is given by the result of Stanley that the linear extensions of a poset correspond to simplices in a triangulation of its order polytope \cite{stanley_tpp}.

But nowhere is this ability in sharper relief than in the Tamari lattice \cite{friedman_tamari,huang-tamari,tamari_thesis}, a ubiquitous partial order encoded by triangulations of convex polygons. The Tamari lattice is inescapable when considering weak associativity conditions \cite{tamari}, where triangulations of convex polygons correspond to the different possibilities for performing a binary operation on a string. Homotopy associativity of $H$-spaces was studied by Stasheff \cite{stasheff} using the associahedron, a polytope whose $1$-skeleton is the Tamari lattice \cite{tamari_thesis,stasheff_met}. In mathematical physics, weak associativity conditions occur in open string field theory \cite{moore_freudenthal,kk_trees,hikko}, and  in the Biedenharn--Elliott identities \cite{biedenharn_racah,elliott_nuclear}. In algebra, triangulations of convex polygons correspond to clusters in the type~$A$ cluster algebra \cite{fz-y}, which is related to incarnations of the Tamari lattice as a partial order on tilting modules \cite{buan-krause} or torsion classes \cite{thomas_tamari} for the type~$A$ path algebra. The sequence counting the number of objects of the Tamari lattice is the Catalan numbers, which is known to enumerate over two hundred different sequences of combinatorial objects \cite{stanley-catalan}. The extensive reach of the Tamari lattice into different areas of mathematics is exhibited in the Tamari memorial festschrift \cite{tamari-festschrift}.

The first and second higher Stasheff--Tamari orders are two higher-dimensional versions of the Tamari lattice. Their objects are triangulations of cyclic polytopes, which are the higher-dimensional analogues of triangulations of convex polygons. The history of these orders is as follows. In 1991, Kapranov and Voevodsky \cite{kv-poly} defined an order on the set of triangulations of a cyclic polytope, called the \emph{higher Stasheff order}, to give natural examples of strictly ordered $n$-categories produced by a certain iterative construction. In 1996, Edelman and Reiner built upon this work by introducing the two \emph{a priori} different higher Stasheff--Tamari orders. Thomas later proved that the first higher Stasheff--Tamari order of Edelman and Reiner coincided with the higher Stasheff order of Kapranov and Voevodsky \cite{thomas-bst}. Edelman and Reiner further conjectured the two higher Stasheff--Tamari orders to coincide with each other \cite[Conjecture 2.6]{er}, a problem that has remained open since, despite several papers on the orders \cite{err,thomas,thomas-bst,rambau,rs-baues,rr}.

One especially beautiful facet of the higher-dimensional orders is that triangulations of $(n + 1)$-dimensional cyclic polytopes are assembled from maximal chains of triangulations of $n$-dimensional cyclic polytopes in the first higher Stasheff--Tamari order \cite{rambau}. In particular, the objects of the three-dimensional first higher Stasheff--Tamari order correspond to equivalence classes of maximal chains in the Tamari lattice, and the objects of the four-dimensional first higher Stasheff--Tamari order correspond to equivalence classes of maximal chains in the three-dimensional order, and so on. 

Generalisations of, and variations on, the Tamari lattice is a large subject in itself, and includes Tamari lattices in other Dynkin types \cite{thomas_btamari}, Cambrian lattices \cite{reading_cambrian}, lattices of torsion classes of cluster-tilted algebras \cite{g-mc}, $m$-Tamari lattices \cite{bpr_mtam,bfp_intervalno}, $\nu$-Tamari lattices \cite{pv_nu_tamari}, Dyck lattices \cite{knuth_apc4,dfrr_catalan}, generalised Tamari orders \cite{ronco_tamari}, and Grassmann--Tamari orders \cite{ssw}. However, the higher Stasheff--Tamari orders hold a particularly special position amongst these because, as we have seen, they encode higher-dimensional information hidden in the Tamari lattice itself, rather than being only variations on the Tamari lattice. This furthermore shows the virtues of viewing the Tamari lattice in terms of triangulations of convex polygons: it brings out these latent higher-dimensional structures which are obscured by other combinatorial interpretations.

Just as we have seen for triangulations of convex polygons, triangulations of cyclic polytopes describe phenomena across mathematics. Indeed, triangulations of cyclic polytopes are often used to define higher-dimensional analogues of structures that exist for lower-dimensional triangulations. The example of this \emph{par excellence} is the application of triangulations of cyclic polytopes to define higher Segal spaces \cite{dk-segal}, see also \cite{poguntke,djw}. In integrable systems, regular triangulations of cyclic polytopes describe the evolution of a class of solitary waves modelled by the Kadomtsev--Petviashvili equation \cite{dm-h,njw-hbo}, see also \cite{huang_thesis,kk,gpw}. The amplituhedron of Arkani-Hamed and Trnka \cite{amplituhedron} is a cyclic polytope for particular values of its parameters. Here, repeatedly applying the BCFW recursion \cite{bcfw} to compute scattering amplitudes produces a triangulation of this cyclic polytope \cite{what_is_amp}. In algebra, triangulations of cylic polytopes correspond to tilting modules and equivalence classes of ($d$-)maximal green sequences for the higher Auslander algebras of type~$A$ \cite{ot,njw-hst}. Triangulations of cyclic polytopes have also been shown to be in bijection with other combinatorial objects, such as snug partitions \cite{thomas} and persistent graphs \cite{fr}.The higher Stasheff--Tamari orders can be interpreted in terms of these combinatorial objects. A cyclic polytope is called an \emph{alternating polytope} if all of its induced subpolytopes are cyclic; Sturmfels shows that alternating polytopes correspond naturally to totally positive matrices \cite{sturmfels_pos}, which are of significant interest in both pure mathematics and applications \cite{ando,l_tp_intro,post-grass}.

The two higher Stasheff--Tamari orders are quite different in nature and each has its own advantages. The first order is more combinatorial and is defined by means of its covering relations, which are given by ``increasing bistellar flips''. The second order is more geometric and was originally defined by comparing the heights of sections induced by triangulations. However, it was shown in \cite{thomas,njw-hst} how one may define the second order combinatorially. The second order allows direct comparison between triangulations, whereas comparing triangulations in the first order requires one to find a sequence of increasing bistellar flips. On the other hand, the local structure of the second poset is not clear, because the covering relations are not known. It is also easier to compute the entire first poset than to compute the entire second poset. Computing either poset requires computing all the triangulations of a given cyclic polytope. The most efficient algorithm for doing this is to start at the minimal triangulation and iteratively compute increasing bistellar flips \cite{jos-kast}, which is tantamount to computing the first order. To construct the second order then requires additional computations on top of this.

It is clear that whenever the first higher Stasheff--Tamari order holds between a pair of triangulations, then the second order must hold too, as was noted in \cite{er}. This is because if a triangulation~$\mathcal{T}'$ is an increasing bistellar flip of a triangulation~$\mathcal{T}$, then the section of $\mathcal{T}'$ certainly lies above the section of $\mathcal{T}$. But it is not clear whether the first order should hold whenever the second one does. Indeed, an analogous result for the higher Bruhat orders has been known to be false since 1993 \cite{ziegler-bruhat}. \emph{A priori} it might be possible for there to exist a pair of pathological triangulations $\mathcal{T}$ and $\mathcal{T}'$ where the section of $\mathcal{T}$ lay below the section of $\mathcal{T}'$, and yet $\mathcal{T}'$ could not be reached by a sequence of increasing bistellar flips from $\mathcal{T}$.

In this paper, we prove the Edelman--Reiner conjecture that the first higher Stasheff--Tamari order ($\leqslant_{1}$) is equal to the second higher Stasheff--Tamari order~($\leqslant_{2}$).

\begin{theorem}[Theorem~\ref{thm:main_odd} and Theorem~\ref{thm:main_even}]
Let $\mathcal{T}$ and $\mathcal{T}'$ be triangulations of the cyclic polytope $C(m, n)$. Then $\mathcal{T} \leqslant_{1} \mathcal{T}'$ if and only if $\mathcal{T} \leqslant_{2} \mathcal{T}'$.
\end{theorem}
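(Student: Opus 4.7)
The plan is to prove the non-trivial direction, namely $\mathcal{T} \leqslant_{2} \mathcal{T}' \Rightarrow \mathcal{T} \leqslant_{1} \mathcal{T}'$, since the converse was already observed by Edelman and Reiner. My strategy would be to establish a ``jump lemma'' of the following shape: whenever $\mathcal{T} <_{2} \mathcal{T}'$ strictly, there exists an increasing bistellar flip $\mathcal{T} \lessdot_{1} \mathcal{T}''$ with $\mathcal{T}'' \leqslant_{2} \mathcal{T}'$. Iterating such flips, and using that the poset of triangulations is finite and graded by some well-founded measure (for example, the sum of heights of the section, or the number of $n$-simplices in the symmetric difference with $\mathcal{T}'$), would produce a chain of increasing bistellar flips from $\mathcal{T}$ to $\mathcal{T}'$, which is precisely the statement $\mathcal{T} \leqslant_{1} \mathcal{T}'$.

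To produce the flip in the jump lemma, I would lean heavily on the purely combinatorial reformulation of the second order developed in \cite{thomas} and \cite{njw-hst}, which recasts $\leqslant_{2}$ as a vertex-by-vertex (or small-subset-by-small-subset) comparison of the simplices of the two triangulations. This reformulation allows one to localise the inequality $\mathcal{T} \leqslant_{2} \mathcal{T}'$ to restrictions of $\mathcal{T}$ and $\mathcal{T}'$ onto subpolytopes of the form $C(m-1, n)$ or $C(m-2, n)$ obtained by deleting one or two well-chosen vertices. One then runs a double induction on $m$ and on the gap measure: the outer induction on $m$ allows the inductive hypothesis $\leqslant_{1}\,=\,\leqslant_{2}$ to be assumed on all smaller cyclic polytopes, and the inner induction provides the terminating condition for the jump process.

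The theorem is split into odd and even $n$ because the lower and upper facets of $C(m,n)$ are described by Gale's evenness criterion, which is parity-sensitive; consequently the sections defining $\leqslant_{2}$ and the bistellar flips generating $\leqslant_{1}$ have different shapes in the two parities. I would therefore expect to run essentially parallel but distinct arguments in the two cases, using the parity-specific face structure to select which vertex (or pair of vertices) to delete when localising $\mathcal{T} \leqslant_{2} \mathcal{T}'$ to a smaller cyclic polytope. Along the way it will be useful to re-examine Rambau's correspondence between triangulations of $C(m,n+1)$ and equivalence classes of maximal chains in the first order on $C(m,n)$ as a source of inductive leverage between dimensions.

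The main obstacle, and presumably the reason the conjecture has resisted resolution for almost thirty years, is controlling how an increasing flip produced on a subpolytope lifts back to an increasing flip on the ambient polytope while preserving $\leqslant_{2}$ with $\mathcal{T}'$. Producing \emph{some} upward flip at $\mathcal{T}$ is standard, but an arbitrary choice can overshoot $\mathcal{T}'$ in the second order; what is needed is a canonical way to identify, within the ``witnesses'' to $\mathcal{T} <_{2} \mathcal{T}'$ supplied by the combinatorial characterisation, a sub-$(n+2)$-configuration on which the flip is guaranteed to remain below $\mathcal{T}'$. Engineering such a canonical choice — and verifying its properties on both sides of the flip simultaneously — is where I expect the proof to be most delicate, and where the distinction between even and odd $n$ will force the argument to branch.
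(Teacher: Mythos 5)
Your skeleton agrees with the paper's: reduce to a ``jump lemma'' (if $\mathcal{T} <_{2} \mathcal{T}'$ then some increasing flip $\mathcal{T} \lessdot_{1} \mathcal{T}'' \leqslant_{2} \mathcal{T}'$ exists, iterated using finiteness), work with the combinatorial characterisations of $\leqslant_{2}$ from \cite{thomas,njw-hst}, induct on the number of vertices, and split by parity. But the proposal stops exactly where the theorem begins: you explicitly defer ``engineering a canonical choice'' of flip that does not overshoot $\mathcal{T}'$, and that is the entire mathematical content here, not a detail to be filled in. Moreover, the localisation mechanism you propose is flawed as stated: a triangulation of $C(m,n)$ does not restrict to a triangulation of the subpolytope on $[m]\setminus v$, so you cannot ``localise $\mathcal{T}\leqslant_{2}\mathcal{T}'$ to $C(m-1,n)$ or $C(m-2,n)$ obtained by deleting one or two vertices.'' The paper instead uses vertex \emph{contractions}: $[m-1\leftarrow m]$, $[1\to 2]$, and---crucially---contractions $[x\to v\leftarrow y]$ at an interior pair of vertices. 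The interior contraction is unavoidable in the case your outline never addresses, namely when $\mathcal{T}[1\to 2]=\mathcal{T}'[1\to 2]$ and $\mathcal{T}[m-1\leftarrow m]=\mathcal{T}'[m-1\leftarrow m]$ yet $\mathcal{T}<_{2}\mathcal{T}'$; and to use it one must prove it preserves $\leqslant_{2}$ and, much harder, develop an expansion theory describing all preimages $\preim{\mathcal{T}}$ with $\preim{\mathcal{T}}[x\to v\leftarrow y]=\mathcal{T}$ via sections of the (non-cyclic) vertex figure $\mathcal{T}\backslash v$, generalising Rambau--Santos' result for the last vertex. This occupies an entire section of the paper and has no counterpart in your plan.

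The overshoot problem itself is then resolved not by a canonical choice among ``witnesses'' to $\mathcal{T}<_{2}\mathcal{T}'$ but by controlling the preimage of the flip produced downstairs: the induction hypothesis gives a flip of the contracted triangulation supported on a $C(n+2,n)$ subpolytope, and the expansion lemmas show its preimage in $\mathcal{T}$ is a subpolytope on at most $n+3$ vertices; since all triangulations of $C(n+3,n)$ are explicitly classified, one selects a flip inside that small subpolytope and shows it stays $\leqslant_{2}\mathcal{T}'$---in even dimensions via the obstruction lemma (an obstructing simplex $\tup{\obs{a}_{0},a_{1},\dots,a_{d}}$ would survive contraction and contradict the inductive inequality, using the lemma on the $2d$-simplex below an obstruction), and in odd dimensions via uniqueness of supports together with the bridging property. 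Your appeal to Rambau's chains-to-triangulations correspondence as ``inductive leverage between dimensions'' is also not how the argument runs: the induction is on the number of vertices in a fixed dimension, not on dimension. So while your high-level plan points in the right direction, it omits the contraction/expansion machinery at arbitrary vertices, the three-way case split on which contraction separates $\mathcal{T}$ from $\mathcal{T}'$, and the $C(n+3,n)$ and obstruction/support analysis that together constitute the proof.
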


This reveals the remarkable---and somewhat surprising---fact that in order for a sequence of increasing bistellar flips to exist from a triangulation~$\mathcal{T}$ to a triangulation~$\mathcal{T}'$, it suffices for the section of $\mathcal{T}'$ to lie above the section of $\mathcal{T}$. This allows triangulations to be compared directly in the first order, and thus substantially increases the ease of working with this poset. An application of this result is that the orders considered in \cite{njw-hst} on tilting modules and their maximal chains, and cluster-tilting objects and ($d$-)maximal green sequences coincide for the higher Auslander algebras of type~$A$. This is an intriguing fact which raises the prospect that this might be true more generally in higher Auslander--Reiten theory. However, there are many obstacles to such a proof, principally, the lack of mutability present in higher Auslander--Reiten theory \cite{ot}.

Our proof of the conjecture is inductive and combinatorial, drawing upon the results of \cite{njw-hst}. The main difficulty in proving the conjecture is that as the dimension of the cyclic polytope grows, increasing bistellar flips become scarce. The key insight of the proof is that one can find increasing bistellar flips inductively by contracting triangulations, because when one reverses the contraction the small polytopes in which the flips occur remain small enough to find a new flip. The difficult step in the proof is then showing that the increasing bistellar flip one has found respects the second order, which allows one to build a chain of flips between the two sections. The details of this step differ between even and odd dimensions, and, accordingly, we deal with the proofs separately for the two different parities. Understanding how subpolytopes of triangulations behave under expansion requires extending the theory of contracting and expanding triangulations from \cite[Lemma~4.7(i)]{rs-baues} to arbitrary vertices, which is of independent interest. Indeed, this theory is extremely useful in proving new descriptions of triangulations of cyclic polytopes \cite{thomas,ot,njw-hst}.

This paper is structured as follows. In Section~\ref{sect:background} we give background on the higher Stasheff--Tamari orders. In Section~\ref{sect:main} we prove the Edelman--Reiner conjecture. At the beginning of this section we give an outline of our proof that the higher Stasheff--Tamari orders are equal. After proving some preliminary lemmas, we split the proof of the conjecture into two cases, depending upon whether the cyclic polytope is odd-dimensional or even-dimensional. 
In Section~\ref{sect:expansion} we generalise the theory from \cite[Lemma~4.7(i)]{rs-baues} of contracting and expanding triangulations of cyclic polytopes to arbitrary vertices. This allows us to prove technical results which are needed for the proof in Section~\ref{sect:main}.

\section{Background}\label{sect:background}

We first declare some notation. We use $[m]$ to denote the set $\{1, 2, \dots, m\}$. Similarly, we write $[m,n]$ for $\set{i \in \mathbb{N} \st m \leqslant i \leqslant n}$ and call subsets of this form \emph{intervals}. By $\binom{[m]}{k}$ we mean the set of subsets of $[m]$ of size $k$. When we display the elements of a subset of $[m]$, we shall always display the elements in order. Hence, if we write $S = \tup{a, b, c, \dots, x, y, z}$, we always mean that $a < b < c < \dots < x < y < z$. Furthermore, if $A \in \binom{[m]}{k + 1}$, then, unless indicated otherwise, we shall find it convenient to denote the elements of $A$ by $A = \tup{a_{0}, a_{1}, \dots, a_{k}}$. The same applies to other letters of the alphabet: the upper-case letter denotes the subset; the lower-case letter is used for the elements, which are ordered according to their index starting from 0. In an effort to make the notation lighter, we often omit braces around sets, writing $A \cup x$ for $A \cup \{x\}$ and $A \setminus x$ for $A \setminus \{x\}$.

\subsection{Cyclic polytopes and their triangulations}

Our framework for cyclic polytopes and their triangulations is based on \cite{rambau} and maintains a sharp distinction between the combinatorial and the geometric.

\subsubsection{Convex polytopes}\label{sect:back:cyc:conv}

A subset $\realset \subset \mathbb{R}^{n}$ is \emph{convex} if for any $x,x' \in \realset$, the line segment connecting $x$ and $x'$ is contained in $\realset$. The \emph{convex hull} $\mathrm{conv}(\realset)$ of $\realset$ is the smallest convex set containing $\realset$ or, equivalently, the intersection of all convex sets containing $\realset$.

Let $\vertices \subseteq \mathbb{Z}_{>0}$ be a finite set and $|-|\colon \vertices \to \mathbb{R}^{n}$ be an injective function, which we call the \emph{geometric realisation}. We extend the notation to subsets of $\vertices$ by setting $|A| = \conv\set{|a| \st a \in A}$. We let $\polytope = |\vertices|$ and suppose that the affine span of $\polytope$ is~$\mathbb{R}^{n}$. A subset $\polytope \subseteq \mathbb{R}^{n}$ of this form is called a (\emph{geometric}) \emph{convex polytope}. Given $\vertices' \subseteq V$, we say that $|\vertices'|$ is a \emph{subpolytope} of $|\vertices|$.

A \emph{face} of a $\polytope$ is a subset on which some linear functional is maximised. That is, $\face \subseteq \polytope$ is a face of $\polytope$ if there is a vector $\ve{a} \in \mathbb{R}^{n}$ such that \[\face = \set{\ve{x} \in \polytope \st \ip{\ve{a}}{\ve{x}} \geqslant \ip{\ve{a}}{\ve{y}},\, \forall \ve{y} \in \polytope},\] where `$\ip{-}{-}$' denotes the standard inner product. A (\emph{geometric}) \emph{facet} of $\polytope$ is a face of codimension one. A (\emph{combinatorial}) \emph{facet} of $\polytope$ is a subset $F \subseteq \vertices$ such that $|F|$ is a geometric facet of $\polytope$. The set of faces of a polytope $\polytope$, along with the empty set, forms a lattice under inclusion, which is known as the \emph{face lattice}.

Let $\vertex \in \vertices$ be such that $|\vertex|$ is the face of $\polytope$ given by maximising a functional $\ip{\ve{a}}{-}$. Further, let $\epsilon > 0$ be sufficiently small that, for all $w \in \vertices \setminus \vertex$, we have that $\ip{\ve{a}}{|w|} < \ip{\ve{a}}{|\vertex|} - \epsilon$. The \emph{vertex figure} of $\polytope$ at $\vertex$ is then the intersection \[\polytope\backslash \vertex := \polytope \cap \set{\ve{x} \in \mathbb{R}^{n} \st \ip{\ve{a}}{\ve{x}} = \ip{\ve{a}}{|\vertex|} - \epsilon},\] that is, the intersection of $\polytope$ with the hyperplane $\ip{\ve{a}}{\ve{x}} = \ip{\ve{a}}{|\vertex|} - \epsilon$.

A \emph{circuit} of a polytope $\polytope$ realised geometrically via $\realisation$ is a pair, $(Z_{+}, Z_{-})$, of disjoint subsets of $\vertices$ which are inclusion-minimal with the property that $|Z_{+}| \cap |Z_{-}| \neq \emptyset$. In this case, $|Z_{+}|$ and $|Z_{-}|$ intersect in a unique point.

The facets and circuits of the polytope $\polytope$ realised geometrically via $\realisation$ comprise the combinatorial data that we are interested in. If we let $\facets{\polytope}$ and $\circuits{\polytope}$ be respectively the set of combinatorial facets of $\polytope$ and the set of combinatorial circuits of $\polytope$, then we say that the triple $\combpoly = (\vertices, \facets{\polytope}, \circuits{\polytope})$ is a \emph{combinatorial polytope}.

\begin{remark}
Note that there might exist $\vertex \in \vertices$ such that $|\vertex|$ is not a face of $\polytope$, since we may have $|\vertex| \in |\vertices\setminus\vertex|$. Allowing such elements of $\vertices$ is necessary for considering one-dimensional cyclic polytopes. Hence, strictly, the data we consider comprise a point configuration rather than a polytope.
\end{remark}

\subsubsection{Cyclic polytopes}\label{sect:back:cyc:cyc}

Cyclic polytopes are the higher-dimensional analogues of convex polygons. General introductions to this class of polytopes can be found in \cite[Lecture 0]{ziegler} and \cite[Section 4.7]{gruenbaum}. Gr\"unbaum writes that the construction of cyclic polytopes in current use is due to Gale \cite{gale} and Klee \cite{klee}, and that they were introduced and studied in the 1950s by Gale \cite{gale-abstract} and Motzkin \cite{motzkin}. The earlier work of Carath\'eodory \cite{car1907,car1911} is related, but the convex bodies studied in these papers are not cyclic polytopes: they are the continuous analogues of even-dimensional cyclic polytopes.

\begin{definition}\label{def:cyc_poly}
The \emph{cyclic polytope} $\creal{\vertices}{n}$ is the polytope with geometric realisation
\begin{align*}
|-|_{n}\colon \vertices &\to \mathbb{R}^{n}\\
\vertex &\mapsto |\vertex|_{n} =  p_{n}(t_{\vertex}) := (t_{\vertex}, t_{\vertex}^{2}, \dots, t_{\vertex}^{n}),
\end{align*}
where $\{t_{\vertex_{0}}, t_{\vertex_{1}}, \dots, t_{\vertex_{k}}\} \subset \mathbb{R}$ and $k + 1 = \# \vertices$. (Recall our convention that $\vertices = \{\vertex_{0}, \vertex_{1}, \dots, \vertex_{k}\}$ and that by writing $\{t_{\vertex_{0}}, t_{\vertex_{1}}, \dots, t_{\vertex_{k}}\}$, we indicate that $t_{\vertex_{0}} < t_{\vertex_{1}} < \dots < t_{\vertex_{k}}$.)

When the dimension of the geometric realisation is clear from the context, we will drop the subscript and write $|-|$ instead of $|-|_{n}$. In the case where $\vertices = [m]$, we write $\creal{m}{n} := \creal{[m]}{n}$. The precise values $t_{i}$ do not affect the combinatorial properties of the cyclic polytope, so for simplicity we set $t_{i} = i$. The curve defined by $p_{n}(t):=(t, t^{2}, \dots , t^{n}) \subset \mathbb{R}^{n}$ is called the \emph{moment curve}.
\end{definition}

\begin{figure}
\caption{Cyclic polytopes \cite[Figure 2]{er}}
\[
\begin{tikzpicture}
\coordinate(11) at (-4,0);
\coordinate(12) at (-2.4,0);
\coordinate(13) at (-0.8,0);
\coordinate(14) at (0.8,0);
\coordinate(15) at (2.4,0);
\coordinate(16) at (4,0);

\coordinate(21) at (-4,4.5);
\coordinate(22) at (-2.4,2.9);
\coordinate(23) at (-0.8,2.1);
\coordinate(24) at (0.8,2.1);
\coordinate(25) at (2.4,2.9);
\coordinate(26) at (4,4.5);

\coordinate(31) at (-4,9);
\coordinate(32) at (-2.4,7.4);
\coordinate(33) at (-0.8,6.6);
\coordinate(34) at (0.8,6.6);
\coordinate(35) at (2.4,7.4);
\coordinate(36) at (4,9);

\draw[->] (0, 6) -- (0,5.1);
\draw[->] (0, 1.5) -- (0,0.6);

\node(1) at (-6,0){$\creal{6}{3}$};
\node(2) at (-6,4.5){$\creal{6}{2}$};
\node(3) at (-6,9){$\creal{6}{1}$};

\draw[cyc] (11) -- (12) -- (13) -- (14) -- (15) -- (16);

\draw[cyc] (21) -- (22) -- (23) -- (24) -- (25) -- (26) -- (21);

\draw[dotted] (31) -- (33);
\draw[dotted] (31) -- (34);
\draw[dotted] (31) -- (35);
\draw[fill=cyc, fill opacity = 0.3] (31) -- (32) -- (33) -- (34) -- (35) -- (36) -- (31);
\draw (36) -- (34);
\draw (36) -- (33);
\draw (36) -- (32);

\node at (11){$\bullet$};
\node at (12){$\bullet$};
\node at (13){$\bullet$};
\node at (14){$\bullet$};
\node at (15){$\bullet$};
\node at (16){$\bullet$};
\node at (21){$\bullet$};
\node at (22){$\bullet$};
\node at (23){$\bullet$};
\node at (24){$\bullet$};
\node at (25){$\bullet$};
\node at (26){$\bullet$};
\node at (31){$\bullet$};
\node at (32){$\bullet$};
\node at (33){$\bullet$};
\node at (34){$\bullet$};
\node at (35){$\bullet$};
\node at (36){$\bullet$};
\end{tikzpicture}
\]
\end{figure}

The facets of $\creal{m}{n}$ come in two different types.

\begin{definition}[\cite{er}]
A facet $\face$ of $\creal{m}{n}$ is an \emph{upper facet} if for any $\ve{a} \in \mathbb{R}^{n}$ such that $\ip{\ve{a}}{-}$ is maximised on $\face$, we have that $\ve{a}_{n} > 0$, where $\ve{a}_{n}$ is the $n$-th coordinate of $\ve{a}$. Dually, a facet $\face$ of $\creal{m, n}$ is a \emph{lower facet} if $\ve{a}_{n}$ is negative for any $\ve{a}$ such that $\face$ maximises $\ip{\ve{a}}{-}$.
\end{definition}

Equivalently, a facet $\face$ of $\creal{m}{n}$ is an upper (lower) facet if any normal vector to $\face$ which points out of the polytope has a positive (negative) $n$-th coordinate. Or, more informally, $\face$ is an upper (lower) facet if it can be seen from a very large positive (negative) $n$-th coordinate. Upper and lower facets of cyclic polytopes can be characterised combinatorially using the following notions.

\begin{definition}
Given a subset $F \subset \vertices$, we say that an element $v \in \vertices \setminus F$ is an \emph{even gap} in $F$ if $\#\set{ x \in F \st x > v }$ is even. Otherwise, it is an \emph{odd gap}. A subset $F \subset \vertices$ is \emph{even} if every $v \in \vertices \setminus F$ is an even gap. A subset $F \subset \vertices$ is \emph{odd} if every $v \in [m] \setminus F$ is an odd gap.
\end{definition}

\begin{theorem}[{Gale's Evenness Criterion, \cite[Theorem~3]{gale},\cite[Lemma~2.3]{er}}]
Given a $n$-subset $F \subset \vertices$, we have that $|F|$ is an upper facet of $\creal{\vertices}{n}$ if and only if $F$ is an odd subset, and that $|F|$ is a lower facet of $\creal{\vertices}{n}$ if and only if $F$ is an even subset.
\end{theorem}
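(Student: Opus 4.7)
The plan is to translate the facet condition into a sign condition on a single polynomial in $t$, built from the standard Vandermonde-type determinant that vanishes on the affine hyperplane through $n$ chosen points of the moment curve.

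First I would fix $F = \tup{v_{i_{1}}, \dots, v_{i_{n}}} \subset \vertices$ and consider the affine functional
\[
L(\ve{x}) = \det\begin{pmatrix} 1 & x_{1} & x_{2} & \cdots & x_{n} \\ 1 & t_{i_{1}} & t_{i_{1}}^{2} & \cdots & t_{i_{1}}^{n} \\ \vdots & \vdots & \vdots & & \vdots \\ 1 & t_{i_{n}} & t_{i_{n}}^{2} & \cdots & t_{i_{n}}^{n} \end{pmatrix},
\]
whose zero set is precisely the affine hyperplane spanned by $|v_{i_{1}}|, \dots, |v_{i_{n}}|$. Substituting $\ve{x} = p_{n}(t)$ yields a Vandermonde determinant in the variables $t, t_{i_{1}}, \dots, t_{i_{n}}$, which factors as $\phi(t) := L(p_{n}(t)) = (-1)^{n} V \prod_{j=1}^{n}(t - t_{i_{j}})$, where $V = \prod_{1 \leqslant j < k \leqslant n}(t_{i_{k}} - t_{i_{j}}) > 0$ is the Vandermonde determinant of the $t_{i_{j}}$. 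The entire cyclic polytope lies on one side of this hyperplane precisely when $\phi(t_{\ell})$ has the same sign for every $v_{\ell} \in \vertices \setminus F$.

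Next I would read off the sign of $\phi(t_{\ell})$ for $v_{\ell} \notin F$: exactly one factor $(t_{\ell} - t_{i_{j}})$ is negative for each $i_{j} > \ell$, so if $v_{\ell}$ has $k$ elements of $F$ above it, then $\operatorname{sign}(\phi(t_{\ell})) = (-1)^{n+k}$. Hence the sign is constant across all gaps if and only if the parity of $k$ is constant across all gaps, i.e.\ iff $F$ is either even or odd in the sense of the previous definition. This already proves that $|F|$ is a facet iff $F$ is even or odd.

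Finally I would identify upper versus lower by computing the $n$-th coefficient of $L$. Cofactor expansion along the top row gives the coefficient of $x_{n}$ as $(-1)^{n} V$, so the gradient $\nabla L$ has $n$-th component $(-1)^{n} V$. The outward normal to the facet $|F|$ is $-\nabla L$ when $\phi > 0$ on the complement of $F$ and $+\nabla L$ when $\phi < 0$ there; combining with the sign formula $(-1)^{n+k}$ above, a quick case check on the parity of $n$ shows the $n$-th component of the outward normal is positive precisely when $k$ is odd for every gap (giving an upper facet), and negative precisely when $k$ is even for every gap (giving a lower facet). The main point where I would take care is tracking the interaction between the cofactor sign $(-1)^{n}$, the factor $(-1)^{n}$ coming from reordering rows in $\phi(t)$, and the sign of $\phi$ on the complement of $F$; once these three signs are reconciled the identification odd $\leftrightarrow$ upper and even $\leftrightarrow$ lower is automatic and parity-independent.
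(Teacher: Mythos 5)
Your proposal is correct, but note that the paper does not prove this statement at all: it is quoted as a known result, with citations to Gale's original paper and to Edelman--Reiner, so there is no internal proof to compare against. What you have written is essentially the classical argument from those sources: the affine span of the $n$ chosen points is the zero set of the determinant $L$, evaluating along the moment curve factors it as $\phi(t)=(-1)^{n}V\prod_{j}(t-t_{i_{j}})$ with $V>0$, and at a gap $v_{\ell}$ with $k$ elements of $F$ above it one gets $\operatorname{sign}\phi(t_{\ell})=(-1)^{n+k}$, so the supporting-hyperplane condition is exactly constancy of the parity of $k$ over the gaps. Your final sign bookkeeping also checks out: the coefficient of $x_{n}$ in $L$ is $(-1)^{n}V$, the polytope lies on the side of sign $\epsilon=(-1)^{n+k}$, and the outward normal is $-\epsilon\nabla L$, whose $n$-th component therefore has sign $-\epsilon(-1)^{n}=(-1)^{k+1}$, independent of $n$; this is precisely odd $\leftrightarrow$ upper and even $\leftrightarrow$ lower, so the ``case check on the parity of $n$'' you defer is indeed automatic. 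Two small points worth making explicit in a full write-up: (i) no vertex outside $F$ lies on the hyperplane, since $\phi(t_{\ell})\neq 0$ (general position of distinct points on the moment curve), so the face cut out by the supporting hyperplane is exactly $|F|$ and ``weakly on one side'' upgrades to ``strictly''; and (ii) for a facet, every functional maximised on it is a positive multiple of the outward normal, so the paper's definition of upper and lower facets reduces, as you use, to the sign of the $n$-th coordinate of that normal.
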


We write
\begin{align*}
\lfacets{\vertices}{n} &:= \set{ F \subseteq \vertices \st |F|_{n} \text{ is a lower facet of }|\vertices|_{n}}, \text{ and}\\
\ufacets{\vertices}{n} &:= \set{ F \subseteq \vertices \st |F|_{n} \text{ is an upper facet of }|\vertices|_{n}}.
\end{align*}

One may likewise characterise the circuits of cyclic polytopes combinatorially.

\begin{definition}[{\cite[Definition~2.2]{ot},\cite{njw-hst}}]
If $A, B \subseteq \vertices$ are $(d + 1)$-subsets, then we say that $A$ \emph{intertwines} $B$, and write $A \wr B$, if \[a_{0} < b_{0} < a_{1} < b_{1} < \dots < a_{d} < b_{d}.\]

If either $A \wr B$ or $B \wr A$, then we say that $A$ and $B$ are \emph{intertwining}. That is, we use `are intertwining' to refer to the symmetric closure of the relation `intertwines'. A collection of $(d + 1)$-subsets is called \emph{non-intertwining} if no pair of the elements are intertwining.

If $A$ is a $d$-subset and $B$ is a $(d + 1)$-subset, then we also say that $A$ \emph{intertwines} $B$, and write $A \wr B$, if \[b_{0} < a_{0} < b_{1} < \dots < a_{d - 1} < b_{d}.\]
\end{definition}

\begin{theorem}[\cite{breen}]
The circuits of $\creal{\vertices}{n}$ are the pairs $(A, B)$ and $(B, A)$ such that $A$ is a $(\lfloor \frac{n}{2} \rfloor+1)$-subset, $B$ is a $(\lceil \frac{n}{2} \rceil +1)$-subset, and $A$ intertwines~$B$.
\end{theorem}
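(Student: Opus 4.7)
The plan is to exploit the fact that points on the moment curve are in \emph{general position}: any $n+1$ of them are affinely independent. This follows because the matrix whose columns are the affine coordinates $(1, t_i, t_i^2, \ldots, t_i^n)^{T}$ for $n+1$ distinct parameters $t_i$ is the $(n+1)\times(n+1)$ Vandermonde matrix, whose determinant $\prod_{j<j'}(t_{j'} - t_j)$ is nonzero. Consequently, any minimal affine dependence among points of $|\vertices|_n$ must involve \emph{exactly} $n+2$ vertices, so every circuit $(Z_+, Z_-)$ satisfies $\#(Z_+ \cup Z_-) = n+2$.

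Next I would compute, for any fixed $(n+2)$-subset $\{v_{i_0},\ldots,v_{i_{n+1}}\}$ of $\vertices$ (with $t_{v_{i_0}} < \cdots < t_{v_{i_{n+1}}}$), the unique (up to scalar) affine dependence $\sum_{k=0}^{n+1} \lambda_k\,|v_{i_k}|_n = 0$ with $\sum \lambda_k = 0$. By Cramer's rule applied to the $(n+1)\times(n+2)$ system, $\lambda_k$ is, up to a global sign, $(-1)^k$ times the Vandermonde determinant of the remaining $n+1$ parameters. Since each such Vandermonde is strictly positive, the signs of $\lambda_0, \lambda_1, \ldots, \lambda_{n+1}$ strictly alternate. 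Rearranging the dependence to move the negative terms to one side yields a relation $\sum_{k \text{ even}} |\lambda_k|\,|v_{i_k}|_n = \sum_{k \text{ odd}} |\lambda_k|\,|v_{i_k}|_n$, which, after normalising so that each side has coefficient-sum equal to $1$, exhibits a common point of $|A|$ and $|B|$, where $A$ and $B$ are the subsets of vertices in even and odd positions respectively.

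Counting positions: among the indices $0, 1, \ldots, n+1$, there are $\lceil (n+2)/2 \rceil = \lceil n/2\rceil + 1$ even positions and $\lfloor (n+2)/2 \rfloor = \lfloor n/2\rfloor + 1$ odd positions, and these alternate by construction. Thus one of $\{A,B\}$ is a $(\lfloor n/2\rfloor+1)$-subset and the other is a $(\lceil n/2\rceil+1)$-subset, and they intertwine as defined. Minimality of $(A,B)$ as a circuit is automatic: removing any vertex leaves $n+1$ points in general position, so no proper sub-pair can have intersecting convex hulls. Conversely, whenever $A$ intertwines $B$ with the prescribed sizes, $A \cup B$ has exactly $n+2$ elements in the required alternating pattern, so the dependence above produces an intersection point of $|A|$ and $|B|$, showing $(A,B)$ (and symmetrically $(B,A)$) is a circuit.

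The only mild subtlety is the parity bookkeeping for even versus odd $n$; otherwise the argument is essentially the standard Vandermonde sign computation. No deep obstacle is expected, since the statement is classical — the task is just to record cleanly that Breen's circuit description is precisely the intertwining condition introduced here.
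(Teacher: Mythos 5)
Your argument is correct in substance, but it takes a different route from the paper, which does not prove this statement at all: it is quoted from Breen and justified by appeal to the known oriented-matroid structure of cyclic polytopes (the alternating matroid), so your Vandermonde computation is a self-contained replacement for a citation. What your approach buys is exactly that self-containedness: general position of points on the moment curve forces every affinely dependent set to have at least $n+2$ elements, and Cramer's rule gives the strictly alternating sign pattern of the unique dependence on any $(n+2)$-subset, which is precisely the intertwining condition; minimality and the converse direction then come for free. The one step you should tighten is the passage from the paper's definition of a circuit (an inclusion-minimal pair $(Z_{+}, Z_{-})$ with $|Z_{+}| \cap |Z_{-}| \neq \emptyset$) to the claim that $\#(Z_{+} \cup Z_{-}) = n + 2$ with the sign split of the dependence matching the pair. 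An intersection point only gives a dependence whose positive support lies in $Z_{+}$ and negative support in $Z_{-}$; minimality of the pair forces full support, but to rule out $\#(Z_{+} \cup Z_{-}) \geqslant n + 3$ you need the standard elimination argument (subtract a suitable multiple of a second, independent dependence until a coefficient vanishes, noting that affine dependences always carry both signs), since the sign split of a dependence on an arbitrary $(n+2)$-subset need not be compatible with $(Z_{+}, Z_{-})$. Finally, note that which of the two parity classes is the $(\lfloor n/2\rfloor + 1)$-set and hence plays the role of $A$ in the relation $A \wr B$ depends on the parity of $n$; this is only bookkeeping, but it is worth stating explicitly since the intertwining relation is asymmetric.
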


This characterisation of the circuits is well-known due to the description of the oriented matroid given by a cyclic polytope \cite{b-lv,sturmfels,cd}.

These combinatorial characterisations of upper and lower facets and circuits show that these notions are independent of the particular geometric realisation of $\creal{\vertices}{n}$. Hence, we make the following definition.

\begin{definition}\label{def:comb_cyc_poly}
We will write $\cfacets{\vertices}{n}$ for the combinatorial facets of $\creal{\vertices}{n}$, $\ccircuits{\vertices}{n}$ for the combinatorial circuits of $\creal{\vertices}{n}$, and $C(\vertices, n)$ for the \emph{combinatorial cyclic polytope} consisting of the triple $(\vertices, \cfacets{\vertices}{n}, \ccircuits{\vertices}{n})$.
\end{definition}

Facets of $C(\vertices, n)$ will also be designated as either upper facets or lower facets, as dictated by Gale's Evenness Criterion. As for geometric cyclic polytopes, we write $C(m, n) := C([m], n)$. If $\#\vertices = m$, then we say that $C(\vertices, n)$ and $C(m, n)$ are \emph{combinatorially equivalent}---they only differ by the labels of the vertices. More generally, two (geometric or combinatorial) polytopes are combinatorially equivalent if they have isomorphic face lattices.

\begin{remark}
In the literature, the term `cyclic polytope' is sometimes used more widely to refer to a geometric polytope which is combinatorially equivalent to a cyclic polytope $\creal{m}{n}$. Here we use the term `cyclic polytope' in the narrower sense in which we have defined it above: the convex hull of a set of points on the moment curve.

Another term that appears in the literature is `alternating polytope', which refers to a polytope $\polytope$ which is combinatorially equivalent to a cyclic polytope $\creal{m}{n}$ in such a way that this combinatorial equivalence restricts to a combinatorial equivalence between each of the corresponding subpolytopes of $\polytope$ and $\creal{m}{n}$. It is clear from the definitions that cyclic polytopes in our sense are also alternating polytopes in this sense. Indeed, when we consider subpolytopes of our cyclic polytopes, we will use the fact that these are cyclic polytopes under the vertex labelling induced from the larger polytope. Shemer proves that every polytope combinatorially equivalent to an even-dimensional cyclic polytope is an alternating polytope \cite{shemer_np}, but this is known not to be true in general---see, for instance, \cite{sturmfels_pos}. See also \cite{bk_subpoly}.
\end{remark}

\subsubsection{Triangulations}\label{sect:back:cyc:tri}

We now explain our framework for triangulations. We maintain our set-up from Section~\ref{sect:back:cyc:conv}, where $\vertices \subseteq \mathbb{Z}_{>0}$ is a finite subset, with $|-|\colon \vertices \to \mathbb{R}^{n}$ a geometric realisation giving a geometric polytope $\polytope = |\vertices|$, with corresponding combinatorial polytope $\combpoly = (\vertices, \facets{\polytope}, \circuits{\polytope})$.

A \emph{combinatorial $n$-simplex} in $\vertices$ is a $(n + 1)$-subset $\S \subseteq \vertices$. The \emph{$k$-faces} of $\S$ are the subsets of $\S$ of size $k + 1$. An \emph{abstract simplicial complex} is a set $\mathcal{A}$ of combinatorial simplices in $[m]$ such that if $\S,\S' \in \mathcal{A}$, then $\S \not\subseteq \S'$. The $k$-simplices of $\mathcal{A}$ are the $k$-faces of elements of $\mathcal{A}$.  An abstract simplicial complex $\mathcal{A}'$ is an \emph{abstract simplicial subcomplex} of $\mathcal{A}$ if every simplex of $\mathcal{A}'$ is a face of a simplex of~$\mathcal{A}$. Hence, we consider abstract simplicial complexes in terms of maximal simplices. 

Given a $(n + 1)$-simplex $\S \subseteq \vertices$, if $\set{|s| \st s \in \S}$ is an affinely independent set, then $|\S|$ is a \emph{geometric $n$-simplex}. A collection $\mathcal{G}$ of geometric simplices is a \emph{geometric simplicial complex} if \[|\S \cap \R| = |\S| \cap |\R|\] for all $|\S|, |\R| \in \mathcal{G}$, and if there exist no $|\S|, |\R| \in \mathcal{G}$ such that $|\S|$ is a face of $|\R|$. \emph{Geometric simplicial subcomplexes} are defined analogously to abstract simplicial subcomplexes.

\begin{definition}\label{def:triang}
A (\emph{geometric}) \emph{triangulation} of the geometric polytope $\polytope$ is a geometric simplicial complex $\geomtri$ such that $\polytope = \bigcup_{\geomsimp \in \geomtri} \geomsimp$.

A (\emph{combinatorial}) \emph{triangulation} of the combinatorial polytope $\combpoly$ is an abstract simplicial complex $\mathcal{T}$ such that
\begin{itemize}
\item for all $\S \in \mathcal{T}$ and all facets $\F$ of $\S$, we either have that $\F$ is contained in a facet of $\combpoly$, or there exists $\R \in \mathcal{T}\setminus \{\S\}$ such that $\F \subset \R$,
\item there is no circuit $(Z_{+}, Z_{-})$ of $\combpoly$ such that $Z_{+} \subseteq \S$ and $Z_{-} \subseteq \R$ for $\S, \R \in \mathcal{T}$.
\end{itemize}
We use $|\mathcal{T}|$ to refer to the geometric simplicial complex corresponding to $\mathcal{T}$.
\end{definition}

\begin{proposition}[{\cite[Proposition~2.2]{rambau}}]
Given an abstract simplicial complex $\mathcal{T}$, we have that $\mathcal{T}$ is a combinatorial triangulation of $P$ if and only if $|\mathcal{T}|$ is a geometric triangulation of $\polytope$.
\end{proposition}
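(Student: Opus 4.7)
The plan is to verify both directions of the biconditional by carefully translating between the combinatorial and geometric definitions of a triangulation. For the forward direction, I would assume $\mathcal{T}$ is a combinatorial triangulation of $\combpoly$, and then separately establish the two properties that make $|\mathcal{T}|$ a geometric simplicial complex together with the covering property $\bigcup_{\S \in \mathcal{T}} |\S| = \polytope$. The non-containment property for $|\mathcal{T}|$ is automatic: since $\mathcal{T}$ is an abstract simplicial complex specified by its maximal simplices, no inclusion $\S \subseteq \R$ can hold between distinct members of $\mathcal{T}$, and hence no $|\S|$ is a face of another $|\R|$. For the intersection identity $|\S \cap \R| = |\S| \cap |\R|$, the inclusion $\subseteq$ is immediate; for $\supseteq$ I would argue by contradiction: given $p \in |\S| \cap |\R|$ with $p \notin |\S \cap \R|$, let $\S' \subseteq \S$ and $\R' \subseteq \R$ be the minimal subsets with $p \in |\S'|$ and $p \in |\R'|$. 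Comparing the two convex representations of $p$ yields an affine dependence supported on $\S' \cup \R'$, and after discarding common vertices one extracts a minimal dependence whose positive and negative parts form a circuit $(Z_+, Z_-)$ of $\combpoly$ with $Z_+ \subseteq \S$ and $Z_- \subseteq \R$, contradicting the no-circuit clause of Definition~\ref{def:triang}.

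For the covering property, I would argue globally from the local facet condition. Each facet $\F$ of a simplex $\S \in \mathcal{T}$ either lies within a combinatorial facet of $\combpoly$---in which case $|\F| \subseteq \partial \polytope$---or is shared with a neighbouring simplex $\R \in \mathcal{T}$ that lies on the opposite side of $|\F|$. Combined with the intersection identity just proved, this shows that the topological boundary of $\bigcup_{\S \in \mathcal{T}} |\S|$ inside $\polytope$ is contained in $\partial \polytope$. Since $\polytope$ is connected and equals the closure of its interior, a nonempty closed subset whose interior boundary is contained in $\partial \polytope$ must be all of $\polytope$.

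For the backward direction, suppose $|\mathcal{T}|$ is a geometric triangulation of $\polytope$. To verify the facet condition, take $\S \in \mathcal{T}$ and a facet $\F$ of $\S$. If $|\F| \subseteq \partial \polytope$, then $\F$ is contained in a combinatorial facet of $\combpoly$. Otherwise the relative interior of $|\F|$ meets the interior of $\polytope$, so the region immediately on the opposite side of $|\F|$ from $|\S|$ must be covered by some other simplex $\R \in \mathcal{T}$; picking any $q$ in the relative interior of $|\F|$, we have $q \in |\S| \cap |\R| = |\S \cap \R|$, and because $\F$ is the minimal face of $\S$ containing $q$ we deduce $\F \subseteq \R$. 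To verify the no-circuit condition, suppose $(Z_+, Z_-)$ were a circuit with $Z_+ \subseteq \S$ and $Z_- \subseteq \R$. The unique intersection point $p$ of $|Z_+|$ and $|Z_-|$ lies in the relative interiors of both, hence lies in $|\S| \cap |\R| = |\S \cap \R|$; minimality of the face structure then forces $Z_+ \cup Z_- \subseteq \S \cap \R$, but $Z_+ \cup Z_-$ is affinely dependent by definition of a circuit, contradicting the affine independence of the vertex set of the simplex $\S \cap \R$.

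The step I expect to be the main obstacle is the covering statement in the forward direction, since this is where purely local combinatorial information must be converted into a global geometric conclusion. The topological argument outlined above is standard in spirit but delicate to execute rigorously; one must show that the local facet-matching condition is strong enough to force simplices to propagate throughout $\polytope$ without leaving interior gaps, using only the fact that $\polytope$ is a compact connected manifold-with-boundary whose boundary consists of the geometric realisations of the combinatorial facets of $\combpoly$.
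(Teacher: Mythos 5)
The paper does not prove this proposition at all---it is imported verbatim from Rambau \cite[Proposition~2.2]{rambau}---so there is no in-paper argument to compare against; your proof has to stand on its own. Most of it does: the backward direction is correct as written, and in the forward direction the derivation of the intersection identity from the no-circuit clause is the right idea. Two smaller points there deserve explicit treatment: first, you never check that each $|S|$, $S \in \mathcal{T}$, is actually a geometric simplex; affine independence of $S$ follows from the no-circuit clause applied with $S = R$ (a minimal affine dependence inside $S$ yields a circuit with both halves in $S$), and it is also what makes your non-containment argument valid, since $|S| = |R'|$ forces $S = R'$ only once vertex sets are affinely independent. Second, extracting from your affine dependence a circuit $(Z_+, Z_-)$ with $Z_+ \subseteq S$ and $Z_- \subseteq R$ is not just ``take a minimal dependent subset of the support'': an arbitrary minimal dependent subset may carry signs incompatible with the original dependence, so you need the conformal decomposition of an affine dependence into circuits (or an equivalent direct argument) to keep the positive part inside $S$ and the negative part inside $R$.

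The genuine gap is the covering step, exactly where you flag it. Your facet-matching argument shows only that a point in the relative interior of a facet $|F|$ of some $|S|$, with $|F| \not\subseteq \partial\polytope$, is interior to $|S| \cup |R|$; it says nothing about frontier points of $U = \bigcup_{S \in \mathcal{T}} |S|$ lying in faces of codimension at least two (think of two triangles meeting only at a vertex), so the assertion that the topological frontier of $U$ inside $\polytope$ is contained in $\partial\polytope$ is not yet established, and the subsequent connectedness argument has nothing to bite on. To close it one must show that the codimension-one part of the frontier is dense in the frontier: if a small ball $B$ about a frontier point met $\partial U$ only in a set of dimension at most $n - 2$, then $B \setminus \partial U$ would be connected while also being the disjoint union of the nonempty open sets $B \cap \mathrm{int}\,U$ and $B \setminus U$, a contradiction; one then chooses a frontier point in $\mathrm{int}\,\polytope$ lying in the relative interior of a single facet realisation and away from all lower-dimensional faces of members of $\mathcal{T}$, which your matching argument rules out. (A generic ray-shooting parity argument is an alternative route.) You also silently use that $\mathcal{T} \neq \emptyset$ and that its members are $(n+1)$-subsets, i.e.\ full-dimensional; both are needed for $U \cap \mathrm{int}\,\polytope$ to be nonempty, and the second is genuinely required for the statement to be true at all (a single simplex contained in a facet of $\combpoly$ satisfies both clauses of Definition~\ref{def:triang} but covers nothing).
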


In this paper we are usually concerned with combinatorial triangulations, but sometimes we shall need to consider geometric triangulations.

One can use the descriptions of the facets and circuits of $C(m, n)$ to determine whether or not a collection $\mathcal{T}$ of $n$-simplices gives a triangulation of $C(m, n)$. We denote the set of triangulations of the cyclic polytope $C(m, n)$ by $\mathsf{S}(m, n)$. There are two triangulations of $C(m, n)$ which are of particular note. Namely, the lower facets $\lfacets{[m]}{n + 1}$ of $C(m, n + 1)$ give a triangulation of $C(m, n)$, which is known as the \emph{lower triangulation}. Similarly, $\ufacets{[m]}{n + 1}$ gives a triangulation of $C(m, n)$, which is known as the \emph{upper triangulation}.

Given a triangulation~$\mathcal{T}$ of $C(m, n)$ and $\U \subseteq [m]$, we say that $C(\U, n)$ is a \emph{subpolytope} of $\mathcal{T}$ if the facets $\cfacets{\U}{n}$ of $C(\U, n)$ are a simplicial subcomplex of $\mathcal{T}$. Equivalently, we have that $C(\U, n)$ is a subpolytope of $\mathcal{T}$ if and only if $\set{\S \in \mathcal{T} \st \S \subseteq \U}$ is a triangulation of $C(\U, n)$. We refer to this triangulation as the \emph{induced triangulation} of $C(\U, n)$.

\subsection{The higher Stasheff--Tamari orders}

We now come to the definitions of the two higher Stasheff--Tamari orders. General introductions to these orders can be found in \cite{rr} and \cite[Section 6.1]{lrs}.

We make some observations to motivate the definition of the first higher Stasheff--Tamari order. Given $\U \in \binom{[m]}{n+2}$, we have that $C(\U, n)$ has only two triangulations, namely: the lower triangulation and the upper triangulation. For example, if $n = 2$, then $\creal{\U}{n}$ is a quadrilateral, with the two possible triangulations given by a choice of diagonal.

\begin{definition}[{\cite{er}}]
Suppose that $C(\U, n)$ is a subpolytope of $\mathcal{T}$ where  $\U \in \binom{[m]}{n + 2}$ and the induced triangulation of $C(\U, n)$ is the lower triangulation.

Let $\mathcal{T}' = (\mathcal{T}\setminus\lfacets{\U}{n + 1}) \cup \ufacets{\U}{n + 1}$; that is, $\mathcal{T}'$ results from replacing the induced triangulation of $C(\U, n)$ with the upper triangulation. Then $\mathcal{T}'$ is also a triangulation of $C(m, n)$ and we say that $\mathcal{T}'$ is an \emph{increasing bistellar flip} of $\mathcal{T}$.

Dually, we say that $\mathcal{T}$ is a \emph{decreasing bistellar flip} of $\mathcal{T}'$. We often simply say \emph{increasing flip} or \emph{decreasing flip}.
\end{definition}

Bistellar flips are also known as ``Pachner moves'' after \cite{pachner}.

\begin{definition}[{\cite{kv-poly,er}}]
The \emph{first higher Stasheff--Tamari order} is the partial order on triangulations of $C(m, n)$ with covering relations such that $\mathcal{T} \lessdot_{1} \mathcal{T}'$ if and only if $\mathcal{T}'$ is an increasing bistellar flip of $\mathcal{T}$. We write $\mathsf{S}_{1}(m,n)$ for the poset on $\mathsf{S}(m,n)$ this gives and $\leqslant_{1}$ for the partial order itself.
\end{definition}

We now define the second higher Stasheff--Tamari order. Unlike the first higher Stasheff--Tamari order, this order is defined using the geometric realisation of the cyclic polytope, although the order itself is independent of the geometric realisation. In Section~\ref{sect:back:comb_char} we shall explain the entirely combinatorial characterisation of the second higher Stasheff--Tamari order from \cite{njw-hst}.

Every triangulation $|\mathcal{T}|$ of $\creal{m}{n}$ determines a unique piecewise-linear section \[\sm_{|\mathcal{T}|}\colon \creal{m}{n}\rightarrow \creal{m}{n+1}\] of $\creal{m}{n +1}$ by sending each $n$-simplex $|\S|_{n}$ of $|\mathcal{T}|$ to $|\S|_{n+1}$ in $\creal{m}{n+1}$, in the natural way, recalling the notation $|S|_{n}$ and $|S|_{n + 1}$ from Definition~\ref{def:cyc_poly}.

\begin{definition}[{\cite{er}}]
The \emph{second higher Stasheff--Tamari order} on $\mathsf{S}(m,n)$ is defined by \[ \mathcal{T} \leqslant_{2} \mathcal{T}' \iff \sm_{|\mathcal{T}|}(x)_{n+1} \leqslant \sm_{|\mathcal{T}'|}(x)_{n+1} \quad \forall x \in \creal{m}{n},\] where $\sm_{|\mathcal{T}|}(x)_{n + 1}$ denotes the $(n + 1)$-th coordinate of the point $\sm_{|\mathcal{T}|}(x)$. We write $\mathsf{S}_{2}(m, n)$ for the poset on $\mathsf{S}(m,n)$ this gives.
\end{definition}

Given triangulations $\mathcal{T}\in \mathsf{S}(m, n)$ and $\mathcal{T}' \in \mathsf{S}(m,n+1)$, we say that $\mathcal{T}$ is a \emph{section} of $\mathcal{T}'$ if $\mathcal{T}$ is contained in $\mathcal{T}'$ as a simplicial subcomplex.

\subsection{Combinatorial characterisation}\label{sect:back:comb_char}

We now explain the results of \cite{njw-hst}, where combinatorial interpretations were given of both higher Stasheff--Tamari orders in the same framework. This makes comparison between the orders much easier. It is these interpretations of the higher Stasheff--Tamari orders that we use in this paper to prove that the orders are equivalent.

The combinatorial interpretations of the orders from \cite{njw-hst} require more sophisticated combinatorial descriptions of triangulations of cyclic polytopes than that of Section~\ref{sect:back:cyc:tri}.

\begin{definition}
We call a simplex $A \subseteq [m]$ \emph{an internal simplex of $C(m, n)$} if $A$ does not lie within any facet of $C(m, n)$.
\end{definition}

It follows from \cite{dey} that a triangulation of $C(m, n)$ is determined by its internal $\floor{n/2}$-simplices, as explained for even dimensions in \cite[Lemma~2.15]{ot} and for odd dimensions in \cite[Lemma~4.4]{njw-hst}. We can thus combinatorially describe triangulations of cyclic polytopes by characterising when a set of $d$-simplices in $[m]$ is the set of internal $d$-simplices of a triangulation of $C(m, 2d)$ or $C(m, 2d + 1)$.

In order to do this, we first need to know when $A$ is an internal $\floor{n/2}$-simplex of $C(m, n)$.

\begin{proposition}[{\cite[Lemma~2.1(3)]{ot} and \cite[Lemma~4.2]{njw-hst}}]
The internal $\floor{n/2}$-simplices of $C(m, n)$ are described as follows.
\begin{enumerate}
\item In even dimensions, $A$ is an internal $d$-simplex of $C(m,2d)$ if and only if
\begin{equation}
A \in \nonconsec{m}{d} := \adjset{B \in \binom{[m]}{d + 1} \st b_{i} \leqslant b_{i + 1} - 2 \: \forall i \in [d], \text{ and } b_{d} \leqslant b_{0} + m - 2}.\label{eq:even_int}
\end{equation}
\item In odd dimensions, $A$ is an internal $d$-simplex of $C(m, 2d + 1)$ if and only if
\begin{equation}
A \in \chainset{m}{d}:=\set{B \in \nonconsec{m}{d} \st b_{0} \neq 1, \, b_{d} \neq m}.\label{eq:odd_int}
\end{equation}
\end{enumerate}
\end{proposition}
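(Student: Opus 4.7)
The plan is to translate ``$A$ is internal'' into ``$A$ is not contained in any facet of $C(m, n)$'' and then apply Gale's Evenness Criterion to make the facet condition concrete. The first step is to unpack Gale's criterion structurally. In even dimension $n = 2d$, every lower (even) facet of $C(m, n)$ is a disjoint union of exactly $d$ consecutive integer pairs $\{x, x+1\} \subseteq [m]$, and every upper (odd) facet is $\{1, m\}$ together with $d - 1$ disjoint consecutive pairs in $[2, m-1]$. In odd dimension $n = 2d + 1$, every lower facet consists of $\{1\}$ together with $d$ consecutive pairs in $[2, m]$, while every upper facet consists of $\{m\}$ together with $d$ consecutive pairs in $[1, m-1]$. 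Each facet thus decomposes into ``pair slots'' of two consecutive integers, possibly flanked by one or two unpaired endpoint slots depending on the parity of $n$ and whether the facet is upper or lower.

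For the reverse direction (the stated combinatorial condition implies internality), I would argue by pigeonhole. In the even case, if $A \in \nonconsec{m}{d}$ were contained in a lower facet, its $d + 1$ elements would have to fit into only $d$ pair slots, forcing two of them into the same slot and hence consecutive in $[m]$, contradicting $a_{i + 1} \geqslant a_{i} + 2$. If $A$ lay in an upper facet, then either $\{1, m\} \subseteq A$, in which case $a_d - a_0 = m - 1 > m - 2$ violates the second defining condition of $\nonconsec{m}{d}$, or at most one of $1, m$ lies in $A$, leaving at least $d$ elements of $A$ to be distributed across only $d - 1$ interior pair slots; pigeonhole again produces a consecutive pair. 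The odd-dimensional case proceeds in parallel: a lower facet has one endpoint slot ($1$) plus $d$ pair slots, so if $A \in \chainset{m}{d}$ avoids $1$, its $d + 1$ elements fit into $d$ slots and pigeonhole forces consecutives. Dually for upper facets and the endpoint $m$.

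For the forward direction (failure of the condition implies $A$ lies in some facet), I would build the facet explicitly. In the even case, if $a_{i + 1} = a_{i} + 1$ for some $i$, designate $(a_i, a_{i + 1})$ as one slot of a lower facet, then greedily complete each remaining element of $A$ to a consecutive pair by adjoining a missing neighbor, and finally fill any still-empty slots with disjoint consecutive pairs placed in the unused portion of $[m]$. If $\{1, m\} \subseteq A$, build an upper facet with endpoints $1$ and $m$ and extend the interior analogously. The odd case adds the sub-cases $a_0 = 1$ and $a_d = m$, each handled by the appropriate endpoint-based facet. The main obstacle here is verifying feasibility, namely that the greedy extension does not run out of room inside $[m]$. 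This reduces to a counting check together with a parity observation: the number of additional integers required to complete all pair slots is automatically even, because the number of odd-length runs of consecutive elements in $A$ has the same parity as their total size. Feasibility then follows from the nondegeneracy hypothesis $m \geqslant n + 1$ that is implicit in $C(m, n)$ being a genuine cyclic polytope. The pigeonhole direction, by contrast, is essentially immediate once the facet structure above has been correctly identified.
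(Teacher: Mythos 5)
Your route is the one the paper intends (the paper cites the result and notes that both parts follow from Gale's Evenness Criterion): your structural description of the lower and upper facets in both parities is correct, and your pigeonhole argument for the direction ``the stated condition implies internality'' is complete.

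The gap is in the converse direction, precisely in the feasibility step that you flag and then dismiss. First, the recipe as literally written -- ``complete each remaining element of $A$ to a consecutive pair by adjoining a missing neighbor'' -- breaks whenever some element of $A$ has both of its neighbours in $A$ (a run of length at least three other than the designated pair): that element has no missing neighbour, so runs must be covered by pairs collectively. Second, under the natural repair (cover each maximal run of $A$ by pairs aligned with it, extend each odd run by one neighbouring element, then ``fill any still-empty slots with disjoint consecutive pairs placed in the unused portion of $[m]$''), the filling step can be infeasible: with $d=4$, $m=9$, $A=\{2,3,5,6,8\}$, every aligned covering uses $\{2,3\}$, $\{5,6\}$ and one of $\{7,8\}$, $\{8,9\}$, leaving $\{1,4,9\}$ or $\{1,4,7\}$ as the unused set, which contains no two consecutive elements -- yet $A$ does lie in facets, e.g.\ the lower facet $\{2,3,4,5,6,7,8,9\}$, which covers the run $\{5,6\}$ by the offset pairs $\{4,5\}$, $\{6,7\}$, and the upper facet $\{1,2,3,5,6,7,8,9\}$. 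So feasibility does not ``follow from a counting check together with a parity observation'' plus $m\geqslant n+1$; a correct argument must allow the covering pairs to be re-aligned (an even run may need one more pair than its length suggests), or choose between lower and upper facets more carefully, or establish existence by another device (for instance by selecting the complement of the prospective facet). Relatedly, your parity sentence is imprecise: the number of elements needed to complete the partially filled slots is the number $o$ of odd runs of $A$, which has the parity of $d+1$ and need not be even; what is automatically even is $d-1-o$, the number of filler elements. All of this is fixable, and the odd-dimensional cases inherit exactly the same issue, but as written the only substantive step of the forward implication is not established.
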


These two facts both follow from applying Gale's Evenness Criterion. Given a triangulation~$\mathcal{T}$ of $C(m, 2d)$ or $C(m, 2d + 1)$, we write $\simp(\mathcal{T})$ for its set of internal $d$-simplices.

Even-dimensional triangulations may be described combinatorially by considering their sets of internal $d$-simplices.

\begin{theorem}[{\cite[Theorem~2.3 and Theorem 2.4]{ot}}]\label{thm:tri_char_even}
Given $\mathbf{X} \subseteq \nonconsec{m}{d}$, we have that $\mathbf{X} = \simp(\mathcal{T})$ for some triangulation~$\mathcal{T}$ of $C(m, 2d)$ if and only if $\# \mathbf{X} = \binom{m - d - 2}{d}$ and $\mathbf{X}$ is non-intertwining .
\end{theorem}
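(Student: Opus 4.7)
The plan is to prove both directions of the equivalence.

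For necessity, suppose $\mathbf{X} = \simp(\mathcal{T})$. The non-intertwining property follows from the combinatorial description of circuits of $C(m, 2d)$: if two $(d+1)$-subsets $A, B \in \mathbf{X}$ intertwined, then $(A, B)$ or $(B, A)$ would be a circuit, and since $|A \cup B| = 2d + 2 > 2d + 1$, the top simplices of $\mathcal{T}$ containing $A$ and $B$ would have to be distinct, violating the circuit axiom of a combinatorial triangulation. The cardinality can be established by computing $\#\simp(\mathcal{T})$ explicitly for the lower triangulation via Gale's Evenness Criterion, and then observing that each bistellar flip preserves $\#\simp(\mathcal{T})$: such a flip exchanges the lower and upper triangulations of some $(2d+2)$-subset $U$, each contributing exactly one internal $d$-simplex (namely one of the two intertwining $(d+1)$-subsets of $U$, which is always internal in $C(m, 2d)$ because $U \subseteq [m]$ forces the endpoint condition $a_d \leq a_0 + m - 2$). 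Flip-connectivity then pins the count at $\binom{m - d - 2}{d}$ for every triangulation.

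For sufficiency, I would argue by induction on $m$. The base case $m = 2d + 1$ is immediate: $\nonconsec{m}{d} = \emptyset$ since $a_d \geq a_0 + 2d$ is incompatible with $a_d \leq a_0 + 2d - 1$, this matches $\binom{d - 1}{d} = 0$, and the unique triangulation consists of the single top simplex $[m]$. For the inductive step, given $\mathbf{X}$ satisfying both conditions, I would contract at an extreme vertex $v \in \{1, m\}$ to obtain a candidate $\mathbf{X}' \subseteq \nonconsec{m-1}{d}$, verify that $\mathbf{X}'$ remains non-intertwining and has the expected cardinality $\binom{m - d - 3}{d}$, apply the inductive hypothesis to produce a triangulation $\mathcal{T}'$ of $C(m-1, 2d)$, and then expand $\mathcal{T}'$ back to a triangulation $\mathcal{T}$ of $C(m, 2d)$ with $\simp(\mathcal{T}) = \mathbf{X}$ (uniqueness of this reconstruction is ensured by the fact, following from \cite{dey}, that a triangulation of a cyclic polytope is determined by its internal $d$-simplices).

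The main obstacle will be the inductive construction itself: verifying that contraction preserves admissibility and that the subsequent expansion reconstitutes exactly $\mathbf{X}$ requires careful control over how intertwining pairs and cardinalities transform under these operations, which is precisely what the contraction-expansion machinery of Section~\ref{sect:expansion} is designed to manage. An alternative, more direct route would be to prescribe the top-dimensional simplices of $\mathcal{T}$ as those $(2d+1)$-subsets $\S$ each of whose $(d+1)$-subsets lying in $\nonconsec{m}{d}$ belongs to $\mathbf{X}$, and then argue by double counting: non-intertwining excludes every potential circuit conflict across distinct top simplices, while the exact cardinality $\binom{m - d - 2}{d}$ forces the resulting simplicial complex to cover $C(m, 2d)$ without gaps.
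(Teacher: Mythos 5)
You should first note that the paper does not prove this statement at all: it is quoted from \cite{ot} (Theorems~2.3 and~2.4 there), so there is no internal proof to compare against; your proposal has to stand on its own. The necessity direction of your sketch is essentially sound: non-intertwining follows immediately from the circuit description together with the second condition in Definition~\ref{def:triang}, and the cardinality can indeed be obtained by computing $\#\simp$ for the lower triangulation and observing that an increasing flip removes exactly one internal $d$-simplex ($A$) and inserts exactly one ($B$), both of which lie in $\nonconsec{m}{d}$ --- though you should say explicitly that you are invoking Rambau's theorem \cite{rambau} that every triangulation of a cyclic polytope is reachable from the lower triangulation by increasing flips; without that boundedness/connectivity result the ``flip-connectivity then pins the count'' step has no basis.

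The genuine gap is the sufficiency direction, which is the actual content of the theorem, and neither of your two routes contains an argument where the difficulty sits. In the inductive route you never show that the contracted collection (say $\set{A[m-1 \leftarrow m]} \cap \nonconsec{m-1}{d}$, as in Proposition~\ref{prop:cont_char_m}) is again non-intertwining \emph{and} has cardinality exactly $\binom{m-d-3}{d}$; under contraction distinct elements of $\mathbf{X}$ can merge or leave $\nonconsec{m-1}{d}$, and controlling the count is precisely the delicate combinatorial step. Worse, even granting a triangulation $\mathcal{T}'$ of $C(m-1,2d)$ with $\simp(\mathcal{T}')=\mathbf{X}'$, there are in general many expansions $\mathcal{T}$ with $\mathcal{T}[m-1\leftarrow m]=\mathcal{T}'$ (by \cite{rs-baues} they correspond to sections of a vertex figure), and you give no construction of one whose internal $d$-simplices are exactly $\mathbf{X}$; the appeal to \cite{dey} only yields uniqueness of such a triangulation \emph{if it exists}, which cannot substitute for existence. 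In the alternative direct route, the sentence ``the exact cardinality forces the resulting simplicial complex to cover $C(m,2d)$ without gaps'' is an assertion of the theorem, not a proof of it: one must actually verify the facet-matching condition of Definition~\ref{def:triang} for the candidate family of $(2d+1)$-subsets (or give a covering/volume argument), and then additionally check that the internal $d$-simplices of the resulting triangulation coincide with $\mathbf{X}$ rather than being a proper subset, using maximality of $\mathbf{X}$ (a non-intertwining set of size $\binom{m-d-2}{d}$ is maximal, which itself needs proof). As it stands, the hard implication is unproven.
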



In odd dimensions we require the following properties to characterise triangulations combinatorially.

\begin{definition}[{\cite[Definition~4.8]{njw-hst}}]\label{def:support}
Let $\mathbf{X} \subseteq \chainset{m}{d}$ and $A \in \mathbf{X}$. A $(d - 1)$-simplex $\SU$ is called a \emph{support} for $A$ if $\SU \wr A$ and, for any internal $d$-simplex $B \in \chainset{m}{d}$ such that $B \subseteq A \cup \SU$, we have that $B \in \mathbf{X}$. We say that $\mathbf{X}$ is \emph{supporting} if every $A \in \mathbf{X}$ has a support.
\end{definition}

\begin{definition}[{\cite[Definition~4.10]{njw-hst}}]\label{def:bridge}
We say that $\mathbf{X} \subseteq \chainset{m}{d}$ is \emph{bridging} if whenever
\begin{align*}
\tup{\sQ_{0}, \sQ_{1}, \dots, \sQ_{i-1}, a_{i}, a_{i + 1}, \dots, a_{j}, \sQ_{j+1}, \sQ_{j+2}, \dots, \sQ_{d}} &\in \mathbf{X}, \text{ and}\\
\tup{\sQ_{0}, \sQ_{1}, \dots, \sQ_{i-1}, b_{i}, b_{i + 1}, \dots, b_{j}, \sQ_{j+1}, \sQ_{j + 2}, \dots, \sQ_{d}} &\in \mathbf{X},
\end{align*}
where possibly $i = 0$ or $j = d$, or both, such that $\tup{a_{i}, a_{i + 1}, \dots, a_{j}} \wr \tup{b_{i}, b_{i + 1}, \dots, b_{j}}$, we have that \[\tup{\sQ_{0}, \sQ_{1}, \dots, \sQ_{i-1}, a_{i}, a_{i + 1}, \dots, a_{k-1}, b_{k}, b_{k + 1}, \dots, b_{j}, \sQ_{j + 1}, \sQ_{j + 2}, \dots, \sQ_{d}} \in \mathbf{X}\] for all $i \leqslant k \leqslant j + 1$.
\end{definition}

The following result then characterises odd-dimensional triangulations combinatorially.

\begin{theorem}[{\cite[Theorem~4.1]{njw-hst}}]\label{thm:tri_char_odd}
Given $\mathbf{X} \subseteq \chainset{m}{d}$, we have that $\mathbf{X} = \simp(\mathcal{T})$ for some triangulation~$\mathcal{T}$ of $C(m, 2d + 1)$ if and only if $\mathbf{X}$ is supporting and bridging.
\end{theorem}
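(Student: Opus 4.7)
The plan is to argue both directions separately, using Theorem~\ref{thm:tri_char_even} as the backbone for the even-dimensional structure hidden inside the odd-dimensional one.

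For necessity, suppose $\mathcal{T}$ is a triangulation of $C(m,2d+1)$ and $\mathbf{X}=\simp(\mathcal{T})$. To verify supporting, I would fix $A\in\mathbf{X}$ and choose any top-dimensional simplex $S\in\mathcal{T}$ containing $A$; then $S$ has $2d+2$ vertices, so writing $S=A\cup\SU\cup\{\su\}$ for a suitable $d$-subset $\SU$ fitting into the gaps of $A$ (chosen so that $\SU\wr A$), Gale's Evenness Criterion applied inside $S$ forces every $(d+1)$-subset of $A\cup\SU$ lying in $\chainset{m}{d}$ to be among the $d$-faces of some top simplex of $\mathcal{T}$, hence in $\mathbf{X}$. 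For bridging, I would take the two given $d$-simplices and restrict $\mathcal{T}$ to the subpolytope $C(U,2d+1)$ spanned by their union $U$; inside this smaller polytope the two simplices are intertwining in the middle coordinates, so the induced triangulation must contain all the ``intermediate'' $d$-simplices listed in Definition~\ref{def:bridge}, because any triangulation of a cyclic polytope on so few vertices is forced by these internal $d$-simplices to include them.

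For sufficiency, given supporting and bridging $\mathbf{X}\subseteq\chainset{m}{d}$, I would recover $\mathcal{T}$ from $\mathbf{X}$ by declaring its top simplices to be exactly those $(2d+2)$-subsets $S\subseteq[m]$ such that every element of $\chainset{m}{d}$ contained in $S$ belongs to $\mathbf{X}$ and such that $S$ decomposes compatibly as $A\cup\SU\cup\{\su\}$ with $A\in\mathbf{X}$ and $\SU$ a support for $A$. The supporting condition guarantees that each $A\in\mathbf{X}$ lies in at least one such $S$, so we get the right internal $d$-simplices; the bridging condition is what guarantees that whenever two candidate top simplices would create a forbidden circuit in $C(m,2d+1)$, the corresponding chain of intermediate $d$-simplices in $\mathbf{X}$ can be used to replace them by simplices that fit together. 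Checking the two bullet points of Definition~\ref{def:triang} then reduces to verifying, via Gale's Evenness Criterion, that no circuit $(Z_+,Z_-)$ of $C(m,2d+1)$ has both parts inside a single top simplex, and that internal facets of top simplices are matched in pairs.

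The main obstacle is the sufficiency direction, specifically verifying the ``no-circuit'' condition: a circuit in $C(m,2d+1)$ is a pair $(A,B)$ with $A\in\chainset{m}{d}$ and $B\in\binom{[m]}{d+2}$ intertwining, and ruling such pairs out from simultaneous inclusion in any top simplex of the candidate $\mathcal{T}$ is exactly the combinatorial content of bridging applied along the coordinates where $B$ threads through $A$. I expect the cleanest way to close this step is by induction on $m$, using the contraction/expansion theory of Section~\ref{sect:expansion} to reduce $\mathbf{X}$ to a supporting and bridging family on $[m-1]$, applying the inductive hypothesis there, and then showing that expanding the resulting triangulation back recovers one with internal $d$-simplices $\mathbf{X}$. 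The even-dimensional analogue Theorem~\ref{thm:tri_char_even} provides a useful sanity check, since the star of an interior vertex of $C(m,2d+1)$ is combinatorially a vertex figure that looks like $C(m-1,2d)$, and the supporting condition is precisely what is needed to make the induced $d$-simplices there non-intertwining.
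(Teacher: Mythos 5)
Bear in mind that this paper does not prove Theorem~\ref{thm:tri_char_odd} at all: it is imported from \cite{njw-hst}, so your argument has to stand on its own, and as written it has genuine gaps in both directions. In the necessity direction, the supporting step rests on the claim that a top-dimensional simplex $S \in \mathcal{T}$ containing $A$ can be written as $S = A \cup \SU \cup \{\su\}$ with $\SU \wr A$. This is false for an arbitrary choice of $S$: in the lower triangulation of $C(6,3)$ take $A = \tup{2,5}$ and $S = \tup{1,2,5,6}$, whose remaining vertices miss the gap $(2,5)$ entirely; and you give no argument that \emph{some} such $S$ exists. Indeed a support need not be realisable inside a single top simplex at all --- compare Lemma~\ref{lem:unique_support}, where the support $B'$ of a mutable $A$ comes from the flip partner $B$, and the internal $d$-simplices of $A \cup B'$ lie in $\mathcal{T}$ only because they sit on the boundary of the subpolytope $C(A \cup B, 2d+1)$, not because $A \cup B'$ is a face of one $(2d+1)$-simplex. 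The bridging step is likewise unfounded: ``restricting $\mathcal{T}$ to $C(U, 2d+1)$'' presupposes that $C(U, 2d+1)$ is a subpolytope of $\mathcal{T}$ in the sense of Section~\ref{sect:back:cyc:tri}, which is simply not guaranteed for the union $U$ of the two given simplices, and even granting it, the assertion that a small cyclic polytope is ``forced'' to contain the intermediate simplices is precisely what has to be proved.

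In the sufficiency direction the candidate complex is wrong: requiring every top simplex to decompose as $A \cup \SU \cup \{\su\}$ with $A \in \mathbf{X}$ excludes top simplices containing no internal $d$-simplex, yet these must occur --- for $\mathbf{X} = \emptyset$ (which is vacuously supporting and bridging) the required triangulation is the upper triangulation, all of whose top simplices contain no element of $\chainset{m}{d}$, so your construction returns the empty complex. The verification of Definition~\ref{def:triang} is also misstated: a circuit of $C(m,2d+1)$ involves $2d+3$ vertices and so can never lie inside a single $(2d+2)$-vertex simplex; the condition to check is that no two \emph{distinct} simplices of the candidate complex carry opposite halves of a circuit, and the phrase ``replace them by simplices that fit together'' does not engage with this, nor with the facet-matching and covering conditions, which are the hard part. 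Your closing suggestion --- induction on $m$ via contraction and expansion --- is a sensible strategy and close in spirit to how \cite{njw-hst} actually argues, but as presented it is a plan rather than a proof: one must show the contracted family is again supporting and bridging, and construct from $\mathbf{X}$ a section of the relevant vertex figure so that the expanded triangulation has internal $d$-simplices exactly $\mathbf{X}$. Finally, the remark that the vertex figure at an interior vertex ``looks like $C(m-1,2d)$'' contradicts the paper's explicit observation at the start of Section~\ref{sect:expansion} that vertex figures at middle vertices are not cyclic polytopes; only the first and last vertices behave this way.
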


Building on this, one can characterise the higher Stasheff--Tamari orders combinatorially in terms of the sets $\simp(\mathcal{T})$. 

\begin{theorem}[{\cite[Theorem~3.3 and Theorem 3.6]{njw-hst}}]\label{thm:hst_char_even}
Given $\mathcal{T}, \mathcal{T}' \in \mathsf{S}(m,2d)$, we have that
\begin{itemize}
\item $\mathcal{T} \lessdot_{1} \mathcal{T}'$ if and only if $\simp(\mathcal{T}) = \mathcal{R} \cup \{ A\}$ and $\simp(\mathcal{T}')= \mathcal{R} \cup \{ B\}$, where $A \wr B$;
\item $\mathcal{T} \leqslant_{2} \mathcal{T}'$ if and only if for every $A \in \simp(\mathcal{T})$, there is no $B \in \simp(\mathcal{T}')$ such that $B \wr A$.
\end{itemize}
\end{theorem}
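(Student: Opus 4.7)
My plan is to prove the two bullet points separately, using as the principal tool Theorem~\ref{thm:tri_char_even}, which identifies triangulations of $C(m, 2d)$ bijectively with non-intertwining subsets of $\nonconsec{m}{d}$ of cardinality $\binom{m-d-2}{d}$ via $\mathcal{T} \mapsto \simp(\mathcal{T})$. This lets me reduce both characterizations to combinatorial statements about these sets of internal $d$-simplices.

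For the covering-relation characterization of $\leqslant_1$, the key local fact is that whenever $U \in \binom{[m]}{2d+2}$ with $U = \{u_0, \dots, u_{2d+1}\}$, the subpolytope $C(U, 2d)$ possesses exactly two internal $d$-simplices, namely $A_0 = \{u_0, u_2, \dots, u_{2d}\}$ and $A_1 = \{u_1, u_3, \dots, u_{2d+1}\}$. This is immediate from \eqref{eq:even_int}: the constraints $b_{i+1} \geqslant b_i + 2$ and $b_d \leqslant b_0 + 2d$ (applied internally to $U$) force the index gap $i_d - i_0 = 2d$, leaving only these two possibilities. Applying Gale's Evenness Criterion shows that $A_0$ is the unique internal $d$-simplex of the lower triangulation of $C(U, 2d)$ while $A_1$ is the unique internal $d$-simplex of the upper triangulation, and all other $d$-faces of these two triangulations coincide on the boundary of $C(U, 2d)$. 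Hence an increasing flip at $U$ exchanges $A_0 \in \simp(\mathcal{T})$ for $A_1 \in \simp(\mathcal{T}')$ and leaves $\simp(\mathcal{T}) \setminus \{A_0\}$ intact, and one checks directly that $A_0 \wr A_1$. For the converse, given $\simp(\mathcal{T}) = \mathcal{R} \cup \{A\}$ and a non-intertwining candidate $\mathcal{R} \cup \{B\}$ with $A \wr B$, set $U = A \cup B$, which has $|U| = 2d+2$; the induced triangulation of $C(U, 2d)$ in $\mathcal{T}$ must then be the lower one, and the target $\mathcal{T}'$ corresponds to replacing it with the upper triangulation, furnishing the required increasing flip.

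For the forward direction of the second characterization, suppose $A \in \simp(\mathcal{T})$, $B \in \simp(\mathcal{T}')$, and $B \wr A$. Then $(A, B)$ is a circuit of $C(A \cup B, 2d) \subseteq C(m, 2d)$ by Breen's theorem, so $|A|_{2d} \cap |B|_{2d}$ consists of a single interior point $x$. A direct moment-curve computation shows that the intertwining $B \wr A$ translates into $|B|_{2d+1}$ lying strictly below $|A|_{2d+1}$ at the lift above $x$. Since $|A|_{2d+1}$ realizes $\sigma_{|\mathcal{T}|}$ at $x$ (as $A$ is a $d$-face of the $2d$-simplex of $\mathcal{T}$ containing $x$) and similarly $|B|_{2d+1}$ realizes $\sigma_{|\mathcal{T}'|}$, this yields $\sigma_{|\mathcal{T}'|}(x)_{2d+1} < \sigma_{|\mathcal{T}|}(x)_{2d+1}$, contradicting $\mathcal{T} \leqslant_2 \mathcal{T}'$.

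The main obstacle is the converse of the second characterization: the non-intertwining hypothesis must be leveraged to produce the pointwise inequality $\sigma_{|\mathcal{T}|} \leqslant \sigma_{|\mathcal{T}'|}$. I plan to proceed by strong induction on the symmetric difference $|\simp(\mathcal{T}) \triangle \simp(\mathcal{T}')|$, reducing to the first-bullet characterization. When $\simp(\mathcal{T}) \neq \simp(\mathcal{T}')$ and the hypothesis holds, the aim is to exhibit $A \in \simp(\mathcal{T}) \setminus \simp(\mathcal{T}')$ together with a $B$ such that $A \wr B$ and $(\simp(\mathcal{T}) \setminus \{A\}) \cup \{B\}$ is again a valid non-intertwining set of the correct cardinality which still satisfies the non-intertwining hypothesis against $\simp(\mathcal{T}')$. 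The first bullet then furnishes a covering $\mathcal{T} \lessdot_1 \mathcal{T}''$, the elementary inclusion $\leqslant_1 \subseteq \leqslant_2$ (each single increasing flip manifestly raises the section on its local $2d$-subpolytope and leaves it unchanged elsewhere) yields $\mathcal{T} \leqslant_2 \mathcal{T}''$, and combining with the inductive conclusion $\mathcal{T}'' \leqslant_2 \mathcal{T}'$ completes the proof. The delicate point is the extremal choice of $A$---for instance, picking $A$ lexicographically maximal in $\simp(\mathcal{T}) \setminus \simp(\mathcal{T}')$---so that no intertwining obstruction within $\simp(\mathcal{T})$ blocks the exchange, and so that the non-intertwining property against $\simp(\mathcal{T}')$ is preserved under the swap.
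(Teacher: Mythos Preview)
First, note that the paper does not prove this theorem: it is quoted from \cite{njw-hst} as background, and the main results of the present paper (Theorems~\ref{thm:main_odd} and~\ref{thm:main_even}) \emph{use} it as an input. So there is no ``paper's own proof'' to compare against here; nonetheless, your proposal can be assessed on its internal merits and against the logical architecture of the paper.

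Your treatment of the first bullet and of the forward implication of the second bullet is essentially sound, though in the converse of the first bullet you assert that $C(A\cup B,2d)$ is a subpolytope of $\mathcal{T}$ without justification; this requires an argument (compare the use of \cite[Proposition~4.6]{ot} in the paper).

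The genuine gap is in the converse of the second bullet. Your plan is to show that the combinatorial non-intertwining condition implies $\mathcal{T}\leqslant_{2}\mathcal{T}'$ by producing a chain of increasing flips from $\mathcal{T}$ to $\mathcal{T}'$, i.e.\ by proving $\mathcal{T}\leqslant_{1}\mathcal{T}'$. Combined with your forward direction, this would yield $\leqslant_{1}=\leqslant_{2}$ on $\mathsf{S}(m,2d)$---which is precisely Theorem~\ref{thm:main_even}, the main even-dimensional result of the paper. Since the paper \emph{uses} Theorem~\ref{thm:hst_char_even} throughout the proof of Theorem~\ref{thm:main_even} (see Lemmas~\ref{lem:obstruction}, \ref{lem:even_simp_below}, \ref{lem:even_contr_vertices} and the proof of Theorem~\ref{thm:main_even} itself), your strategy is circular relative to the paper's logic. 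Even judged independently, the step you label ``delicate''---choosing $A$ (e.g.\ lexicographically maximal) so that the swap to some $B$ both remains non-intertwining inside $\simp(\mathcal{T})$ and preserves the hypothesis against $\simp(\mathcal{T}')$---is exactly the content that the entire paper is devoted to, via the contraction/expansion machinery of Section~\ref{sect:expansion}. A bare lexicographic choice does not obviously work, and you give no argument that it does.

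The intended proof of the second bullet (in \cite{njw-hst}) must characterise $\leqslant_{2}$ \emph{directly} from the geometric definition, without passing through $\leqslant_{1}$: one argues that if $\sigma_{|\mathcal{T}|}$ fails to lie weakly below $\sigma_{|\mathcal{T}'|}$, then at a point where the sections cross one can extract a $d$-face $A$ of $\mathcal{T}$ and a $d$-face $B$ of $\mathcal{T}'$ with $B\wr A$, using the circuit description of $C(m,2d)$. That is the direction you should pursue for the converse, rather than an inductive reduction to flips.
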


\begin{theorem}[{\cite[Theorem~5.5 and Corollary 5.14]{njw-hst}}]\label{thm:hst_char_odd}
Given $\mathcal{T}, \mathcal{T}' \in \mathsf{S}(m, 2d + 1)$, we have that
\begin{itemize}
\item $\mathcal{T} \lessdot_{1} \mathcal{T}'$ if and only if $\simp(\mathcal{T}') = \simp(\mathcal{T})\setminus\{A\}$ for some $A \in \simp(\mathcal{T})$;
\item $\mathcal{T} \leqslant_{2} \mathcal{T}'$ if and only if $\simp(\mathcal{T}) \supseteq \simp(\mathcal{T}')$.
\end{itemize}
\end{theorem}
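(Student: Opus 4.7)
The theorem contains two separate characterizations, both resting on Theorem~\ref{thm:tri_char_odd}, which identifies triangulations of $C(m, 2d+1)$ with the supporting and bridging subsets of $\chainset{m}{d}$. I would treat the four implications in turn.

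For the first bullet, the forward direction reduces to a local computation inside the flip subpolytope $C(U, 2d+1)$, where $U$ has $2d+3$ elements. Applying Gale's Evenness Criterion together with the definition of $\chainset{\#U}{d}$, one verifies that $C(U, 2d+1)$ has a \emph{unique} internal $d$-simplex, namely the odd-indexed vertices $A = \{u_1, u_3, \ldots, u_{2d+1}\}$, and that $A$ is a face of every simplex of the lower triangulation of $C(U, 2d+1)$ but of no simplex of its upper triangulation. Since everything outside $U$ is untouched by the flip, this yields $\simp(\mathcal{T}') = \simp(\mathcal{T}) \setminus \{A\}$. For the reverse direction, given $\simp(\mathcal{T}') = \simp(\mathcal{T}) \setminus \{A\}$, I would reconstruct the flip subpolytope from $A$ by using a support $\SU$ for $A$ (Definition~\ref{def:support}) and extending $\SU \cup A$ by one vertex below $a_0$ and one vertex above $a_d$ to obtain $U$ of size $2d+3$; here the existence of suitable extensions would be enforced by the bridging condition of Definition~\ref{def:bridge}. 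Since the only internal $d$-simplex of $C(U, 2d+1)$ is $A$ itself, the induced triangulation of $C(U, 2d+1)$ in $\mathcal{T}$ must be the lower one, and its bistellar flip produces $\mathcal{T}'$.

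For the second bullet, the reverse direction $\simp(\mathcal{T}) \supseteq \simp(\mathcal{T}') \Rightarrow \mathcal{T} \leqslant_2 \mathcal{T}'$ I would establish by a chain-of-flips argument: remove elements of $\simp(\mathcal{T}) \setminus \simp(\mathcal{T}')$ one at a time, using the first bullet at each step, producing a sequence $\mathcal{T} = \mathcal{T}_0 \lessdot_1 \cdots \lessdot_1 \mathcal{T}_k = \mathcal{T}'$, and then invoke the easy implication $\leqslant_1 \subseteq \leqslant_2$ already observed in \cite{er}. Some care is needed to pick an order in which each intermediate set of internal $d$-simplices remains supporting and bridging, which I would arrange by induction on $\#(\simp(\mathcal{T}) \setminus \simp(\mathcal{T}'))$ together with an analysis of which elements of the difference are \emph{safely removable} at each stage.

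The forward direction $\mathcal{T} \leqslant_2 \mathcal{T}' \Rightarrow \simp(\mathcal{T}) \supseteq \simp(\mathcal{T}')$ will be the main obstacle. My plan is to argue by contradiction: if $A \in \simp(\mathcal{T}') \setminus \simp(\mathcal{T})$, then by Theorem~\ref{thm:tri_char_odd} applied to $\mathcal{T}$, the absence of $A$ from $\simp(\mathcal{T})$ must be witnessed by a circuit of $C(m, 2d+1)$ whose positive side is contained in some simplex of $\mathcal{T}$ and whose negative side intertwines $A$. Using this circuit, I would locate a point $p$ in the relative interior of $|A|$ at which the piecewise-linear sections $\sm_{|\mathcal{T}|}$ and $\sm_{|\mathcal{T}'|}$ can be compared, and show that the crossing forces $\sm_{|\mathcal{T}|}(p)_{2d+2} > \sm_{|\mathcal{T}'|}(p)_{2d+2}$, contradicting $\mathcal{T} \leqslant_2 \mathcal{T}'$. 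This geometric step is subtler than the even-dimensional analogue in Theorem~\ref{thm:hst_char_even}, where section heights are compared at the intersection points of intertwining pairs of equi-dimensional internal simplices; here the comparison happens at an interior point of a lower-dimensional simplex, which is the principal technical difficulty.
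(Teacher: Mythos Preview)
This theorem is not proven in the present paper; it is cited as background from \cite{njw-hst}. So there is no ``paper's own proof'' to compare against here. That said, your proposal has a genuine structural problem worth flagging.

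Your argument for the reverse implication of the second bullet --- that $\simp(\mathcal{T}) \supseteq \simp(\mathcal{T}')$ implies $\mathcal{T} \leqslant_{2} \mathcal{T}'$ --- passes through $\leqslant_{1}$: you propose to remove elements of $\simp(\mathcal{T}) \setminus \simp(\mathcal{T}')$ one at a time to build a chain of increasing flips, then invoke $\leqslant_{1} \subseteq \leqslant_{2}$. But combined with the forward implication you sketch, this would yield $\mathcal{T} \leqslant_{2} \mathcal{T}' \Rightarrow \simp(\mathcal{T}) \supseteq \simp(\mathcal{T}') \Rightarrow \mathcal{T} \leqslant_{1} \mathcal{T}'$, which is precisely the Edelman--Reiner conjecture in odd dimensions (Theorem~\ref{thm:main_odd}). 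The step you wave off as ``some care is needed to pick an order in which each intermediate set remains supporting and bridging'' \emph{is} the content of that theorem: the paper devotes all of Section~\ref{sect:odd} to it, using induction on the number of vertices, contractions $\xvy$, and the subpolytope expansion lemmas of Section~\ref{sect:expansion}. Moreover, the paper's proof of Theorem~\ref{thm:main_odd} invokes Theorem~\ref{thm:hst_char_odd} repeatedly (see Lemma~\ref{lem:odd_indstep_key}), so your route is circular in the context of this paper.

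The proof in \cite{njw-hst} must therefore establish $\simp(\mathcal{T}) \supseteq \simp(\mathcal{T}') \Rightarrow \mathcal{T} \leqslant_{2} \mathcal{T}'$ \emph{directly}, without going through $\leqslant_{1}$. The natural route is geometric: one shows that $A \in \simp(\mathcal{T})$ is equivalent to $|A|$ being submerged by the section $\sigma_{|\mathcal{T}|}$ (compare the submersion language used in Definition~\ref{def:above_below} of this paper for vertex figures). Then $\simp(\mathcal{T}) \supseteq \simp(\mathcal{T}')$ says every internal $d$-simplex submerged by $\sigma_{|\mathcal{T}'|}$ is also submerged by $\sigma_{|\mathcal{T}|}$, which one can upgrade to the pointwise height inequality defining $\leqslant_{2}$. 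Your sketches for the first bullet and for the forward direction of the second bullet are plausible, but the reverse direction of the second bullet needs to be reworked along these direct lines.
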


\begin{remark}
It is useful to think of increasing bistellar flips in these terms. That is, in even dimensions, given triangulations $\mathcal{T}, \mathcal{T}'$ with $\simp(\mathcal{T}) = \mathcal{R} \cup \{ A\}$  and $\simp(\mathcal{T}')= \mathcal{R} \cup \{B\}$  and $A \wr B$, the increasing bistellar flip replaces the $d$-simplex $A$ with the $d$-simplex $B$ inside $C(A \cup B, 2d)$. This is because $A$ is the intersection of $\lfacets{A \cup B}{2d + 1}$ and $B$ is the intersection of $\ufacets{A \cup B}{2d + 1}$. Here we say that $A$ is a \emph{mutable} $d$-simplex in $\mathcal{T}$. For odd dimensions, suppose that we have $\mathcal{T}, \mathcal{T}'$ with $\simp(\mathcal{T}')=\simp(\mathcal{T})\setminus\{A\}$ for some $A \in \simp(\mathcal{T})$. Then there exists a $(d + 1)$-simplex $B$ with $A \wr B$ such that the increasing bistellar flip occurs inside the subpolytope $C(A \cup B, 2d + 1)$. The increasing bistellar flip then replaces the $d$-simplex $A$ with the $(d + 1)$-simplex $B$ inside this subpolytope. This is likewise because $A$ is the intersection of $\lfacets{A \cup B}{2d + 2}$ and $B$ is the intersection of $\ufacets{A \cup B}{2d + 2}$. We also say here that $A$ is a \emph{mutable} $d$-simplex in $\mathcal{T}$. Under these conceptions, we think of increasing bistellar flips as replacing one half of a circuit with another.
\end{remark}

\subsection{Operations on triangulations}\label{sect:back:op}

Our proof of the equivalence of the orders is inductive and uses the following operations on triangulations, which were introduced in \cite{rambau} based on the corresponding operations on oriented matroids from \cite{b-lv}. However, note that our notation and terminology differs from \cite{b-lv,rambau}, and is instead based on \cite{ot,rs-baues}.

\subsubsection{Operations at the first or last vertex}\label{sect:op:first}

\begin{definition}
If $\S \subseteq [m]$ is a $k$-simplex, we define the \emph{contraction} $\S[m - 1 \leftarrow m]$ of $\S$ by
\begin{align*}
\S[m - 1 \leftarrow m] :=
\begin{cases}
\hfil \S \quad &\text{if }m \notin \S,\\
(\S \setminus m) \cup m - 1 \quad &\text{otherwise.}
\end{cases}
\end{align*}
Note that $\S[m - 1 \leftarrow m]$ is a $(k - 1)$-simplex if $\S \supseteq \{m - 1, m\}$. Given a triangulation~$\mathcal{T}$ of $C(m,n)$, we define the \emph{contraction} $\mathcal{T}[m - 1 \leftarrow m]$ to be the triangulation of $C(m - 1, n)$ given by 
\[\mathcal{T}[m - 1 \leftarrow m] := \adjset{\S[m - 1 \leftarrow m] \in {\textstyle\binom{[m - 1]}{n + 1}} \st \S \in \mathcal{T} }.\]
\end{definition}

This is indeed a triangulation of $C(m - 1, n)$ by \cite[Theorem~4.2(iii)]{rambau}. This corresponds to the triangulation obtained from $|\mathcal{T}|$ by moving vertex $|m|$ along the moment curve until it coincides with vertex $|m-1|$, as illustrated in Figure~\ref{fig:contraction}.

\begin{figure}
\caption{The contraction operation $[4 \leftarrow 5]$.}\label{fig:contraction}
\[
\begin{tikzpicture}

\begin{scope}[shift={(-4,0)}]


\coordinate(1) at (-2,2);
\coordinate(2) at (-1,0.5);
\coordinate(3) at (0,0);
\coordinate(4) at (1,0.5);
\coordinate(5) at (2,2);


\draw[cyc] (1) -- (2) -- (3) -- (4) -- (5) -- (1);
\draw[blue] (1) -- (3);
\draw[blue] (3) -- (5);


\node(v1) at (1) {$\bullet$};
\node(v2) at (2) {$\bullet$};
\node(v3) at (3) {$\bullet$};
\node(v4) at (4) {$\bullet$};
\node(v5) at (5) {$\bullet$};


\draw[->,red,thick] (v5) to [bend left] (v4);


\node at (1) [left = 1mm of 1]{1};
\node at (2) [below left = 1mm of 2]{2};
\node at (3) [below = 1mm of 3]{3};
\node at (4) [below right = 1mm of 4]{4};
\node at (5) [right = 1mm of 5]{5};

\end{scope}


\draw[->, OliveGreen, ultra thick] (-1.2,1) -- (1.2,1);

\begin{scope}[shift={(4,0)}]


\coordinate(1) at (-2,2);
\coordinate(2) at (-1,0.5);
\coordinate(3) at (0,0);
\coordinate(4) at (1,0.5);


\draw[cyc] (1) -- (2) -- (3) -- (4) -- (1);
\draw[blue] (1) -- (3);


\node(v1) at (1) {$\bullet$};
\node(v2) at (2) {$\bullet$};
\node(v3) at (3) {$\bullet$};
\node(v4) at (4) {$\bullet$};


\node at (1) [left = 1mm of 1]{1};
\node at (2) [below left = 1mm of 2]{2};
\node at (3) [below = 1mm of 3]{3};
\node at (4) [below right = 1mm of 4]{4};

\end{scope}

\end{tikzpicture}
\]
\end{figure}

\begin{definition}
Given a triangulation~$\mathcal{T}$ of $C(m,n)$, we define the \emph{deletion} $\mathcal{T}\backslash m$ to be the triangulation of $C(m - 1, n - 1)$ given by \[\mathcal{T}\backslash m := \set{\S\setminus m \st S \in \mathcal{T},\, m \in S}.\]
\end{definition}

This is indeed a triangulation of $C(m - 1, n - 1)$ by \cite[Theorem~4.2(ii)]{rambau}. This the triangulation induced by $|\mathcal{T}|$ on the vertex figure of $\creal{m}{n}$ at $|m|$, as illustrated in Figure~\ref{fig:deletion}.

\begin{figure}
\caption{The deletion operation $- \backslash 5$.}\label{fig:deletion}
\[
\begin{tikzpicture}

\begin{scope}[shift={(-4,0)}]


\coordinate(1) at (-2,2);
\coordinate(2) at (-1,0.5);
\coordinate(3) at (0,0);
\coordinate(4) at (1,0.5);
\coordinate(5) at (2,2);


\draw[cyc] (1) -- (2) -- (3) -- (4) -- (5) -- (1);
\draw[blue] (1) -- (3);
\draw[blue] (3) -- (5);


\node(v1) at (1) {$\bullet$};
\node(v2) at (2) {$\bullet$};
\node(v3) at (3) {$\bullet$};
\node(v4) at (4) {$\bullet$};
\node(v5) at (5) {$\bullet$};


\draw[red,thick] (0,2.5) -- (2,0.5);


\node at (1) [left = 1mm of 1]{1};
\node at (2) [below left = 1mm of 2]{2};
\node at (3) [below = 1mm of 3]{3};
\node at (4) [below right = 1mm of 4]{4};
\node at (5) [right = 1mm of 5]{5};

\end{scope}


\draw[->, OliveGreen, ultra thick] (-1.2,1) -- (1.2,1);

\begin{scope}[shift={(4,0)}]


\coordinate(1) at (-2,1);
\coordinate(2) at (-1,1);
\coordinate(3) at (0,1);
\coordinate(4) at (1,1);


\draw[red] (1) -- (4);


\node(v1) at (1) {$\bullet$};
\node(v3) at (3) {\color{blue} $\bullet$};
\node(v4) at (4) {$\bullet$};


\node at (1) [below = 1mm of 1]{1};
\node at (3) [below = 1mm of 3]{3};
\node at (4) [below = 1mm of 4]{4};

\end{scope}

\end{tikzpicture}
\]
\end{figure}

These operations behave well with respect to the higher Stasheff--Tamari orders.

\begin{theorem}[{\cite[Proposition~5.14]{rambau}, \cite[Theorem~4.1]{thomas}}]
\hfill
\begin{enumerate}
\item The operation $[m - 1 \leftarrow m]$ is order-preserving with respect to both the first and the second higher Stasheff--Tamari orders.
\item The operation $-\backslash m$ is order-reversing with respect to both the first and the second higher Stasheff--Tamari orders.
\end{enumerate}
\end{theorem}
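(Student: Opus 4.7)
For the second order I would use a continuity argument. Set up a one-parameter family of geometric realisations in which only $t_{m}$ varies, sliding down through the interval $(m-1, m]$. Throughout this interval the combinatorial data of $C(m, n)$ is unchanged, so $\mathcal{T}$ and $\mathcal{T}'$ remain triangulations and their sections depend continuously on $t_{m}$. The pointwise inequality $\sigma_{|\mathcal{T}|}(x)_{n+1} \leqslant \sigma_{|\mathcal{T}'|}(x)_{n+1}$ therefore persists in the limit $t_{m} \to m-1$, in which $|m|$ coincides with $|m-1|$ and the induced triangulations are $\mathcal{T}[m-1 \leftarrow m]$ and $\mathcal{T}'[m-1 \leftarrow m]$ viewed inside $C(m-1, n)$. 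For the first order I would check the statement on covering relations. Given $\mathcal{T} \lessdot_{1} \mathcal{T}'$ via a flip at $U \in \binom{[m]}{n+2}$, I split on whether $\{m-1, m\} \subseteq U$. If not, then $U[m-1 \leftarrow m]$ still has $n+2$ elements, and since Gale's Evenness Criterion is insensitive to the renaming, $\lfacets{U}{n+1}$ and $\ufacets{U}{n+1}$ transport to the lower and upper triangulations of $C(U[m-1 \leftarrow m], n)$, giving a cover. If $\{m-1, m\} \subseteq U$, then $C(U[m-1 \leftarrow m], n)$ is a single $n$-simplex: every $(n+1)$-subset of $U$ containing both $m-1$ and $m$ becomes degenerate under contraction and is discarded, while the surviving simplices in both $\lfacets{U}{n+1}$ and $\ufacets{U}{n+1}$ coincide with the unique non-degenerate simplex, so $\mathcal{T}[m-1 \leftarrow m] = \mathcal{T}'[m-1 \leftarrow m]$.

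\textbf{Plan for part~(2), deletion.} Here $\mathcal{T} \backslash m$ triangulates the vertex figure of $\creal{m}{n}$ at $|m|$, which is combinatorially $C(m-1, n-1)$. The key combinatorial fact, verified directly from Gale's Evenness Criterion, is that an $n$-subset $F \cup m$ of $[m]$ containing $m$ is an upper facet of $C(m, n)$ if and only if $F$ is a lower facet of $C(m-1, n-1)$, and vice versa: since $m$ is the maximal label, its inclusion shifts $\#\{x \in F \cup m : x > v\}$ by one for every $v \in [m-1] \setminus F$, flipping the parity. Applied to a flip subpolytope $U$ with $m \in U$, this swaps the roles of $\lfacets{U}{n+1}$ and $\ufacets{U}{n+1}$ after deletion, converting an increasing flip into a decreasing one; flips at $U \not\ni m$ act trivially on $\mathcal{T} \backslash m$. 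This reverses the first order. For the second order the same orientation reversal appears geometrically: the vertex figure is a cross-section just below $|m|$, and projecting sections from $|m|$ onto this cross-section reverses the direction of height comparisons along the projection rays, so the pointwise inequality flips sign.

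\textbf{Main obstacle.} The delicate points are the degenerate case of the first-order contraction argument and the rigorous identification of the vertex figure with $C(m-1, n-1)$ carrying the parity-swapped facet structure; both rest on careful bookkeeping with Gale's Evenness Criterion and on the fact that $m$ is the maximal label. The second-order geometric arguments, while intuitive, also require care to formalise: the continuous dependence of $\sigma_{|\mathcal{T}|}$ on $t_{m}$, and the sign reversal induced by the vertex-figure projection.
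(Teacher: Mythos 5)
The paper does not actually prove this statement---it is imported from \cite{rambau} and \cite{thomas}---so there is no internal proof to compare against; I am assessing your plan on its own terms. Your first-order arguments are correct and essentially complete. For the contraction, covers map to covers when $\{m-1,m\}\not\subseteq U$ (relabelling $m$ as $m-1$ preserves the relative order inside $U$, so Gale's criterion transports), and to equalities when $\{m-1,m\}\subseteq U$: your bookkeeping is right, since the only members of $\lfacets{U}{n+1}$ and $\ufacets{U}{n+1}$ that survive contraction are $U\setminus m$ and $U\setminus (m-1)$, and both contract to the single $n$-simplex $U[m-1\leftarrow m]$. For the deletion, the parity swap you verify (for $F$ not containing $m$, $F\cup m$ is an odd subset if and only if $F$ is an even subset) does exchange $\lfacets{U\setminus m}{n}$ and $\ufacets{U\setminus m}{n}$, so covers at $U\ni m$ reverse and covers at $U\not\ni m$ collapse to equalities; this gives order-reversal for $\leqslant_{1}$. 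The second-order contraction argument by letting $t_{m}$ decrease to $t_{m-1}$ is also workable with the care you flag: the inequality holds in every realisation along the family, the heights converge at interior points of maximal simplices of the contracted triangulations, and continuity of the two limiting sections extends the weak inequality everywhere.

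The genuine gap is part (2) for $\leqslant_{2}$. The assertion that ``projecting sections from $|m|$ onto the cross-section reverses the direction of height comparisons along the projection rays'' is not an argument but a restatement of what must be proved, and as phrased it does not even type-check: the order $\leqslant_{2}$ on $\mathsf{S}(m-1,n-1)$ compares the $n$-th coordinates of the sections $\sigma_{|\mathcal{T}\backslash m|}$, which live in a fresh realisation $\creal{m-1}{n}$ of a different point configuration, whereas your projection takes place inside $\creal{m}{n}$, and the hypothesis $\mathcal{T}\leqslant_{2}\mathcal{T}'$ concerns sections in $\creal{m}{n+1}$, one dimension up. Nothing in the proposal connects ``position along a ray emanating from $|m|$'' with the last coordinate of the new lift, and that connection---which is exactly where the reversal comes from, for instance via a projective change of coordinates sending $t_{m}\to\infty$ so that the star of $m$ is governed by visibility from large positive last coordinate, or purely combinatorially via the characterisations of $\leqslant_{2}$ in Theorem~\ref{thm:hst_char_even} and Theorem~\ref{thm:hst_char_odd} together with a description of $\simp(\mathcal{T}\backslash m)$---is the actual substance of this half of the statement. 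As it stands, that half rests on an unproved and unformulated claim; either carry out the projective/limiting argument in detail, switch to the combinatorial route, or cite \cite{thomas} as the paper does.
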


We shall need to use the following combinatorial characterisations of contraction.

\begin{proposition}[{\cite[Lemma~2.23]{ot}, \cite[Lemma~4.12]{njw-hst}}]\label{prop:cont_char_m}
\hfill
\begin{enumerate}
\item For a triangulation $\mathcal{T} \in \mathsf{S}(m, 2d)$, we have \[
\simp(\mathcal{T}[m - 1 \leftarrow m]) = \set{A \in \nonconsec{m - 1}{d} \st A = B[m - 1 \leftarrow m] \text{ for } B \in \simp(\mathcal{T})}.\]\label{op:cont_char_m:even}
\item For a triangulation $\mathcal{T} \in \mathsf{S}(M, 2d + 1)$, we have \[\simp(\mathcal{T}[m - 1 \leftarrow m])=\set{A \in \simp(\mathcal{T}) \st a_{d} \neq m-1}.\]\label{op:cont_char_m:odd}
\end{enumerate}
\end{proposition}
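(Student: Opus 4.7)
The plan is to prove both parts by direct case analysis of how the contraction operation $[m-1 \leftarrow m]$ acts on internal $d$-simplices, using two inputs: the combinatorial characterizations of internal $d$-simplices via $\nonconsec{m}{d}$ and $\chainset{m}{d}$, and the fact that a top-dimensional simplex $\S \in \mathcal{T}$ remains top-dimensional after contraction precisely when $\{m-1, m\}\not\subseteq \S$ (otherwise $\S[m-1\leftarrow m]$ has strictly smaller size and is absent from $\mathcal{T}[m-1\leftarrow m]$).

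For part~(2), the hypothesis $A \in \chainset{m}{d}$ forces $m \notin A$, so contraction fixes $A$ pointwise. The task then reduces to determining when $A$ remains internal in $C(m-1, 2d+1)$ (equivalently, $A \in \chainset{m-1}{d}$, which given $A \in \chainset{m}{d}$ amounts exactly to $a_d \neq m-1$) and when $A$ is still a face of a top-dimensional simplex of the contracted triangulation. The latter requires finding a top-dimensional $\S \in \mathcal{T}$ containing $A$ with $\{m-1, m\}\not\subseteq \S$, which follows from the link argument (see below). Conversely, if $A \in \simp(\mathcal{T}[m-1\leftarrow m])$ then $A \in \chainset{m-1}{d}$ forces $A \subseteq [m-2]$, and a short case analysis on the lift $\S \in \mathcal{T}$ of the top-dimensional simplex $\S[m-1\leftarrow m]$ containing $A$ shows that $A \subseteq \S$ in $\mathcal{T}$, so $A \in \simp(\mathcal{T})$.

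For part~(1), contraction can genuinely move a $d$-simplex $B$: either $m \notin B$ and $A := B[m-1\leftarrow m] = B$, or $m \in B$ with $m-1 \notin B$ and $A = (B \setminus m) \cup \{m-1\}$. The condition $A \in \nonconsec{m-1}{d}$ encodes precisely when the image $A$ remains internal in $C(m-1, 2d)$; comparing the cyclic inequality $a_d \leq a_0 + m - 3$ for $\nonconsec{m-1}{d}$ against $b_d \leq b_0 + m - 2$ for $\nonconsec{m}{d}$ excludes exactly those $B$ whose contraction lands on a facet of $C(m-1, 2d)$. For the $\supseteq$ inclusion I apply the link argument to produce a top-dimensional $\S \in \mathcal{T}$ containing $B$ with $\{m-1, m\}\not\subseteq \S$, so that $A \subseteq \S[m-1\leftarrow m] \in \mathcal{T}[m-1\leftarrow m]$. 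For the $\subseteq$ inclusion, given $A \in \simp(\mathcal{T}[m-1\leftarrow m])$ lying in $\S[m-1\leftarrow m]$ for some $\S \in \mathcal{T}$ with $\{m-1, m\}\not\subseteq \S$, I reconstruct $B$ by inverting contraction: $B = A$ if $m-1 \notin A$, and $B = (A \setminus \{m-1\}) \cup \{m\}$ if $m-1 \in A$. Then $B \subseteq \S$ gives $B \in \mathcal{T}$, while the cyclic condition on $\nonconsec{m-1}{d}$ delivers $B \in \nonconsec{m}{d}$.

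The main obstacle is the link argument, which asserts that for any internal $d$-simplex $B \in \simp(\mathcal{T})$, at least one top-dimensional simplex of $\mathcal{T}$ containing $B$ avoids the pair $\{m-1, m\}$. This rests on the fact that the link of an internal simplex in a triangulation of a cyclic polytope is a triangulated sphere, and that no triangulated sphere can have every maximal face pass through a fixed pair of vertices. The result is routine but requires some attention in low-dimensional boundary cases (for instance $d = 0$ or $d = 1$, where the link is a $0$- or $1$-sphere and one must check the elementary edge/vertex count directly).
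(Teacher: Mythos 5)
The paper itself offers no argument for Proposition~\ref{prop:cont_char_m}: it is imported wholesale from \cite[Lemma~2.23]{ot} and \cite[Lemma~4.12]{njw-hst}. Your self-contained proof is therefore a different route by construction, and its skeleton is sound: contraction preserves top-dimensionality exactly when $\{m-1,m\}\not\subseteq S$, the memberships in $\nonconsec{m-1}{d}$, $\chainset{m-1}{d}$ do encode internality after contraction (and the cyclic inequalities work out as you say, e.g.\ $a_{d}=m-1$ in $\nonconsec{m-1}{d}$ forces $a_{0}\geqslant 2$, which is what makes the lift through $m$ land back in $\nonconsec{m}{d}$), and the only real content is your ``link argument''. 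Two repairs are needed. First, in part~(1), the inclusion $\subseteq$: your universal recipe ``$B=(A\setminus\{m-1\})\cup\{m\}$ whenever $m-1\in A$'' and the ensuing claim $B\subseteq S$ fail in the subcase where the witnessing maximal simplex $S$ contains $m-1$ but not $m$ (e.g.\ $\mathcal{T}$ the fan of $C(5,2)$ at the vertex $2$, $A=\{2,4\}$, witness $S=\{2,3,4\}$: your $B=\{2,5\}\not\subseteq S$). The fix is immediate: choose the lift according to the witness, taking $B=A$ when $m\notin S$ (then $\nonconsec{m-1}{d}\subseteq\nonconsec{m}{d}$ gives $B\in\simp(\mathcal{T})$ and $B[m-1\leftarrow m]=A$) and $B=(A\setminus\{m-1\})\cup\{m\}$ only when $m\in S$.

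Second, the link argument is correct in substance but, as written, rests on links of internal simplices being triangulated spheres; for simplicial balls in general this requires knowing that linear triangulations of convex polytopes are PL balls, which is standard but should be cited or avoided. It can be avoided entirely: since $B$ never contains both $m-1$ and $m$, it suffices to rule out a single fixed vertex $w\in\{m-1,m\}\setminus B$ lying in every maximal simplex containing $B$. If it did, pick $p$ in the relative interior of $|B|$ (an interior point of the polytope, as $B$ is internal, so internality is genuinely used here) and set $q=p+\epsilon(p-|w|)$ for small $\epsilon>0$; then $q$ lies in $|S|$ for some maximal $S\supseteq B$, hence $w\in S$, and expressing $p=\frac{1}{1+\epsilon}\bigl(q+\epsilon|w|\bigr)$ in the (unique) barycentric coordinates of $|S|$ gives $p$ a strictly positive $w$-coordinate, contradicting the fact that the carrier of $p$ is $B\not\ni w$. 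With these two patches your argument goes through and gives an elementary proof of a statement the paper only quotes; the price relative to the citation is the extra bookkeeping above, the benefit is self-containedness.
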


There are analogous operations $-[1 \rightarrow 2]$ and $-\backslash 1$, which are order-preserving for both orders.


\subsubsection{Operations at middle vertices}

In this paper, we also consider contractions of triangulations at other pairs of vertices besides $[1 \rightarrow 2]$ and $[m - 1 \leftarrow m]$. For this purpose we let $[m - 1]_{v+}:= \{1, 2, \dots, v - 1, x, y, v + 1, v + 2, \dots, m - 1\}$. We also extend this notation in a natural way to $\U \subseteq [m - 1]$, so that $\U_{v+} = (\U \setminus v) \cup \{x, y\}$ if $v \in \U$, and $\U = \U$ otherwise.

\begin{definition}
Given a $k$-simplex $\S \subseteq [m - 1]_{v+}$, we define
\begin{align*}
\S\xvy :=
\begin{cases}
\hfil \S \quad &\text{if } \{x,y\} \cap \S = \emptyset,\\
(\S \setminus \{x,y\}) \cup v \quad &\text{otherwise.}
\end{cases}
\end{align*}
Given a triangulation~$\mathcal{T}$ of $C([m - 1]_{v+}, n)$, we then define the \emph{contraction} $\mathcal{T}\xvy$ to be the triangulation of $C(m - 1, n)$ given by \[\mathcal{T}\xvy := \adjset{S\xvy \in {\textstyle \binom{[m - 1]}{n + 1}} \st S \in \mathcal{T}}.\]
\end{definition}

This is indeed a triangulation of $C(m - 1, n)$ by \cite[Theorem~3.3]{rs-baues}. Geometrically, it is obtained from $|\mathcal{T}|$ by moving $|x|$ and $|y|$ along the moment curve towards each other until they coincide with each other at a new vertex, which we label $|v|$. We choose to relabel $[m]$ as $[m - 1]_{v+}$ here so that there does not appear to be a missing vertex after contraction. It is also useful to distinguish between the two vertices before contraction and the vertex after contraction.

In order to understand how the contractions $\xvy$ behave, we will consider the deletion operation at other vertices too. Indeed, we make the following definition.

\begin{definition}\label{def:comb_vert_fig}
Given $C(m, n)$ and $v \in [m]$, define the \emph{combinatorial vertex figure at $v$} to be the combinatorial polytope $C(m, n)\backslash v = ([m]\setminus v, \vfacets{[m]\setminus v}{n}, \vcircuits{[m]\setminus v}{n})$, where \[\vfacets{[m]\setminus v}{n} = \set{F \subseteq [m]\setminus v \st F \cup v \in \cfacets{[m]}{n}},\] and \[\vcircuits{[m]\setminus v}{n} = \set{(Z_{-}, Z_{+}) \st (Z_{-} \cup v, Z_{+}) \in \ccircuits{[m]}{n} \text{ or } (Z_{-}, Z_{+} \cup v) \in \ccircuits{[m]}{n}}.\]
\end{definition}

\begin{definition}
Given a triangulation~$\mathcal{T}$ of $C(m, n)$, we define the \emph{deletion} $\mathcal{T}\backslash v$ to be the triangulation \[\mathcal{T}\backslash v := \set{S \setminus v \st S \in \mathcal{T}, v \in S}.\]
\end{definition}

It follows straightforwardly from the definition of $C(m, n)\backslash v$ and the definition of a combinatorial triangulation that $\mathcal{T}\backslash v$ is a triangulation of $C(m, n)\backslash v$. Note also that $|\mathcal{T}\backslash v|$ may be realised geometrically as the triangulation induced by $|\mathcal{T}|$ on the vertex figure of $\creal{m}{n}$ at $v$.

Finally, we shall also consider deletions of multiple vertices. Given a triangulation~$\mathcal{T}$ of $C(m, n)$ and $V \subseteq [m]$, we define the collection of simplices \[\mathcal{T}\backslash V := \set{S \setminus V \st S \in \mathcal{T},\, V \subseteq S}.\] In the examples we consider here, $V$ will always be a pair of consecutive vertices, such as $\{1, 2\}$ or $\{m - 1, m\}$ in $[m]$, or $\{x, y\}$ in $[m]_{v+}$.

\section{Equality of the two orders}\label{sect:main}

In this section we prove that the higher Stasheff--Tamari orders are equal. What we need to establish is that for $\mathcal{T}, \mathcal{T}' \in \mathsf{S}(m,n)$ with $\mathcal{T} <_{2} \mathcal{T}'$, then we can find a triangulation~$\mathcal{T}''$ either such that $\mathcal{T} \lessdot_{1} \mathcal{T}'' \leqslant_{2} \mathcal{T}'$, or such that $\mathcal{T} \leqslant_{2} \mathcal{T}'' \lessdot_{1} \mathcal{T}'$. See Lemma~\ref{lem:different_formulations} for more detail on this point. Following \cite{njw-hst}, we treat the odd-dimensional cases separately from the even-dimensional cases. The details of the proof are different for these two cases, but the broad outlines are similar. We explain these outlines now.

The proof is by induction on the number of vertices of the cyclic polytope $C(m, n)$, noting that the orders are known to be equal when $m \leqslant n + 3$ \cite{rr}. We start with triangulations $\mathcal{T}, \mathcal{T}'$ of $C(m,n)$ such that $\mathcal{T} <_{2} \mathcal{T}'$. We perform contractions to obtain triangulations $\mathcal{T}[m - 1 \leftarrow m]$ and ${\mathcal{T}'}[m - 1 \leftarrow m]$ of $C(m - 1, n)$. In the case that $\mathcal{T}[m - 1 \leftarrow m] \neq {\mathcal{T}'}[m - 1 \leftarrow m]$, we apply the induction hypothesis to these triangulations. This provides an increasing flip $\mathcal{U}$ of $\mathcal{T}[m - 1 \leftarrow m]$ such that $\mathcal{U} \leqslant_{2} \mathcal{T}'[m - 1 \leftarrow m]$, and hence provides a subpolytope of $\mathcal{T}[m - 1 \leftarrow m]$ combinatorially equivalent to $C(n + 2, n)$. We consider the pre-image of this subpolytope in $\mathcal{T}$. If the pre-image of this subpolytope is combinatorially equivalent to $C(n + 2, n)$, then we choose the increasing flip inside this subpolytope to obtain our triangulation~$\mathcal{T}'$. As we show in Lemma~\ref{lem:subpolytope_exp}, the only other option is that the preimage of the $C(n + 2, n)$ subpolytope is a subpolytope combinatorially equivalent to $C(n + 3, n)$. This polytope is still relatively small and the triangulations of it are well-understood, as recorded in Lemma~\ref{lem:pent_triangs}. We can find an increasing bistellar flip $\mathcal{T}''$ of $\mathcal{T}$ which occurs within the induced triangulation of this $C(n + 3, n)$ subpolytope. We then show that if we do not have $\mathcal{T}'' \leqslant_{2} \mathcal{T}'$, then there is a contradiction of the existence of the increasing bistellar flip we chose using the induction hypothesis. Deriving this contradiction requires a series of lemmas, and the details differ between even and odd dimensions. 

If $\mathcal{T}[m - 1 \leftarrow m] = \mathcal{T}'[m - 1 \leftarrow m]$, then we instead consider the contractions $\mathcal{T}[1 \rightarrow 2]$ and ${\mathcal{T}'}[1 \rightarrow 2]$. If $\mathcal{T}[1 \rightarrow 2] \neq {\mathcal{T}'}[1 \rightarrow 2]$, then we can apply symmetries of the cyclic polytope to convert to the case where $\mathcal{T}[m - 1 \leftarrow m] \neq \mathcal{T}'[m - 1 \leftarrow m]$.

If we have that both $\mathcal{T}[1 \rightarrow 2] = \mathcal{T}'[1 \rightarrow 2]$ and $\mathcal{T}[m - 1 \leftarrow m] = \mathcal{T}'[m - 1 \leftarrow m]$, then one can apply the results of \cite{njw-hst} to show that, since we have $\mathcal{T} <_{2} \mathcal{T}'$, there must be a $v \in [2,m-2]$ such that if we relabel the vertices of $C(m, n)$ such that $\mathcal{T}, \mathcal{T}'$ are triangulations of $C([m-1]_{v_{+}}, n)$, then we have that $\mathcal{T}\xvy \neq \mathcal{T}'\xvy$. Then one can proceed similarly to before.


\subsection{Preliminary lemmas}

We begin by proving some preliminary lemmas and recording some known results which we shall need. The proofs of three key lemmas (Lemma~\ref{lem:subpolytope_exp}, Lemma~\ref{lem:subpolytope_exp_oth_vert} and Lemma~\ref{lem:v_cont_op}) will be postponed to Section~\ref{sect:expansion}, which concerns the theory behind the contraction $\xvy$.

The following lemma records the possible triangulations of $C(n+3,n)$ and their properties. These triangulations are already well understood; for instance, see \cite[Proof of Proposition 9.1]{thomas-bst}. This lemma can be verified using the results described in Section~\ref{sect:back:comb_char}. Recall the sets $\nonconsec{m}{d}$ and $\chainset{m}{d}$ of internal $d$-simplices for cyclic polytopes in dimension $2d$ and $2d + 1$ from~\eqref{eq:even_int} and~\eqref{eq:odd_int}.

\begin{lemma}\label{lem:pent_triangs}
The triangulations of $C(n+3,n)$ may be described as follows.
\begin{enumerate}
\item If $n=2d$, then
	\begin{enumerate}
	\item $C(2d+3,2d)$ has $2d+3$ triangulations $\mathcal{T}_{1}, \mathcal{T}_{2}, \dots, \mathcal{T}_{2d+3}$;
	\item the triangulation~$\mathcal{T}_{i}$ is the \emph{fan} triangulation at the vertex $i$, that is \[\simp(\mathcal{T}_{i}) = \set{A \in \nonconsec{2d+3}{d} \st i \in A};\]
	\item the poset $\mathsf{S}_{1}(2d+3,2d)=\mathsf{S}_{2}(2d+3,2d)$ is as shown in Figure~\ref{fig:pent_triangs}.
	\item the bistellar flips of the triangulations are as follows:
		\begin{itemize}
		\item $\mathcal{T}_{1}$ possesses two increasing bistellar flips: one which replaces $\tup{1, 3, \dots, 2d + 1}$ with $\tup{2,4,\dots, 2d+2}$ and one which replaces $\tup{1, 4,\dots,2d+2}$ with $\tup{3,5,\dots,2d+3}$;
		\item for $i$ even, $\mathcal{T}_{i}$ admits an increasing flip replacing $\tup{1,3,\dots,i-3,i,i+2, \dots, 2d+2}$ with $\tup{2,4,\dots,i-2,i+1,i+3,\dots,2d+3}$;
		\item for $i$ odd with $i \notin \{1, 2d + 3\}$, $\mathcal{T}_{i}$ admits an increasing flip which replaces $\tup{1,3,\dots, i, i+3, \dots, 2d+2}$ with $\tup{2,4,\dots, i-1, i+2,i+4, \dots, 2d+3}$.
		\end{itemize}
	\end{enumerate}
\item If $n=2d+1$, then
	\begin{enumerate}
	\item $C(2d+4,2d+1)$ has $2d+4$ triangulations $\mathcal{T}_{1}, \mathcal{T}_{2}, \dots, \mathcal{T}_{2d+4}$;
	\item the triangulations $\mathcal{T}_{i}$ have the following sets of internal $d$-simplices:
	\begin{align*}
	\simp(\mathcal{T}_{1}) &= \chainset{2d+4}{d}, \\
	\simp(\mathcal{T}_{2d+4}) &= \emptyset,
	\end{align*}
	for $i \neq 2d + 4$ even 
	\begin{align*}
	\simp(\mathcal{T}_{i}) = \bigset{\tup{2,4,\dots, 2d+2}, &\tup{2, 4, \dots, 2d, 2d+3}, \dots, \\
	 &\tup{2, 4, \dots, i, i+3, i+5, \dots,2d+3}},
	\end{align*}
	 and for $i \neq 1$ odd \begin{align*}
	 \simp(\mathcal{T}_{i}) = \bigset{\tup{2,4, \dots, i-3,& i,i+2, \dots, 2d+3}, \dots,\\&\tup{2, 5,7, \dots, 2d+3},\tup{3,5,\dots, 2d+3}};
	 \end{align*}
	\item the poset $\mathsf{S}_{1}(2d+4,2d+1)=\mathsf{S}_{2}(2d+4,2d+1)$ is as shown in Figure~\ref{fig:pent_triangs}.
	\item the bistellar flips of the triangulations are as follows:
		\begin{enumerate}
		\item $\mathcal{T}_{1}$ admits two increasing bistellar flips: one from removing $\tup{2,4,\dots, 2d+2}$ and one from removing $\tup{3,5, \dots, 2d+3}$;
		\item for $i \neq 2d + 4$ even, $\mathcal{T}_{i}$ admits an increasing bistellar flip from removing $\tup{2,4, \dots, i, i+3, i+5, \dots,2d+3}$;
		\item for $i \neq 1$ odd, $\mathcal{T}_{i}$ admits an increasing bistellar flip from removing $\tup{2,4, \dots, i-3, i,i+2, \dots, 2d+3}$.
		\end{enumerate}
	\end{enumerate}
\end{enumerate}
\end{lemma}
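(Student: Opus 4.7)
The plan is a direct verification using the combinatorial characterisations from Section~\ref{sect:back:comb_char}. As preparation, I would enumerate the relevant internal simplices: for $\nonconsec{2d+3}{d}$ the constraints force $b_d - b_0 \in \{2d, 2d+1\}$, giving three ``balanced'' simplices (all gaps equal to $2$) plus $2d$ simplices with a single gap of size $3$, for a total of $2d+3$; an analogous split for $\chainset{2d+4}{d}$, taking into account the additional conditions $b_0 \neq 1$ and $b_d \neq 2d+4$, yields $d+2$ elements. I would then treat the two parities separately, with three sub-tasks in each: certifying each listed collection as a triangulation, establishing completeness of the list, and reading off the bistellar flips to recover the Hasse diagram.

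For the even-dimensional case on $C(2d+3, 2d)$, I would check that each candidate $\simp(\mathcal{T}_i) := \{A \in \nonconsec{2d+3}{d} : i \in A\}$ has exactly $d+1$ elements (by a direct per-vertex count, splitting on whether $i$ lies in a ``balanced'' or ``skewed'' simplex) and is pairwise non-intertwining because all members contain $i$. Theorem~\ref{thm:tri_char_even} then certifies each $\mathcal{T}_i$ as a triangulation. For completeness, I would show that any two disjoint elements of $\nonconsec{2d+3}{d}$ must intertwine: parsing $A \cup B$ as a sorted sequence of $2d+2$ elements of $[2d+3]$, the non-consecutive condition forces alternation between $A$ and $B$ at every gap of size $1$, and a counting argument rules out the only other possibility (a ``double'' at the unique gap of size $2$) by showing that such a pattern would force $b_d - b_0 = 2d+2 > 2d+1$ for one of the pair. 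Hence a non-intertwining collection of $d+1$ simplices in $\nonconsec{2d+3}{d}$ is pairwise intersecting, and a short case analysis within the explicit, $(2d+3)$-element set $\nonconsec{2d+3}{d}$ forces a common vertex, making $\mathcal{T}$ a fan.

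For the odd-dimensional case on $C(2d+4, 2d+1)$, I would verify that each proposed $\simp(\mathcal{T}_i)$ satisfies both the support condition of Definition~\ref{def:support} and the bridging condition of Definition~\ref{def:bridge}. Both are mechanical checks because each list is a short chain of simplices related by single ``shifts'', so a $(d-1)$-support can be exhibited explicitly for each listed simplex and the bridging relation is forced by the structure of the chain. Theorem~\ref{thm:tri_char_odd} then certifies each $\mathcal{T}_i$ as a triangulation, and completeness follows either by enumerating all supporting and bridging subsets of $\chainset{2d+4}{d}$ (tractable given its size $d+2$) or by induction on $d$ using the contraction $[m-1 \leftarrow m]$ from Section~\ref{sect:back:op}.

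Finally, the bistellar flips and the Hasse diagram follow by applying Theorem~\ref{thm:hst_char_even} and Theorem~\ref{thm:hst_char_odd} to each $\mathcal{T}_i$: I would identify the mutable $d$-simplices and verify that the corresponding replacements (even case) or removals (odd case) yield triangulations in the enumerated list, recovering the covering relations of Figure~\ref{fig:pent_triangs}. I expect the main obstacle to be the completeness step in the even case, which relies on the somewhat delicate observation that the cyclic constraint $b_d - b_0 \leqslant 2d+1$ rules out the ``double'' configurations in disjoint pairs; the odd-case completeness and the flip computations in both parities are routine given the explicit descriptions of the triangulations.
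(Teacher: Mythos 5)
Your proposal is correct and is essentially the approach the paper itself takes: the paper offers no written proof of this lemma, remarking only that these triangulations are already well understood and that the statement can be verified using the combinatorial characterisations of Section~\ref{sect:back:comb_char}, which is precisely the verification you outline via Theorem~\ref{thm:tri_char_even}, Theorem~\ref{thm:tri_char_odd}, Theorem~\ref{thm:hst_char_even} and Theorem~\ref{thm:hst_char_odd}. The two completeness steps you single out (the common-vertex argument in even dimensions, and the fact that for general $d$ only the listed subsets of $\chainset{2d+4}{d}$ are supporting and bridging, which needs a short structural argument rather than a literal enumeration) are indeed the only points requiring care, and both go through.
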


\begin{figure}
\caption{The higher Stasheff--Tamari posets on $\mathsf{S}(n + 3, n)$}\label{fig:pent_triangs}
\[
	\begin{tikzpicture}
	
	\begin{scope}[xscale=2,shift={(-1.75,0)}]
	
	
	\node(b) at (0,0) {$\mathcal{T}_{1}$};
	\node(l1) at (-1,1) {$\mathcal{T}_{2d + 2}$};
	\node(r1) at (1,1) {$\mathcal{T}_{3}$};
	\node(l2) at (-1,2) {};
	\node(r2) at (1,2) {};
	\node(l3) at (-1,3) {};
	\node(r3) at (1,3) {};
	\node(l4) at (-1,4) {$\mathcal{T}_{2}$};
	\node(r4) at (1,4) {$\mathcal{T}_{2d + 1}$};
	\node(t) at (0,5) {$\mathcal{T}_{2d + 3}$};
	
	
	\draw[->] (b) -- (l1);
	\draw[->] (b) -- (r1);
	\draw[->] (l1) -- (l2);
	\draw[->] (r1) -- (r2);
	\draw[dot diameter=1pt, dot spacing=5pt, dots] (l2) -- (l3);
	\draw[dot diameter=1pt, dot spacing=5pt, dots] (r2) -- (r3);
	\draw[->] (l3) -- (l4);
	\draw[->] (r3) -- (r4);
	\draw[->] (r4) -- (t);
	\draw[->] (l4) -- (t);
	
	
	\node at (0,-1) {$\mathsf{S}_{1}(2d+3,2d)=\mathsf{S}_{2}(2d+3,2d)$};
	
	\end{scope}

	\begin{scope}[xscale=2,shift={(1.75,0)}]
	
	
	\node(b) at (0,0) {$\mathcal{T}_{1}$};
	\node(l1) at (-1,1) {$\mathcal{T}_{2}$};
	\node(r1) at (1,1) {$\mathcal{T}_{2d + 3}$};
	\node(l2) at (-1,2) {};
	\node(r2) at (1,2) {};
	\node(l3) at (-1,3) {};
	\node(r3) at (1,3) {};
	\node(l4) at (-1,4) {$\mathcal{T}_{2d + 2}$};
	\node(r4) at (1,4) {$\mathcal{T}_{3}$};
	\node(t) at (0,5) {$\mathcal{T}_{2d + 4}$};
	
	
	\draw[->] (b) -- (l1);
	\draw[->] (b) -- (r1);
	\draw[->] (l1) -- (l2);
	\draw[->] (r1) -- (r2);
	\draw[dot diameter=1pt, dot spacing=5pt, dots] (l2) -- (l3);
	\draw[dot diameter=1pt, dot spacing=5pt, dots] (r2) -- (r3);
	\draw[->] (l3) -- (l4);
	\draw[->] (r3) -- (r4);
	\draw[->] (r4) -- (t);
	\draw[->] (l4) -- (t);
	
	
	\node at (0,-1) {$\mathsf{S}_{1}(2d+4,2d+1)=\mathsf{S}_{2}(2d+4,2d+1)$};
	
	\end{scope}
	
	\end{tikzpicture}	
\]
\end{figure}

\begin{example}\label{ex:pent_triangs}
We give examples of the triangulations described in Lemma~\ref{lem:pent_triangs}. We denote each triangulation~$\mathcal{T}_{i}$ by its set of internal $d$-simplices $\simp(\mathcal{T})$. The poset $\mathsf{S}_{1}(7,4)=\mathsf{S}_{2}(7,4)$ is shown in Figure~\ref{fig:c74_triangs}. The poset $\mathsf{S}_{1}(8,5)=\mathsf{S}_{2}(8,5)$ is shown in Figure~\ref{fig:c85_triangs}.

\begin{figure}
\caption{Triangulations of $C(7,4)$.}\label{fig:c74_triangs}
\[
\begin{tikzcd}
& \{247,257,357\} & \\
\{257,357,135\} \ar[ur] && \{246,247,257\} \ar[ul] \\
 && \{146,246,247\} \ar[u] \\
\{357,135,136\} \ar[uu] && \{136,146,246\} \ar[u] \\
& \{135,136,146\} \ar[ur] \ar[ul] &
\end{tikzcd}
\]
\end{figure}

\begin{figure}
\caption{Triangulations of $C(8,5)$.}\label{fig:c85_triangs}
\[
\begin{tikzcd}
& \emptyset & \\
\{246\} \ar[ur] && \{357\} \ar[ul] \\
\{246,247\} \ar[u] && \{257,357\} \ar[u] \\
\{246,247,257\} \ar[u] && \{247,257,357\} \ar[u] \\
& \{246,247,257,357\} \ar[ur] \ar[ul] &
\end{tikzcd}
\]
\end{figure}

\end{example}

The following lemma is straightforward, but serves to clarify what needs to be proven in order to show that the orders are equivalent.

\begin{lemma}\label{lem:different_formulations}
The following are equivalent.\nopagebreak
\begin{enumerate}
\item For any pair of triangulations $\mathcal{T}, \mathcal{T}' \in \mathsf{S}(m,n)$, we have that \[\mathcal{T} \leqslant_{1} \mathcal{T}' \iff \mathcal{T} \leqslant_{2} \mathcal{T}'.\]\label{op:equiv}
\item For any pair of triangulations $\mathcal{T}, \mathcal{T}' \in \mathsf{S}(m,n)$ such that $\mathcal{T} <_{2} \mathcal{T}'$, there exists a triangulation~$\mathcal{T}'' \in \mathsf{S}(m,n)$ such that \[\mathcal{T} \lessdot_{1} \mathcal{T}'' \leqslant_{2} \mathcal{T}'.\]\label{op:inc}
\item For any pair of triangulations $\mathcal{T}, \mathcal{T}' \in \mathsf{S}(m,n)$ such that $\mathcal{T} <_{2} \mathcal{T}'$, there exists a triangulation~$\mathcal{T}'' \in \mathsf{S}(m,n)$ such that \[\mathcal{T} \leqslant_{2} \mathcal{T}'' \lessdot_{1} \mathcal{T}'.\]\label{op:dec}
\end{enumerate}
\end{lemma}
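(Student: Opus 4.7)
The proof will be purely formal, relying on two inputs: the inclusion $\leqslant_{1}\,\subseteq\,\leqslant_{2}$ noted in the introduction, and the finiteness of $\mathsf{S}(m,n)$. Nothing specific to cyclic polytopes enters. I would organise the argument as \eqref{op:equiv}$\Rightarrow$\eqref{op:inc}, \eqref{op:equiv}$\Rightarrow$\eqref{op:dec}, \eqref{op:inc}$\Rightarrow$\eqref{op:equiv}, and \eqref{op:dec}$\Rightarrow$\eqref{op:equiv}.

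For \eqref{op:equiv}$\Rightarrow$\eqref{op:inc} and \eqref{op:equiv}$\Rightarrow$\eqref{op:dec}, assume $\mathcal{T}<_{2}\mathcal{T}'$. By \eqref{op:equiv} we have $\mathcal{T}<_{1}\mathcal{T}'$, so since $\lessdot_{1}$ generates $\leqslant_{1}$ as a covering relation, there is a saturated chain $\mathcal{T}=\mathcal{T}_{0}\lessdot_{1}\mathcal{T}_{1}\lessdot_{1}\cdots\lessdot_{1}\mathcal{T}_{k}=\mathcal{T}'$ in $\mathsf{S}_{1}(m,n)$ with $k\geqslant 1$. Taking $\mathcal{T}''=\mathcal{T}_{1}$ gives \eqref{op:inc} and taking $\mathcal{T}''=\mathcal{T}_{k-1}$ gives \eqref{op:dec}, converting the remaining $\leqslant_{1}$-comparisons to $\leqslant_{2}$-comparisons by the inclusion $\leqslant_{1}\,\subseteq\,\leqslant_{2}$.

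For \eqref{op:inc}$\Rightarrow$\eqref{op:equiv}, the $(\Leftarrow)$ direction is the known inclusion, so it remains to show $\mathcal{T}\leqslant_{2}\mathcal{T}'$ implies $\mathcal{T}\leqslant_{1}\mathcal{T}'$. I would induct on the cardinality of the $\leqslant_{2}$-interval $[\mathcal{T},\mathcal{T}']_{2}:=\set{\mathcal{U}\in\mathsf{S}(m,n)\st\mathcal{T}\leqslant_{2}\mathcal{U}\leqslant_{2}\mathcal{T}'}$, which is finite. If $\mathcal{T}=\mathcal{T}'$ the conclusion is trivial. Otherwise $\mathcal{T}<_{2}\mathcal{T}'$, and \eqref{op:inc} provides $\mathcal{T}''$ with $\mathcal{T}\lessdot_{1}\mathcal{T}''\leqslant_{2}\mathcal{T}'$. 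Using $\leqslant_{1}\,\subseteq\,\leqslant_{2}$ gives $\mathcal{T}<_{2}\mathcal{T}''$, so by transitivity $[\mathcal{T}'',\mathcal{T}']_{2}\subseteq[\mathcal{T},\mathcal{T}']_{2}$; moreover $\mathcal{T}\in[\mathcal{T},\mathcal{T}']_{2}\setminus[\mathcal{T}'',\mathcal{T}']_{2}$ by antisymmetry of $\leqslant_{2}$, so the containment is strict. The inductive hypothesis yields $\mathcal{T}''\leqslant_{1}\mathcal{T}'$, and concatenating with $\mathcal{T}\lessdot_{1}\mathcal{T}''$ gives $\mathcal{T}\leqslant_{1}\mathcal{T}'$ as required. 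The implication \eqref{op:dec}$\Rightarrow$\eqref{op:equiv} is entirely symmetric, building the chain downwards from $\mathcal{T}'$ using \eqref{op:dec} at each step.

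As this is a formal fact about the interplay of a partial order with its covering relations inside a finer partial order on a finite set, no genuine obstacle arises; the whole point of recording the lemma is to reduce the main theorem to producing a single $\leqslant_{1}$-covering step of the required form, which is the substantive task carried out in the remainder of Section~\ref{sect:main}.
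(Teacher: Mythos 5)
Your proposal is correct and follows essentially the same route as the paper: the implications \eqref{op:equiv}$\Rightarrow$\eqref{op:inc},\eqref{op:dec} via a saturated $\lessdot_{1}$-chain and the known inclusion $\leqslant_{1}\subseteq\leqslant_{2}$, and the converse by iterating \eqref{op:inc} (respectively \eqslant-symmetric use of \eqref{op:dec}) to build a chain of increasing flips. The only difference is cosmetic: where the paper says ``by applying \eqref{op:inc} repeatedly, we obtain a chain'' and leaves termination implicit, you justify it explicitly by inducting on the cardinality of the finite $\leqslant_{2}$-interval, which is a slightly more careful rendering of the same argument.
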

\begin{proof}
First note that it is already known from \cite[Proposition~2.5]{er} that if $\mathcal{T} \leqslant_{1} \mathcal{T}'$, then $\mathcal{T} \leqslant_{2} \mathcal{T}'$. To show that (\ref{op:equiv}) implies (\ref{op:inc}) and (\ref{op:dec}), suppose that we have $\mathcal{T},\mathcal{T}' \in \mathsf{S}(m,n)$ such that $\mathcal{T} <_{2} \mathcal{T}'$. Then, from (\ref{op:equiv}), it follows that $\mathcal{T} <_{1} \mathcal{T}'$, so that we have \[\mathcal{T}=\mathcal{T}_{0} \lessdot_{1} \mathcal{T}_{1} \lessdot_{1} \dots \lessdot_{1} \mathcal{T}_{r}=\mathcal{T}'.\] Hence we have $\mathcal{T} \lessdot_{1} \mathcal{T}_{1} \leqslant_{1} \mathcal{T}'$, and so $\mathcal{T} \lessdot_{1} \mathcal{T}_{1} \leqslant_{2} \mathcal{T}'$ and (\ref{op:inc}) holds. Similarly $\mathcal{T} \leqslant_{2} \mathcal{T}_{r - 1} \lessdot_{1} \mathcal{T}'$, and so (\ref{op:dec}) holds as well.

We now show that (\ref{op:inc}) implies (\ref{op:equiv}). We can assume that $\mathcal{T} <_{2} \mathcal{T}'$, since if $\mathcal{T} = \mathcal{T}'$, then it is trivial that $\mathcal{T} \leqslant_{1} \mathcal{T}'$. Then, by applying (\ref{op:inc}), we obtain that there is a triangulation~$\mathcal{T}_{1} \in \mathsf{S}(m,n)$ such that \[\mathcal{T} \lessdot_{1} \mathcal{T}_{1} \leqslant_{2} \mathcal{T}'.\] By applying (\ref{op:inc}) repeatedly, we obtain a chain \[\mathcal{T}=\mathcal{T}_{0} \lessdot_{1} \mathcal{T}_{1} \lessdot_{1} \dots \lessdot_{1} \mathcal{T}_{r}=\mathcal{T}'.\] This then establishes that $\mathcal{T} <_{1} \mathcal{T}'$, as desired. The proof that (\ref{op:dec}) implies (\ref{op:equiv}) is similar.
\end{proof}

The proof of the following three lemmas will be postponed to Section~\ref{sect:expansion}, since Lemma~\ref{lem:subpolytope_exp_oth_vert} requires substantial groundwork. Lemma~\ref{lem:subpolytope_exp} follows from Lemma~\ref{lem:subpolytope_exp_oth_vert} by choosing $v = m - 1$, but, since it does not require the groundwork that Lemma~\ref{lem:subpolytope_exp_oth_vert} does, it will be useful to prove it separately as a preamble to the proof of Lemma~\ref{lem:subpolytope_exp_oth_vert}. Lemma~\ref{lem:v_cont_op} is proven in Section~\ref{sect:expansion} since it also concerns contractions $\xvy$.

\begin{lemma}\label{lem:subpolytope_exp}
Let $\mathcal{T}$ be a triangulation of $C(m - 1,n)$. Suppose that $\mathcal{T}$ contains a cyclic subpolytope $C(\U,n)$. Let $\preim{\mathcal{T}}$ be a triangulation of $C(m,n)$ such that $\preim{\mathcal{T}}[m - 1 \leftarrow m]=\mathcal{T}$. Then either
\begin{enumerate}
\item $C(\U, n)$ is a subpolytope of $\preim{\mathcal{T}}$ and $m - 1 \notin \U$,
\item $C((\U\setminus m - 1)\cup m, n)$ is a subpolytope of $\preim{\mathcal{T}}$, where $m - 1 \in \U$, or
\item $C(\U \cup m, n)$ is a subpolytope of $\preim{\mathcal{T}}$, where $m - 1 \in \U$.
\end{enumerate}
\end{lemma}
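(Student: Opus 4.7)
I would prove the lemma by case analysis on whether $m - 1 \in \U$. Setting $V := \set{S \in \mathcal{T} \st S \subseteq \U}$, which triangulates $C(\U, n)$ by hypothesis, the goal in each case is to identify a subset $\U'$ of $[m]$ for which $\set{S \in \preim{\mathcal{T}} \st S \subseteq \U'}$ triangulates $C(\U', n)$.

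In the easy case $m - 1 \notin \U$, I would note that every $S \in V$ does not contain $m - 1$, so its unique preimage under the contraction $[m - 1 \leftarrow m]$ must be $S$ itself (any preimage containing $m$ would contract to a simplex containing $m - 1$). Hence $V \subseteq \preim{\mathcal{T}}$. Conversely, any simplex of $\preim{\mathcal{T}}$ contained in $\U$ cannot involve $m$ (as $m \notin \U$) and so equals its own contraction in $\mathcal{T}$, placing it in $V$. This immediately yields option~(1).

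The substantive case is $m - 1 \in \U$. Here, for each $S \in V$ containing $m - 1$, I would first establish that its unique preimage in $\preim{\mathcal{T}}$ is either $S$ itself or $(S \setminus (m - 1)) \cup m$, but not both: both simplices lie on the same side of the hyperplane through the common face $S \setminus (m - 1)$ (a consequence of the moment curve's convexity), so their interiors overlap in the geometric realisation, forbidding coexistence in a triangulation. Splitting on this dichotomy, if every such preimage is of the form $(S \setminus (m - 1)) \cup m$, then I would check directly from the definition of contraction that $\set{\preim{S} \in \preim{\mathcal{T}} \st \preim{S} \subseteq (\U \setminus (m - 1)) \cup m}$ coincides exactly with the collection of all preimages of $V$. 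The order-preserving bijection $\U \to (\U \setminus (m - 1)) \cup m$ sending $m - 1 \mapsto m$ gives a combinatorial equivalence of the two cyclic polytopes, transporting $V$ onto this collection and establishing option~(2).

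In the remaining subcase, some $S \in V$ containing $m - 1$ has preimage $S$ itself, so $m - 1$ appears in some simplex of $\preim{\mathcal{T}}$ contained in $\U \cup m$. I would claim that $V'' := \set{S \in \preim{\mathcal{T}} \st S \subseteq \U \cup m}$ triangulates $C(\U \cup m, n)$, yielding option~(3). The elements of $V''$ comprise the preimages of simplices in $V$ together with any ``bridge'' simplex of $\preim{\mathcal{T}}$ containing both $m - 1$ and $m$ (whose remaining vertices must lie in $\U$, since such a simplex contracts to a proper face of a simplex in $V$). Geometrically, adding the vertex $|m|$ extends the region $|C(\U, n)|$ to $|C(\U \cup m, n)|$, which is filled precisely by the preimages and bridge simplices. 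The main obstacle will be the combinatorial verification that $V''$ is indeed a triangulation: one must show that every facet of $C(\U \cup m, n)$ (as prescribed by Gale's Evenness Criterion) arises as a face of some simplex in $V''$, and that no simplex of $\preim{\mathcal{T}}$ straddles the boundary of $C(\U \cup m, n)$. This will require careful tracking of how facets of $C(\U, n)$ lift through the expansion---facets not involving $m - 1$ persist, while facets involving $m - 1$ may lift to facets of $C(\U \cup m, n)$ involving $m - 1$, $m$, or both---combined with the subpolytope hypothesis on $\mathcal{T}$.
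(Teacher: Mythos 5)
Your easy case ($m-1 \notin \U$) and your ``all preimages switch to $m$'' branch are essentially sound, but the dichotomy you impose when $m - 1 \in \U$ is too coarse, and the claim made in your remaining branch is false. Concretely, take $n = 2$, $m = 7$, $\preim{\mathcal{T}} = \{123, 136, 356, 345, 167\}$, which is a triangulation of $C(7,2)$ with $\mathcal{T} = \preim{\mathcal{T}}[6 \leftarrow 7] = \{123, 136, 356, 345\}$, and let $\U = \{3,5,6\}$, so that $C(\U, 2)$ is a subpolytope of $\mathcal{T}$ with induced triangulation $\{356\}$. The simplex $356$ contains $m - 1 = 6$ and its preimage in $\preim{\mathcal{T}}$ is $356$ itself, so your second branch applies; yet $C(\U \cup 7, 2) = C(\{3,5,6,7\},2)$ is \emph{not} a subpolytope of $\preim{\mathcal{T}}$, since its facet $\{3,7\}$ is not a face of any simplex of $\preim{\mathcal{T}}$ (the only simplex of $\preim{\mathcal{T}}$ contained in $\{3,5,6,7\}$ is $356$). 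What your split misses is the situation in which \emph{every} preimage keeps $m - 1$, i.e.\ in the paper's terms the subpolytope $\creal{\U\setminus (m-1)}{n}$ lies entirely above the section $|\mathcal{W}|$ of the vertex figure; there the correct conclusion is that $C(\U, n)$ itself is a subpolytope of $\preim{\mathcal{T}}$ (with $m-1 \in \U$), as in the four-case statement of Lemma~\ref{lem:subpolytope_exp_oth_vert}, and option (3) genuinely fails. Your dichotomy conflates ``lies above the section'' with ``the section crosses the subpolytope''.

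Even in the genuinely mixed case, where some preimages keep $m-1$ and some switch to $m$, your proposal does not yet contain a proof: the covering/straddling verification you flag as ``the main obstacle'' is the entire content of the lemma in that case, and the one justification you offer---that a bridge simplex containing both $m-1$ and $m$ has its remaining vertices in $\U$ ``since it contracts to a proper face of a simplex in $V$''---does not follow, because lower-dimensional images are not recorded in $\preim{\mathcal{T}}[m-1\leftarrow m]$, and ruling out bridge simplices with vertices outside $\U$ meeting $|C(\U \cup m, n)|$ is precisely what must be proved. The paper resolves this with the structure theorem of \cite[Lemma~4.7(i)]{rs-baues}: $\preim{\mathcal{T}}$ is encoded by a section $\mathcal{W}$ of $\mathcal{T}\backslash (m-1)$, the simplices containing both $m-1$ and $m$ are exactly $\mathcal{W} \ast \{m-1, m\}$, and when $|\mathcal{W}|$ meets $\creal{\U\setminus(m-1)}{n}$ it restricts to a section $\mathcal{W}_{\U}$ of $\mathcal{T}_{\U}\backslash (m-1)$; applying the same result to $C(\U, n)$ then produces a triangulation of $C(\U \cup m, n)$ whose pieces visibly sit inside the corresponding pieces of $\preim{\mathcal{T}}$. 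Some such global input (or an honest replacement for it) is needed; a simplex-by-simplex analysis of preimages alone will not deliver case (3), nor will it detect the ``all above'' case discussed above.
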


\begin{lemma}\label{lem:subpolytope_exp_oth_vert}
Let $\mathcal{T}$ be a triangulation of $C(m - 1,n)$. Suppose that $\mathcal{T}$ contains a cyclic subpolytope $C(\U, n)$. Let $\preim{\mathcal{T}}$ be a triangulation of $C([m - 1]_{v+},n)$ such that $\preim{\mathcal{T}}\xvy = \mathcal{T}$. Then either
\begin{enumerate}
\item $C(\U, n)$ is a subpolytope of $\preim{\mathcal{T}}$ and $v \notin \U$,
\item $C((\U \setminus v)\cup x, n)$ is a subpolytope of $\preim{\mathcal{T}}$, where $v \in \U$,
\item $C((\U \setminus v)\cup y, n)$ is a subpolytope of $\preim{\mathcal{T}}$, where $v \in \U$, or
\item $C(\U_{v+}, n)$ is a subpolytope of $\preim{\mathcal{T}}$, where $v \in \U$.
\end{enumerate}
\end{lemma}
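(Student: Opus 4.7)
The plan is to proceed by a case split on whether $v \in \U$, and then, in the case $v \in \U$, to analyse the preimages in $\preim{\mathcal{T}}$ of the simplices of the induced triangulation $\mathcal{A}$ of $C(\U,n)$ in $\mathcal{T}$.

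When $v \notin \U$, option~(1) should follow directly from the definition of $\xvy$: for each $A \in \mathcal{A}$ we have $v \notin A$, so any preimage $S \in \preim{\mathcal{T}}$ with $S\xvy = A$ must contain neither of $x, y$ (otherwise either $v \in S\xvy$, or $|S\xvy| = n$, a contradiction), forcing $S = A \in \preim{\mathcal{T}}$. Hence $C(\U,n)$ would be a subpolytope of $\preim{\mathcal{T}}$.

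When $v \in \U$, I would split $\mathcal{A} = \mathcal{A}_0 \cup \mathcal{A}_v$ according to the presence of $v$. The simplices of $\mathcal{A}_0$ lie in $\preim{\mathcal{T}}$ by the same argument. Each $A \in \mathcal{A}_v$ has as its only possible preimages $(A \setminus v) \cup x$ and $(A \setminus v) \cup y$, at least one of which must lie in $\preim{\mathcal{T}}$. If every $A \in \mathcal{A}_v$ has its $x$-lift in $\preim{\mathcal{T}}$, the image of $\mathcal{A}$ under the order-preserving relabelling $v \mapsto x$ is a triangulation of $C((\U\setminus v)\cup x, n)$ contained in $\preim{\mathcal{T}}$, giving option~(2); symmetrically one would obtain option~(3). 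The remaining, \emph{mixed}, subcase is where some $A \in \mathcal{A}_v$ has only its $y$-lift in $\preim{\mathcal{T}}$ while some other has only its $x$-lift, and one must show option~(4). To verify option~(4) in the mixed subcase I would check that every facet of $C(\U_{v+}, n)$ is a face of some simplex of $\preim{\mathcal{T}}$; by Gale's Evenness Criterion these facets split into (a)~facets disjoint from $\{x,y\}$, coinciding with facets of $C(\U,n)$ not containing $v$ and lying in $\preim{\mathcal{T}}$ as above; (b)~facets meeting $\{x,y\}$ in a single element, each obtained by the unique parity-correct lift $v \mapsto x$ or $v \mapsto y$ of a facet of $C(\U,n)$ containing $v$; and (c)~``bridge'' facets containing both $x$ and $y$.

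The main obstacle will be type~(c): producing a simplex of $\preim{\mathcal{T}}$ containing both $x$ and $y$ to witness each bridge facet, since such simplices are invisible at the level of $\mathcal{A}$ and must be forced by the global structure of $\preim{\mathcal{T}}$. The plan is to invoke the theory of the contraction $\xvy$ developed in Section~\ref{sect:expansion} (in particular the combinatorial description of $\xvy$ in Lemma~\ref{lem:v_cont_op}) alongside the combinatorial characterisations of triangulations of cyclic polytopes recalled in Section~\ref{sect:back:comb_char}, to translate the mixed hypothesis into a parity or counting constraint that forces the required bridge simplices and simultaneously handles the parity-correct lifts of type~(b).
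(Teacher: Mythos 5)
Your case decomposition (whether $v \in \U$; then all $x$-lifts, all $y$-lifts, or mixed) is exactly the paper's trichotomy, and your handling of the first three situations is sound: each maximal simplex of the induced triangulation $\mathcal{T}_{\U}$ containing $v$ has exactly one of its two lifts in $\preim{\mathcal{T}}$, and in the pure cases the relabelled complex certifies options (2) and (3). The genuine gap is the mixed case, which is the whole content of the lemma. To certify option (4) you must exhibit, for every facet of $C(\U_{v+}, n)$ containing both $x$ and $y$, a simplex of $\preim{\mathcal{T}}$ having it as a face; such a simplex necessarily contains $\{x,y\}$, and nothing you cite controls which $\{x,y\}$-simplices $\preim{\mathcal{T}}$ contains. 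Lemma~\ref{lem:v_cont_op} is only the statement that $\xvy$ preserves $\leqslant_{2}$, and the combinatorial characterisations of Section~\ref{sect:back:comb_char} see only internal $\floor{n/2}$-simplices; neither yields a ``parity or counting constraint'' forcing bridge simplices, because which bridge simplices occur is genuinely global data---Example~\ref{ex:v_exp} shows four expansions of the same $\mathcal{T}$ with four different sets of $\{x,y\}$-simplices. Your type~(b) step has the same hole: the parity-correct lift of a boundary facet through $v$ need not agree with the lift actually taken by the adjacent maximal simplex of $\mathcal{T}_{\U}$, in which case covering that facet again requires a bridge simplex, so (b) is not easier than (c) in the mixed case.

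What is actually needed, and what the paper does, is the machinery of Section~\ref{sect:expansion}: Proposition~\ref{prop:sections=expansions} identifies $\preim{\mathcal{T}}$ with a section $\synth$ of $\mathcal{T}\backslash v$ associated to a lower set $\CL$ of $\vorder$, so that the $\{x,y\}$-simplices of $\preim{\mathcal{T}}$ are exactly $\synth \ast \{x,y\}$. In the mixed case one checks that $\CL \cap \mathcal{T}_{\U}$ is a lower set of the restriction of $\vorder$ to $\mathcal{T}_{\U}\backslash v$, that its associated section consists of the simplices of $\synth$ lying in $\mathcal{T}_{\U}\backslash v$, and then applies Proposition~\ref{prop:sections=expansions} a second time, to $\mathcal{T}_{\U}$ as a triangulation of $C(\U, n)$, to obtain a full triangulation of $C(\U_{v+}, n)$ all of whose simplices lie in $\preim{\mathcal{T}}$; this is what simultaneously supplies the bridge facets and the wrongly-oriented type~(b) facets (via Lemma~\ref{lem:facets_under_exp} and Corollary~\ref{cor:ul_facet_intersect}). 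Completing your facet-by-facet plan would amount to re-deriving this restricted bijection, so as written the proposal identifies the obstacle but does not contain the idea that overcomes it.
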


\begin{lemma}\label{lem:v_cont_op}
The contraction $\xvy$ is order-preserving with respect to the second higher Stasheff--Tamari order. That is, if $\preim{\mathcal{T}}$ and $\preim{\mathcal{T}}'$ are triangulations of $C([m - 1]_{v+}, n)$, with $\preim{\mathcal{T}} \leqslant_{2} \preim{\mathcal{T}}'$, then $\mathcal{T} \leqslant_{2} \mathcal{T}'$, where $\mathcal{T} = \preim{\mathcal{T}}\xvy$ and $\mathcal{T}' = \preim{\mathcal{T}}'\xvy$.
\end{lemma}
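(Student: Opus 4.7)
My plan is to prove this via the combinatorial characterisations of $\leqslant_{2}$ given by Theorem~\ref{thm:hst_char_even} and Theorem~\ref{thm:hst_char_odd}, handling the two parities separately.

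For even dimensions $n = 2d$, I would argue the contrapositive. Suppose $\mathcal{T} \not\leqslant_{2} \mathcal{T}'$; then by Theorem~\ref{thm:hst_char_even} there exist $A \in \simp(\mathcal{T})$ and $B \in \simp(\mathcal{T}')$ with $B \wr A$. The plan is to lift these to $A' \in \simp(\preim{\mathcal{T}})$ and $B' \in \simp(\preim{\mathcal{T}}')$ with $B' \wr A'$ in $C([m-1]_{v+}, 2d)$, contradicting $\preim{\mathcal{T}} \leqslant_{2} \preim{\mathcal{T}}'$. The construction proceeds by cases on whether $v \in A$ and $v \in B$. If $v \notin A$ and $v \notin B$, take $A' := A$ and $B' := B$; the intertwining is preserved under the order-preserving identification of $[m-1]_{v+}$ with $[m]$. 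If $v = a_{i} \in A$ but $v \notin B$, the relation $b_{i} < v < b_{i+1}$ (with the natural convention at the endpoints) becomes $b_{i} < x < y < b_{i+1}$ in $[m-1]_{v+}$; hence setting $A' := (A \setminus v) \cup z$ for either choice $z \in \{x, y\}$ and $B' := B$ preserves $B' \wr A'$. The case $v \in B, v \notin A$ is symmetric, and the case $v \in A \cap B$ cannot occur since $B \wr A$ forces all elements to be distinct. To verify $A' \in \simp(\preim{\mathcal{T}})$, I would choose $z$ to match the vertex among $\{x, y\}$ contained in the top-dimensional simplex $S$ of $\preim{\mathcal{T}}$ whose image $S\xvy$ contains $A$, so that $A'$ is a face of $S$; then I would check that $A'$ satisfies the non-consecutive condition defining $\nonconsec{m}{d}$ under the identification.

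For odd dimensions $n = 2d + 1$, by Theorem~\ref{thm:hst_char_odd} the hypothesis reduces to $\simp(\preim{\mathcal{T}}) \supseteq \simp(\preim{\mathcal{T}}')$ and we want $\simp(\mathcal{T}) \supseteq \simp(\mathcal{T}')$. The plan is to establish an analog of Proposition~\ref{prop:cont_char_m}(\ref{op:cont_char_m:odd}) for middle-vertex contractions, giving a monotone combinatorial description of $\simp(\preim{\mathcal{T}}\xvy)$ in terms of $\simp(\preim{\mathcal{T}})$. Granting this, the statement follows by a direct diagram chase: for $A \in \simp(\mathcal{T}')$, lift to $A' \in \simp(\preim{\mathcal{T}}')$, apply the hypothesis to deduce $A' \in \simp(\preim{\mathcal{T}})$, and project back to $A = A'\xvy \in \simp(\mathcal{T})$.

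The main obstacle is the odd case, where the combinatorial description of $\simp(\preim{\mathcal{T}}\xvy)$ is more subtle than for the endpoint contractions. In particular, an internal $d$-simplex $A' \in \simp(\preim{\mathcal{T}})$ may contain both $x$ and $y$, in which case $A'\xvy$ has only $d$ elements and is no longer a $d$-simplex; the small gap between $x$ and $y$ in the order on $[m-1]_{v+}$ may also cause $A'\xvy$ to fail the non-consecutive condition defining $\chainset{m-1}{d}$. Moreover, one must ensure that whenever $A'\xvy \in \chainset{m-1}{d}$, the simplex $A'\xvy$ is actually a face of some top-dimensional simplex of $\preim{\mathcal{T}}\xvy$, which may require invoking the supporting and bridging structure of $\simp(\preim{\mathcal{T}})$ from Definitions~\ref{def:support} and~\ref{def:bridge} to produce an alternative witnessing top-dimensional simplex not containing both $x$ and $y$. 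This structural analysis parallels the theory of expansion at middle vertices and is exactly what is developed in Section~\ref{sect:expansion}, which is why the proof is postponed to there.
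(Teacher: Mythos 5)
Your even-dimensional argument is essentially the paper's: argue the contrapositive, lift an intertwining pair $B \wr A$ to $C([m-1]_{v+},2d)$ by replacing $v$ (if present) with whichever of $x,y$ lies in a maximal simplex of $\preim{\mathcal{T}}$ witnessing $A$, and check internality via $\nonconsec{}{}$. That half is fine, modulo the routine check (which you only make explicit in the $v\in A$ case) that the unchanged simplex is also a face of a simplex of the expanded triangulation.

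The odd-dimensional half, however, has a genuine gap: the diagram chase is the right reduction, but it rests entirely on the lifting statement that every $A \in \simp(\mathcal{T}')$ equals $\preim{A}\xvy$ for some $\preim{A} \in \simp(\preim{\mathcal{T}}')$, and you do not prove this — you defer it to an unstated "analog of Proposition~\ref{prop:cont_char_m}" and speculate that it needs the supporting/bridging structure or the expansion theory of Section~\ref{sect:expansion}. In the paper this is exactly Lemma~\ref{lem:cont_comb_int}, and its proof is short and self-contained, using none of that machinery: take a maximal simplex $T$ of $\mathcal{T}'$ containing $A$, write $T = S\xvy$ for a maximal simplex $S$ of $\preim{\mathcal{T}}'$, lift each element of $A$ inside $S$ (choosing one of $x,y$ for $v$, and discarding the other if both occur), and observe that the lift is internal because $A$ is — immediate from the characterisation of $\chainset{}{}$, since splitting $v$ into $x,y$ only enlarges gaps and does not create an endpoint at $1$ or the maximum. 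Your stated worries (lifts containing both $x$ and $y$, contractions becoming consecutive, needing bridging to find an alternative witnessing simplex) concern the opposite direction — describing the full image $\simp(\preim{\mathcal{T}}\xvy)$ — which the diagram chase never uses; only the easy containment ($\preim{A}\in\simp(\preim{\mathcal{T}})$ with $\preim{A}\xvy = A \in \chainset{m-1}{d}$ implies $A \in \simp(\mathcal{T})$) is needed on the way back, and that is immediate. So the missing lemma is real but elementary, and your proposal both omits it and misdiagnoses what proving it would require.
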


\subsection{Odd dimensions}\label{sect:odd}

We now prove the equivalence of the orders for odd dimensions. We begin by showing some preliminary lemmas which are specific to odd dimensions. Recall the notion of a support from Definition~\ref{def:support}.

\begin{lemma}\label{lem:unique_support}
Let $\mathcal{T} \in \mathsf{S}(m, 2d + 1)$ be a triangulation with a mutable $d$-simplex $A \in \simp(\mathcal{T})$ which is replaced by the $(d + 1)$-simplex $B$ in the increasing flip. Then $B' = \tup{b_{1}, b_{2}, \dots, b_{d}}$ is the unique support of $A$.
\end{lemma}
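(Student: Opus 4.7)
The plan is to establish the two claims---that $B'$ is a support, and that it is the only one---separately. Both arguments exploit the localisation of the flip $A \to B$ inside the subpolytope $C(A \cup B, 2d+1)$, which is combinatorially $C(2d+3, 2d+1)$ and within which $\mathcal{T}$ restricts to the lower triangulation containing $A$.

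First I would verify $B' \wr A$ directly: the intertwining $A \wr B$ amounts to $b_0 < a_0 < b_1 < a_1 < \cdots < a_d < b_{d+1}$, which restricts to $a_0 < b_1 < a_1 < b_2 < \cdots < b_d < a_d$, exactly the condition $B' \wr A$. For the support condition, I would apply Gale's Evenness Criterion inside $C(A \cup B, 2d+1)$ to identify the top-dimensional simplices of the lower triangulation as $(A \cup B) \setminus b_i$ for $i = 0, 1, \ldots, d+1$. Since $|B| = d+2$, every $(d+1)$-subset of $A \cup B$ omits at least one $b_i$ and is therefore a face of some top-dimensional simplex; applied to any $(d+1)$-subset $C \subseteq A \cup B' \subseteq A \cup B$ with $C \in \chainset{m}{d}$, this yields $C \in \simp(\mathcal{T})$.

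For uniqueness, I would observe that mutability of $A$ confines all top-dimensional simplices of $\mathcal{T}$ containing $A$ to $(A \cup B) \setminus b_i$ for $i = 0, 1, \ldots, d+1$; equivalently, the link of $A$ in $\mathcal{T}$ is the boundary $\partial B$. Given any support $E$ of $A$, the plan is to argue that $A \cup E$ is a $(2d)$-simplex of $\mathcal{T}$. Once this is established, $A \cup E \subseteq (A \cup B) \setminus b_i$ for some $i$ forces $E \subseteq B \setminus b_i$; the intertwining $E \wr A$ places each element of $E$ in an open interval $(a_k, a_{k+1})$ and so excludes $b_0$ (which lies below $a_0$) and $b_{d+1}$ (which exceeds $a_d$), so $E \subseteq B \setminus \{b_0, b_{d+1}\} = B'$. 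Since $|E| = d = |B'|$, we conclude $E = B'$.

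The main obstacle will be rigorously justifying that $A \cup E$ is a simplex of $\mathcal{T}$ whenever $E$ is a support. The support condition places every internal $d$-subsimplex of $A \cup E$ in $\simp(\mathcal{T})$; since $|A \cup E| = 2d+1$ is too small to contain any circuit of $C(m, 2d+1)$ (which require $2d+3$ vertices), the reconstruction of $\mathcal{T}$ from $\simp(\mathcal{T})$ promised by Theorem~\ref{thm:tri_char_odd} ought to place $A \cup E$ into $\mathcal{T}$, but carrying this out carefully will require some manipulation within the supporting-and-bridging framework of~\cite{njw-hst}.
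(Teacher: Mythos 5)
Your argument that $B'$ is a support is correct and close in spirit to the paper's: the paper notes that every $d$-simplex in $A \cup B'$ other than $A$ has two entries consecutive in $A \cup B$ and hence lies in the boundary of the subpolytope $C(A \cup B, 2d+1)$, while you note that every $(d+1)$-subset of $A \cup B$ omits some $b_{i}$ and is therefore a face of the maximal cell $(A \cup B) \setminus b_{i}$ of the induced lower triangulation; either way the support condition follows.

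The uniqueness half, however, rests on a step that is false as you propose to prove it. It is not true that $A \cup E$ is a simplex of $\mathcal{T}$ whenever $E$ is a support of $A$: take $\mathcal{T}$ to be the lower triangulation of $C(7,3)$ (so $d=1$), whose maximal simplices are the ten $4$-subsets of the form $\tup{i, i+1, j, j+1}$ (the lower facets of $C(7,4)$). Then $\simp(\mathcal{T})$ contains all six internal edges, so $E = \tup{4}$ is a support of $A = \tup{2,6}$, yet $\tup{2,4,6}$ is contained in no maximal simplex of $\mathcal{T}$, since each maximal simplex is a union of two consecutive pairs. Your justification cannot close this gap, because the obstruction to $A \cup E$ being a face of $\mathcal{T}$ need not be a circuit supported on $A \cup E$: here $(\tup{3,5}, \tup{2,4,6})$ is a circuit of $C(7,3)$ and $\tup{3,5} \in \simp(\mathcal{T})$, so no simplex of $\mathcal{T}$ may contain $\tup{2,4,6}$, even though $\#(A \cup E) = 2d+1 < 2d+3$ and every internal $d$-face of $A \cup E$ lies in $\simp(\mathcal{T})$. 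Thus membership of a $2d$-simplex is not determined by its internal $d$-faces, and Theorem~\ref{thm:tri_char_odd} will not place $A \cup E$ into $\mathcal{T}$ for you. (In the example $A$ is of course not mutable, consistently with the lemma; but your argument for this step makes no use of mutability, so it proves too much.) For mutable $A$ the claim $A \cup E \in \mathcal{T}$ is true only a posteriori, once $E = B'$ is known, so mutability must enter exactly at this point; the paper does this directly, without passing through $A \cup E$: assuming $e_{i} \neq b_{i}$ for some $i$, the support property places the $d$-simplex obtained from $A$ by exchanging $a_{i-1}$ (if $e_{i} < b_{i}$), respectively $a_{i}$ (if $e_{i} > b_{i}$), for $e_{i}$ into $\simp(\mathcal{T})$, and this simplex intertwines a boundary $(d+1)$-simplex of $C(A \cup B, 2d+1)$, contradicting the fact that this subpolytope, with its boundary simplices, belongs to $\mathcal{T}$. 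Your reduction from ``$A \cup E$ is a simplex of $\mathcal{T}$'' to $E = B'$ via the link of $A$ is fine, but the missing step above is where the content of the lemma lies, and repairing it essentially leads back to the paper's circuit argument.
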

\begin{proof}
First note that $B'$ is a support of $A$. This follows from the fact that every $d$-simplex contained in $A \cup B'$, excluding $A$, contains consecutive entries in $A \cup B$, and is therefore a $d$-simplex of $\mathcal{T}$, by virtue of lying on the boundary of $C(A \cup B, 2d + 1)$, which is a subpolytope of $\mathcal{T}$.

We now suppose that $A$ possesses a support $\SU = \tup{\su_{1}, \su_{2}, \dots, \su_{d}}$. We show that if $\su_{i} \neq b_{i}$ for any $i$, then $\mathcal{T}$ contains a $d$-simplex which forms a circuit with a $(d + 1)$-simplex in the boundary of $C(A \cup B, 2d + 1)$. This is a contradiction, since $C(A \cup B, 2d + 1)$ is a subpolytope of $\mathcal{T}$. The internal $(d + 1)$-simplices of $C(A \cup B, 2d + 1)$ consist of $B$ along with the $(d + 1)$-simplices which have $A$ as a face, by Gale's Evenness Criterion. All other $(d + 1)$-simplices in $C(A \cup B, 2d + 1)$ lie on the boundary.

Suppose that $\su_{i} < b_{i}$ for some $i$. Then, since $\SU$ is a support of $A$, we have that $\simp(\mathcal{T})$ contains $\tup{a_{0}, a_{1}, \dots, a_{i - 2}, \su_{i}, a_{i}, a_{i + 1}, \dots, a_{d}}$, which intertwines $\tup{b_{0}, b_{1}, \dots, b_{i - 2}, a_{i - 1}, b_{i}, b_{i + 1}, \dots, b_{d + 1}}$, which is a boundary $(d + 1)$-simplex of $C(A \cup B, 2d + 1)$. Suppose instead that $\su_{i} > b_{i}$ for some $i$. Then, since $\SU$ is a support of $A$, we have that $\simp(\mathcal{T})$ contains $\tup{a_{0}, a_{1}, \dots, a_{i - 1}, \su_{i}, a_{i + 1},  a_{i + 2}, \dots, a_{d}}$ which intertwines $\tup{b_{0}, b_{1}, \dots, b_{i}, a_{i}, b_{i + 2}, b_{i + 3}, \dots, b_{d + 1}}$, which is a boundary $(d + 1)$-simplex of $C(A \cup B, 2d + 1)$. Therefore, we must have $\SU = B'$, and so $B'$ is the unique support of~$A$.
\end{proof}

The following lemma helps us to understand what supports look like in triangulations of $C(2d + 4, 2d + 1)$. This is useful when we expand from $C(2d + 3, 2d + 1)$ subpolytopes to $C(2d + 4, 2d + 1)$ subpolytopes.

\begin{lemma}\label{lem:encompassing_support}
Let $\mathcal{T}$ be a triangulation of $C(2d + 4, 2d + 1)$ which is neither the upper triangulation nor the lower triangulation. Let $A$ be the unique mutable $d$-simplex of $\mathcal{T}$ with $\SU$ the support of $A$. Then every internal $d$-simplex $A'$ of $\mathcal{T}$ has $A' \subseteq A \cup \SU$.
\end{lemma}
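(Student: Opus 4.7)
The plan is to invoke the explicit classification of triangulations of $C(2d+4, 2d+1)$ from Lemma~\ref{lem:pent_triangs}: since $\mathcal{T}$ is neither the lower nor the upper triangulation, $\mathcal{T} = \mathcal{T}_i$ for some $i \in \{2, 3, \dots, 2d+3\}$, and both $\simp(\mathcal{T}_i)$ and the mutable $d$-simplex $A$ are listed explicitly. Writing $T^{(k)} := \tup{2, 4, \dots, 2k, 2k+3, 2k+5, \dots, 2d+3}$ for $k = 0, 1, \dots, d+1$ (these exhaust $\chainset{2d+4}{d}$, i.e.\ the internal $d$-simplices of the lower triangulation), one reads off from Lemma~\ref{lem:pent_triangs} that $\simp(\mathcal{T}_i) = \{T^{(i/2)}, \dots, T^{(d+1)}\}$ with $A = T^{(i/2)}$ when $i$ is even, and $\simp(\mathcal{T}_i) = \{T^{(0)}, \dots, T^{((i-3)/2)}\}$ with $A = T^{((i-3)/2)}$ when $i$ is odd.

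Next I would identify $\SU$. By Lemma~\ref{lem:unique_support}, $\SU = \tup{e_0, \dots, e_{d-1}}$ is the unique support, with each $e_j$ constrained by $\SU \wr A$ to lie strictly between consecutive entries of $A$. These constraints determine every $e_j$ uniquely from $A$ except at one position spanning the ``gap of three'' in $A$ (between its even and odd parts), where there are two candidates; the support condition of Definition~\ref{def:support} selects the correct candidate by ruling out the alternative, under which some $T^{(k')} \notin \simp(\mathcal{T}_i)$ would be a subset of $A \cup \SU$. The outcome is
\[A \cup \SU \;=\; [2, 2d+3] \setminus \{v\},\]
with $v = i+1$ in the even case and $v = i-1$ in the odd case. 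In the boundary cases $i \in \{3, 2d+2\}$ the two-way choice instead occurs at $b_0$ or $b_{d+1}$, which does not enter $\SU = \tup{b_1, \dots, b_d}$, so the above formula for $A \cup \SU$ persists.

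The containment $T^{(k)} \subseteq A \cup \SU$ for each $T^{(k)} \in \simp(\mathcal{T}_i)$ then reduces to verifying $v \notin T^{(k)}$. In the even case, $v = i+1$ is odd and lies in the odd part of $T^{(k)}$ only when $i+1 \geq 2k+3$, i.e.\ $k \leq i/2 - 1$, which is below the range $k \geq i/2$; in the odd case, $v = i-1$ is even and lies in the even part of $T^{(k)}$ only when $i-1 \leq 2k$, i.e.\ $k \geq (i-1)/2$, which is above the range $k \leq (i-3)/2$. The main obstacle I anticipate is cleanly pinning down $\SU$ in the face of the two-way ambiguity and the slightly different structure of the boundary cases; once that is done, the remaining containment check is routine bookkeeping against the formulas from Lemma~\ref{lem:pent_triangs}.
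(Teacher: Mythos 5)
Your proposal is correct and follows essentially the same route as the paper: both arguments rest on the explicit classification in Lemma~\ref{lem:pent_triangs}, pin down the support $\SU$ of the unique mutable simplex $A$ explicitly, and then check that the internal $d$-simplices contained in $A \cup \SU$ are exactly $\simp(\mathcal{T}_{i})$. The only difference is presentational: the paper simply exhibits $\SU = \tup{3, 5, \dots, i - 1, i + 2, i + 4, \dots, 2d + 2}$ in the even case (treating the odd case as the mirror image) and verifies it directly, whereas you derive $\SU$ from the intertwining constraint together with uniqueness (Lemma~\ref{lem:unique_support}) and the support condition, with the same outcome $A \cup \SU = [2, 2d+3] \setminus \{v\}$.
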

\begin{proof}
Note first that $\mathcal{T}$ has a unique mutable $d$-simplex by Lemma~\ref{lem:pent_triangs}. One can then proceed by direct verification. Suppose that we have the triangulation~$\mathcal{T}_{i}$ of $C(2d + 4, 2d + 1)$, where $i$ is even and $i \neq 2d +4$. Hence, as in Lemma~\ref{lem:pent_triangs}, we have $\simp(\mathcal{T}_{i})$ is \[\bigset{\tup{2, 4, \dots, 2d + 2}, \tup{2, 4 \dots, 2d, 2d + 3)}, \dots, \tup{2, 4, \dots, i, i + 3, i + 5, \dots, 2d + 3}}.\] The mutable $(d + 1)$-simplex here is $A = \tup{2, 4, \dots, i, i + 3, i + 5, \dots, 2d + 3}$ by Lemma~\ref{lem:pent_triangs}. One can verify that this has support $\SU = \tup{3, 5, \dots, i - 1, i + 2, i + 4, \dots, 2d + 2}$. Indeed, the internal $d$-simplices contained in $\tup{2, 4, \dots, i, i + 3, i + 5, \dots, 2d + 3} \cup \tup{3, 5, \dots, i - 1, i + 2, i + 4, \dots, 2d + 2}$ are precisely $\simp(\mathcal{T})$. This establishes the claim when $i$ is even, and the case where $i$ is odd is the mirror image of this. 
\end{proof}

The next lemma is the inductive step of the proof of the equivalence of the orders for odd dimensions. Giving it as a separate lemma simplifies the presentation of the proof.

\begin{lemma}\label{lem:odd_indstep_key}
Let $\mathcal{T}, \mathcal{T}' \in \mathsf{S}([m - 1]_{v+}, 2d + 1)$ be triangulations such that $\mathcal{T} <_{2} \mathcal{T}'$ and $\mathcal{T}\xvy <_{2} \mathcal{T}'\xvy$. Suppose that $\mathcal{T}\xvy$ possesses an increasing flip $\mathcal{U}$ such that $\mathcal{T}\xvy \lessdot_{1} \mathcal{U} \leqslant_{2} \mathcal{T}'\xvy$. Then $\mathcal{T}$ possesses an increasing flip $\mathcal{T}''$ such that $\mathcal{T} \lessdot_{1} \mathcal{T}'' \leqslant_{2} \mathcal{T}'$.
\end{lemma}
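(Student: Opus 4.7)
The plan is to lift the given flip $\mathcal{T}\xvy \lessdot_{1} \mathcal{U}$ through the contraction $\xvy$, using Lemma~\ref{lem:subpolytope_exp_oth_vert} to identify a suitable subpolytope of $\mathcal{T}$ inside which to perform a flip. By Theorem~\ref{thm:hst_char_odd}, the flip $\mathcal{T}\xvy \lessdot_{1} \mathcal{U}$ removes a mutable $d$-simplex $A \in \simp(\mathcal{T}\xvy)$ sitting inside a subpolytope $C(A \cup B, 2d+1) \cong C(2d+3, 2d+1)$ equipped with its lower triangulation, where $B$ is the $(d+1)$-simplex satisfying $A \wr B$. Applying Lemma~\ref{lem:subpolytope_exp_oth_vert} would produce one of four options for the corresponding subpolytope $P$ of $\mathcal{T}$; in each, I would select a mutable $d$-simplex $A''$ of the induced triangulation $\mathcal{T}_{\mathrm{ind}}$ on $P$ and let $\mathcal{T}''$ denote the resulting flip of $\mathcal{T}$. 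Since $\simp(\mathcal{T}'') = \simp(\mathcal{T}) \setminus \set{A''}$ and $\simp(\mathcal{T}) \supseteq \simp(\mathcal{T}')$ by Theorem~\ref{thm:hst_char_odd}, the required inequality $\mathcal{T}'' \leqslant_{2} \mathcal{T}'$ reduces to showing $A'' \notin \simp(\mathcal{T}')$.

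For the first three options of Lemma~\ref{lem:subpolytope_exp_oth_vert}, $P$ has only $2d+3$ vertices and the natural lift $A''$ of $A$ (either $A$ itself, or $A$ with $v$ replaced by $x$ or by $y$) is the unique mutable $d$-simplex of the induced lower triangulation on $P$. Since $A''$ contains at most one of $x, y$, one has $A''\xvy = A$, and by the odd-dimensional analogue for $\xvy$ of Proposition~\ref{prop:cont_char_m} the assumption $A'' \in \simp(\mathcal{T}')$ would give $A = A''\xvy \in \simp(\mathcal{T}'\xvy) \subseteq \simp(\mathcal{U})$, contradicting $\simp(\mathcal{U}) = \simp(\mathcal{T}\xvy) \setminus \set{A}$.

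The main obstacle is the fourth option of Lemma~\ref{lem:subpolytope_exp_oth_vert}, in which $P = C((A \cup B)_{v+}, 2d+1) \cong C(2d+4, 2d+1)$. Here $\mathcal{T}_{\mathrm{ind}}$ is one of the triangulations $\mathcal{T}_{1},\ldots,\mathcal{T}_{2d+4}$ catalogued in Lemma~\ref{lem:pent_triangs}, and the requirement that $\mathcal{T}_{\mathrm{ind}}\xvy$ equal the lower triangulation of $C(A \cup B, 2d+1)$ cuts this down to a short list. I would handle those $\mathcal{T}_{\mathrm{ind}}$ whose mutable $d$-simplex satisfies $A''\xvy = A$ exactly as in the easy cases. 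The essentially new situation is $\mathcal{T}_{\mathrm{ind}} = \mathcal{T}_{1}$, whose two mutable $d$-simplices $A''_{1} = \tup{2,4,\ldots,2d+2}$ and $A''_{2} = \tup{3,5,\ldots,2d+3}$ (in the standard labeling of $(A\cup B)_{v+}$) do not contract to $A$. Here I would appeal to the bridging property (Definition~\ref{def:bridge}): assuming both $A''_{1}, A''_{2} \in \simp(\mathcal{T}')$ and using $A''_{1} \wr A''_{2}$, bridging forces the simplices $\tup{2,4,\ldots,2k,2k+3,\ldots,2d+3}$ for $0 \leqslant k \leqslant d+1$ into $\simp(\mathcal{T}')$, and a direct check shows this family contains both $d$-simplex preimages of $A$ under $\xvy$, putting $A$ into $\simp(\mathcal{T}'\xvy)$ for the same contradiction as before. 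Any remaining $\mathcal{T}_{\mathrm{ind}}$ whose mutable simplex does not contract to $A$ I would handle by combining the supporting property (Definition~\ref{def:support}) with the containment $\simp(\mathcal{T}') \subseteq \simp(\mathcal{T})$: a putative support of $A''$ in $\simp(\mathcal{T}')$ is forced either to produce a preimage of $A$ in $\simp(\mathcal{T}')$ or to produce a $d$-simplex of $\simp(\mathcal{T}')$ absent from $\simp(\mathcal{T})$, both contradictions. The hardest step will be executing this case analysis cleanly through the enumeration in Lemma~\ref{lem:pent_triangs}.
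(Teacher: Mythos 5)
Your proposal follows essentially the same route as the paper's proof: the four cases supplied by Lemma~\ref{lem:subpolytope_exp_oth_vert}, the direct contraction argument (via $\preim{A}\xvy = A$) in the first three cases, the bridging argument when the induced triangulation of the $C(2d+4,2d+1)$ subpolytope is the lower triangulation, and a support-based contradiction otherwise --- the paper simply packages your final ``support dichotomy'' as two preliminary lemmas (uniqueness of the support of a mutable $d$-simplex, Lemma~\ref{lem:unique_support}, proved by a circuit argument against the boundary of the flip subpolytope, and the fact that every internal $d$-simplex of such an induced triangulation lies in the union of the mutable simplex and its support, Lemma~\ref{lem:encompassing_support}, proved by the enumeration you have in mind). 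The only loose point is in the lower-triangulation subcase: you need just one internal $d$-simplex preimage of $A$ in the bridged family (which exists because that family is all of $\simp(\mathcal{T}_{1})$), rather than ``both'' preimages, which exist only when $v \in A$; this does not affect the argument.
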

\begin{proof}
Let the increasing flip from $\mathcal{T}\xvy$ to $\mathcal{U}$ consist of replacing the $d$-simplex $A$ with the $(d+1)$-simplex $B$ inside the cyclic subpolytope $C(A \cup B, 2d+1)$. We must then have that $A \notin \simp(\mathcal{T}'\xvy)$ since $\mathcal{U} \leqslant_{2} \mathcal{T}'\xvy$ implies that $\simp(\mathcal{T}\xvy)\setminus\{A\}=\simp(\mathcal{U}) \supseteq \simp(\mathcal{T}{'}\xvy)$, using Theorem~\ref{thm:hst_char_odd}. 

By Lemma~\ref{lem:subpolytope_exp_oth_vert}, we have that either
\begin{caseenum}
\item $C(A \cup B, 2d + 1)$ is a subpolytope of $\mathcal{T}$, where $v \notin A \cup B$,\label{op:subpoly}
\item $C((A \cup B \cup x)\setminus v, 2d + 1)$ is a subpolytope of $\mathcal{T}$, where $v \in A \cup B$,\label{op:subpoly_x}
\item $C((A \cup B \cup y)\setminus v, 2d + 1)$ is a subpolytope of $\mathcal{T}$, where $v \in A \cup B$, or\label{op:subpoly_y}
\item $C((A \cup B)_{v+}, 2d + 1)$ is a subpolytope of $\mathcal{T}$, where $v \in A \cup B$.\label{op:subpoly_x_y}
\end{caseenum}
We deal with each of these cases in turn.
\begin{caseenum}
\item Suppose that $C(A \cup B, 2d + 1)$ is a subpolytope of $\mathcal{T}$. Then the induced triangulation of this subpolytope must contain $A$, since $A \in \simp(\mathcal{T}\xvy)$ and the contraction does not affect the subpolytope $C(A \cup B, 2d + 1)$. Since $A$ is contained in the subpolytope $C(A \cup B, 2d + 1)$ of $\mathcal{T}$, we have that $\mathcal{T}$ admits an increasing flip $\mathcal{T}''$ where $\simp(\mathcal{T}'') = \simp(\mathcal{T}) \setminus A$. Furthermore, $A \notin \simp(\mathcal{T}')$, since $A \notin \simp(\mathcal{T}'\xvy)$. Thus, we have $\mathcal{T} \lessdot_{1} \mathcal{T}'' \leqslant_{2} \mathcal{T}'$ by Theorem~\ref{thm:hst_char_odd}.

\item Suppose now that $C((A \cup B \cup x)\setminus v, 2d + 1)$ is a subpolytope of $\mathcal{T}$, where $v \in A \cup B$. If $v \in A$, then let $\preim{A} = (A \cup x)\setminus v$. Otherwise, let $\preim{A} = A$. Then the induced triangulation of the subpolytope $C((A \cup B \cup x)\setminus v, 2d + 1)$ must contain $\preim{A}$, since $A \in \simp(\mathcal{T}\xvy)$, so there must be a simplex which contracts to $A$. Moreover, $\preim{A}$ is contained in the subpolytope $C((A \cup B \cup x)\setminus v, 2d + 1)$ of $\mathcal{T}$, so that $\mathcal{T}$ admits an increasing flip $\mathcal{T}''$ where $\simp(\mathcal{T}'') = \simp(\mathcal{T})\setminus \preim{A}$. We also have that $\preim{A} \notin \simp(\mathcal{T}')$, because $A \notin \simp(\mathcal{T}'\xvy)$. Hence $\mathcal{T} \lessdot_{1} \mathcal{T}'' \leqslant_{2} \mathcal{T}'$ by Theorem~\ref{thm:hst_char_odd}, as desired.

\item The case where $C((A \cup B \cup y)\setminus v, 2d + 1)$ is a subpolytope of $\mathcal{T}$ with $v \in A \cup B$ behaves similarly to the previous case.

\item Consider now the case where $C((A \cup B \cup \{x, y\})\setminus v, 2d + 1)$ is a subpolytope of $\mathcal{T}$. Then the triangulation of this subpolytope induced by $\mathcal{T}$ must contain a $d$-simplex $\preim{A}$ such that $\preim{A}\xvy = A$. This $d$-simplex $\preim{A}$ must be internal in $C((A \cup B \cup \{x, y\})\setminus v, 2d + 1)$, since $A$ is internal in $C(A \cup B, 2d + 1)$. Hence the induced triangulation of $C((A \cup B \cup \{x, y\})\setminus v, 2d + 1)$ cannot be the upper triangulation. Moreover, we cannot have $\preim{A} \in \simp(\mathcal{T}')$, since this implies that $A \in \simp(\mathcal{T}'\xvy)$.

Suppose first that the induced triangulation is the lower triangulation~$\mathcal{T}_{l}$ of $C((A \cup B \cup \{x, y\})\setminus v, 2d + 1)$. Then, by Lemma~\ref{lem:pent_triangs}, the lower triangulation~$\mathcal{T}_{l}$ contains two mutable $d$-simplices, which we call $\J$ and $\K$. Suppose that we have both $\J, \K \in \simp(\mathcal{T}')$. Then, recalling the bridging property from Definition~\ref{def:bridge} and Theorem~\ref{thm:tri_char_odd}, $\mathcal{T}'$ must contain every internal $d$-simplex in $\mathcal{T}_{l}$. But this means that $\preim{A} \in \simp(\mathcal{T}')$. Thus, at least one of $\J$ and $\K$ is not a $d$-simplex of $\mathcal{T}'$. Hence, let $\mathcal{T}''$ be the increasing flip of $\mathcal{T}$ defined by removing whichever of $\J$ and $\K$ is not a $d$-simplex of $\mathcal{T}'$. We therefore have $\mathcal{T} \lessdot_{1} \mathcal{T}'' \leqslant_{2} \mathcal{T}'$ by Theorem~\ref{thm:hst_char_odd}, as desired.

Now suppose that the induced triangulation of $C((A \cup B, \cup \{x, y\})\setminus v, 2d + 1)$ is neither the lower triangulation nor the upper triangulation. Then, by Lemma~\ref{lem:pent_triangs}, the induced triangulation has a unique mutable $d$-simplex $\Ja$. By Lemma~\ref{lem:unique_support}, $\Ja$ has a unique support $\SU$ in $\simp(\mathcal{T})$. We then have that $\preim{A} \subseteq \Ja \cup \SU$, by Lemma~\ref{lem:encompassing_support}.

Suppose that $\Ja \in \simp(\mathcal{T}')$. Let $\SU'$ be the support of $\Ja$ in $\simp(\mathcal{T}')$. Then, since $\simp(\mathcal{T}) \supseteq \simp(\mathcal{T}')$ by Theorem~\ref{thm:hst_char_odd}, we have that $\SU'$ is a support of $\Ja$ in $\simp(\mathcal{T})$. This implies that $\SU' = \SU$, by Lemma~\ref{lem:unique_support}. In turn, this implies that $\preim{A} \in \simp(\mathcal{T}')$, which is a contradiction. Therefore, if $\mathcal{T}''$ is the triangulation of $C(m, 2d + 1)$ such that $\simp(\mathcal{T}'') = \simp(\mathcal{T})\setminus\{\Ja\}$, then we must have that $\mathcal{T} \lessdot_{1} \mathcal{T}'' \leqslant_{2} \mathcal{T}'$, as desired.
\end{caseenum}
\end{proof}

We now have enough preliminaries in place to prove the equivalence of the orders in odd dimensions.

\begin{theorem}\label{thm:main_odd}
Let $\mathcal{T}, \mathcal{T}'$ be triangulations of $C(m,2d + 1)$. Then $\mathcal{T} \leqslant_{1} \mathcal{T}'$ if and only if $\mathcal{T} \leqslant_{2} \mathcal{T}'$.
\end{theorem}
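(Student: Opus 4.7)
The plan is to induct on $m$, with base case $m \leqslant 2d+4$ provided by \cite{rr}. By Lemma~\ref{lem:different_formulations}, it suffices to show that whenever $\mathcal{T}, \mathcal{T}' \in \mathsf{S}(m, 2d+1)$ satisfy $\mathcal{T} <_{2} \mathcal{T}'$, there exists an increasing bistellar flip $\mathcal{T} \lessdot_{1} \mathcal{T}''$ with $\mathcal{T}'' \leqslant_{2} \mathcal{T}'$. Since $\mathcal{T} \leqslant_{1} \mathcal{T}'$ always implies $\mathcal{T} \leqslant_{2} \mathcal{T}'$ by \cite{er}, only this direction requires work.

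The first case I would handle is $\mathcal{T}[m-1 \leftarrow m] \neq \mathcal{T}'[m-1 \leftarrow m]$. Here contraction is order-preserving, so $\mathcal{T}[m-1 \leftarrow m] <_{2} \mathcal{T}'[m-1 \leftarrow m]$ in $\mathsf{S}(m-1, 2d+1)$. The induction hypothesis then produces an increasing flip $\mathcal{U}$ of $\mathcal{T}[m-1 \leftarrow m]$ with $\mathcal{U} \leqslant_{2} \mathcal{T}'[m-1 \leftarrow m]$. I would then lift this flip to $\mathcal{T}$ by the $[m-1 \leftarrow m]$ analog of Lemma~\ref{lem:odd_indstep_key}: writing the downstairs flip as replacing $A \in \simp(\mathcal{T}[m-1 \leftarrow m])$ by some $(d+1)$-simplex $B$, apply Lemma~\ref{lem:subpolytope_exp} to the subpolytope $C(A \cup B, 2d+1)$ to identify its preimage in $\mathcal{T}$, then run the same casewise analysis as in Lemma~\ref{lem:odd_indstep_key}, using Lemma~\ref{lem:unique_support}, Lemma~\ref{lem:encompassing_support}, and the characterisations of triangulations (Theorem~\ref{thm:tri_char_odd}) and of the orders (Theorem~\ref{thm:hst_char_odd}) to produce the desired $\mathcal{T}''$.

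If $\mathcal{T}[m-1 \leftarrow m] = \mathcal{T}'[m-1 \leftarrow m]$, I would first try the symmetric argument with $[1 \rightarrow 2]$, exploiting the automorphism of $C(m, 2d+1)$ reversing the vertex order to reduce to the previous case. If both $\mathcal{T}[m-1 \leftarrow m] = \mathcal{T}'[m-1 \leftarrow m]$ and $\mathcal{T}[1 \rightarrow 2] = \mathcal{T}'[1 \rightarrow 2]$, then by Proposition~\ref{prop:cont_char_m} the internal $d$-simplices of $\mathcal{T}$ and $\mathcal{T}'$ already agree along all $d$-simplices meeting $\{1, 2\}$ or $\{m-1, m\}$, and since $\mathcal{T} <_{2} \mathcal{T}'$ implies $\simp(\mathcal{T}) \supsetneq \simp(\mathcal{T}')$, there must exist an internal $d$-simplex on which they differ; the consecutive-pair structure forces some middle vertex $v \in [2, m-2]$ at which one can relabel so that $\mathcal{T}, \mathcal{T}' \in \mathsf{S}([m-1]_{v+}, 2d+1)$ with $\mathcal{T}\xvy \neq \mathcal{T}'\xvy$. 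Lemma~\ref{lem:v_cont_op} gives $\mathcal{T}\xvy <_{2} \mathcal{T}'\xvy$, so the induction hypothesis yields an increasing flip downstairs, and Lemma~\ref{lem:odd_indstep_key} applies directly to lift it.

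The main obstacle is the lifting step itself, carried out by Lemma~\ref{lem:odd_indstep_key}. The difficulty is that Lemma~\ref{lem:subpolytope_exp_oth_vert} offers four possible preimages of the flip subpolytope $C(A \cup B, 2d+1)$, and in the fourth case, where the preimage is the larger polytope $C((A \cup B)_{v+}, 2d+1) \cong C(2d+4, 2d+1)$, one must show that whatever mutable $d$-simplex $\Ja$ the induced triangulation carries cannot survive in $\simp(\mathcal{T}')$ — otherwise, by uniqueness of supports (Lemma~\ref{lem:unique_support}) and their structure in $C(2d+4, 2d+1)$ (Lemma~\ref{lem:encompassing_support}), the forbidden preimage $\preim{A}$ would itself lie in $\simp(\mathcal{T}')$, contradicting $A \notin \simp(\mathcal{T}'\xvy)$. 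Assembling this contradiction correctly for every sub-case is the technical heart of the proof; the rest of the theorem is then a short bookkeeping argument combining the three reduction cases above.
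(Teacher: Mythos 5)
Your proposal follows the paper's own proof essentially step for step: the same induction on the number of vertices with base cases $m \leqslant 2d+4$ from the known small cases, the reduction via Lemma~\ref{lem:different_formulations}, the same three-case split according to whether the contractions at $\{m-1,m\}$, at $\{1,2\}$ (handled by the vertex-reversing symmetry), or at a middle pair separate $\mathcal{T}$ and $\mathcal{T}'$, and the same lifting of the downstairs flip through Lemma~\ref{lem:odd_indstep_key} (the paper simply applies that lemma with $m-1$ and $m$ relabelled as $x$ and $y$, rather than proving a separate $[m-1 \leftarrow m]$ analogue via Lemma~\ref{lem:subpolytope_exp}, but this is only a presentational difference). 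The one slip is in your third case: equality of the two contractions forces the simplices in $\simp(\mathcal{T}) \setminus \simp(\mathcal{T}')$ to \emph{contain} both $2$ and $m-1$ (they agree on all simplices avoiding $2$ or avoiding $m-1$), and it is exactly this, together with $m > 2d+4$, that produces a gap of length at least $3$ in such a simplex in which to place the contracted pair so that $\mathcal{T}\xvy \neq \mathcal{T}'\xvy$.
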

\begin{proof}
We prove the result by induction on the number of vertices of the cyclic polytope. We may use the cases where $m - (2d + 1) \leqslant 3$ as base cases, since the result is already known for these cases, as noted in \cite{rr}. Indeed, for $m = 2d + 2$, the cyclic polytope $C(m, 2d + 1)$ is a $(2d + 1)$-simplex, so the result is trivial. The result is also clear for $m = 2d + 3$, since here the cyclic polytope $C(m, 2d + 1)$ only has two triangulations. Finally, one may check the case where $m = 2d + 4$ by applying the results of \cite{njw-hst} to Lemma~\ref{lem:pent_triangs}. Hence, from now on, we assume that $m > 2d + 4$.

As in Lemma~\ref{lem:different_formulations}, we seek a triangulation~$\mathcal{T}''$ such that $\mathcal{T} \lessdot_{1} \mathcal{T}'' \leqslant_{2} \mathcal{T}'$. The existence of such a triangulation will establish our claim. We now split into three cases.

\begin{proofenum}
\item Suppose that $\mathcal{T}[m - 1 \leftarrow m] \neq {\mathcal{T}'}[m - 1 \leftarrow m]$. Then, by the induction hypothesis, $\mathcal{T}[m - 1 \leftarrow m]$ admits an increasing flip $\mathcal{U}$ such that $\mathcal{T}[m - 1 \leftarrow m] \lessdot_{1} \mathcal{U} \leqslant_{2} {\mathcal{T}'}[m - 1 \leftarrow m]$. By applying Lemma~\ref{lem:odd_indstep_key} with $m - 1$ and $m$ relabelled as $x$ and $y$, we obtain that $\mathcal{T}$ possesses an increasing flip $\mathcal{T}''$ such that $\mathcal{T} \lessdot_{1} \mathcal{T}'' \leqslant_{2} \mathcal{T}'$.

\item We now suppose that $\mathcal{T}[1 \rightarrow 2] \neq \mathcal{T}'[1 \rightarrow 2]$, so that $\mathcal{T}[1 \rightarrow 2] <_{2} \mathcal{T}'[1 \rightarrow 2]$. By \cite[Proposition~2.11]{er}, the permutation \[\alpha = \begin{pmatrix}
1 & 2 & \dots & m - 1 & m\\
m & m - 1 & \dots & 2 & 1
\end{pmatrix}
\] on the vertices of the cyclic polytope induces an order-preserving bijection $\alpha$ on both $\mathsf{S}_{1}(m, 2d + 1)$ and $\mathsf{S}_{2}(m, 2d + 1)$. We then have that $\alpha(\mathcal{T}[1 \to 2]) = \alpha(\mathcal{T})[m - 1 \leftarrow m] <_{2} \alpha(\mathcal{T}')[m - 1 \leftarrow m] = \alpha(\mathcal{T}'[1 \to 2])$. Hence, by applying the previous case, we obtain a triangulation~$\mathcal{T}''$ such that $\alpha(\mathcal{T}') \lessdot_{1} \mathcal{T}'' \leqslant_{2} \alpha(\mathcal{T})$. By applying $\alpha$ again, we obtain that $\mathcal{T} \lessdot_{1} \alpha(\mathcal{T}'') \leqslant_{2} \mathcal{T}'$, which resolves this case.

\item We may now suppose that we are in neither of the previous cases, so that $\mathcal{T}[1 \rightarrow 2] =\mathcal{T}'[1 \rightarrow 2]$ and $\mathcal{T}[m - 1 \leftarrow m]={\mathcal{T}'}[m - 1 \leftarrow m]$. Hence, by Proposition~\ref{prop:cont_char_m}\eqref{op:cont_char_m:odd} and its analogue for the contraction $[1 \to 2]$, we must have that
\begin{align*}
\set{A \in \simp(\mathcal{T}) \st 2 \notin A} &= \set{A \in \simp(\mathcal{T}') \st 2 \notin A}, \\
\set{A \in \simp(\mathcal{T}) \st m - 1 \notin A} &= \set{A \in \simp(\mathcal{T}') \st m - 1 \notin A}, 
\end{align*}
and so $\simp(\mathcal{T})$ and $\simp(\mathcal{T}')$ only differ in simplices containing both $2$ and $m-1$. Let $A$ be a simplex such that $A \in \simp(\mathcal{T})\setminus\simp(\mathcal{T}')$. Then $\{2,m-1\} \subseteq A$. There must be some $i \in [d]$ such that $a_{i}-a_{i-1}>2$, otherwise $m-1=2d+2$, and we are supposing that this is not the case because $m = 2d + 3$ is a base case.

Relabel $[m]$ as $[m - 1]_{v+}$, and relabel $A$ correspondingly, such that $a_{i - 1} < x < y < a_{i}$ and then perform the contraction $\xvy$. Lemma~\ref{lem:v_cont_op} tells us that $\mathcal{T}\xvy \leqslant_{2} \mathcal{T}'\xvy$. Moreover, $\mathcal{T}\xvy <_{2} \mathcal{T}'\xvy$, since $A \in \simp(\mathcal{T}\xvy)\setminus\simp(\mathcal{T}'\xvy)$. By the induction hypothesis, there is a increasing flip $\mathcal{U}$ of $\mathcal{T}\xvy$ such that $\mathcal{T}\xvy \lessdot_{1} \mathcal{U} \leqslant_{2} \mathcal{T}{'}\xvy$. We then apply Lemma~\ref{lem:odd_indstep_key} to obtain that there exists an increasing flip $\mathcal{T}''$ of $\mathcal{T}$ such that $\mathcal{T} \lessdot_{1} \mathcal{T}'' \leqslant_{2} \mathcal{T}'$.
\end{proofenum}
\end{proof}

\subsection{Even dimensions}\label{sect:even}

We now prove the equivalence of the orders for even dimensions, beginning by proving preliminary lemmas specific to this parity.

\begin{lemma}\label{lem:obstruction}
Let $\mathcal{T}, \mathcal{T}' \in \mathsf{S}(m,2d)$ such that $\mathcal{T} <_{2} \mathcal{T}'$. Suppose that $\mathcal{T}$ admits an increasing flip $\mathcal{T}''$ which is the result of replacing the $d$-simplex $A$ by the $d$-simplex~$B$. Then, we have that $\mathcal{T}'' \not\leqslant_{2} \mathcal{T}'$ if and only if $\tup{\obs{a}_{0},a_{1}, \dots, a_{d}} \in \simp(\mathcal{T}')$ for some $\obs{a}_{0}$ such that $a_{0} \leqslant \obs{a}_{0} < b_{0}$. 
\end{lemma}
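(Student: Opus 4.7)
The approach is to reformulate $\mathcal{T}'' \not\leqslant_2 \mathcal{T}'$ combinatorially using Theorem~\ref{thm:hst_char_even}. Since $\simp(\mathcal{T}'') = (\simp(\mathcal{T}) \setminus \{A\}) \cup \{B\}$ and the hypothesis $\mathcal{T} \leqslant_2 \mathcal{T}'$ already forbids any $D \in \simp(\mathcal{T}')$ from intertwining a simplex of $\simp(\mathcal{T})$, the condition $\mathcal{T}'' \not\leqslant_2 \mathcal{T}'$ is equivalent to the existence of some $D \in \simp(\mathcal{T}')$ with $D \wr B$. The direction $(\Leftarrow)$ then follows directly: the simplex $\tup{\obs{a}_0, a_1, \ldots, a_d}$ itself intertwines $B$, as $\obs{a}_0 < b_0$ together with $A \wr B$ delivers the alternating chain $\obs{a}_0 < b_0 < a_1 < b_1 < \cdots < a_d < b_d$.

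For the direction $(\Rightarrow)$, I plan to pass from an arbitrary witness $D \in \simp(\mathcal{T}')$ with $D \wr B$ to one of the prescribed form in two stages. The first stage is a reduction to a witness of the shape $\tup{d_0, a_1, \ldots, a_d}$: starting from any $D$, I would like to replace $d_i$ by $a_i$ at every position $i \geqslant 1$ where they disagree, justifying each step by transferring a hypothetical witness $E$ for non-membership in $\simp(\mathcal{T}')$ back to an intertwining with $D$ (contradicting non-intertwining of $\simp(\mathcal{T}')$) or with $A$ (contradicting $\mathcal{T} \leqslant_2 \mathcal{T}'$). In the second stage, with $D = \tup{d_0, a_1, \ldots, a_d}$ and $d_0 < b_0$, either $d_0 \geqslant a_0$, in which case $\obs{a}_0 := d_0$ works, or $d_0 < a_0$, in which case I aim to show $A \in \simp(\mathcal{T}')$: any $E \in \simp(\mathcal{T}')$ intertwining $A$ must satisfy $A \wr E$ (since $E \wr A$ is forbidden by $\mathcal{T} \leqslant_2 \mathcal{T}'$), and then $d_0 < a_0 < e_0$ combined with $A \wr E$ gives $D \wr E$, contradicting non-intertwining of $\simp(\mathcal{T}')$.

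The main obstacles, which I expect to be the heart of the proof, are: firstly, justifying that the replaced simplex $D' = \tup{d_0, \ldots, d_{i-1}, a_i, d_{i+1}, \ldots, d_d}$ lies in $\simp(\mathcal{T}')$; and secondly, showing that if $A \notin \simp(\mathcal{T}')$ then some $E \in \simp(\mathcal{T}')$ really does intertwine $A$. Since maximality of non-intertwining subsets of $\nonconsec{m}{d}$ is not immediate from Theorem~\ref{thm:tri_char_even} (non-intertwining sets can exceed size $\binom{m-d-2}{d}$), both obstacles require a finer analysis, likely combining an extremal choice of $D$ (such as lex-minimality in the tuple $(d_1, \ldots, d_d)$) with a careful unfolding of the local triangulation structure of $\mathcal{T}'$ at the $(d-1)$-face $\tup{a_1, \ldots, a_d}$. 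The transference of the witness intertwining works immediately on one side (e.g.\ $E \wr D' \Rightarrow E \wr D$ when $d_i > a_i$) but fails on the other, so the extremality has to be chosen so that only the tractable subcases arise.
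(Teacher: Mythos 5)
Your reduction of $\mathcal{T}'' \not\leqslant_{2} \mathcal{T}'$ to the existence of a witness $\J \in \simp(\mathcal{T}')$ with $\J \wr B$, and the easy direction $(\Leftarrow)$, both match the paper. But for the hard direction your proposal stops exactly at the two steps you flag as ``main obstacles'', and these are not minor details: you never establish that the modified simplex $\tup{d_{0},\dots,d_{i-1},a_{i},d_{i+1},\dots,d_{d}}$ lies in $\simp(\mathcal{T}')$, nor that $A \notin \simp(\mathcal{T}')$ forces some $E \in \simp(\mathcal{T}')$ intertwining $A$. As written this is a sketch of a strategy, not a proof, and the strategy itself points in an unnecessarily hard direction: you are trying to manufacture new elements of $\simp(\mathcal{T}')$, which requires completion/maximality-type facts about non-intertwining families that are not needed at all.

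The missing idea is to play the witness $\J$ against $\simp(\mathcal{T})$ rather than against $\simp(\mathcal{T}')$. Since the flip from $A$ to $B$ happens inside the subpolytope $C(A \cup B, 2d)$, every internal $d$-simplex contained in $A \cup B$ other than $B$ lies on a facet of that subpolytope and hence belongs to $\simp(\mathcal{T})$ (the paper cites \cite[Proposition~4.6]{ot} for this). Therefore the standing hypothesis $\mathcal{T} \leqslant_{2} \mathcal{T}'$, via Theorem~\ref{thm:hst_char_even}, already forbids $\J$ from intertwining \emph{any} $d$-simplex $\K \subset A \cup B$ with $\K \neq B$. A short case analysis then pins down $\J$ directly, with no replacement procedure: if $\sJ_{i} < a_{i}$ for some $i$, then $\J \wr (B \setminus b_{i}) \cup a_{i}$; if $\sJ_{i} > a_{i}$ for some $i \neq 0$, then $\J \wr (B \setminus b_{i-1}) \cup a_{i}$; both contradict the above. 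Hence $\sJ_{i} = a_{i}$ for all $i \geqslant 1$ and $\sJ_{0} \geqslant a_{0}$, while $\sJ_{0} < b_{0}$ follows from $\J \wr B$, giving $\J = \tup{\obs{a}_{0}, a_{1}, \dots, a_{d}}$ as required. I recommend replacing your two-stage reduction with this direct argument; it also disposes of your worry about inclusion-maximality of non-intertwining sets, since no new member of $\simp(\mathcal{T}')$ ever has to be produced.
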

\begin{proof}
If $\mathcal{T}'' \not\leqslant_{2} \mathcal{T}'$, then there must exist some $\J \in \simp(\mathcal{T}')$ such that $\J \wr B$, by Theorem~\ref{thm:hst_char_even}, since $\mathcal{T} <_{2} \mathcal{T}'$. Because we flip from $A$ to $B$ in $\mathcal{T}$, every internal $d$-simplex in $A \cup B$, excluding $B$, must be in $\simp(\mathcal{T})$ by \cite[Proposition~4.6]{ot}. Indeed, they all lie in the facets of the subpolytope $C(A \cup B, 2d)$. Hence, if there is a $d$-simplex $\K \subset A \cup B$ such that $\J \wr \K$, then this contradicts $\mathcal{T} \leqslant_{2} \mathcal{T}'$. If we have $\sJ_{i}<a_{i}$ for some $i$, then $\J \wr (B \setminus \{b_{i}\}) \cup \{a_{i}\}$. Similarly, if, for $i \neq 0$, we have $\sJ_{i}>a_{i}$, then $\J \wr (B \setminus \{b_{i-1}\}) \cup \{a_{i}\}$. Thus, we must have $\J=\tup{\obs{a}_{0},a_{1}, \dots, a_{d}}$, where $\obs{a}_{0} \geqslant a_{0}$. That $\obs{a}_{0}<b_{0}$ follows from the fact that $\J \wr B$.

Conversely, it is clear that if $\J=\tup{\obs{a}_{0},a_{1}, \dots, a_{d}} \in \simp(\mathcal{T}')$ such that $a_{0} \leqslant \obs{a}_{0} < b_{0}$, then $\J \wr B$, so that we have $\mathcal{T}'' \not\leqslant_{2} \mathcal{T}'$.
\end{proof}

We call such a simplex $\tup{\obs{a}_{0},a_{1}, \dots, a_{d}} \in \simp(\mathcal{T}')$ where $a_{0} \leqslant \obs{a}_{0} < b_{0}$ an \emph{obstruction} to the increasing flip of $\mathcal{T}$ which replaces $A$ with $B$. By Lemma~\ref{lem:different_formulations}, in order to prove the equivalence of the orders in even dimensions, we must find an increasing flip of $\mathcal{T}$ which is not obstructed by $\mathcal{T}'$. The following lemma allows us to describe the $2d$-simplex lying below the obstructing $d$-simplex.

\begin{lemma}\label{lem:even_simp_below}
Let $\mathcal{T}, \mathcal{T}' \in \mathsf{S}(m, 2d)$ such that $\mathcal{T} <_{2} \mathcal{T}'$. Suppose that $\mathcal{T}$ admits an increasing flip via replacing the $d$-simplex $A$ with the $d$-simplex $B$, and that $\tup{\obs{a}_{0}, a_{1}, \dots, a_{d}} \in \mathcal{T}'$, where $a_{0} \leqslant \obs{a}_{0} < b_{0}$. Then $\mathcal{T}'$ contains the $2d$-simplex $\tup{\obs{a}_{0}, b_{0}, a_{1}, b_{1}, \dots, b_{d - 1}, a_{d}}$.
\end{lemma}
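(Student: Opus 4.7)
The plan is to exhibit $\S = \tup{\obs{a}_{0}, b_{0}, a_{1}, b_{1}, \dots, b_{d - 1}, a_{d}}$ as a $2d$-simplex of $\mathcal{T}'$. Combining the hypothesis $a_{0} \leqslant \obs{a}_{0} < b_{0}$ with $A \wr B$ (which holds since $A \to B$ is a valid increasing flip) gives the strict chain
\[
\obs{a}_{0} < b_{0} < a_{1} < b_{1} < \cdots < a_{d} < b_{d}.
\]
In particular the $(d+1)$-subsets $\J := \tup{\obs{a}_{0}, a_{1}, \dots, a_{d}}$ and $B$ satisfy $\J \wr B$, so $(\J, B)$ is a circuit of $C(m, 2d)$, and the $2d+2$ vertices $\J \cup B$ span a cyclic subpolytope combinatorially equivalent to $C(2d+2, 2d)$.

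The subpolytope $C(\J \cup B, 2d)$ has exactly two triangulations. A direct application of Gale's Evenness Criterion inside $\J \cup B$ shows that the $2d$-simplices of its lower triangulation are precisely $(\J \cup B) \setminus \{b_{i}\}$ for $i = 0, 1, \dots, d$, all of which contain $\J$, and that $\J$ is the unique internal $d$-simplex of this lower triangulation; the upper triangulation has internal $d$-simplex $B$ instead. In particular, $\S = (\J \cup B) \setminus \{b_{d}\}$ is one of the $2d$-simplices of the lower triangulation of $C(\J \cup B, 2d)$.

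The core of the argument is then to show that $C(\J \cup B, 2d)$ is a subpolytope of $\mathcal{T}'$ and that $\mathcal{T}'$ restricts to its lower triangulation there. Since $\J \in \mathcal{T}'$ and $(\J, B)$ is a circuit of $C(m, 2d)$, no simplex of $\mathcal{T}'$ can contain all of $B$, so in particular $B \notin \simp(\mathcal{T}')$; this rules out the upper triangulation as the induced one. Once the subpolytope claim is in hand, $\J \in \simp(\mathcal{T}')$ then forces the induced triangulation on $C(\J \cup B, 2d)$ to be the lower one, which immediately delivers $\S \in \mathcal{T}'$.

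The hard part will be verifying that $C(\J \cup B, 2d)$ is actually a subpolytope of $\mathcal{T}'$, equivalently, that the facets of $C(\J \cup B, 2d)$ form a simplicial subcomplex of $\mathcal{T}'$. I expect this to use the constraint $\mathcal{T} \leqslant_{2} \mathcal{T}'$ via Theorem~\ref{thm:hst_char_even} (which forbids any element of $\simp(\mathcal{T}')$ from intertwining $A$ from below, as well as any element of $\simp(\mathcal{T})$ lying on a facet of $C(A \cup B, 2d)$), together with the tight interlacing $\obs{a}_{0} < b_{0} < a_{1} < \cdots < b_{d}$ and the non-intertwining property of $\simp(\mathcal{T}')$ from Theorem~\ref{thm:tri_char_even}, to rule out any $d$-simplex of $\simp(\mathcal{T}')$ that would cross a facet of $C(\J \cup B, 2d)$.
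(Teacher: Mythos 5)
Your reduction replaces the lemma by a strictly stronger claim---that $C(\J \cup B, 2d)$ is a subpolytope of $\mathcal{T}'$ carrying its lower triangulation---and this claim is false in general, so the proposed route cannot be completed (quite apart from the fact that you leave its verification as an expectation rather than an argument). Concretely, take $m = 5$, $d = 1$; let $\mathcal{T}$ be the fan at vertex $1$, with $\simp(\mathcal{T}) = \set{13, 14}$, and let $\mathcal{T}'$ be the triangulation with $\simp(\mathcal{T}') = \set{13, 35}$, whose triangles are $123$, $135$, $345$. Here $\mathcal{T}$ is the lower triangulation, so $\mathcal{T} <_{2} \mathcal{T}'$; the flip of $\mathcal{T}$ exchanging $A = \tup{1,3}$ for $B = \tup{2,4}$ is increasing; and $\tup{\obs{a}_{0}, a_{1}} = \tup{1,3} \in \mathcal{T}'$ with $a_{0} = \obs{a}_{0} = 1 < b_{0} = 2$, so all hypotheses of the lemma hold, and its conclusion does too: $\tup{1,2,3}$ is a triangle of $\mathcal{T}'$. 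But $C(\tup{1,2,3,4}, 2)$ is \emph{not} a subpolytope of $\mathcal{T}'$: the triangle $\tup{1,3,4}$ of its lower triangulation is not a simplex of $\mathcal{T}'$, and the boundary edge $\tup{1,4}$ is not even a face of any simplex of $\mathcal{T}'$. The lemma genuinely yields only the single $2d$-simplex $(\J \cup B)\setminus b_{d}$, not the full lower triangulation of $C(\J \cup B, 2d)$ inside $\mathcal{T}'$, so no argument can establish your intermediate claim.

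The paper's proof proceeds pointwise instead: by \cite[Proposition~2.13]{ot}, the internal $d$-simplex $\tup{\obs{a}_{0}, a_{1}, \dots, a_{d}}$ of $\mathcal{T}'$ is contained in a $2d$-simplex of $\mathcal{T}'$ of interleaved form $S = \tup{\obs{a}_{0}, \sQ_{0}, a_{1}, \sQ_{1}, \dots, \sQ_{d-1}, a_{d}}$, and one then pins down $\sQ_{i} = b_{i}$ for every $i$: if $\sQ_{i} < b_{i}$ (respectively $\sQ_{i} > b_{i}$), then a suitable $d$-face of $S$, hence an element of $\simp(\mathcal{T}')$, intertwines a $d$-simplex lying on the boundary of $C(A \cup B, 2d)$, hence an element of $\simp(\mathcal{T})$, contradicting $\mathcal{T} <_{2} \mathcal{T}'$ via Theorem~\ref{thm:hst_char_even}. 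Some local argument of this kind is unavoidable, since, as the example shows, no cyclic subpolytope of $\mathcal{T}'$ on the vertex set $\J \cup B$ is available to you.
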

\begin{proof}
By \cite[Proposition~2.13]{ot}, there exists a $2d$-simplex $S=\stup\obs{a}_{0}, \sQ_{0}, a_{1}, \sQ_{1},$ $\dots,\sQ_{d - 1}, a_{d}\etup$ in $\mathcal{T}'$. We will show that we must have $\sQ_{i} = b_{i}$ for all $i$ by ruling out the other cases.

Suppose that $\sQ_{i} < b_{i}$ for some $i$. Then $\tup{a_{1}, a_{2}, \dots, a_{i}, b_{i}, b_{i + 1}, \dots, b_{d}} \in \simp(\mathcal{T})$, since it is in the boundary of $C(A \cup B, 2d)$, and $\tup{\sQ_{0}, \sQ_{1}, \dots, \sQ_{i}, a_{i + 1}, a_{i + 2}, \dots, a_{d}} \in \simp(\mathcal{T}')$, since it is contained in $S$. But this contradicts $\mathcal{T} <_{2} \mathcal{T}'$ by Theorem~\ref{thm:hst_char_even}, since \[\tup{\sQ_{0}, \sQ_{1}, \dots, \sQ_{i}, a_{i + 1}, a_{i + 2}, \dots, a_{d}} \wr \tup{a_{1}, a_{2}, \dots, a_{i}, b_{i}, b_{i + 1}, \dots, b_{d}}.\]

Now suppose that $\sQ_{i} > b_{i}$ for some $i$. Then $\tup{b_{0}, b_{1}, \dots, b_{i}, a_{i + 1}, a_{i + 2}, \dots, a_{d}} \in \simp(\mathcal{T})$, since it is in the boundary of $C(A \cup B, 2d)$, and we also have that $\tup{\obs{a}_{0}, a_{1}, \dots, a_{i}, \sQ_{i},  \sQ_{i + 1}, \dots, \sQ_{d - 1}} \in \simp(\mathcal{T}')$, since it is contained in $S$. But this contradicts $\mathcal{T} <_{2} \mathcal{T}'$ by Theorem~\ref{thm:hst_char_even}, since \[\stup \obs{a}_{0}, a_{1}, \dots, a_{i}, \sQ_{i}, \sQ_{i + 1}, \dots, \sQ_{d - 1} \etup \wr \tup{b_{0}, b_{1}, \dots, b_{i}, a_{i + 1}, a_{i + 2}, \dots, a_{d}},\] noting that $\obs{a}_{0} < b_{0}$. Thus $\sQ_{i} = b_{i}$ for all~$i$, as desired.
\end{proof}

We will use the following lemma and its corollary to find the right pair of middle vertices to contract at in the final case of our proof of the equivalence of the orders in even dimensions.

\begin{lemma}\label{lem:even_contr_vertices}
Let $\mathcal{T}, \mathcal{T}' \in \mathsf{S}(m, 2d)$ be triangulations such that $\mathcal{T} <_{2} \mathcal{T}'$. Suppose further that both $\mathcal{T}\conto = \mathcal{T}'\conto$ and $\mathcal{T}\contm = \mathcal{T}'\contm$. Then, there exists $A \in \simp(\mathcal{T}) \setminus \simp(\mathcal{T}')$ and, for every such $A$, we have $a_{0} = 1$ and $a_{d} = m - 1$. Dually, there exists $B \in \simp(\mathcal{T}') \setminus \simp(\mathcal{T})$ and, for every such $B$, we have $b_{0} = 2, b_{d} = m$.
\end{lemma}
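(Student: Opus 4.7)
The plan is to first establish existence by counting, then derive the endpoint conditions through a case analysis built on Proposition~\ref{prop:cont_char_m}\eqref{op:cont_char_m:even} applied to both $\contm$ and its $\conto$ analogue, and finally rule out the ``bad'' configurations by invoking the zig-zag $2d$-simplex enclosing an internal $d$-simplex (the device already used in the proof of Lemma~\ref{lem:even_simp_below}).

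For existence, $\mathcal{T} <_{2} \mathcal{T}'$ implies $\mathcal{T} \neq \mathcal{T}'$, so Theorem~\ref{thm:tri_char_even} gives $\#\simp(\mathcal{T}) = \#\simp(\mathcal{T}') = \binom{m-d-2}{d}$, whence $\mathcal{A} := \simp(\mathcal{T}) \setminus \simp(\mathcal{T}')$ and $\mathcal{B} := \simp(\mathcal{T}') \setminus \simp(\mathcal{T})$ are nonempty and of equal size. Next, fix $A \in \mathcal{A}$ and use $\mathcal{T}\contm = \mathcal{T}'\contm$: when $A\contm \in \nonconsec{m-1}{d}$, Proposition~\ref{prop:cont_char_m}\eqref{op:cont_char_m:even} forces $A\contm$ to have a preimage in $\simp(\mathcal{T}')$, and since this preimage cannot be $A$, the alternate preimage $(A\contm \setminus (m-1)) \cup m$ must exist and lie in $\simp(\mathcal{T}')$. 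Tracking the remaining possibility that $A\contm$ falls outside $\nonconsec{m-1}{d}$ (the configuration $\{m-1, m\} \subseteq A$ is ruled out by non-consecutivity; the surviving options are $a_d = m$ with $a_{d-1} = m-2$ and $m-1 \notin A$, or $a_0 = 1$ with $a_d = m-1$), one concludes $a_d \in \{m-1, m\}$. The symmetric $\conto$ analysis yields $a_0 \in \{1, 2\}$, and the joint configuration $(a_0, a_d) = (1, m)$ is impossible as it violates the wrap condition of $\nonconsec{m}{d}$.

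The remaining bad configurations to rule out are $(a_0, a_d) \in \{(2, m), (2, m-1)\}$. For $A = (2, a_1, \ldots, a_{d-1}, m)$ with partner $X := (2, a_1, \ldots, a_{d-1}, m-1) \in \simp(\mathcal{T}')$ provided by the $\contm$ analysis, I would invoke the zig-zag enclosing $2d$-simplex $S_A = (2, p_0, a_1, p_1, \ldots, a_{d-1}, p_{d-1}, m) \in \mathcal{T}$ from \cite[Proposition~2.13]{ot}, with $a_{d-1} < p_{d-1} < m$. When $p_{d-1} \leqslant m-2$, the $d$-face $Q := (p_0, p_1, \ldots, p_{d-1}, m)$ of $S_A$ is internal (every gap is $\geqslant 2$ and the wrap condition holds), hence $Q \in \simp(\mathcal{T})$; a direct inequality check gives $X \wr Q$, contradicting $\mathcal{T} \leqslant_{2} \mathcal{T}'$ by Theorem~\ref{thm:hst_char_even}. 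When $p_{d-1} = m-1$, the partner $X$ is itself a $d$-face of $S_A$, so $X \in \simp(\mathcal{T}) \cap \simp(\mathcal{T}')$, and I iterate the zig-zag argument on an enclosing $2d$-simplex of $X$ in $\mathcal{T}$, whose penultimate interpolating vertex is now strictly less than $m-1$, in order to extract the required intertwining. The configuration $(2, m-1)$ is handled symmetrically, exploiting the two partner simplices $(2, a_1, \ldots, a_{d-1}, m)$ and $(1, a_1, \ldots, a_d)$ in $\simp(\mathcal{T}')$ supplied by the $\contm$ and $\conto$ analyses.

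Finally, the dual statement for $B \in \mathcal{B}$ follows by applying the proved first part to the pair $(\alpha(\mathcal{T}'), \alpha(\mathcal{T}))$, where $\alpha \colon i \mapsto m+1-i$ is the reflection of \cite[Proposition~2.11]{er}; since $\alpha$ exchanges $\contm$ with $\conto$ and reverses $\leqslant_{2}$, both contraction hypotheses and the strict inequality persist under $\alpha$, and the conclusion $(\alpha(B))_{0} = 1$, $(\alpha(B))_{d} = m-1$ translates to $b_{0} = 2$, $b_{d} = m$. The principal obstacle is the iterative step for $(a_0, a_d) = (2, m)$ when $p_{d-1} = m-1$: one must verify that this iteration terminates by descending through enclosing $2d$-simplices with strictly decreasing penultimate interpolating vertex, rather than cycling through simplices lying in $\simp(\mathcal{T}) \cap \simp(\mathcal{T}')$ indefinitely.
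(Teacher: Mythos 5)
Your opening reductions are sound: counting gives nonemptiness, the two contraction equalities together with Proposition~\ref{prop:cont_char_m}\eqref{op:cont_char_m:even} force $a_{d} \in \{m-1, m\}$ and $a_{0} \in \{1, 2\}$, and the wrap condition kills $(a_{0}, a_{d}) = (1, m)$. The genuine gap is in the elimination of the remaining configurations. First, for $(a_{0}, a_{d}) = (2, m)$ your argument starts from the partner $X = \tup{2, a_{1}, \dots, a_{d-1}, m-1} \in \simp(\mathcal{T}')$, but this partner is supplied by the $\contm$ analysis only when $A\contm \in \nonconsec{m-1}{d}$, i.e.\ when $a_{d-1} \leqslant m-3$; in the sub-case $a_{d-1} = m-2$ the set $X$ is not even internal, and the $\conto$ analysis also yields nothing (since $A\conto = A$ violates the wrap condition for the contracted polytope), so this sub-case is simply not addressed. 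Second, in the sub-case $p_{d-1} = m-1$ you only have $X \in \simp(\mathcal{T}) \cap \simp(\mathcal{T}')$, and the proposed iteration is not an argument: $X$ lies in both triangulations, so no ``partner'' is available for it, the face $\tup{r_{0}, \dots, r_{d-1}, m-1}$ of an enclosing simplex of $X$ need not be internal, and no element of $\simp(\mathcal{T}')$ is exhibited that intertwines any internal face of that enclosing simplex; you flag this yourself as unresolved. Third, the case $(a_{0}, a_{d}) = (2, m-1)$ is only asserted to be ``symmetric'', but this configuration is fixed by the reflection $i \mapsto m+1-i$, so there is no reduction by symmetry, and running your zig-zag argument there meets the same kind of boundary sub-cases (interpolating vertices adjacent to $1, 2, m-1, m$) that are not treated.

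For comparison, the paper's proof avoids all of these cases at once: given $A \in \simp(\mathcal{T}) \setminus \simp(\mathcal{T}')$, maximality of the non-intertwining family $\simp(\mathcal{T}')$ produces some $B \in \simp(\mathcal{T}')$ intertwining $A$, and the characterisation of $\leqslant_{2}$ in Theorem~\ref{thm:hst_char_even} forces $A \wr B$ (in particular $a_{d} = m$ is impossible outright, since $A \wr B$ would need $b_{d} > m$). Each endpoint claim then follows in one line from the contraction equalities: e.g.\ if $a_{0} > 1$ then $A$ and $B$ both survive $\conto$ and give an intertwining pair inside $\simp(\mathcal{T}'\conto)$, contradicting Theorem~\ref{thm:tri_char_even}; and if $b_{0} > 2$ then $\tup{2, a_{1}, \dots, a_{d}} \wr B$ gives the same contradiction. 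If you want to salvage your contraction-partner route, you would at minimum need a separate argument for the sub-case $a_{d-1} = m-2$, a genuine induction (with a terminating, contradiction-producing step) replacing the vague iteration at $p_{d-1} = m-1$, and a worked-out treatment of $(2, m-1)$; as it stands the proof is incomplete.
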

\begin{proof}
Since we know that $\mathcal{T} \neq \mathcal{T}'$, there must exist $A \in \simp(\mathcal{T})$ such that $A \notin \simp(\mathcal{T}')$. We then have that there is a $B \in \simp(\mathcal{T})$ such that $A$ and $B$ are intertwining. Since $\mathcal{T} <_{2} \mathcal{T}'$, we must in fact have that $A$ intertwines $B$, rather than the other way around, by Theorem~\ref{thm:hst_char_even}. One may also arrive at this situation by first choosing $B \in \simp(\mathcal{T}')$ such that $B \notin \simp(\mathcal{T})$. If $a_{0} > 1$, then $A \in \mathcal{T}[1 \to 2]$ and $B \in \mathcal{T}'[1 \to 2]$, which contradicts the fact that $\mathcal{T}[1 \to 2] = \mathcal{T}'[1 \to 2]$. Hence $a_{0} = 1$, and we similarly argue that $b_{d} = m$. We can continue with similar deductions. If $b_{0} > 2$, then $\mathcal{T}[1 \to 2] \ni \tup{2, a_{1}, a_{2}, \dots, a_{d}} \wr B \in \mathcal{T}'[1 \to 2]$. Therefore $b_{0} = 2$, and we can likewise reason that $a_{d} = m - 1$.
\end{proof}

\begin{corollary}\label{cor:even_contr_vertices}
Let $\mathcal{T}, \mathcal{T}' \in \mathsf{S}(m, 2d)$, with $m > 2d + 3$, be triangulations such that $\mathcal{T} <_{2} \mathcal{T}'$, and both $\mathcal{T}\conto = \mathcal{T}'\conto$ and $\mathcal{T}\contm = \mathcal{T}'\contm$. Then there exists $v \in [3, m - 2]$ such that if one relabels $[m]$ as $[m - 1]_{v+}$, then we have $\mathcal{T}\xvy <_{2} \mathcal{T}'\xvy$.
\end{corollary}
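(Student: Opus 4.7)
The plan is to apply Lemma~\ref{lem:even_contr_vertices} to obtain a distinguishing pair of intertwining simplices, then to locate a $v \in [3, m-2]$ such that these simplices survive the contraction $\xvy$ as intertwining internal $d$-simplices of $C(m-1, 2d)$. By Lemma~\ref{lem:even_contr_vertices}, there exist $A = \tup{1, a_1, \dots, a_{d-1}, m-1} \in \simp(\mathcal{T}) \setminus \simp(\mathcal{T}')$ and $B = \tup{2, b_1, \dots, b_{d-1}, m} \in \simp(\mathcal{T}') \setminus \simp(\mathcal{T})$ with $A \wr B$; in particular $A$ and $B$ are disjoint.

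I will first show by pigeonhole that some $v \in [3, m-2]$ satisfies $\{v, v+1\} \not\subseteq A \cup B$. The intersection $(A \cup B) \cap [3, m-1]$ consists of the $2d - 1$ elements $a_1, b_1, \dots, a_{d-1}, b_{d-1}, m-1$. If every $v \in [3, m-2]$ had $\{v, v+1\} \subseteq A \cup B$, then chaining these inclusions gives $[3, m-1] \subseteq A \cup B$, which forces $m - 3 \leq 2d - 1$ and contradicts $m > 2d + 3$. Fix such a $v$. The gap condition for $A$ and $B$ means at most one of $v, v+1$ lies in $A$ and at most one in $B$, and our choice excludes the two configurations in which $v$ and $v+1$ split between $A$ and $B$. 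The remaining cases are: (i) $v, v+1 \notin A \cup B$; (ii) exactly one of $v, v+1$ lies in $A$ and the other is in neither $A$ nor $B$; and (iii) the $B$-analogue of (ii). The main obstacle is a case analysis verifying that in each of these, both $A\xvy$ and $B\xvy$ lie in $\nonconsec{m-1}{d}$ and that $A\xvy \wr B\xvy$. For instance, in case (ii) with $v = a_k$, the condition $v+1 \notin B$ forces $b_k \geq v + 2$ and thus $a_{k+1} \geq b_k + 1 \geq a_k + 3$, so the only gap of $A$ affected by $\xvy$ shrinks from $\geq 3$ to $\geq 2$; the relative order of all elements is preserved, which delivers $A\xvy \wr B\xvy$. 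The remaining cases are analogous, with the interleaving inequalities $a_{i-1} < b_{i-1} < a_i < b_i$ providing the extra unit of slack each time.

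To conclude, since $A \in \simp(\mathcal{T})$ and $A\xvy \in \nonconsec{m-1}{d}$, the analogue of Proposition~\ref{prop:cont_char_m}\eqref{op:cont_char_m:even} for the contraction $\xvy$---part of the expansion theory developed in Section~\ref{sect:expansion}---gives $A\xvy \in \simp(\mathcal{T}\xvy)$, and likewise $B\xvy \in \simp(\mathcal{T}'\xvy)$. If we had $\mathcal{T}\xvy = \mathcal{T}'\xvy$, then $A\xvy$ and $B\xvy$ would both be internal $d$-simplices of the same triangulation, contradicting the non-intertwining condition in Theorem~\ref{thm:tri_char_even}. Hence $\mathcal{T}\xvy \neq \mathcal{T}'\xvy$, and combining this with $\mathcal{T}\xvy \leq_2 \mathcal{T}'\xvy$ from Lemma~\ref{lem:v_cont_op} yields $\mathcal{T}\xvy <_2 \mathcal{T}'\xvy$.
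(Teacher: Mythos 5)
Your argument is correct and essentially the paper's own proof: both take an intertwining pair $A \wr B$ with $a_{0}=1$, $b_{0}=2$, $a_{d}=m-1$, $b_{d}=m$ coming from Lemma~\ref{lem:even_contr_vertices}, use $m>2d+3$ to find a consecutive pair $\{v,v+1\}$ meeting $A\cup B$ in at most one element, check that the contraction preserves the intertwining of internal $d$-simplices (your appeal to the $\xvy$-analogue of contraction, i.e.\ Lemma~\ref{lem:cont_comb_int}), and combine Lemma~\ref{lem:v_cont_op} with the non-intertwining condition of Theorem~\ref{thm:tri_char_even} to upgrade $\leqslant_{2}$ to $<_{2}$. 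The only minor differences are that the paper additionally requires $\{v,v+1\}\cap\{1,2,m-1,m\}=\emptyset$, which removes the boundary subcase $v+1=m-1$ that you verify by hand, and that the existence of a pair with $A \wr B$ is, strictly speaking, extracted from the proof of Lemma~\ref{lem:even_contr_vertices} (equivalently from Theorem~\ref{thm:hst_char_even} together with maximality of non-intertwining families) rather than from its statement alone.
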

\begin{proof}
Since $\mathcal{T} <_{2} \mathcal{T}'$, there must exist $A \in \simp(\mathcal{T})$ and $B \in \simp(\mathcal{T})$ such that $A \wr B$ by Theorem~\ref{thm:hst_char_even}. By Lemma~\ref{lem:even_contr_vertices}, we have that $a_{0} = 1, b_{0} = 2, a_{d} = m - 1, b_{d} = m$. Because $m > 2d + 3$, we must have $[m] \setminus (A \cup B) \neq \emptyset$. We may therefore choose $\{v, v + 1\} \subset [m]$ such that $\#\{v, v + 1\} \cap (A \cup B) \leqslant 1$ and $\{v, v + 1\} \cap \{1, 2, m - 1, m\} = \emptyset$. We then relabel $[m]$ as $[m - 1]_{v+}$ so that $\{v, v + 1\}$ becomes $\{x, y\}$. We likewise relabel $\mathcal{T}, \mathcal{T}'$, $A$, and $B$. If we let $\cont{A}, \cont{B}$ be the respective images of $A$ and $B$ under the contraction $[x \to v \leftarrow y]$, then, by our choice of $v$, we obtain that $\cont{A} \wr \cont{B}$. By Lemma~\ref{lem:v_cont_op}, we obtain that $\mathcal{T}\xvy <_{2} \mathcal{T}'\xvy$.
\end{proof}

We now prove that the orders are equivalent in even dimensions. The structure of the proof is similar to odd dimensions, but we are not able to extract the inductive step of the proof as a separate lemma, since the details differ between the contractions $\contm$ and $\xvy$.

\begin{theorem}\label{thm:main_even}
Let $\mathcal{T}, \mathcal{T}'$ be triangulations of $C(m,2d)$. Then $\mathcal{T} \leqslant_{1} \mathcal{T}'$ if and only if $\mathcal{T} \leqslant_{2} \mathcal{T}'$.
\end{theorem}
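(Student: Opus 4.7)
The plan is to mirror the inductive structure of Theorem~\ref{thm:main_odd}, proceeding by induction on $m$ with base cases $m \leqslant 2d+3$ (the trivial cases from \cite{rr}, together with $m=2d+3$ handled by Lemma~\ref{lem:pent_triangs}); from now on assume $m > 2d+3$. By Lemma~\ref{lem:different_formulations}, given $\mathcal{T} <_{2} \mathcal{T}'$ in $\mathsf{S}(m,2d)$, it suffices to produce an increasing flip $\mathcal{T}''$ of $\mathcal{T}$ with $\mathcal{T}'' \leqslant_{2} \mathcal{T}'$. We split into three cases, as in the odd case: (A) $\mathcal{T}\contm \neq \mathcal{T}'\contm$; (B) $\mathcal{T}\contm = \mathcal{T}'\contm$ but $\mathcal{T}\conto \neq \mathcal{T}'\conto$, reduced to (A) via the order-preserving symmetry $\alpha$ of \cite[Proposition~2.11]{er}; (C) both contractions agree, handled by selecting a middle vertex $v \in [3,m-2]$ via Corollary~\ref{cor:even_contr_vertices} so that $\mathcal{T}\xvy <_{2} \mathcal{T}'\xvy$ in $\mathsf{S}(m-1,2d)$.

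In Case (A), the induction hypothesis furnishes an increasing flip $\mathcal{T}\contm \lessdot_{1} \mathcal{U} \leqslant_{2} \mathcal{T}'\contm$ which replaces some $A \in \simp(\mathcal{T}\contm)$ by $B$ with $A \wr B$ inside the subpolytope $C(A \cup B, 2d)$. Applying Lemma~\ref{lem:subpolytope_exp} to lift this subpolytope through the contraction, I would distinguish two subcases: (A1) the preimage is a subpolytope on $2d+2$ vertices, so $\mathcal{T}$ carries a genuine preimage pair $\preim{A} \wr \preim{B}$ and an increasing flip $\mathcal{T}''$ replacing $\preim{A}$ by $\preim{B}$ is available directly; or (A2) the preimage is a subpolytope on $2d+3$ vertices, in which case the induced triangulation is one of the $2d+3$ triangulations enumerated in Lemma~\ref{lem:pent_triangs}, each admitting one or two explicit increasing flips. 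I would select a flip there—when two are available, guided by which preimage is compatible with $A$—and declare the result $\mathcal{T}''$. Case (C) proceeds identically, using Lemma~\ref{lem:subpolytope_exp_oth_vert} and Lemma~\ref{lem:v_cont_op} in place of Lemma~\ref{lem:subpolytope_exp}, and admitting a fourth expansion option (on $2d+3$ vertices) that is nevertheless handled in the same way as (A2).

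The crucial, and hardest, step is verifying $\mathcal{T}'' \leqslant_{2} \mathcal{T}'$. If this fails, Lemma~\ref{lem:obstruction} produces an obstruction $\tup{\obs{a}_{0}, \preim{a}_{1}, \dots, \preim{a}_{d}} \in \simp(\mathcal{T}')$ with $\preim{a}_{0} \leqslant \obs{a}_{0} < \preim{b}_{0}$; Lemma~\ref{lem:even_simp_below} then upgrades this to a full $2d$-simplex $S = \tup{\obs{a}_{0}, \preim{b}_{0}, \preim{a}_{1}, \preim{b}_{1}, \dots, \preim{b}_{d-1}, \preim{a}_{d}} \in \mathcal{T}'$. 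I would contract this $2d$-simplex back down: in Case (A1/A) via $\contm$, in Case (A2) via $\contm$ again, and in Case (C) via the appropriate $\xvy$. The contracted obstruction then lives inside $\simp(\mathcal{T}'\contm)$ (resp.\ $\simp(\mathcal{T}'\xvy)$), where it must intertwine $B$ in the form forbidden by Lemma~\ref{lem:obstruction}—yielding a contradiction with the relation $\mathcal{U} \leqslant_{2} \mathcal{T}'\contm$ guaranteed by the induction hypothesis. In subcase (A2) one additionally exploits the explicit combinatorics of Lemma~\ref{lem:pent_triangs}: if the canonical flip is obstructed, the alternative flip (when available on a fan triangulation) can be shown to be unobstructed by the same $2d$-simplex, so the process still terminates.

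The principal obstacle is the obstruction-chasing step just described. The delicate point is that the obstructing simplex lives in $\simp(\mathcal{T}')$ on the full polytope, and its contraction may \emph{coincide} with either $A$ or with a boundary simplex of $C(A \cup B, 2d)$ rather than with a fresh intertwining witness; ruling this out requires a careful case analysis on the position of $\obs{a}_{0}$ relative to the vertices $x, y$ (respectively $m-1, m$) of the contraction, and in subcase (A2) a finer analysis on where the obstruction sits within the $C(2d+3, 2d)$ subpolytope. The preliminary lemmas (Lemma~\ref{lem:obstruction}, Lemma~\ref{lem:even_simp_below}, Corollary~\ref{cor:even_contr_vertices}) are tailored precisely to make this case analysis tractable, and I expect the argument to close once each possibility for the position of $\obs{a}_{0}$ and for the contraction type is enumerated and contradicted against the inductively chosen flip $\mathcal{U}$.
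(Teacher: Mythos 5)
Your plan reproduces the paper's architecture (induction on $m$, the three cases according to the contractions $\contm$, $\conto$, $\xvy$, and the obstruction machinery of Lemma~\ref{lem:obstruction} and Lemma~\ref{lem:even_simp_below}), but there is a genuine gap at the hardest point, Case (C). In the paper, before any subcase analysis, the hypotheses $\mathcal{T}\conto = \mathcal{T}'\conto$ and $\mathcal{T}\contm = \mathcal{T}'\contm$ are combined with the commutation of contractions, $(\mathcal{T}\xvy)\conto = (\mathcal{T}\conto)\xvy$ and $(\mathcal{T}\xvy)\contl = (\mathcal{T}\contl)\xvy$ (via \cite[Theorem~3.4]{rs-baues}), and with Lemma~\ref{lem:even_contr_vertices} applied to the contracted pair, to deduce that the flip $A \to B$ supplied by induction has $a_{0} = 1$, $b_{0} = 2$, $a_{d} = m-2$, $b_{d} = m-1$. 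This deduction is the linchpin of the even-dimensional argument: it forces any obstruction to satisfy $\obs{a}_{0} = a_{0} = 1$, hence to coincide with $\preim{A}$, so that $A \in \simp(\mathcal{T}'\xvy)$ contradicts $\mathcal{U} \leqslant_{2} \mathcal{T}'\xvy$; and in the $(2d+3)$-vertex expansion subcase it pins down $\h_{1} = 1$, $\h_{2} = 2$, which is what determines the possible fan triangulations, the flip to perform, and why an obstruction to that flip forces $\preim{A} \in \simp(\mathcal{T}')$ via the $2d$-simplex of Lemma~\ref{lem:even_simp_below}. Your proposal never performs this deduction, and without it your central step---``contract the obstruction (or the $2d$-simplex) and it lands in $\simp(\mathcal{T}'\xvy)$ intertwining $B$''---fails in general: when $v \in A \cup B$ the obstruction can have $\obs{a}_{0} \in \{x, y\}$, and its image under $\xvy$ can acquire the consecutive pair $\tup{v, v+1}$ (or collapse to a smaller simplex), hence lie on the boundary of $C(m-1, 2d)$, so Theorem~\ref{thm:hst_char_even} yields no contradiction. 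You do flag a case analysis on the position of $\obs{a}_{0}$, but the bad configurations are excluded precisely by $a_{0}=1$, $b_{0}=2$, not by a finer positional check; note also that the scenario you worry about, the contraction coinciding with $A$, is not a problem at all---it is exactly the desired contradiction, since $A \wr B$ with $B \in \simp(\mathcal{U})$.

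Two smaller points. First, in even dimensions the symmetry $\alpha$ of \cite[Proposition~2.11]{er} is order-\emph{reversing}, so Case (B) produces $\mathcal{T} \leqslant_{2} \alpha(\mathcal{T}'') \lessdot_{1} \mathcal{T}'$, and you must invoke the decreasing-flip formulation of Lemma~\ref{lem:different_formulations}, not an increasing flip as stated. Second, in your subcase (A2) the paper does not switch to an alternative flip when the chosen one is obstructed: the fan vertex $a_{i}$ determines a single flip replacing $\tup{a_{0}, \dots, a_{i}, b_{i+1}, \dots, b_{d}}$, and the contradiction is obtained not by contracting the obstructing simplex itself (which does not intertwine $B$ and contains $m-1$), but by extracting from the $2d$-simplex of Lemma~\ref{lem:even_simp_below} the face $\tup{\obs{a}_{0}, a_{1}, \dots, a_{d}}$, which avoids $m-1$ and $m$, survives $\contm$, and obstructs the flip $A \to B$ downstairs, contradicting $\mathcal{U} \leqslant_{2} \mathcal{T}'\contm$. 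Making that extraction explicit, and adding the boundary-value deduction in Case (C), is what is needed to close your argument.
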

\begin{proof}
As in the odd-dimensional case, we prove the result by induction on the number of vertices of the cyclic polytope. As noted in \cite{rr}, the result is already known for $m - 2d \leqslant 3$, so we use these as the base cases of our induction. One may also easily verify the result in these cases in the same way as explained in the proof of Theorem~\ref{thm:main_odd}.

Hence, we suppose for induction that we have triangulations $\mathcal{T}, \mathcal{T}' \in \mathsf{S}(m, 2d)$, where $m > 2d + 3$, such that $\mathcal{T} <_{2} \mathcal{T}'$. We split into three cases, seeking a triangulation~$\mathcal{T}''$ such that $\mathcal{T} \lessdot_{1} \mathcal{T}'' \leqslant_{2} \mathcal{T}'$.

\begin{proofenum}
\item Suppose that $\mathcal{T}[m - 1 \leftarrow m] \neq {\mathcal{T}'}[m - 1 \leftarrow m]$, so that $\mathcal{T}[m - 1 \leftarrow m] <_{2} {\mathcal{T}'}[m - 1 \leftarrow m]$. By the induction hypothesis, there exists a triangulation $\mathcal{U}$ such that $\mathcal{T}[m - 1 \leftarrow m] \lessdot_{1} \mathcal{U} \leqslant_{2} {\mathcal{T}'}[m - 1 \leftarrow m]$. Let this increasing flip of $\mathcal{T}[m - 1 \leftarrow m]$ be given by exchanging a $d$-simplex $A$ for a $d$-simplex $B$. Therefore, we have that $C(A \cup B, 2d)$ is a subpolytope of $\mathcal{T}[m - 1 \leftarrow m]$. By Lemma~\ref{lem:subpolytope_exp}, we have that either
\begin{caseenum}
\item $C(A \cup B, 2d)$ is a subpolytope of $\mathcal{T}$,
\item $C(A \cup B', 2d)$ is a subpolytope of $\mathcal{T}$, where $B' = \tup{b_{0}, b_{1}, \dots, b_{d - 1}, m}$ and $b_{d} = m - 1$, or
\item $C(A \cup B \cup m, 2d)$ is a subpolytope of $\mathcal{T}$, in which case $b_{d} = m - 1$.
\end{caseenum}

We deal with each of these cases in turn.

\begin{caseenum}
\item Suppose first that $C(A \cup B,2d)$ is a subpolytope of $\mathcal{T}$. This subpolytope therefore contains the $d$-simplex $A$, since it contains the $d$-simplex $A$ in $\mathcal{T}[m - 1 \leftarrow m]$. Thus $\mathcal{T}$ also admits an increasing flip by exchanging $A$ for $B$ to give a triangulation~$\mathcal{T}''$. If $\mathcal{T}'' \not\leqslant_{2} \mathcal{T}'$, then $\obs{A}=\tup{\obs{a}_{0},a_{1}, \dots, a_{d}} \in \simp(\mathcal{T}')$ where $a_{0} \leqslant \obs{a}_{0}<b_{0}$, by Lemma~\ref{lem:obstruction}. But then $\obs{A} \in \simp({\mathcal{T}'}[m - 1 \leftarrow m])$, since $m \notin \obs{A}$, which contradicts $\mathcal{U} \leqslant_{2} {\mathcal{T}'}[m - 1 \leftarrow m]$. Hence, in this case we have that $\mathcal{T} \lessdot_{1} \mathcal{T}'' \leqslant_{2} \mathcal{T}'$, as desired.

\item If $C(A\cup B',2d)$ is a subpolytope of $\mathcal{T}$, then we may exchange $A$ for $B'$. If there is an obstruction to this, then we get a contradiction in a similar way to the previous case.

\item Finally, suppose that $C(A \cup B \cup m,2d)$ is a subpolytope of $\mathcal{T}$, in which case $b_{d} = m - 1$. The triangulation of this subpolytope induced by $\mathcal{T}$ must contain the $d$-simplex $A$, since $m-1 \notin A$ but $A \in \simp(\mathcal{T}[m - 1 \leftarrow m])$. Since $C(A \cup B \cup m, 2d)$ is a cyclic polytope combinatorially equivalent to $C(2d+3,2d)$, all the triangulations of $C(A \cup B\cup m, 2d)$ are fan triangulations, by Lemma~\ref{lem:pent_triangs}. The possible triangulations of $C(A \cup B\cup m,2d)$ then consist of the fan triangulations determined by the elements $a_{i} \in A$, since we must have that $A$ is a $d$-simplex of the induced triangulation of the subpolytope. By Lemma~\ref{lem:pent_triangs}, the fan triangulation of $C(A \cup B \cup m,2d)$ at $a_{i}$ possesses an increasing flip at the $d$-simplex $\J = \tup{a_{0}, a_{1}, \dots, a_{i}, b_{i+1}, b_{i+2}, \dots, b_{d}}$, which is then exchanged for the $d$-simplex $\K  =\tup{b_{0}, b_{1}, \dots, b_{i-1}, a_{i+1}, a_{i+2}, \dots, a_{d}, m}$.

Let $\mathcal{T}''$ be the triangulation resulting from performing this increasing flip on $\mathcal{T}$. If $\mathcal{T}'' \not\leqslant_{2} \mathcal{T}'$, then $\obs{\J}=\tup{\obs{a}_{0},a_{1}, \dots, a_{i}, b_{i+1}, b_{i+2}, \dots, b_{d}} \in \simp(\mathcal{T}')$ where $a_{0} \leqslant \obs{a}_{0}<b_{0}$, by Lemma~\ref{lem:obstruction}. By Lemma~\ref{lem:even_simp_below}, we conclude that $\stup\obs{a}_{0}, b_{0}, a_{1}, b_{1}, \dots, a_{i}, a_{i + 1}, b_{i + 1}, \dots, a_{d}, b_{d}\etup$ is a $2d$-simplex of $\mathcal{T}'$. Consequently, $\obs{A} = \tup{\obs{a}_{0},a_{1}, \dots, a_{d}} \in \simp(\mathcal{T}')$. This implies that $\obs{A} \in \simp({\mathcal{T}'}[m - 1 \leftarrow m])$, which obstructs the flip from $\mathcal{T}[m - 1 \leftarrow m]$ to $\mathcal{U}$. But we assumed that $\mathcal{U} \leqslant_{2} \mathcal{T}'[m - 1 \leftarrow m]$ using the induction hypothesis. We therefore conclude that we cannot have $\obs{\J}  \in \simp(\mathcal{T}')$. This means that we have $\mathcal{T} \lessdot_{1} \mathcal{T}'' \leqslant_{2} \mathcal{T}'$, as desired.
\end{caseenum}

\item We now suppose that $\mathcal{T}[1 \to 2] \neq \mathcal{T}'[1 \to 2]$. By \cite[Proposition~2.11]{er}, the permutation \[\alpha = \begin{pmatrix}
1 & 2 & \dots & m - 1 & m\\
m & m - 1 & \dots & 2 & 1
\end{pmatrix}
\] on the vertices of the cyclic polytope induces an order-\emph{reversing} bijection $\alpha$ on both $\mathsf{S}_{1}(m, 2d)$ and $\mathsf{S}_{2}(m, 2d)$. We then have that $\alpha(\mathcal{T}[1 \to 2]) = \alpha(\mathcal{T})[m - 1 \leftarrow m] >_{2} \alpha(\mathcal{T}')[m - 1 \leftarrow m] = \alpha(\mathcal{T}'[1 \to 2])$. Hence, by applying the previous case, we obtain a triangulation~$\mathcal{T}''$ such that $\alpha(\mathcal{T}') \lessdot_{1} \mathcal{T}'' \leqslant_{2} \alpha(\mathcal{T})$. By applying $\alpha$ again, we obtain that $\mathcal{T} \leqslant_{2} \alpha(\mathcal{T}'') \lessdot_{1} \mathcal{T}'$, which resolves this case, noting Lemma~\ref{lem:different_formulations}.

\item We may now suppose that we have both $\mathcal{T}[m - 1 \leftarrow m] = \mathcal{T}'[m - 1 \leftarrow m]$ and $\mathcal{T}[1 \to 2] = \mathcal{T}'[1 \to 2]$. Since we are assuming that $m > 2d + 3$, we can apply Corollary~\ref{cor:even_contr_vertices} and relabel $[m]$ as $[m - 1]_{v+}$ in such a way that we have $\mathcal{T}\xvy <_{2} \mathcal{T}'\xvy$. By the induction hypothesis, we obtain that there exists an increasing flip $\mathcal{U}$ of $\mathcal{T}\xvy$ such that $\mathcal{T}\xvy \lessdot_{1} \mathcal{U} \leqslant_{2} \mathcal{T}'\xvy$. Suppose that this increasing bistellar flip replaces the $d$-simplex $A$ with the $d$-simplex $B$. Hence we have that $C(A \cup B, 2d)$ is a subpolytope of the triangulation~$\mathcal{T}\xvy$.

Note then that
\begin{align*}
(\mathcal{T}\xvy)[1 \rightarrow 2] &= (\mathcal{T}[1 \rightarrow 2])\xvy\\
&= (\mathcal{T}'[1 \rightarrow 2])\xvy\\
&= (\mathcal{T}'\xvy)[1 \rightarrow 2].
\end{align*}
This follows from \cite[Theorem~3.4]{rs-baues}, but can also be seen directly. We similarly reason that \[(\mathcal{T}\xvy)[m - 2 \leftarrow m - 1] = (\mathcal{T}'\xvy)[m - 2 \leftarrow m - 1].\] Using these observations, we can deduce the values of the first and last elements of $A$ and $B$. We know that $A \notin \simp(\mathcal{T}'\xvy)$, since $\mathcal{U}$ is obtained from $\mathcal{T}\xvy$ by replacing $A$ with $B$, and $\mathcal{U} \leqslant_{2} \mathcal{T}'\xvy$. By applying Lemma~\ref{lem:even_contr_vertices} to $A$, we obtain that $a_{0} = 1$ and $a_{d} = m - 2$. This implies that $b_{d} = m - 1$. Since $A \notin \simp(\mathcal{T}'\xvy)$, but $(\mathcal{T}\xvy)[1 \to 2] = (\mathcal{T}'\xvy)[1 \to 2]$, we must have $\tup{2, a_{1}, a_{2}, \dots, a_{d}} \in \simp(\mathcal{T}'\xvy)$. But $\tup{2, a_{1}, a_{2}, \dots, a_{d}}$ is an obstruction to the flip from $\mathcal{T}\xvy$ to $\mathcal{U}$ unless $b_{0} = 2$. We thus conclude that $b_{0} = 2$.

By Lemma~\ref{lem:subpolytope_exp_oth_vert}, we have that either
\begin{caseenum}
\item $C(A \cup B, 2d)$ is a subpolytope of $\mathcal{T}$ and $v \notin A \cup B$,\label{op:odd_subpoly}
\item $C((A \cup B \cup x)\setminus v, 2d)$ is a subpolytope of $\mathcal{T}$, where $v \in A \cup B$,\label{op:odd_subpoly_x}
\item $C((A \cup B \cup y)\setminus v, 2d)$ is a subpolytope of $\mathcal{T}$, where $v \in A \cup B$, or\label{op:odd_subpoly_y}
\item $C((A \cup B)_{v+}, 2d)$ is a subpolytope of $\mathcal{T}$, where $v \in A \cup B$.\label{op:odd_subpoly_x_y}
\end{caseenum}
We deal with each of these cases in turn.
\begin{caseenum}
\item We suppose that $C(A \cup B, 2d)$ is a subpolytope of $\mathcal{T}$ and $v \notin A \cup B$. The induced triangulation of this subpolytope must contain the $d$-simplex $A$, since it contains the $d$-simplex $A$ in $\mathcal{T}\xvy$. We hence perform an increasing flip on $\mathcal{T}$ by replacing $A$ with $B$ inside this subpolytope, obtaining a triangulation~$\mathcal{T}''$.

We claim that $\mathcal{T}'' \leqslant_{2} \mathcal{T}'$. If not, then, by Lemma~\ref{lem:obstruction}, there exists an obstruction $\obs{A} = \tup{\obs{a}_{0}, a_{1}, \dots, a_{d}} \in \simp(\mathcal{T}')$, where $a_{0} \leqslant \obs{a}_{0} < b_{0}$. But, since $a_{0} = 1, b_{0} = 2$, we must have that $\obs{a}_{0} = a_{0}$. This means that $A \in \simp(\mathcal{T}')$, which implies that $A \in \simp(\mathcal{T}'\xvy)$, which contradicts the fact that $\mathcal{U} \leqslant_{2} \mathcal{T}'\xvy$.

\item The case where $C((A \cup B \cup x) \setminus v, 2d)$ is a subpolytope of $\mathcal{T}$ and $v \in A \cup B$ is largely analogous to the previous case, although there are additional details. We let $\preim{A} \cup \preim{B} = (A \cup B \cup x) \setminus v$, where $\preim{A} \wr \preim{B}$. Hence, we have $\preim{A}\xvy = A$ and $\preim{B}\xvy = B$. We must have that the induced triangulation of the subpolytope $C(\preim{A} \cup \preim{B}, 2d)$ contains the $d$-simplex $\preim{A}$, since the induced triangulation of the subpolytope $C(A \cup B, 2d)$ of $\mathcal{T}\xvy$ contains $A$. We hence perform an increasing flip on $\mathcal{T}$ by replacing $\preim{A}$ with $\preim{B}$ inside this subpolytope, obtaining a triangulation~$\mathcal{T}''$.

We claim that $\mathcal{T}'' \leqslant_{2} \mathcal{T}$. If not, then, by Lemma~\ref{lem:obstruction}, there exists an obstruction $\obs{A} = \tup{\obs{a}_{0}, \preim{a}_{1}, \dots, \preim{a}_{d}}$, where $\preim{a}_{0} \leqslant \obs{a}_{0} < \preim{b}_{0}$. Since $1 < 2 < x < y$ in the ordering on $[m - 1]_{v+}$, by our choice of $x$ and $y$ from Corollary~\ref{cor:even_contr_vertices}, we must have that $\preim{a}_{0} = a_{0} = 1$ and $\preim{b}_{0} = b_{0} = 2$, and so $\obs{b}_{0} = b_{0} = 1$ and $\obs{A} = \preim{A}$. This means that $\preim{A} \in \simp(\mathcal{T}')$, and so $A \in \simp(\mathcal{T}'\xvy)$. This contradicts the fact that $\mathcal{U} \leqslant_{2} \mathcal{T}'\xvy$.

\item The case where $C((A \cup B \cup y) \setminus v, 2d + 1)$ is a subpolytope of $\mathcal{T}$ and $v \in A \cup B$ is analogous to the previous case.

\item We finally suppose that $C((A \cup B)_{v+}, 2d)$ is a subpolytope of $\mathcal{T}$, where $v \in A \cup B$. We label the vertices of this subpolytope by $\U = \{\h_{1}, \h_{2}, \dots, \h_{2d + 3}\}$. From what we deduced about the first and last values of $A$ and $B$, we know that $\h_{1} = 1, \h_{2} = 2, \h_{2d + 2} = m - 2, \h_{2d + 3} = m - 1$.

Using the notation of Lemma~\ref{lem:pent_triangs}, there are two cases to consider, depending upon whether the triangulation of $C(\U, 2d)$ is $\mathcal{T}_{\h_{2i}}$ for $i$ such that $2 \leqslant i \leqslant d + 1$, or $\mathcal{T}_{\h_{2i - 1}}$ for $i$ such that $1 \leqslant i \leqslant d + 1$. We know that the triangulation of this subpolytope contains an internal $d$-simplex $\preim{A}$ such that $\preim{A}\xvy = A$. Since $1 < 2 < x < y$ in the order on $[m - 1]_{v+}$, we must have that $\preim{a}_{0} = a_{0} = 1$. Moreover, we must have that $\preim{A} \subseteq \{\h_{1}, \h_{2}, \dots, \h_{2d + 2}\}$, since $a_{d} < b_{d}$. Hence, the triangulations $\mathcal{T}_{h_{2}}$ and $\mathcal{T}_{h_{2d + 3}}$ are excluded because they do not contain any internal simplices with $1$ as a vertex, while we know that $\preim{a}_{0} = 1$.

If the triangulation of $C(\U, 2d)$ is $\mathcal{T}_{\h_{2i}}$ for $i > 1$, then, by Lemma~\ref{lem:pent_triangs}, there exists an increasing flip given by replacing $\J = \stup\h_{1}, \h_{3}, \dots, \h_{2i - 3}, \h_{2i}, \h_{2i + 2}, \dots,$ $\h_{2d + 2}\etup$ with $\K = \tup{ \h_{2}, \h_{4}, \dots, \h_{2i - 2}, \h_{2i + 1}, \h_{2i + 3},\dots,\h_{2d + 3}}$. Since $\h_{1} = 1$ and $\h_{2} = 2$, if this flip is obstructed, it must be because $\J \in \simp(\mathcal{T}')$. If this is the case, then by Lemma~\ref{lem:even_simp_below}, we have that $\tup{\h_{1}, \h_{2}, \dots, \h_{2i - 3}, \h_{2i - 2}, \h_{2i}, \h_{2i + 1}, \dots, \h_{2d + 2}}$ is a $2d$-simplex of $\mathcal{T}'$. But we must have $\preim{A} \subseteq \stup \h_{1}, \h_{2}, \dots, \h_{2i - 3}, \h_{2i - 2}, \h_{2i}$, $\h_{2i + 1}$, $\dots$, $\h_{2d + 2}\etup$, since $\preim{A} \in \simp(\mathcal{T}_{\h_{2i}})$, and so $\h_{2i - 1} \notin \preim{A}$. This means that $\preim{A} \in \simp(\mathcal{T}')$, which implies that $A \in \simp(\mathcal{T}'\xvy)$. This contradicts the fact that $\mathcal{U} \leqslant_{2} \mathcal{T}'\xvy$, and so we conclude that the increasing flip given by replacing $\J$ with $\K$ cannot be obstructed. Hence if $\mathcal{T}''$ is the triangulation resulting from this flip, then we have $\mathcal{T} \lessdot_{1} \mathcal{T}'' \leqslant_{2} \mathcal{T}'$. The case where the triangulation of $C(\U, 2d)$ is $\mathcal{T}_{\h_{2i - 1}}$ for $i \geqslant 1$ behaves similarly. Thus, in all cases we are able to construct a triangulation~$\mathcal{T}''$ such that $\mathcal{T} \lessdot_{1} \mathcal{T}'' \leqslant_{2} \mathcal{T}'$, as desired.
\end{caseenum}
\end{proofenum}
\end{proof}

\section{Expanding triangulations}\label{sect:expansion}

We now wish to prove Lemma~\ref{lem:subpolytope_exp} and Lemma~\ref{lem:subpolytope_exp_oth_vert}. To do this, we need to understand the set of triangulations $\preim{\mathcal{T}}$ which transform into $\mathcal{T}$ under some contraction. In particular, we need to understand what the pre-image of a subpolytope of $\mathcal{T}$ looks like within $\preim{\mathcal{T}}$. We need to rule out the possibility that this pre-image is a strange collection of simplices which does not itself triangulate a subpolytope.

The pre-images of triangulations under the contraction $[m - 1 \leftarrow m]$ are well-understood due to \cite[Lemma~4.7(i)]{rs-baues}, which states that such triangulations $\preim{\mathcal{T}}$ are in bijection with sections of the vertex figure $\mathcal{T}\backslash (m - 1)$. By symmetry, one can apply the same theory for contractions $[1 \rightarrow 2]$. In this section we show how one can extend the theory to all contractions, that is, for any contraction $\xvy$ of $C([m - 1]_{v+}, n)$. This general case is more challenging, because the vertex figures of $C(m - 1, n)$ are less well-behaved at vertices which are not $1$ or $m - 1$. Considering this general case is necessary for proving Lemma~\ref{lem:subpolytope_exp_oth_vert}.

We also adopt the opposite perspective to contraction, where we think of the triangulation~$\mathcal{T}$ of expanding to $\preim{\mathcal{T}}$. Under this perspective, we are trying to understand the triangulations $\preim{\mathcal{T}}$ of $C(m, n)$ to which $\mathcal{T}$ can expand when one expands the cyclic polytope $C(m - 1, n)$ at a particular vertex. The question then becomes how subpolytopes of $\mathcal{T}$ behave under expansion.

\subsection{Expansion at the first or last vertex}

We begin by illustrating how the theory of \cite[Lemma~4.7(i)]{rs-baues} works, and then show how it can be used to prove Lemma~\ref{lem:subpolytope_exp}. The result \cite[Lemma~4.7(i)]{rs-baues} states that, given a triangulation~$\mathcal{T} \in \mathsf{S}(m - 1, n)$, triangulations $\preim{\mathcal{T}}$ of $C(m, n)$ such that $\preim{\mathcal{T}}[m - 1 \leftarrow m] = \mathcal{T}$ are in bijection with sections of the vertex figure $\mathcal{T}\backslash (m - 1)$. This bijection operates as follows.
\begin{align*}
\left\lbrace\parbox{2.9cm}{\begin{center}$\preim{\mathcal{T}} \in \mathsf{S}(m, n) \st$ $\preim{\mathcal{T}}[m - 1 \leftarrow m] = \mathcal{T}$\end{center}} \right\rbrace \quad &\longleftrightarrow \quad \left\lbrace\,\parbox{3.9cm}{$\text{Sections }\mathcal{W}\text{ of }\mathcal{T}\backslash (m - 1)$}\, \right\rbrace\\
\preim{\mathcal{T}} \qquad &\longmapsto \qquad \preim{\mathcal{T}}\backslash\{m - 1, m\} \\
\parbox{4cm}{\begin{center}$\mathcal{T}^{\circ} \, \cup \, \mathcal{W} \ast \{m - 1, m\}$ $ \cup\, \mathcal{T}\backslash (m - 1)^{+} \ast (m - 1)$ $\cup \,\mathcal{T}\backslash (m - 1)^{-} \ast m$ \end{center}} \quad &\longmapsfrom \qquad \mathcal{W}
\end{align*}
Here
\begin{itemize}
\item $\mathcal{T}^{\circ}$ is the set of $n$-simplices of $\mathcal{T}$ which contain neither $m - 1$ nor $m$ as a vertex.
\item $\mathcal{T}\backslash (m - 1)^{+}$ is the set of $(n - 1)$-simplices $S$ of $\mathcal{T}\backslash (m - 1)$ such that $|S|_{n - 1}$ is above $|\mathcal{W}|_{n - 1}$ with respect to the $(n - 1)$-th coordinate.
\item $\mathcal{T}\backslash (m - 1)^{-}$ is the set of $(n - 1)$-simplices $S$ of $\mathcal{T}\backslash (m - 1)$ such that $|S|_{n - 1}$ is below $|\mathcal{W}|_{n - 1}$ with respect to the $(n - 1)$-th coordinate.
\end{itemize}
Note that the sets $\mathcal{T}\backslash (m - 1)^{+}$ and $\mathcal{T}\backslash (m - 1)^{-}$ are defined with respect to a geometric realisation. Recall that $\mathcal{T}\backslash (m - 1)$ is a triangulation of $C(m - 2, n - 1)$, so our geometric realisation is $|-|_{n - 1}$. These sets, of course, do not depend upon the particular geometric realisation of $C(m - 2, n - 1)$ given by the choice of points on the moment curve.

We now demonstrate how this result works, using an example.

\begin{example}\label{ex:m_exp}
We consider the triangulation~$\mathcal{T}$ of $C(5, 3)$ with $3$-simplices $\{1234, 1245, 2345\}$. Here we abbreviate by writing the simplices as strings, so that $1234 = \tup{1, 2, 3, 4}$. The triangulations $\preim{\mathcal{T}}$ of $C(6,3)$ such that $\preim{\mathcal{T}}[5 \leftarrow 6] = \mathcal{T}$ are in bijection with the sections of the triangulation~$\mathcal{T}\backslash 5$. We have that $\mathcal{T}\backslash 5$ is a triangulation of $C(4, 2)$ which can be realised geometrically as the triangulation of the vertex figure of $C(5, 3)$ at $|5|$. The triangulations $|\mathcal{T}|$ and $|\mathcal{T}\backslash 5|$ are illustrated in Figure~\ref{fig:m_vertex_figure}.

The triangulation~$\mathcal{T} \backslash 5$ has three sections $\mathcal{W}_{1}, \mathcal{W}_{2}, \mathcal{W}_{3}$, which are illustrated in Figure~\ref{fig:mvf_sections}. By \cite[Lemma~4.7(i)]{rs-baues}, these sections correspond to triangulations $\preim{\mathcal{T}}_{1}, \preim{\mathcal{T}}_{2}, \preim{\mathcal{T}}_{3}$ of $C(6, 3)$ such that $\preim{\mathcal{T}}_{i}[5 \leftarrow 6] = \mathcal{T}$, and \[\preim{\mathcal{T}}_{i} = \mathcal{T}^{\circ} \cup (\mathcal{W}_{i} \ast \{5, 6\}) \cup (\mathcal{T}\backslash 5^{+} \ast 5) \cup (\mathcal{T} \backslash 5^{-} \ast 6).\] Hence one may compute that
\begin{align*}
\preim{\mathcal{T}}_{1} &= \{1234\} \cup \{1256, 2356, 3456\} \cup \{1245, 2345\} \cup \emptyset, \\
\preim{\mathcal{T}}_{2} &= \{1234\} \cup \{1256, 2456\} \cup \{1245\} \cup \{2346\}, \\
\preim{\mathcal{T}}_{3} &= \{1234\} \cup \{1456\} \cup \emptyset \cup \{1246, 2346\}.
\end{align*}
\end{example}

\begin{figure}
\caption{The triangulation $|\mathcal{T}|$ of $\creal{5}{3}$ and the triangulation $|\mathcal{T}\backslash 5|$}\label{fig:m_vertex_figure}
\[
\begin{tikzpicture}

\begin{scope}[shift={(-3,0)}]


\coordinate(1a) at (-2.77,2.25);
\coordinate(2a) at (-1.77,0.75);
\coordinate(3a) at (-0.77,0.25);
\coordinate(4a) at (0.23,0.75);

\coordinate(1b) at (-1.75,2.5);
\coordinate(2b) at (-0.75,1);
\coordinate(4b) at (1.15,1);
\coordinate(5b) at (2.25,2.5);

\coordinate(2c) at (-0.25,0.25);
\coordinate(3c) at (0.75,-0.25);
\coordinate(4c) at (1.75,0.25);
\coordinate(5c) at (2.75,1.75);



\path[name path = line 1] (1a) -- (4a);


\path[name path = line 2] (1b) -- (2b);
\path[name path = line 3] (2b) -- (4b);


\path [name intersections={of = line 1 and line 2}];
\coordinate (a)  at (intersection-1);

\path [name intersections={of = line 1 and line 3}];
\coordinate (b)  at (intersection-1);


\draw[fill=red!30, draw=none] (2a) -- (3a) -- (4a) -- (2a);
\draw[fill=green!30, draw=none] (1a) -- (2a) -- (2b) -- (a) -- (1a);
\draw[fill=green!30, draw=none] (2a) -- (2b) -- (b) -- (4a) -- (2a);
\draw[fill=blue!30, draw=none] (2b) -- (5b) -- (4b) -- (2b);
 
\draw[fill=cyc!30, draw=none] (1b) -- (2b) -- (5b) -- (1b);
\draw[fill=cyc!30, draw=none] (2c) -- (5c) -- (3c) -- (2c);
\draw[fill=cyc!30, draw=none] (3c) -- (4c) -- (5c) -- (3c);


\draw (1a) -- (a);
\draw[dotted] (a) -- (b);
\draw (b) -- (4a);


\draw(1a) -- (2a);
\draw (2a) -- (3a);
\draw (3a) -- (4a);
\draw (2a) -- (4a);
\draw[dotted] (1a) -- (3a); 


\draw (1b) -- (5b);
\draw (1b) -- (2b);
\draw (2b) -- (4b);
\draw (4b) -- (5b);
\draw (2b) -- (5b);
\draw[dotted] (1b) -- (4b);


\draw (2c) -- (5c);
\draw (2c) -- (3c);
\draw (3c) -- (4c);
\draw (4c) -- (5c);
\draw (3c) -- (5c);
\draw[dotted] (2c) -- (4c);


\node at (1a) {$\bullet$};
\node at (2a) {$\bullet$};
\node at (3a) {$\bullet$};
\node at (4a) {$\bullet$};

\node at (1b) {$\bullet$};
\node at (2b) {$\bullet$};
\node at (4b) {$\bullet$};
\node at (5b) {$\bullet$};

\node at (2c) {$\bullet$};
\node at (3c) {$\bullet$};
\node at (4c) {$\bullet$};
\node at (5c) {$\bullet$};

\end{scope}


\begin{scope}[shift={(3,0)}]


\coordinate(31) at (-2,2);
\coordinate(32) at (-1,0.5);
\coordinate(33) at (0,0);
\coordinate(34) at (1,0.5);
\coordinate(35) at (2,2);


\coordinate (v1) at (0,2);
\coordinate (v2) at (0.5,1.25);
\coordinate (v3) at (1,1);
\coordinate (v4) at (1.5,1.25);


\draw[fill=cyc!30,draw=none] (31) -- (32) -- (v2) -- (v1) -- (31);
\draw[fill=cyc!30,draw=none] (32) -- (v2) -- (v3) -- (33) -- (32);
\draw[fill=cyc!30,draw=none] (33) -- (v3) -- (v4) -- (34) -- (33);


\draw[green] (31) -- (32);
\draw[red] (32) -- (33);
\draw[red] (33) -- (34);
\draw[blue] (34) -- (v4);
\draw (v4) -- (v1) -- (31);
\draw (v3) -- (33);
\draw[blue] (v2) -- (32);
\draw (v1) -- (v2) -- (v3) -- (v4);
\draw[dotted] (31) -- (33);
\draw[dotted] (31) -- (34);


\draw[blue] (v2) -- (v4);


\node at (v1) {$\bullet$};
\node at (v2) {$\bullet$};
\node at (v3) {$\bullet$};
\node at (v4) {$\bullet$};


\node at (31) {$\bullet$};
\node at (32) {$\bullet$};
\node at (33) {$\bullet$};
\node at (34) {$\bullet$};


\node at (31) [left = 1mm of 31]{1};
\node at (32) [below left = 1mm of 32]{2};
\node at (33) [below = 1mm of 33]{3};
\node at (34) [below right = 1mm of 34]{4};

\end{scope}

\end{tikzpicture}
\]
\end{figure}

\begin{figure}
\caption{Sections of $|\mathcal{T}\backslash 5|$}\label{fig:mvf_sections}
\[
\begin{tikzpicture}

\begin{scope}[shift={(-4,0)}]


\coordinate(1) at (-2,2);
\coordinate(2) at (-1,0.5);
\coordinate(3) at (0,0);
\coordinate(4) at (1,0.5);


\draw (1) -- (2) -- (3) -- (4) -- (1);


\draw (2) -- (4);


\draw[ultra thick,secclr] (1) -- (2) -- (3) -- (4);


\node at (1) {$\bullet$};
\node at (2) {$\bullet$};
\node at (3) {$\bullet$};
\node at (4) {$\bullet$};


\node at (1) [left = 1mm of 1]{1};
\node at (2) [below left = 1mm of 2]{2};
\node at (3) [below = 1mm of 3]{3};
\node at (4) [below right = 1mm of 4]{4};


\node at (0,-1) {$\mathcal{W}_{1}$};

\end{scope}

\begin{scope}[shift={(0,0)}]


\coordinate(1) at (-2,2);
\coordinate(2) at (-1,0.5);
\coordinate(3) at (0,0);
\coordinate(4) at (1,0.5);


\draw (1) -- (2) -- (3) -- (4) -- (1);


\draw (2) -- (4);


\draw[ultra thick,secclr] (1) -- (2) -- (4);


\node at (1) {$\bullet$};
\node at (2) {$\bullet$};
\node at (3) {$\bullet$};
\node at (4) {$\bullet$};


\node at (1) [left = 1mm of 1]{1};
\node at (2) [below left = 1mm of 2]{2};
\node at (3) [below = 1mm of 3]{3};
\node at (4) [below right = 1mm of 4]{4};


\node at (0,-1) {$\mathcal{W}_{2}$};

\end{scope}

\begin{scope}[shift={(4,0)}]


\coordinate(1) at (-2,2);
\coordinate(2) at (-1,0.5);
\coordinate(3) at (0,0);
\coordinate(4) at (1,0.5);


\draw (1) -- (2) -- (3) -- (4) -- (1);


\draw (2) -- (4);


\draw[ultra thick,secclr] (1) -- (4);


\node at (1) {$\bullet$};
\node at (2) {$\bullet$};
\node at (3) {$\bullet$};
\node at (4) {$\bullet$};


\node at (1) [left = 1mm of 1]{1};
\node at (2) [below left = 1mm of 2]{2};
\node at (3) [below = 1mm of 3]{3};
\node at (4) [below right = 1mm of 4]{4};


\node at (0,-1) {$\mathcal{W}_{3}$};

\end{scope}

\end{tikzpicture}
\]
\end{figure}

We now show how one may apply the result of \cite[Lemma~4.7]{rs-baues} to prove Lemma~\ref{lem:subpolytope_exp}.

\begin{proof}[Proof of Lemma~\ref{lem:subpolytope_exp}]
As above, by \cite[Lemma~4.7(i)]{rs-baues}, triangulations $\preim{\mathcal{T}}$ of $C(m, n)$ such that $\preim{\mathcal{T}}[m - 1 \leftarrow m]=\mathcal{T}$ are in bijection with sections $\mathcal{W}$ of $\mathcal{T}\backslash m - 1$. Moreover, given a section $\mathcal{W}$ of $\mathcal{T}\backslash m - 1$, the corresponding triangulation $\preim{\mathcal{T}}$ has the set of $n$-simplices \[\mathcal{T}^{\circ} \cup (\mathcal{W} \ast \{m - 1, m\}) \cup (\mathcal{T}\backslash m - 1^{+}\ast (m - 1)) \cup (\mathcal{T} \backslash m - 1^{-} \ast m).\]

The set-up of Lemma~\ref{lem:subpolytope_exp} gives us that $\mathcal{T}$ contains a cyclic subpolytope $C(\U, n)$. Let $\mathcal{T}_{H}$ be the induced triangulation of the subpolytope $C(H, n)$ in $\mathcal{T}$. If $m - 1 \notin\U$, then we have $\mathcal{T}_{\U} \subseteq \mathcal{T}^{\circ}$, so that $C(\U, n)$ is a subpolytope of $\preim{\mathcal{T}}$. Hence, assume that $m - 1 \in \U$. This means that $C(\U \setminus m - 1, n)$ is a subpolytope of $\mathcal{T}\backslash m - 1$, with induced triangulation~$\mathcal{T}_{\U}\backslash m - 1$. There are then three options:
\begin{enumerate}
\item The subpolytope $\creal{\U \setminus m - 1}{n}$ lies below the section $|\mathcal{W}|$.\label{op:below}
\item The $\creal{\U \setminus m - 1}{n}$ lies above the section $|\mathcal{W}|$.\label{op:above}
\item The section $|\mathcal{W}|$ intersects $\creal{\U\setminus m - 1}{n}$.\label{op:intersect}
\end{enumerate}
In case (\ref{op:below}), we have that $\mathcal{T}_{\U}\backslash m - 1 \subseteq \mathcal{T}\backslash m - 1^{-}$, so that $C((\U\setminus m - 1) \cup m, n)$ is a subpolytope of $\preim{\mathcal{T}}$. In case (\ref{op:above}), we reason similarly that $C(\U, n)$ is a subpolytope of $\preim{\mathcal{T}}$.

For case (\ref{op:intersect}), we have that $\mathcal{W}$ induces a section $\mathcal{W}_{\U}$ of $\mathcal{T}_{\U}\backslash m - 1$. By \cite[Lemma~4.7(i)]{rs-baues}, we have that the section $\mathcal{W}_{\U}$ of $\mathcal{T}_{\U}$ gives us a triangulation $\preim{\mathcal{T}}_{\U}$ of $C(\U \cup m, n)$. Moreover, the triangulation $\preim{\mathcal{T}}_{\U}$ of $C(\U \cup m, n)$ has simplices \[\mathcal{T}_{\U}^{\circ} \cup (\mathcal{W}_{\U} \ast \{m - 1, m\}) \cup (\mathcal{T}_{\U}\backslash m - 1^{+}\ast (m - 1)) \cup (\mathcal{T}_{\U} \backslash m - 1^{-} \ast m).\] It is then clear that $\mathcal{T}_{\U}^{\circ} \subseteq \mathcal{T}^{\circ}$, $\mathcal{W}_{\U} \subseteq \mathcal{W}$, $\mathcal{T}_{\U}\backslash m - 1^{+} \subseteq \mathcal{T}\backslash m - 1^{+}$, and $\mathcal{T}_{\U}\backslash m - 1^{-} \subseteq \mathcal{T}\backslash m - 1^{-}$. Hence $\preim{\mathcal{T}}_{\U}$ is a subtriangulation of $\preim{\mathcal{T}}$, which gives us that $C(H \cup m, n)$ is a subpolytope of $\preim{\mathcal{T}}$.
\end{proof}

However, we also wish to prove the analogue of Lemma~\ref{lem:subpolytope_exp} for expansion at vertices other than $1$ and $m$, namely Lemma~\ref{lem:subpolytope_exp_oth_vert}. The difficulty is that \cite[Lemma~4.7(i)]{rs-baues} does not apply to these other vertices. Nevertheless, we consider the following example, which suggests that a version of \cite[Lemma~4.7(i)]{rs-baues} ought to hold at these vertices too. We spend most of the remainder of this section proving this more general version of \cite[Lemma~4.7(i)]{rs-baues}, which we then use to prove Lemma~\ref{lem:subpolytope_exp_oth_vert}.

\begin{example}\label{ex:v_exp}
We proceed in the opposite direction to Example~\ref{ex:m_exp}. That is, we consider the same triangulation~$\mathcal{T}$ of $C(5,3)$, but now directly compute the triangulations $\preim{\mathcal{T}}$ of $C([5]_{2+}, 3)$ such that $\preim{\mathcal{T}}[x \rightarrow 2 \leftarrow y] = \mathcal{T}$. We then analyse the triangulated vertex figure $\mathcal{T}\backslash 2$ to see if there is a correspondence.

By direct computation, there are four triangulations $\preim{\mathcal{T}}$ of $C([5]_{2+}, 3)$ such that $\preim{\mathcal{T}}[x \rightarrow 2 \leftarrow y] = \mathcal{T}$, namely
\begin{align*}
\preim{\mathcal{T}_{1}} &= \emptyset \cup \{1xy3, xy35\} \cup \{1x34, 1x45, x345\} \cup \emptyset, \\
\preim{\mathcal{T}_{2}} &= \emptyset \cup \{1xy3, xy34, xy45\} \cup \{1x34, 1x45\} \cup \{y345\}, \\
\preim{\mathcal{T}_{3}} &= \emptyset \cup \{1xy4, xy45\} \cup \{1x45\} \cup \{1y34, y345\}, \\
\preim{\mathcal{T}_{4}} &= \emptyset \cup \{1xy5\} \cup \emptyset \cup \{1y34, 1y45, y345\}.
\end{align*}
Here we have split up the simplices into sets according to whether they have the vertex $x, y$, or both. Now let \[\mathcal{W}_{i} = \set{A \st A \cup \{x, y\} \in \preim{\mathcal{T}}_{i}},\] so that
\begin{align*}
\mathcal{W}_{1} &= \{13, 35\},\\
\mathcal{W}_{2} &= \{13, 34, 45\},\\
\mathcal{W}_{3} &= \{14, 45\},\\
\mathcal{W}_{4} &= \{15\}.
\end{align*}
Consider these sets of simplices as subcomplexes of $\mathcal{T}\backslash 2$. We obtain the results shown in Figure~\ref{fig:ivf_sections} using geometric realisations. Note further that the simplices of the triangulation $\preim{\mathcal{T}}_{i}$ which have $x$ as a vertex correspond to the simplices of $\mathcal{T}\backslash 2$ which are above the section, and the simplices of the triangulation $\preim{\mathcal{T}}_{i}$ which possess $y$ as a vertex correspond to the simplices of $\mathcal{T}\backslash 2$ which are below the section.
\end{example}

\begin{figure}
\caption{The triangulation $|\mathcal{T}|$ of $\creal{5}{3}$ and the triangulation $|\mathcal{T}\backslash 2|$}\label{fig:i_vertex_figure}
\[
\begin{tikzpicture}

\begin{scope}[shift={(-3,0)}]


\coordinate(1a) at (-2.77,2.25);
\coordinate(2a) at (-1.77,0.75);
\coordinate(3a) at (-0.77,0.25);
\coordinate(4a) at (0.23,0.75);

\coordinate(1b) at (-1.75,2.5);
\coordinate(2b) at (-0.75,1);
\coordinate(4b) at (1.15,1);
\coordinate(5b) at (2.25,2.5);

\coordinate(2c) at (-0.25,0.25);
\coordinate(3c) at (0.75,-0.25);
\coordinate(4c) at (1.75,0.25);
\coordinate(5c) at (2.75,1.75);



\path[name path = line 1] (1a) -- (4a);


\path[name path = line 2] (1b) -- (2b);
\path[name path = line 3] (2b) -- (4b);


\path [name intersections={of = line 1 and line 2}];
\coordinate (a)  at (intersection-1);

\path [name intersections={of = line 1 and line 3}];
\coordinate (b)  at (intersection-1);


\draw[fill=red!30, draw=none] (2a) -- (3a) -- (4a) -- (2a);
\draw[fill=green!30, draw=none] (1a) -- (2a) -- (2b) -- (a) -- (1a);
\draw[fill=green!30, draw=none] (2a) -- (2b) -- (b) -- (4a) -- (2a);
\draw[fill=blue!30, draw=none] (2b) -- (5b) -- (4b) -- (2b);
 
\draw[fill=cyc!30, draw=none] (1b) -- (2b) -- (5b) -- (1b);
\draw[fill=cyc!30, draw=none] (2c) -- (5c) -- (3c) -- (2c);
\draw[fill=cyc!30, draw=none] (3c) -- (4c) -- (5c) -- (3c);


\draw (1a) -- (a);
\draw[dotted] (a) -- (b);
\draw (b) -- (4a);


\draw(1a) -- (2a);
\draw (2a) -- (3a);
\draw (3a) -- (4a);
\draw (2a) -- (4a);
\draw[dotted] (1a) -- (3a); 


\draw (1b) -- (5b);
\draw (1b) -- (2b);
\draw (2b) -- (4b);
\draw (4b) -- (5b);
\draw (2b) -- (5b);
\draw[dotted] (1b) -- (4b);


\draw (2c) -- (5c);
\draw (2c) -- (3c);
\draw (3c) -- (4c);
\draw (4c) -- (5c);
\draw (3c) -- (5c);
\draw[dotted] (2c) -- (4c);


\node at (1a) {$\bullet$};
\node at (2a) {$\bullet$};
\node at (3a) {$\bullet$};
\node at (4a) {$\bullet$};

\node at (1b) {$\bullet$};
\node at (2b) {$\bullet$};
\node at (4b) {$\bullet$};
\node at (5b) {$\bullet$};

\node at (2c) {$\bullet$};
\node at (3c) {$\bullet$};
\node at (4c) {$\bullet$};
\node at (5c) {$\bullet$};

\end{scope}


\begin{scope}[shift={(3,0)}]


\coordinate(31) at (-2,2);
\coordinate(32) at (-1,0.5);
\coordinate(33) at (0,0);
\coordinate(34) at (1,0.5);
\coordinate(35) at (2,2);


\coordinate (v1) at ($0.5*(31) + 0.5*(32)$);
\coordinate (v3) at ($0.75*(33) + 0.25*(32)$);
\coordinate (v4) at ($0.4*(34) + 0.6*(32)$);
\coordinate (v5) at ($0.5*(35) + 0.5*(32)$);


\path[name path = p13] (31) -- (33);
\path[name path = p14] (31) -- (34);
\path[name path = p24] (32) -- (34);
\path[name path = pv15] (v1) -- (v5);
\path[name path = pv35] (v3) -- (v5);


\path [name intersections={of = p13 and pv15}];
\coordinate (p13pv15)  at (intersection-1);

\path [name intersections={of = p13 and pv35}];
\coordinate (p13pv35)  at (intersection-1);

\path [name intersections={of = p14 and pv15}];
\coordinate (p14pv15)  at (intersection-1);

\path [name intersections={of = p14 and pv35}];
\coordinate (p14pv35)  at (intersection-1);

\path [name intersections={of = p24 and pv35}];
\coordinate (p24pv35)  at (intersection-1);


\draw[fill=cyc!30,draw=none] (31) -- (v1) -- (v5) -- (35) -- (31);
\draw[fill=cyc!30,draw=none] (v3) -- (v5) -- (35) -- (33) -- (v3);
\draw[fill=cyc!30,draw=none] (33) -- (35) -- (34) -- (33);


\draw[green] (31) -- (v1);
\draw (v1) -- (v3);
\draw[red] (v3) -- (33);
\draw[red] (33) -- (34);
\draw[blue] (34) -- (35);
\draw (35) -- (31);
\draw (v3) -- (v5) -- (v1);
\draw[blue] (v5) -- (35);
\draw (33) -- (35);
\draw[dotted] (31) -- (p13pv15);
\draw[dotted] (p13pv35) -- (33);
\draw[dotted] (31) -- (p14pv15);
\draw[dotted] (p14pv35) -- (34);


\draw[green] (v1) -- (v4);
\draw[red] (v3) -- (v4);
\draw[blue] (v5) -- (v4);


\node at (v1) {$\bullet$};
\node at (v3) {$\bullet$};
\node at (v5) {$\bullet$};


\node at (31) {$\bullet$};
\node at (33) {$\bullet$};
\node at (34) {$\bullet$};
\node at (35) {$\bullet$};


\node at (31) [left = 1mm of 31]{1};
\node at (33) [below = 1mm of 33]{3};
\node at (34) [below right = 1mm of 34]{4};
\node at (35) [right = 1mm of 35]{5};

\end{scope}

\end{tikzpicture}
\]
\end{figure}

\begin{figure}
\caption{Sections of $|\mathcal{T}\backslash 2|$}\label{fig:ivf_sections}
\[
\begin{tikzpicture}

\begin{scope}[scale=1.5]

\begin{scope}[shift={(-2,0)}]


\coordinate(31) at (-2,2);
\coordinate(32) at (-1,0.5);
\coordinate(33) at (0,0);
\coordinate(34) at (1,0.5);
\coordinate(35) at (2,2);


\coordinate (v1) at ($0.5*(31) + 0.5*(32)$);
\coordinate (v3) at ($0.75*(33) + 0.25*(32)$);
\coordinate (v4) at ($0.4*(34) + 0.6*(32)$);
\coordinate (v5) at ($0.5*(35) + 0.5*(32)$);


\draw (v3) -- (v5) -- (v1) -- (v3);


\draw (v1) -- (v4);
\draw (v3) -- (v4);
\draw (v5) -- (v4);


\node at (v1) [left = 1mm of v1]{$1$};
\node at (v3) [below = 1mm of v3]{$3$};
\node at (v4) [above = 1mm of v4]{$4$};
\node at (v5) [right = 1mm of v5]{$5$};


\draw[secclr,ultra thick] (v1) -- (v3) -- (v5);


\node at (v1) {$\bullet$};
\node at (v3) {$\bullet$};
\node at (v5) {$\bullet$};


\node at (-0.25,-0.6) {$\mathcal{W}_{1}$};

\end{scope}

\begin{scope}[shift={(2,0)}]


\coordinate(31) at (-2,2);
\coordinate(32) at (-1,0.5);
\coordinate(33) at (0,0);
\coordinate(34) at (1,0.5);
\coordinate(35) at (2,2);


\coordinate (v1) at ($0.5*(31) + 0.5*(32)$);
\coordinate (v3) at ($0.75*(33) + 0.25*(32)$);
\coordinate (v4) at ($0.4*(34) + 0.6*(32)$);
\coordinate (v5) at ($0.5*(35) + 0.5*(32)$);


\draw (v3) -- (v5) -- (v1) -- (v3);


\draw (v1) -- (v4);
\draw (v3) -- (v4);
\draw (v5) -- (v4);


\node at (v1) [left = 1mm of v1]{$1$};
\node at (v3) [below = 1mm of v3]{$3$};
\node at (v4) [above = 1mm of v4]{$4$};
\node at (v5) [right = 1mm of v5]{$5$};


\draw[secclr,ultra thick] (v1) -- (v3) -- (v4) -- (v5);


\node at (v1) {$\bullet$};
\node at (v3) {$\bullet$};
\node at (v5) {$\bullet$};


\node at (-0.25,-0.6) {$\mathcal{W}_{2}$};

\end{scope}

\begin{scope}[shift={(-2,-2.5)}]


\coordinate(31) at (-2,2);
\coordinate(32) at (-1,0.5);
\coordinate(33) at (0,0);
\coordinate(34) at (1,0.5);
\coordinate(35) at (2,2);


\coordinate (v1) at ($0.5*(31) + 0.5*(32)$);
\coordinate (v3) at ($0.75*(33) + 0.25*(32)$);
\coordinate (v4) at ($0.4*(34) + 0.6*(32)$);
\coordinate (v5) at ($0.5*(35) + 0.5*(32)$);


\draw (v3) -- (v5) -- (v1) -- (v3);


\draw (v1) -- (v4);
\draw (v3) -- (v4);
\draw (v5) -- (v4);


\node at (v1) [left = 1mm of v1]{$1$};
\node at (v3) [below = 1mm of v3]{$3$};
\node at (v4) [above = 1mm of v4]{$4$};
\node at (v5) [right = 1mm of v5]{$5$};


\draw[secclr,ultra thick] (v1) -- (v4) -- (v5);


\node at (v1) {$\bullet$};
\node at (v3) {$\bullet$};
\node at (v5) {$\bullet$};


\node at (-0.25,-0.6) {$\mathcal{W}_{3}$};

\end{scope}

\begin{scope}[shift={(2,-2.5)}]


\coordinate(31) at (-2,2);
\coordinate(32) at (-1,0.5);
\coordinate(33) at (0,0);
\coordinate(34) at (1,0.5);
\coordinate(35) at (2,2);


\coordinate (v1) at ($0.5*(31) + 0.5*(32)$);
\coordinate (v3) at ($0.75*(33) + 0.25*(32)$);
\coordinate (v4) at ($0.4*(34) + 0.6*(32)$);
\coordinate (v5) at ($0.5*(35) + 0.5*(32)$);


\draw (v3) -- (v5) -- (v1) -- (v3);


\draw (v1) -- (v4);
\draw (v3) -- (v4);
\draw (v5) -- (v4);


\node at (v1) [left = 1mm of v1]{$1$};
\node at (v3) [below = 1mm of v3]{$3$};
\node at (v4) [above = 1mm of v4]{$4$};
\node at (v5) [right = 1mm of v5]{$5$};


\draw[secclr,ultra thick] (v1) -- (v5);


\node at (v1) {$\bullet$};
\node at (v3) {$\bullet$};
\node at (v5) {$\bullet$};


\node at (-0.25,-0.6) {$\mathcal{W}_{4}$};

\end{scope}

\end{scope}

\end{tikzpicture}
\]
\end{figure}

This suggests that there ought to be a version of \cite[Lemma~4.7(i)]{rs-baues} for expansion at vertices $v$ such that $1 < v < m$. However, there are several outstanding issues.
\begin{enumerate}
\item The vertex figures $C(m, n)\backslash v$ are not generally cyclic polytopes for $1 < v < m$, as can be seen from Figure~\ref{fig:i_vertex_figure} and Figure~\ref{fig:ivf_sections}.\label{q:vf_not_cp}
\item It is not clear how to define the orientation on the vertex figure $C(m, n)\backslash v$, that is, how to decide what the upper and lower facets of $C(m, n) \backslash v$ are. The orientation of $\creal{5}{3}\backslash 2$ from Figure~\ref{fig:ivf_sections} may look very natural, with $|13|$ and $|35|$ as lower facets and $|15|$ as the sole upper facet. But it is not clear where this comes from, because $|123|$ is a lower facet of $\creal{5}{3}$, whereas $|125|$ and $|235|$ are upper facets.\label{q:vf_orient}
\item Likewise, it is not clear how to orient the simplices in the triangulation. From Figure~\ref{fig:ivf_sections}, it seems that $|345|$ has lower facet $|35|$ and upper facets $|34|,|45|$, whereas $|134|$ has lower facets $|13|,|34|$ and upper facet $|14|$. But is not immediately obvious what the basis for this is.\label{q:simp_orient}
\item Finally, it is not obvious how to define sections of a triangulation of the vertex figure $C(m, n)\backslash v$. Moreover, if one can define the right notion of a section, it is not clear whether such sections will be triangulations of lower-dimensional cyclic polytopes, given that the vertex figures themselves are not cyclic polytopes. \label{q:sect_triang}
\end{enumerate}
Over the course of this section we shall show how to resolve all these issues and prove the analogue of \cite[Lemma~4.7(i)]{rs-baues} at vertices which are not $1$ or $m$. We first show how one can orient the vertex figures $C(m, n)\backslash v$, that is, decide which the upper and lower facets of $C(m, n)\backslash v$ are. This explains the natural orientation we arrived at in Figure~\ref{fig:ivf_sections}, and solves issue (\ref{q:vf_orient}). Next we apply the same logic to the simplices of the triangulation, thereby answering (\ref{q:simp_orient}).

This gives us a partial order on the simplices of $\mathcal{T}\backslash v$. Using this, we derive the relevant notion of a section within the triangulation~$\mathcal{T}\backslash v$. We show that, in fact, our sections are triangulations of $C([m] \setminus v, n - 2)$, solving issue (\ref{q:sect_triang}). This culminates in our proving the following proposition, which is the analogue of \cite[Lemma~4.7(i)]{rs-baues}, showing that point (\ref{q:vf_not_cp}) is not a problem.

\begin{proposition}\label{prop:sections=expansions}
Let $\mathcal{T}$ be a triangulation of $C(m, n)$. There is a bijection between triangulations $\preim{\mathcal{T}}$ of $C([m]_{v+}, n)$ such that $\preim{\mathcal{T}}\xvy = \mathcal{T}$ and sections of $\mathcal{T} \backslash v$, given by
\begin{align*}
\adjset{\parbox{2.9cm}{\begin{center}$\preim{\mathcal{T}} \in \mathsf{S}([m]_{v+}, n) \st$ $\preim{\mathcal{T}}\xvy = \mathcal{T}$\end{center}}} &\longleftrightarrow \set{\parbox{2.9cm}{$\text{Sections }\mathcal{W}\text{ of }\mathcal{T}\backslash v$}}\\
\preim{\mathcal{T}} \quad &\longmapsto \quad \preim{\mathcal{T}}\backslash\{x, y\}\\
\parbox{3.7cm}{\begin{center}$\mathcal{T}^{\circ}  \cup (\mathcal{W} \ast \{x, y\})$ $\cup (\tvplusx) \cup (\tvminusy)$\end{center}} &\longmapsfrom \quad \mathcal{W}.
\end{align*}
\end{proposition}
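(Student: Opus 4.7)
My plan is to proceed in the order suggested by the excerpt, resolving the four issues listed there in turn before assembling the bijection. First I would equip the combinatorial vertex figure $C(m,n)\backslash v$ with an orientation by declaring a facet $F \in \vfacets{[m]\setminus v}{n}$ to be upper or lower according to a parity-adjusted version of Gale's Evenness Criterion: one applies Gale's criterion to $F \cup v$ in $C(m, n)$, then flips upper and lower depending on the parity of the number of elements of $F$ lying past $v$ on the moment curve. Geometrically this reflects the fact that the vertex figure at $v$ arises from intersecting $|C(m,n)|$ with a hyperplane just below $|v|$, so facets of $C(m,n)$ containing $v$ have their induced orientations in the vertex figure determined by the side on which the slicing hyperplane lies. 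The same recipe orients each $(n-1)$-simplex $S \in \mathcal{T}\backslash v$, because $S$ is the intersection of the slicing hyperplane with an $n$-simplex of $\mathcal{T}$ containing $v$.

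Next I would define a section $\mathcal{W}$ of $\mathcal{T}\backslash v$ to be a collection of $(n-2)$-simplices such that every oriented $(n-1)$-simplex $S$ of $\mathcal{T}\backslash v$ decomposes uniquely as a join $S = S_{-} \ast S_{0} \ast S_{+}$ with $S_{-}$ a downward-oriented face, $S_{0} \in \mathcal{W}$, and $S_{+}$ an upward-oriented face, where either of $S_{\pm}$ may be empty. The key structural result at this stage is that sections of $\mathcal{T}\backslash v$ correspond bijectively with triangulations of $C([m]\setminus v,\, n-2)$. My strategy for proving this is to realise $C(m,n)$ with a slightly perturbed embedding in which $|v|$ is moved off the moment curve, so that the resulting vertex figure at $v$ is a cyclic $(n-1)$-polytope on $[m]\setminus v$; the combinatorial type of $\mathcal{T}$ is unaffected by a small enough perturbation, and the classical result \cite[Lemma~4.7(i)]{rs-baues} then identifies sections with triangulations of the cyclic $(n-2)$-polytope $C([m]\setminus v, n-2)$.

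With these preparations, the bijection claimed in the proposition should follow by direct verification. For the forward direction, given $\preim{\mathcal{T}}$ with $\preim{\mathcal{T}}\xvy = \mathcal{T}$, its simplices partition according to whether they contain both of $\{x,y\}$, only $x$, only $y$, or neither; I would check that the first class, with $\{x,y\}$ stripped off, yields a section $\mathcal{W}$ of $\mathcal{T}\backslash v$ by matching the other three classes under $\xvy$ to $\mathcal{T}^{\circ}$, $\mathcal{T}\backslash v^{+} \ast v$, and $\mathcal{T}\backslash v^{-} \ast v$ respectively. For the reverse direction, I would verify that the explicitly described complex $\mathcal{T}^{\circ} \cup (\mathcal{W} \ast \{x,y\}) \cup (\tvplusx) \cup (\tvminusy)$ satisfies both conditions of Definition~\ref{def:triang} for $C([m]_{v+}, n)$ and contracts back to $\mathcal{T}$ under $\xvy$. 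The main obstacle I anticipate is the circuit condition for the reverse map: excluding Radon circuits of $C([m]_{v+}, n)$ that straddle both $x$ and $y$ requires translating the section property of $\mathcal{W}$ into a purely combinatorial statement about intertwining subsets of $[m]_{v+}$, using the orientation conventions established in the first step.
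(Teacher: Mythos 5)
Your central reduction does not work. You propose to perturb the realisation so that the vertex figure at $v$ becomes a cyclic $(n-1)$-polytope on $[m]\setminus v$ and then invoke \cite[Lemma~4.7(i)]{rs-baues}. But which facets of $C(m,n)$ contain $v$ is a combinatorial fact recorded in the face lattice, and any perturbation small enough to preserve the combinatorial type of $C(m,n)$ (which you need, since $\mathcal{T}$, the contraction $\xvy$ and the expansions $\preim{\mathcal{T}}$ are all defined relative to that type) also preserves the combinatorial type of the vertex figure. For $n$ odd and $1 < v < m$ this vertex figure is genuinely not a cyclic polytope: e.g.\ for $C(5,3)$ the facets containing $2$ are $\{1,2,3\}$, $\{1,2,5\}$, $\{2,3,5\}$, so in $C(5,3)\backslash 2$ the vertex $4$ lies in no facet at all and is an interior point of the figure, which no admissible perturbation can turn into a quadrilateral. (For $n$ even the cyclic rotation is an automorphism carrying $v$ to $m$ and the reduction to \cite[Lemma~4.7(i)]{rs-baues} is legitimate; that is exactly the easy case, and the whole difficulty of the proposition is the odd case, where no such symmetry exists.) Because your notion of section, and the identification of sections with triangulations of $C([m]\setminus v, n-2)$, both rest on this false reduction, the later steps have no foundation.

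By contrast, the actual argument has to be carried out combinatorially in the non-cyclic vertex figure: one orients each $(n-1)$-simplex $S$ of $\mathcal{T}\backslash v$ via the circuit $(S_{-}\cup x, S_{+}\cup y)$ supported on $S\cup\{x,y\}$, shows the resulting relation $\vorder$ is a partial order (via an explicit lexicographic total order refining it), defines sections as the interfaces of lower sets, and proves they are triangulations of $C([m]\setminus v, n-2)$ by induction on the lower set using bistellar flips. The second genuine gap in your plan is precisely the point you defer at the end: excluding circuits $(A\cup x, B\cup y)$ between the three pieces $\mathcal{W}\ast\{x,y\}$, $\tvplusx$ and $\tvminusy$ is the main technical content of the reverse direction, and it requires quantitative statements about simplices submerged or supermerged by the section (the analogues of Lemma~\ref{lem:submersion} and Lemma~\ref{lem:synth_circ_orientation}), proved again by induction on lower sets; "translating the section property into a combinatorial statement about intertwining subsets" names the problem but supplies no argument. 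The forward direction also needs more than sorting simplices by membership of $x$ and $y$: to see that $\mathcal{W}\cup\{x,y\}$-type faces really are $(n-2)$-simplices of $\preim{\mathcal{T}}\backslash\{x,y\}$ one has to verify that $W\cup\{x,y\}$ is a simplex of $\preim{\mathcal{T}}$, which in the paper is done via the criterion of \cite[Lemma~4.4]{njw-hst} on $d$- and $(d+1)$-faces, not by inspection of the partition.
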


\begin{remark}\label{rmk:odd_only}
It suffices to prove Proposition~\ref{prop:sections=expansions} for $n$ odd. For $n$ even it already follows from \cite[Lemma~4.7(i)]{rs-baues}, since in this case the cyclic permutation $i \mapsto i + (m - v)$ defines an automorphism of $C(m, n)$ which sends vertex $v$ to vertex $m$---see \cite{kw}. Hence one may apply \cite[Lemma~4.7(i)]{rs-baues} to the vertex $v$ as if it were vertex $m$, which gives Proposition~\ref{prop:sections=expansions} in this case. But for $n$ odd this permutation does not define an automorphism, and so more work needs to be done. Indeed, the fact that, for $n$ odd and $v \in [2, m - 1]$, $C(m, n)\backslash v$ is not a cyclic polytope precludes this permutation from giving an automorphism.

The methods of this section may still be applied to even-dimensional cyclic polytopes. One can check that this is equivalent to considering $C(m, 2d)$ subject to the given automorphism. However, restricting our attention to $n$ odd allows us to simplify some proofs.
\end{remark}

Proving Proposition~\ref{prop:sections=expansions} requires theory for working with triangulated vertex figures and their sections. Developing this theory is the task of Sections~\ref{sect:vf_facets}, \ref{sect:vf_simplices}, \ref{sect:vf_sections}, and \ref{sect:vf_exp}. The purpose of everything we prove in these sections is ultimately to derive Proposition~\ref{prop:sections=expansions}, and in turn to apply this proposition to prove Lemma~\ref{lem:subpolytope_exp_oth_vert}.

\begin{remark}
Proposition~\ref{prop:sections=expansions} and \cite[Lemma~4.7(i)]{rs-baues} are reminiscent of the single-element extension theorem of Las Vergnas for oriented matroids \cite{lv_ext} \cite[Section 7.1]{blswz} \cite[Theorem~4.1.(1)]{rgz_bd}. However, it does not seem that they follow from this result in any obvious way.
\end{remark}

\subsection{Facets of vertex figures}\label{sect:vf_facets}

Our first task is to find the correct orientation of the vertex figure $C(m, 2d + 1)\backslash v$, that is: a classification of its facets into lower facets and upper facets. It is important to note that, as in Example~\ref{ex:v_exp}, this will not generally match the orientation of $C(m, 2d + 1)$. That is to say, if $F$ is a lower facet of $C(m, 2d + 1) \backslash v$ according to our orientation, then $F \cup v$ will not generally be a lower facet of $C(m, 2d + 1)$.

Recall from Gale's Evenness Criterion that a facet $F$ of $C(m, 2d + 1)$ can be expressed uniquely as a union of disjoint pairs of consecutive numbers along with either $1$ or $m$. Hence, given $v \in [2, m - 1]$, and a facet $F$ of $C(m, n)$ such that $v \in F$, we can talk about the pair of consecutive entries that $v$ lies in, which must either be $\{v - 1, v\}$ or $\{v, v + 1\}$. We then define the upper and lower facets of $C(m, 2d + 1)\backslash v$ as follows.

\begin{definition}\label{def:poly_synth_facets}
Let $v \in [2, m - 1]$ and let $F$ be a facet of $C(m, 2d + 1)$ such that $v \in F$. Then $F \setminus v$ is a facet of $C(m, 2d + 1) \backslash v$.
\begin{itemize}
\item If the other element in the pair with $v$ in $F$ is $v + 1$, then we say that $F \setminus v$ is a \emph{lower facet of $C(m, 2d + 1)\backslash v$}.
\item If the other element in the pair with $v$ in $F$ is $v - 1$, then we say that $F \setminus v$ is an \emph{upper facet of $C(m, 2d + 1)\backslash v$}.
\end{itemize}
\end{definition}

The following lemma indicates why our orientation of the vertex figure $C(m, 2d + 1)\backslash v$ is the correct one when it comes to considering expansion. A lower facet $F$ of $C(m, 2d + 1)\backslash v$ should always lie below the section, and hence should always become a facet $F \cup y$ of $C([m]_{v+}, 2d + 1)$. 

\begin{lemma}\label{lem:facets_under_exp}
We have that $F$ is a lower facet of $C(m, 2d + 1)\backslash v$ if and only if $F \cup y$ is a facet of $C([m]_{v+}, 2d + 1)$. Dually, we have that $F$ is an upper facet of $C(m, 2d + 1)\backslash v$ if and only if $F \cup x$ is a facet of $C([m]_{v+}, 2d + 1)$.
\end{lemma}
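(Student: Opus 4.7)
The approach is to apply Gale's Evenness Criterion in both $C(m, 2d + 1)$ and $C([m]_{v+}, 2d + 1)$ and reduce the claim to a structural correspondence between their facets. Gale's criterion implies that each facet of $C(m, 2d + 1)$ decomposes uniquely as $\{1\} \cup P$ (for a lower facet) or $\{m\} \cup P$ (for an upper facet), where $P$ is a disjoint union of $d$ pairs of consecutive elements of $[m]$. The same description applies to facets of $C([m]_{v+}, 2d + 1)$, using the order $1 < \cdots < v - 1 < x < y < v + 1 < \cdots < m$, with $1$ and $m$ still being the extreme elements of $[m]_{v+}$. The plan is then to show that facets of $C(m, 2d + 1)$ containing $v$ paired with $v + 1$ correspond bijectively to facets of $C([m]_{v+}, 2d + 1)$ containing $y$, by exchanging the pair $\{v, v + 1\}$ for $\{y, v + 1\}$.

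For the forward direction of the lower-facet statement, I would take $F$ to be a lower facet of $C(m, 2d + 1)\backslash v$; by Definition~\ref{def:poly_synth_facets} this means $F \cup v$ is a facet of $C(m, 2d + 1)$ in which $v$ is paired with $v + 1$. Thus $v + 1 \in F$, and I can write $F \cup v = F'' \cup \{v, v + 1\}$ with $F''$ of the form $\{1\} \cup P'$ or $\{m\} \cup P'$, where $P'$ is a disjoint union of $d - 1$ pairs consecutive in $[m]$ and disjoint from $\{v, v + 1\}$. Replacing $v$ by $y$ yields $F \cup y = F'' \cup \{y, v + 1\}$, and I would verify that this has the Gale form for a facet of $C([m]_{v+}, 2d + 1)$: the pair $\{y, v + 1\}$ is consecutive in $[m]_{v+}$, the pairs of $P'$ involve none of $v, x, y$ and so remain consecutive in $[m]_{v+}$, and $\{1\}$ or $\{m\}$ is still extreme.

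For the reverse direction, I would start with $F \cup y$ as a facet of $C([m]_{v+}, 2d + 1)$. Since $F \subseteq [m] \setminus v$, the element $x$ is absent from $F \cup y$, so by Gale's criterion $y$ must be paired with its other neighbour $v + 1$ in $[m]_{v+}$. This produces a decomposition $F \cup y = F'' \cup \{y, v + 1\}$ with $F''$ as in the previous paragraph, and translating back shows that $F \cup v = F'' \cup \{v, v + 1\}$ is a facet of $C(m, 2d + 1)$ with $v$ paired with $v + 1$, so $F$ is a lower facet of $C(m, 2d + 1)\backslash v$. The upper-facet statement will follow by the symmetric argument, with $x$ in place of $y$ and $v - 1$ in place of $v + 1$. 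I expect the only real obstacle to be the bookkeeping of consecutivity between the two orderings, which is handled by the observation that any pair of elements disjoint from $\{v, x, y\}$ is consecutive in $[m]$ if and only if it is consecutive in $[m]_{v+}$.
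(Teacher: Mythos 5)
Your proposal is correct and follows essentially the same route as the paper: both arguments are direct applications of Gale's Evenness Criterion via the unique decomposition of a facet into $1$ or $m$ together with disjoint consecutive pairs, with the key observation in the converse direction that $x \notin F$ forces $y$ to be paired with $v+1$ (so that exchanging $v$ and $y$ preserves the Gale structure). The paper phrases this in terms of even/odd subsets rather than writing out the pair decomposition explicitly, but the content is the same.
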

\begin{proof}
If $F \cup v$ is an even subset of $[m]$ and $v$ is in a pair with $v + 1$, then $F \cup y$ will be an even subset of $[m]_{v+}$. Conversely, if $F \cup y$ is either an even or an odd subset of $[m]_{v+}$ where $F \subseteq [m]\setminus v$, then $y$ must be in a pair with $v + 1$, since $x \notin F$. Consequently $F \cup v$ is either an even or an odd subset of $[m]$ with $v$ in a pair with $v + 1$. The analogous claim for upper facets follows by a similar argument.
\end{proof}

One can also describe the upper and lower facets of $C(m, 2d + 1)\backslash v$ using the following, which can be seen as a generalisation of Gale's Evenness Criterion.

\begin{lemma}\label{lem:evenness_facet_desc}
Let $F \subseteq [m]\setminus v$. Then
\begin{itemize}
\item $F$ is a lower facet of $C(m, 2d + 1)\backslash v$ if and only if $\# \set{\aI \in F \st \bI < \aI < v}$ is even for all $\bI \in [v - 1]\setminus F$ and $\# \set{\aI \in F \st v < \aI < \bI}$ is odd for all $\bI \in [v + 1, m]\setminus F$; and
\item $F$ is an upper facet of $C(m, 2d + 1)\backslash v$ if and only if $\# \set{\aI \in F \st \bI < \aI < v}$ is odd for all $\bI \in [v - 1]\setminus F$ and $\# \set{\aI \in F \st v < \aI < \bI}$ is even for all $\bI \in [v + 1, m]\setminus F$.
\end{itemize}
\end{lemma}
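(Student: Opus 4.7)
My plan is to deduce both characterisations directly from Gale's Evenness Criterion applied to $F \cup v$, combined with the observation that the unique Gale pairing of a facet of $C(m, 2d + 1)$ consists of disjoint pairs of consecutive integers together with either $1$ or $m$ as singleton. By Definition~\ref{def:poly_synth_facets}, $F$ being a lower (respectively upper) facet of $C(m, 2d + 1) \backslash v$ translates to the statement that $F \cup v$ is a facet of $C(m, 2d + 1)$ in which the Gale pair containing $v$ is $\{v, v + 1\}$ (respectively $\{v - 1, v\}$). I will treat the lower case explicitly; the upper case is symmetric, obtained by swapping both parities in the hypothesis and exchanging the roles of $v + 1$ and $v - 1$ throughout.

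For the forward direction, assume $F \cup v$ is a facet of $C(m, 2d + 1)$ with $v$ Gale-paired with $v + 1$. Then $v + 1 \in F$, and the set $F \cap [v - 1]$ is a disjoint union of pairs of consecutive integers, together with at most the singleton $\{1\}$ when that is the Gale singleton of $F \cup v$. For $\bI \in [v - 1] \setminus F$, the element $\bI$ cannot lie inside any of these pairs, and the singleton $1$ (if present) satisfies $1 \leqslant \bI$ and hence is not counted; therefore $\# \{a \in F : \bI < a < v\}$ equals a sum of complete pair sizes, which is even. Dually, $F \cap [v + 1, m]$ consists of $v + 1$ together with further disjoint pairs of consecutive integers, possibly terminated by the singleton $\{m\}$, so for $\bI \in [v + 1, m] \setminus F$ the count $\# \{a \in F : v < a < \bI\}$ equals $1$ (from $v + 1$) plus a sum of complete pair sizes, which is odd.

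For the converse, I will first verify Gale's Evenness for $F \cup v$ by a direct parity computation. Write $N := \#(F \cap [v + 1, m])$ and $r(\bI) := \# \{a \in F \cup v : a > \bI\}$. For $\bI \in [m] \setminus (F \cup v)$ with $\bI < v$,
\[
r(\bI) = 1 + \# \{a \in F : \bI < a < v\} + N \equiv N + 1 \pmod{2}
\]
by the even hypothesis, while for $\bI > v$,
\[
r(\bI) = N - \# \{a \in F : v < a < \bI\} \equiv N - 1 \equiv N + 1 \pmod{2}
\]
by the odd hypothesis together with $\bI \notin F$. The two parities agree, so $F \cup v$ is a facet of $C(m, 2d + 1)$. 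Taking $\bI = v + 1$ in the odd hypothesis forces $v + 1 \in F$, since otherwise the count would be $0$. Letting $B = [b_{\min}, b_{\max}]$ be the maximal consecutive block of $F \cup v$ containing $v$, the hypothesis applied at $\bI = b_{\min} - 1$ (when $b_{\min} > 1$) forces $v - b_{\min}$ to be even, and at $\bI = b_{\max} + 1$ (when $b_{\max} < m$) forces $b_{\max} - v$ to be odd. A short case split on whether $1 \in B$, whether $m \in B$, and on the location of the Gale singleton ($1$ in the lower-facet case of $C(m, 2d + 1)$, $m$ in the upper-facet case) then shows that in every case the unique Gale pairing of $B$ places $v$ at an odd index reading from the left, so that $v$ is indeed paired with $v + 1$.

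The main technical obstacle is this final case split: there are several subcases according to whether the block $B$ abuts $1$ or $m$ and whether the Gale singleton sits inside or outside $B$, and in each one I must check that the parities forced by the two hypotheses align exactly with the pairing decomposition that gives $v$ a left-odd position within its paired sub-block. Once this is handled, the upper facet characterisation follows by running the same chain of reasoning after swapping the two parities in the hypothesis and the roles of $v + 1$ and $v - 1$, or equivalently by the order-reversing relabelling $\aI \mapsto m + 1 - \aI$ of $[m]$ composed with the obvious interchange of orientations on the vertex figure.
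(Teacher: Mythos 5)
Your proposal is correct and follows essentially the same route as the paper: the forward direction reads the required parities directly off the pair-plus-singleton Gale decomposition of $F \cup v$ (with $v$ paired to $v+1$), and the converse, like the paper's, deduces $v + 1 \in F$ from the odd condition at $\bI = v + 1$ and then pins down the structure by parity analysis of gaps and consecutive blocks. The only difference is organisational --- you first verify Gale's criterion uniformly (every gap of $F \cup v$ has parity $N + 1$) and then locate $v$ inside the unique pairing via the block-boundary parities $v - b_{\min}$ even and $b_{\max} - v$ odd, whereas the paper reconstructs the pair decomposition directly --- and the case split you leave sketched is routine given those constraints, comparable in level of detail to the paper's own ``otherwise we can find gaps in $F$'' step.
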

\begin{proof}
We only show the first claim, since the second claim is similar. Suppose that $F$ is a lower facet of $C(m, 2d + 1)\backslash v$. Then $F \cup v$ is a facet of $C(m, 2d + 1)$ and $v$ occurs in a pair with $v + 1$. Let $\bI \in [v - 1] \setminus F$. There are then a whole number of pairs of consecutive numbers between $\bI$ and $v$, so $\# \set{\aI \in [m] \st \bI < \aI < v}$ is even. Let $\bI \in [v + 1, m] \setminus F$. Then the elements of $F$ between $\bI$ and $v$ consist of $v + 1$ and a set of pairs of consecutive numbers, so $\# \set{\aI \in [m] \st v < \aI < \bI}$ is odd.

Suppose now that $F$ is such that $\# \set{\aI \in [m] \st \bI < \aI < v}$ is even for all $\bI \in [v - 1]\setminus F$ and $\# \set{\aI \in [m] \st v < \aI < \bI}$ is odd for all $\bI \in [v + 1, m]\setminus F$. Then we must have $v + 1 \in F$, since otherwise we can choose $\bI = v + 1$, and $\# \set{\aI \in [m] \st v < \aI < v+1} = 0$. The remaining elements of $F$ must consist of disjoint pairs of consecutive numbers and possibly $1$ or $m$, otherwise we can find gaps in $F$ which contradict our assumption. Moreover, $F$ cannot contain both $1$ and $m$, since $F$ must have $2d + 1$ elements. This gives that $F \cup v$ is a facet of $C(m, 2d + 1)$ where $v$ occurs in a pair with $v + 1$. Hence $F$ is a lower facet of $C(m, 2d + 1)\backslash v$.
\end{proof}

The following lemma describes the significance of the intersections of upper and lower facets of the vertex figure $C(m, n)\backslash v$. It is analogous to the easily-verified fact that the facets of $C(m, n)$ correspond precisely to the $(n - 1)$-simplices which are intersections of a lower facet and an upper facet of $C(m, n + 1)$.

\begin{lemma}\label{lem:ul_facet_intersect}
If $G \in \binom{[m] \setminus v}{2d - 1}$ and $G = F \cap F'$, where $F$ is an upper facet of $C(m, 2d + 1)\backslash v$ and $F'$ is a lower facet of $C(m, 2d + 1) \backslash v$, then $G$ is a facet of $C([m] \setminus v, 2d - 1)$.
\end{lemma}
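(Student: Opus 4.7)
Plan: I aim to verify Gale's Evenness Criterion for $G$ as a subset of $[m]\setminus v$; that is, to show that $\#\{i \in G : i > j\}$ has the same parity for every $j \in ([m]\setminus v)\setminus G$. This will immediately yield that $G$ is a facet of $C([m]\setminus v, 2d-1)$.

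Since $|F \triangle F'| = 2$, I would write $F = G \cup \{a\}$ and $F' = G \cup \{b\}$ for some $a \in F\setminus F'$ and $b \in F'\setminus F$. Applying Lemma~\ref{lem:evenness_facet_desc} with $j = v - 1$ in condition (i) for $F$ forces $v - 1 \in F$, and the analogous argument with $j = v + 1$ in (iv) for $F'$ gives $v + 1 \in F'$. By Definition~\ref{def:poly_synth_facets}, both $F \cup v$ and $F' \cup v$ are facets of $C(m, 2d+1)$; since they differ by the single swap $a \leftrightarrow b$, their intersection $G \cup v$ is a ridge of $C(m, 2d+1)$. By Gale's Evenness Criterion applied inside $C(m, 2d+1)$, each of $F \cup v$ and $F' \cup v$ decomposes as $d$ disjoint pairs of consecutive integers together with a singleton at either $1$ or $m$, and in particular has a well-defined parity type.

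For each $j \in ([m]\setminus v)\setminus G$, I would express $\#\{i \in G : i > j\}$ either as $\#\{i \in F : i > j\} - [a > j]$ (when $j \notin F$) or as $\#\{i \in F' : i > j\} - [b > j]$ (when $j \notin F'$), with both expressions available when $j \notin F \cup F'$. Gale applied to the facets $F \cup v$ and $F' \cup v$ of $C(m, 2d+1)$ then determines the parity of the reduced counts: $\#\{i \in F : i > j\} \equiv \epsilon_F + [v > j] \pmod{2}$, where $\epsilon_F \in \{0, 1\}$ records whether the singleton of $F \cup v$ is $1$ or $m$, and analogously for $F'$. Substituting gives the parity of $\#\{i \in G : i > j\}$ as a function of $\epsilon_F, \epsilon_{F'}, [v > j], [a > j]$, and $[b > j]$. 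The consistency of the two expressions for $j \notin F \cup F'$ constrains $\epsilon_F + \epsilon_{F'} \equiv [a > j] + [b > j] \pmod 2$ uniformly in $j$, which in turn forces the true gaps to lie on one side of $\{a, b\}$; combining this with the direct evaluations at $j = a$ and $j = b$ via the opposite formula, the parity of $\#\{i \in G : i > j\}$ turns out to be globally constant. The hard part will be the bookkeeping across the sub-cases determined by the singleton types of $F \cup v$ and $F' \cup v$ (independently $1$ or $m$) and by whether each of $a, b$ lies to the left or right of $v$, together with the edge cases $a \in \{v - 1, 1, m\}$ and $b \in \{v + 1, 1, m\}$; verifying the parity match in all these sub-cases will yield $G$ as an even or odd subset of $[m]\setminus v$, hence a facet of $C([m]\setminus v, 2d-1)$.
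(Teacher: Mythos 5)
Your preliminary steps are fine: $v-1\in F$ and $v+1\in F'$ do follow from Lemma~\ref{lem:evenness_facet_desc}, and the decomposition of $F\cup v$ and $F'\cup v$ into $d$ consecutive pairs plus a singleton at $1$ or $m$ is correct. The gap comes immediately after, where you discard the hypothesis that $F$ is an upper and $F'$ a lower facet of $C(m,2d+1)\backslash v$, retaining only the memberships $v-1\in F$, $v+1\in F'$, and from then on work solely with the ambient Gale parities $\epsilon_F,\epsilon_{F'}$ of $F\cup v$, $F'\cup v$ and the positions of $a$, $b$. Being an upper (resp.\ lower) facet of the vertex figure means, by Definition~\ref{def:poly_synth_facets}, that $v$ is \emph{paired} with $v-1$ (resp.\ $v+1$) in the unique pair decomposition, which is strictly stronger than $v-1\in F$ (resp.\ $v+1\in F'$): for instance $v-2,v-1,v,v+1$ may all lie in $F\cup v$ with $v$ paired to $v+1$. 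The data you keep are genuinely insufficient to force the parity constancy you assert. Take $m=10$, $v=5$, $d=2$, $F=\{1,3,4,6\}$, $F'=\{3,4,6,10\}$: then $F\cup v=\{1,3,4,5,6\}$ and $F'\cup v=\{3,4,5,6,10\}$ are facets of $C(10,5)$, $v-1=4\in F$, $v+1=6\in F'$, $a=1$, $b=10$, and your consistency relation $\epsilon_F+[a>j]\equiv\epsilon_{F'}+[b>j]\pmod 2$ holds at every gap $j\notin F\cup F'$; yet $G=\{3,4,6\}$ is not a facet of $C([10]\setminus 5,3)$, since the gap $2$ has three elements of $G$ above it while the gap $7$ has none. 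This configuration does not contradict the lemma (here both $F$ and $F'$ are in fact lower facets of the vertex figure), but it shows that the claim that the parity of $\#\{i\in G : i>j\}$ ``turns out to be globally constant'' cannot be verified from the ingredients you allow yourself: some of the sub-cases in your proposed bookkeeping are simply false at that level of generality, and ruling them out requires the very hypothesis you dropped.

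To complete an argument along these lines you must use the full two-sided parity conditions of Lemma~\ref{lem:evenness_facet_desc} for both $F$ and $F'$, not just their consequences at $j=v\mp1$. That is what the paper's proof does: it considers the nearest gaps of $F$ and of $F'$ on each side of $v$, shows (using the opposite parities imposed by the upper/lower hypotheses) that these gaps interlace and that the two swapped vertices are exactly such nearest gaps, and then verifies Gale's Evenness Criterion for $G$ by an interval-by-interval count. As it stands, your proposal is an outline whose decisive final step both remains unproved and, with the stated ingredients only, cannot be proved.
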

\begin{proof}
Let
\begin{align*}
\bI &= \max\set{\aI \in [m]\setminus F \st \aI < v},\\
\bI' &= \max\set{\aI \in [m]\setminus F' \st \aI < v},\\
\cI &= \min\set{\aI \in [m]\setminus F \st v < \aI},\\
\cI' &= \min\set{\aI \in [m]\setminus F' \st v < \aI}. 
\end{align*}
We cannot have $\bI = \bI'$, since $\# \set{\aI \in F \st \bI < \aI < v}$ is odd, whereas $\# \set{\aI \in F' \st \bI' < \aI < v}$ is even. Therefore, suppose that $\bI < \bI'$. This implies that $\bI' \in F$, so that $F = G \cup \bI'$. Hence, $\cI' \notin F$, so that $\cI < \cI'$ and $F' = G \cup \cI'$.

Then $\#\set{\aI \in G \st \bI' < \aI < v} = \#\set{\aI \in F' \st \bI' < \aI < v}$, which is even, and $\#\set{\aI \in G \st v < \aI < \cI} = \#\set{\aI \in F \st v < \aI < \cI}$, which is also even. Furthermore, $\#\set{\aI \in G \st \bI < \aI < \bI'}$ is even, since $\#\set{\aI \in F \st \bI < \aI < v}$ is odd and $\#\set{\aI \in F' \st \bI' < \aI < v}$ is even. Similarly, $\#\set{\aI \in G \st \cI < \aI < \cI'}$ is even.

Thus, $G \cap [1, \bI]$ consists of a set of disjoint pairs along with possibly $1$, since this is true of $F$ and $F'$; $G \cap [\bI, \bI']$ is an interval of even length; $G \cap [\bI', \cI]$ is an interval in $[m] \setminus v$ of even length; $G \cap [\cI, \cI']$ is an interval of even length; and $G \cap [\cI', m]$ consists of a disjoint union of pairs, along with possibly $m$. Consequently, $G$ satisfies Gale's Evenness Criterion, and so is a facet of $C([m]\setminus v, 2d + 1)$. The case where $\bI' < \bI$ is similar.
\end{proof}

\begin{corollary}\label{cor:ul_facet_intersect}
If $G = F \cap F'$, where $F$ is an upper facet of $C(m, 2d + 1)\backslash v$ and $F'$ is a lower facet of $C(m, 2d + 1) \backslash v$, then $G \cup \{x, y\}$ is a facet of $C([m]_{v+}, 2d + 1)$.
\end{corollary}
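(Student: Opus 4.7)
The plan is to reduce the claim to a direct verification of Gale's Evenness Criterion for $G \cup \{x, y\}$ inside $[m]_{v+}$, using Lemma~\ref{lem:ul_facet_intersect} as essentially the only nontrivial input. The intuition is that $\{x, y\}$ is a pair of consecutive elements of $[m]_{v+}$ inserted precisely where $v$ used to be, so attaching them to any subset of $[m] \setminus v$ should preserve parities of all gap counts.

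First I would invoke Lemma~\ref{lem:ul_facet_intersect} to conclude that $G$ is a facet of $C([m] \setminus v, 2d - 1)$. In particular $|G| = 2d - 1$, so $|G \cup \{x, y\}| = 2d + 1$, which is the correct cardinality for a facet of $C([m]_{v+}, 2d + 1)$. Moreover Gale's Evenness Criterion tells us that $G$ is either an even or an odd subset of $[m] \setminus v$; that is, the quantities $\# \set{e \in G \st e > a}$, taken as $a$ ranges over $([m] \setminus v) \setminus G$, all have the same parity.

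Next I would verify Gale's Evenness Criterion for $G \cup \{x, y\}$ in $[m]_{v+}$. The gaps to check are exactly $[m]_{v+} \setminus (G \cup \{x, y\}) = ([m] \setminus v) \setminus G$. Fix such a gap $a$. If $a < v$, then in the ordering on $[m]_{v+}$ both $x$ and $y$ lie above $a$, so
\[
\#\set{e \in G \cup \{x, y\} \st e > a \text{ in } [m]_{v+}} = \#\set{e \in G \st e > a} + 2,
\]
which has the same parity as $\#\set{e \in G \st e > a}$. If $a > v$, then both $x$ and $y$ lie below $a$ in $[m]_{v+}$, so the count is $\#\set{e \in G \st e > a}$ unchanged. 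In either case, the parity matches that of the corresponding gap count of $G$ in $[m] \setminus v$, and so by the previous paragraph these parities are all equal. Thus $G \cup \{x, y\}$ satisfies Gale's Evenness Criterion in $[m]_{v+}$, and is therefore a facet of $C([m]_{v+}, 2d + 1)$.

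I do not anticipate any serious obstacle: the proof is essentially a one-step parity check once Lemma~\ref{lem:ul_facet_intersect} is available. The only care required is to confirm that inserting the consecutive pair $\{x, y\}$ in place of $v$ shifts the gap count by exactly $+2$ for gaps below $v$ and by $0$ for gaps above $v$, so that parity is preserved globally and the facet-hood of $G$ in dimension $2d - 1$ upgrades to facet-hood of $G \cup \{x, y\}$ in dimension $2d + 1$.
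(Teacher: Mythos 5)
Your proposal is correct and follows essentially the same route as the paper: invoke Lemma~\ref{lem:ul_facet_intersect} so that Gale's Evenness Criterion makes $G$ an even or odd subset of $[m]\setminus v$, then observe that inserting the consecutive pair $\{x,y\}$ in place of $v$ leaves every gap parity unchanged, so $G \cup \{x,y\}$ is again an even or odd $(2d+1)$-subset and hence a facet of $C([m]_{v+}, 2d+1)$. Your explicit $+2$/$+0$ case analysis for gaps below and above $v$ just spells out the paper's one-line parity remark.
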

\begin{proof}
This follows from Gale's Eveness Criterion and Lemma~\ref{lem:ul_facet_intersect}. If $G$ is an even (respectively, odd) subset of $[m] \setminus v$, then $G \cup \{x,y\}$ is an even (respectively, odd) subset of $[m]_{v+}$. Adding a pair of consecutive entries cannot change the parities of any gaps.
\end{proof}

\subsection{Orienting the simplices}\label{sect:vf_simplices}

We now show how one can similarly orient the simplices of the triangulation~$\mathcal{T}\backslash v$, which allows us to introduce a partial order on these simplices.

We first explain the logic of our orientation of the simplices of $\mathcal{T}\backslash v$. Given a triangulation~$\mathcal{T}$ of $C(m - 1, 2d + 1)$, we wish to understand the different triangulations $\preim{\mathcal{T}}$ of $C([m - 1]_{v+}, 2d + 1)$ such that $\preim{\mathcal{T}}\xvy = \mathcal{T}$. We consider the triangulated vertex figure $\mathcal{T}\backslash v$. It is clear that $\mathcal{T}\backslash v$ contains $\preim{\mathcal{T}}\backslash \{x, y\}$ as a simplicial subcomplex. We would like to think of these simplicial subcomplexes as sections which divide $\mathcal{T}\backslash v$ into a part where $x \leftarrow v$ under expansion and a part where $v \rightarrow y$ under expansion.

However, it is not clear geometrically which part of $\mathcal{T}\backslash v$ lies above $\preim{\mathcal{T}}\backslash \{x, y\}$ and which part lies below. Hence, we look to characterise this combinatorially instead. Note that for every $2d$-simplex $\S$ of $\mathcal{T}\backslash v$, we must have that $\S \cup v$ is a simplex of~$\mathcal{T}$, so that $\S \cup x$ is a $(2d + 1)$-simplex of~$\preim{\mathcal{T}}$, or that $\S \cup y$ is a $(2d + 1)$-simplex of~$\preim{\mathcal{T}}$. But we cannot have both, since $\S \cup \{x, y\}$ can be decomposed into two halves of a circuit, one of which is contained in $\S \cup x$, and the other of which is contained in $\S \cup y$, since $x$ and $y$ are adjacent in $\S \cup \{x, y\}$. We therefore orient the simplices of the triangulated vertex figure $\mathcal{T}\backslash v$ as follows.

\begin{definition}\label{def:synthetic_facets}
Let $\S$ be a $2d$-simplex of $\mathcal{T}\backslash v$. Then $\S \cup \{x, y\}$ consists of $2d + 3$ distinct vertices, and so uniquely gives two halves of a circuit of $C([m]_{v+}, 2d + 1)$, which we denote $(\S_{-} \cup x, \S_{+} \cup y)$. Then we say that $\S\setminus \s$ is an \emph{lower facet} of $\S$ if $\s \in \S_{+}$, and an \emph{upper facet} of $\S$ if $\s \in \S_{-}$.
\end{definition}

One can also translate this definition of the upper and lower facets of simplices of $\mathcal{T}\backslash v$ into an evenness criterion, which can be deduced straightforwardly from the definition.

\begin{lemma}\label{lem:even_simp_desc}
Let $\S$ be a $2d$-simplex of $\mathcal{T}\backslash v$ and let $\s \in \S$. Then
\begin{enumerate}
\item if $\s < v$, then $\S \setminus \s$ is
	\begin{enumerate}
	\item a lower facet of $\S$ if $\# \set{\aI \in \S \st \s < \aI < v}$ is even, and
	\item an upper facet of $\S$ if $\# \set{\aI \in \S \st \s < \aI < v}$ is odd; 
	\end{enumerate}
\item if $v < \s$, then $\S \setminus \s$ is
	\begin{enumerate}
	\item a lower facet of $\S$ if $\# \set{\aI \in \S \st v < \aI < \s}$ is odd, and
	\item an upper facet of $\S$ if $\# \set{\aI \in \S \st v < \aI < \s}$ is even. 
	\end{enumerate}
\end{enumerate}
\end{lemma}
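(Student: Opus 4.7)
The plan is to translate Definition~\ref{def:synthetic_facets} directly into a parity count on positions in the sorted order of $\S \cup \{x, y\}$. Write $\S = \tup{\s_{0}, \s_{1}, \dots, \s_{2d}}$ and let $k$ denote the number of entries with $\s_{i} < v$. Because $x$ and $y$ are consecutive in the linear order on $[m - 1]_{v+}$ and $v \notin \S$, the sorted order of the $(2d + 3)$-element set $\S \cup \{x, y\}$ is
\[
\s_{0} < \dots < \s_{k - 1} < x < y < \s_{k} < \dots < \s_{2d},
\]
so $\s_{i}$ occupies position $i$ for $i < k$ and position $i + 2$ for $i \geqslant k$, while $x$ and $y$ occupy positions $k$ and $k + 1$ respectively.

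Next, I would invoke Breen's characterisation recalled earlier in the paper: a circuit of $C([m - 1]_{v+}, 2d + 1)$ is an intertwining pair consisting of a $(d + 1)$-subset and a $(d + 2)$-subset. The definition of $\wr$ forces the two halves of such a pair to alternate in the sorted order of their union, so one half of the circuit $(\S_{-} \cup x, \S_{+} \cup y)$ occupies the even positions $0, 2, \dots, 2d + 2$ and the other occupies the odd positions $1, 3, \dots, 2d + 1$. Since $x$ and $y$ lie at consecutive positions $k$ and $k + 1$, they must lie in different halves of the circuit, and an element $\s \in \S$ belongs to $\S_{+}$ precisely when its position in the sorted order has the same parity as $k + 1$.

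Finally, I would translate this position-parity statement into the counts in the lemma. For $\s = \s_{i}$ with $\s < v$, the position of $\s$ is $i$, and one computes $\#\set{a \in \S \st \s < a < v} = k - 1 - i$; the conditions $i \equiv k + 1 \pmod{2}$ and $k - 1 - i \equiv 0 \pmod{2}$ coincide, so by Definition~\ref{def:synthetic_facets} the face $\S \setminus \s$ is a lower facet exactly when this count is even, giving~(1a); case~(1b) follows because the only alternative is that $\S \setminus \s$ is an upper facet. For $\s = \s_{i}$ with $\s > v$, the position of $\s$ is $i + 2$ and $\#\set{a \in \S \st v < a < \s} = i - k$; the conditions $i + 2 \equiv k + 1 \pmod{2}$ and $i - k \equiv 1 \pmod{2}$ again coincide, yielding~(2a) and~(2b). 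The argument is essentially index bookkeeping, and the only real content is the alternation property of intertwining pairs, which identifies the two halves of the circuit with the two parity classes of positions.
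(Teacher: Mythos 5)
Your proof is correct and takes essentially the same route as the paper, which gives no written argument beyond asserting that the lemma ``can be deduced straightforwardly from the definition'': your combination of the alternation property of the circuit halves (via Breen's characterisation) with the position-parity bookkeeping around the adjacent pair $x,y$ is precisely that deduction, and it reproduces the paper's conventions (e.g.\ the facet orientations of $|345|$ and $|134|$ in Figure~\ref{fig:ivf_sections}). The only cosmetic slip is writing $C([m-1]_{v+},2d+1)$ where Section~\ref{sect:expansion} works with $C([m]_{v+},2d+1)$, which affects nothing.
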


By comparing with Lemma~\ref{lem:evenness_facet_desc}, we see that our notion of the upper and lower facets of a simplex of $\mathcal{T}\backslash v$ matches our notion of the upper and lower facets of $C(m, 2d + 1)\backslash v$.

\begin{lemma}\label{lem:local_stackability}
Let $\mathcal{T}$ be a triangulation of $C(m, 2d + 1)$. Let $\S, \R$ be $2d$-simplices of $\mathcal{T}\backslash v$. Then $\S \cap \R$ cannot be both a lower facet of $\S$ and a lower facet of $\R$. Similarly, $\S \cap \R$ cannot be both an upper facet of $\S$ and an upper facet of $\R$.
\end{lemma}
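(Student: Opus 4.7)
The plan is to assume for contradiction that $G := \S \cap \R$ is a lower facet of both $\S$ and $\R$ and derive a parity contradiction by combining Lemma~\ref{lem:even_simp_desc} with the triangulation axiom. Writing $\S = G \cup s$ and $\R = G \cup r$ with $s \neq r$ in $[m] \setminus v$, the key object is the $(2d+3)$-subset $A = G \cup \{v, s, r\}$ of $[m]$, whose circuit $(P, Q)$ in $C(m, 2d+1)$ is obtained by partitioning the sorted list of $A$ into the entries at odd and even indices.

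Since $\S \cup v = A \setminus r$ and $\R \cup v = A \setminus s$ are both simplices of the triangulation $\mathcal{T}$, the circuit clause in the definition of a triangulation forbids both $(P \subseteq \S \cup v$ and $Q \subseteq \R \cup v)$ and the reverse containment. Unpacking using that $P \subseteq A \setminus r$ is equivalent to $r \in Q$, and analogously for the other inclusions, one sees that these two forbidden configurations are precisely the ones in which $s$ and $r$ lie in different parts of $(P, Q)$. Hence $\mathcal{T}$ being a triangulation forces $s$ and $r$ to occupy positions of the same parity in the sorted listing of $A$.

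The remainder is a brief case analysis according to whether each of $s, r$ is below or above $v$. In each case, label the numbers of $G$-elements in the subintervals cut out by $\{s, v, r\}$, and convert all three of (i) the lower facet hypothesis at $s \in \S$, (ii) the lower facet hypothesis at $r \in \R$, using Lemma~\ref{lem:even_simp_desc}, and (iii) the same-parity conclusion for positions of $s, r$ in $A$ just derived, into linear parity conditions on those counts. A direct computation shows that in each case the combination of (i) and (ii) pins down the parity of one specific $G$-count, namely the number of elements of $G$ strictly between the two inner members of $\{s, v, r\}$, while (iii) pins it down to the opposite parity, producing the required contradiction.

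The upper facet statement follows by the same argument: replacing ``lower'' by ``upper'' in Definition~\ref{def:synthetic_facets} swaps which side of the circuit of $\S \cup \{x, y\}$ contains $s$, and similarly for $r$, so it flips both of (i) and (ii) simultaneously while leaving (iii) unchanged, and the parity contradiction is preserved. The main technical hurdle is the positional bookkeeping across the four cases in the analysis, but once the circuit structures of $A$ and of $\S \cup \{x, y\}$, $\R \cup \{x, y\}$ are written out explicitly the cases all collapse to a single parity identity.
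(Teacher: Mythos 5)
Your proposal is correct and follows essentially the same route as the paper: both arguments hinge on the unique alternating circuit supported on the $(2d+3)$-set $(\S\cap\R)\cup\{s,r,v\}$, the parity criterion of Lemma~\ref{lem:even_simp_desc}, and the circuit clause in the definition of a triangulation applied to the simplices $\S\cup v$ and $\R\cup v$. The only difference is the order of play (you use the triangulation axiom to force $s$ and $r$ into the same half of the circuit and contradict the facet parities, while the paper uses the facet parities to force them into opposite halves and contradicts the axiom), which is just a contrapositive rearrangement of the same computation.
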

\begin{proof}
We only show the first claim, since the second claim is similar. Suppose that $F$ is both a lower facet of $\S = F \cup \s$ and a lower facet of $\R = F \cup \sR$. Without loss of generality, assume that $\s < \sR$. If $\s < \sR < v$ or $v < \s < \sR$, then are are an even number of elements $f \in F$ such that $\s < f < \sR$, by Lemma~\ref{lem:even_simp_desc}. If $\s < v < \sR$, then there are an odd number of elements $f \in F$ such that $\s < f < \sR$, by Lemma~\ref{lem:even_simp_desc}.

We have $\# F \cup \{\s, \sR, v\} = 2d + 3$, and so there is a circuit $(Z, Z')$ of $C(m, 2d + 1)$ such that $Z \cup Z' = F \cup \{\s, \sR, v\}$. Suppose, without loss of generality, that $\s \in Z$. By the previous paragraph, we must then have $\sR \in Z'$. Hence the simplices $F \cup \{\s, v\}$ and $F \cup \{\sR, v\}$ each contain one half of a circuit, which contradicts their both being simplices of $\mathcal{T}$.
\end{proof}

In the manner of \cite[Definition~5.7]{rambau}, we may now define a relation on the set of $2d$-simplices of $\mathcal{T}\backslash v$. Given two $2d$-simplices $\S, \R$, we write that $\S \vcover \R$ if and only if $\S \cap \R$ is an upper facet of $\S$ and a lower facet of $\R$. We show that $\vorder$ is a partial order using the method of \cite[Corollary 5.8]{rambau}: we define a total order on the simplices of $\mathcal{T}\backslash v$ and show that $\vorder$ is a sub-order of it. This means that every triangulation of $C(m, 2d + 1)\backslash v$ which comes from a triangulation of $C(m, 2d + 1)$ is \emph{stackable}, in the sense of \cite[Definition~2.13]{rs-baues}.

To each $\S \in \binom{[m]\setminus v}{2d + 1}$, we assign a unique string by
\begin{align*}
\Gamma \colon \binom{[m]\setminus v}{2d + 1} &\to \{o, \ast, e\}^{m - 1} \\
\Gamma(\S) &:= (\gamma_{v + 1}(\S), \gamma_{v + 2}(\S), \dots, \gamma_{m}(\S), \gamma_{1}(\S), \gamma_{2}(\S), \dots, \gamma_{v - 1}(\S)),
\end{align*}
where
\[
\gamma_{j}(\S) = 
\begin{cases}
\ast \text{ if } j \in \S \\
\text{ if } j \notin \S
\begin{cases}
\text{ if } j < v
\begin{cases}
e \text{ if } \#\set{b \in \S \st j < b < v} \text{ is even,} \\
o \text{ if } \#\set{b \in \S \st j < b < v} \text{ is odd,}
\end{cases}
\\
\text{ if } v < j
\begin{cases}
e \text{ if } \#\set{b \in \S \st v < b < j} \text{ is even,} \\
o \text{ if } \#\set{b \in \S \st v < b < j} \text{ is odd.}
\end{cases}
\end{cases}
\end{cases}
\]
We then denote by $\preceq$ the lexicographic order on $\binom{[m]\setminus v}{2d + 1}$ induced by $\Gamma$ and the ordering of the letters $o \prec \ast \prec e$.

\begin{lemma}\label{lem:gap_analysis}
Let $\mathcal{T}$ be a triangulation of $C(m, 2d + 1)$ and consider the triangulated vertex figure $\mathcal{T} \backslash v$. Let $\S$ and $\R$ be $2d$-simplices of $\mathcal{T} \backslash v$ such that $\S \vcover \R$, with $\S \setminus \{\s\} = \R \setminus \{\sR\}$.
\begin{enumerate}
\item If we have $v < \s < \sR$ in the cyclic ordering, then $\gamma_{\sR}(\S) = e$ and $\gamma_{\s}(\R) = e$.\label{op:vab}
\item If we have $v < \sR < \s$ in the cyclic ordering, then $\gamma_{\sR}(\S) = o$ and $\gamma_{\s}(\R) = o$.\label{op:vba}
\item For $\bI \notin \S \cup \R$, we have that $\gamma_{\bI}(\S) \neq \gamma_{\bI}(\R)$ if and only if $\bI$ lies between $\s$ and $\sR$ in the ordering $v + 1, v + 2, \dots, n, 1, 2, \dots, v - 1$.\label{op:between_gaps}
\end{enumerate}
\end{lemma}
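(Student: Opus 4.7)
The plan is to prove all three statements by direct computation, translating the cover relation $\S \vcover \R$ into parity conditions via Lemma~\ref{lem:even_simp_desc} and then matching these against the parities encoded by $\gamma$. Throughout I would use that $F = \S \cap \R$, $\S = F \cup \s$, and $\R = F \cup \sR$, so that a count of the form $\#\set{a \in \S \st a \text{ is between } c \text{ and } v}$ coincides with the corresponding count over $F$ except for a possible $+1$ from $\s$ when $\s$ lies in the interval, and similarly for $\R$ with $\sR$.

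For parts (\ref{op:vab}) and (\ref{op:vba}), Lemma~\ref{lem:even_simp_desc} applied to the upper-facet condition on $\S$ and the lower-facet condition on $\R$ pins down the parities of $\#\set{a \in F \st a \text{ between } \s \text{ and } v}$ and $\#\set{a \in F \st a \text{ between } \sR \text{ and } v}$, with the specific even/odd requirement depending on whether $\s$ (respectively $\sR$) lies above or below $v$. Unwinding the definitions, computing $\gamma_{\sR}(\S)$ amounts to the parity of the second of these counts adjusted by $+1$ whenever $\s$ lies between $\sR$ and $v$, and computing $\gamma_{\s}(\R)$ is dual. I would do the bookkeeping in three sub-cases depending on the normal-order positions of $\s, \sR$ relative to $v$: both $>v$, both $<v$, or one on each side. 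In each sub-case, the relative normal order of $\s, \sR$ determines whether we lie in case (\ref{op:vab}) or (\ref{op:vba}) under the cyclic ordering $v+1, v+2, \dots, m, 1, \dots, v-1$, and the parities compose to give $e$ in case (\ref{op:vab}) and $o$ in case (\ref{op:vba}).

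For part (\ref{op:between_gaps}), the counts defining $\gamma_{\bI}(\S)$ and $\gamma_{\bI}(\R)$ differ by $[\s \in I] - [\sR \in I]$, where $I = (\bI, v)$ if $\bI < v$ and $I = (v, \bI)$ if $\bI > v$. Hence $\gamma_{\bI}(\S) \neq \gamma_{\bI}(\R)$ precisely when exactly one of $\s, \sR$ lies in $I$. A short case check on which side of $v$ each of $\bI, \s, \sR$ lies then identifies this condition with $\bI$ lying strictly between $\s$ and $\sR$ in the cyclic ordering: when $\s, \sR$ are both on the same side of $v$ as $\bI$ this is a standard interval comparison, and when one of them lies on the opposite side of $v$ from $\bI$ it is automatically outside $I$, so the condition reduces to comparing $\bI$ with the remaining element, which by cyclic order is precisely the assertion that $\bI$ falls between $\s$ and $\sR$ in the cycle.

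The main obstacle is volume rather than depth: the argument is essentially parity bookkeeping, but there are many sub-cases, and the principal pitfall is aligning the cyclic ordering with the interval conditions when $\s$ and $\sR$ straddle $v$. Careful tracking of which side of $v$ each of $\s$, $\sR$, and $\bI$ lies on is essential throughout.
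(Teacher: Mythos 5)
Your proposal is correct and follows essentially the same route as the paper: translate $\S \vcover \R$ into parity conditions via Lemma~\ref{lem:even_simp_desc}, relate the counts over $\S$ and $\R$ through the common facet $F$ with a $\pm 1$ adjustment, and finish by case analysis on the positions of $\s$, $\sR$ (and $\bI$) relative to $v$. The paper merely organises the sub-cases by the cyclic ordering first rather than by which side of $v$ the elements lie on, and your packaging of part~(\ref{op:between_gaps}) as ``the counts differ by the indicator of $\s$ in the interval minus that of $\sR$'' is the same bookkeeping the paper carries out case by case.
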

\begin{proof}
By Lemma~\ref{lem:even_simp_desc}, the fact that $\S \vcover \R$ implies that
\begin{align*}
\text{if } \s < v \text{, then } &\# \set{\aI \in \S \st \s < \aI < v} \text{ is odd, and} \\
\text{if } v < \s \text{, then } &\# \set{\aI \in \S \st v < \aI < \s} \text{ is even},
\end{align*}
and
\begin{align*}
\text{if } \sR < v \text{, then } &\# \set{\aI \in \R \st \sR < \aI < v} \text{ is even, and} \\
\text{if } v < \sR \text{, then } &\# \set{\aI \in \R \st v < \aI < \sR} \text{ is odd.}
\end{align*}

We consider the case where $v < \s < \sR$ is a cyclic ordering.
\begin{enumerate}[wide,  labelindent=\parindent]
\item Within this set of cases, we first suppose that $v < \s < \sR$. Then \[\# \set{\aI \in \R \st v < \aI < \s} = \# \set{\aI \in \S \st v < \aI < \s},\] which is even, and \[\# \set{\aI \in \S \st v < \aI < \sR} = \# \set{\aI \in \R \st v < \aI < \sR} + 1,\] which is even. Therefore $\gamma_{\s}(\R) = e$ and $\gamma_{\sR}(\S) = e$. Moreover, if $\bI \notin \S \cup \R$, then $\gamma_{\bI}(\S) \neq \gamma_{\bI}(\R)$ if and only if $\s < \bI < \sR$ in the cyclic ordering.
\item If $\sR < v < \s$, then \[\# \set{\aI \in \R \st v < \aI < \s} = \# \set{\aI \in \S \st v < \aI < \s},\] which is even, and \[\# \set{\aI \in \S \st \sR < \aI < v} = \# \set{\aI \in \R \st \sR < \aI < v},\] which is even. Therefore $\gamma_{\s}(\R) = e$ and $\gamma_{\sR}(\S) = e$. Moreover, if $\bI \notin \S \cup \R$, then $\gamma_{\bI}(\S) \neq \gamma_{\bI}(\R)$ if and only if $\s < \bI < \sR$ in the cyclic ordering.
\item If $\s < \sR < v$, then \[\#\set{\aI \in \R \st \s < \aI < v} = \# \set{\aI \in \S \st \s < \aI < v} - 1,\] which is even, and \[\# \set{\aI \in \S \st \sR < \aI < v} = \# \set{\aI \in \R \st \sR < \aI < v},\] which is even. Therefore $\gamma_{\s}(\R) = e$ and $\gamma_{\sR}(\S) = e$. Moreover, if $\bI \notin \S \cup \R$, then $\gamma_{\bI}(\S) \neq \gamma_{\bI}(\R)$ if and only if $\s < \bI < \sR$ in the cyclic ordering.
\end{enumerate}
The cases where $v < \sR < \s$ is a cyclic ordering are similar.
\end{proof}

\begin{corollary}\label{cor:partial_order}
The relation $\vorder$ is a partial order.
\end{corollary}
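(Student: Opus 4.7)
My plan is to follow the template of \cite[Corollary~5.8]{rambau}: use the strict total order $\prec$ on $\binom{[m]\setminus v}{2d+1}$ induced by the lexicographic order $\preceq$ on the strings $\Gamma(\S)$, and show that the covering relation refines it, in the sense that $\S \vcover \R$ implies $\S \prec \R$. Granted this inclusion, $\vorder$ is reflexive and transitive by construction as the reflexive-transitive closure of $\vcover$; moreover its transitive closure is contained in $\prec$ by transitivity of $\prec$, hence it is asymmetric, and adding the diagonal yields a genuine partial order.

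The entire content of the argument is therefore the implication $\S \vcover \R \Rightarrow \S \prec \R$, which I would verify by pinpointing the first position at which $\Gamma(\S)$ and $\Gamma(\R)$ differ in the reading order $v+1, v+2, \dots, m, 1, 2, \dots, v-1$. Write $\S \setminus \s = \R \setminus \sR$. Lemma~\ref{lem:gap_analysis}\eqref{op:between_gaps} shows that for $\bI \notin \S \cup \R$ the two strings agree outside the cyclic interval strictly between $\s$ and $\sR$; combined with $\gamma_{\s}(\S) = \gamma_{\sR}(\R) = \ast$ together with $\s \notin \R$ and $\sR \notin \S$, this identifies the first position where $\Gamma(\S)$ and $\Gamma(\R)$ differ as whichever of $\s$ or $\sR$ comes earlier in the reading order.

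It then remains to read off the letters in the two scenarios permitted by Lemma~\ref{lem:gap_analysis}. If $v < \s < \sR$ cyclically (case~\eqref{op:vab}), the first difference sits at position $\s$, with $\gamma_{\s}(\S) = \ast$ and $\gamma_{\s}(\R) = e$, and the relation $\ast \prec e$ delivers $\S \prec \R$. If instead $v < \sR < \s$ cyclically (case~\eqref{op:vba}), the first difference sits at position $\sR$, with $\gamma_{\sR}(\S) = o$ and $\gamma_{\sR}(\R) = \ast$, and $o \prec \ast$ again delivers $\S \prec \R$. These two configurations exhaust all coverings because $\S \neq \R$ forces $\s \neq \sR$, and the cyclic triple $\{v, \s, \sR\}$ has just those two orderings. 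The only conceptual obstacle—keeping track of gap parities—has already been overcome inside Lemma~\ref{lem:gap_analysis}, so beyond citing it the argument is a mechanical comparison of letters in the lex order.
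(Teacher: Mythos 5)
Your proposal is correct and follows essentially the same route as the paper: both show $\S \vcover \R$ implies $\S \prec \R$ by using Lemma~\ref{lem:gap_analysis}\eqref{op:between_gaps} to locate the first differing entry of $\Gamma(\S)$ and $\Gamma(\R)$ at $\s$ (when $v < \s < \sR$ cyclically, giving $\ast \prec e$) or at $\sR$ (when $v < \sR < \s$, giving $o \prec \ast$), and then conclude that $\vorder$ is a suborder of the total order $\preceq$, hence a partial order. Your explicit remark that antisymmetry follows because the transitive closure of $\vcover$ embeds in the strict total order is just a slightly more detailed rendering of the paper's closing sentence.
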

\begin{proof}
We show that $\S \vorder \R$ implies that $\S \preceq \R$. For this it suffices to show that $\S \vcover \R$ implies that $\S \preceq \R$. If $v < \s < \sR$ in the cyclic ordering, then, by Lemma~\ref{lem:gap_analysis}(\ref{op:between_gaps}), it suffices to consider $\gamma_{\s}(\S)$ and $\gamma_{\s}(\R)$ in order to compare $\Gamma(\S)$ and $\Gamma(\R)$ in the lexicographic order, since this is the first entry that differs. Then we have $\gamma_{\s}(\S) = \ast$ and $\gamma_{\s}(\R) = e$ by Lemma~\ref{lem:gap_analysis}(\ref{op:vab}) so that $\S \preceq \R$. Similarly, if $v < \sR < \s$ in the cyclic ordering, then we consider $\gamma_{\sR}(\S) = o$ and $\gamma_{\sR}(\R) = \ast$, so that $\S \preceq \R$ likewise. We conclude that $\S \vorder \R$ implies that $\S \preceq \R$. This entails that $\vorder$ is a partial order, since $\preceq$ is a total order.
\end{proof}

Recall that $\mathcal{L}$ is a \emph{lower set} for a partial order $\leqslant$ on a set $\mathcal{P}$ if $\mathcal{L}$ is a subset of $\mathcal{P}$ such that whenever $p \in \mathcal{L}$ and $p' \leqslant p$, we also have $p' \in \mathcal{L}$. The notion of an \emph{upper set} of a partial order is defined dually. These concepts, together with our partial order $\vorder$, allow us to characterise the set of simplices in $\mathcal{T}\backslash v$ where $x \leftarrow v$ under expansion and the set of simplices where $v \rightarrow y$ under expansion.

\begin{lemma}\label{lem:lu_set}
Let $\mathcal{T}$ be a triangulation of $C(m, 2d + 1)$ with $\preim{\mathcal{T}}$ a triangulation of $C([m]_{v+}, 2d + 1)$ such that $\preim{\mathcal{T}}\xvy = \mathcal{T}$. Let $\mathcal{L}$ be the set of $2d$-simplices $\S$ of $\mathcal{T} \backslash v$ such that $\S \cup y$ is a $(2d + 1)$-simplex of $\preim{\mathcal{T}}$ and let $\mathcal{U}$ be the set of $2d$-simplices $\R$ of $\mathcal{T} \backslash v$ such that $\R \cup x$ is a $(2d + 1)$-simplex of $\preim{\mathcal{T}}$. Then $\mathcal{L}$ is a lower set for $\vorder$, $\mathcal{U}$ is an upper set for $\vorder$, and $\mathcal{L} \cup \mathcal{U} = \mathcal{T} \backslash v$ with $\mathcal{L} \cap \mathcal{U} = \emptyset$.
\end{lemma}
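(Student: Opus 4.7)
The plan is to establish the four claims ($\mathcal{L}\cup \mathcal{U} = \mathcal{T}\backslash v$, $\mathcal{L}\cap\mathcal{U} = \emptyset$, $\mathcal{L}$ a lower set, $\mathcal{U}$ an upper set) in sequence. The whole argument rests on the circuit $(S_{-}\cup x, S_{+}\cup y)$ of $C([m]_{v+}, 2d + 1)$ associated to each $2d$-simplex $S$ of $\mathcal{T}\backslash v$ by Definition~\ref{def:synthetic_facets}, together with the circuit axiom for triangulations from Definition~\ref{def:triang}.

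First I will show $\mathcal{L} \cup \mathcal{U} = \mathcal{T}\backslash v$. For any $2d$-simplex $S$ of $\mathcal{T}\backslash v$, the $(2d+1)$-simplex $S \cup v$ lies in $\mathcal{T}$, so some $\sigma \in \preim{\mathcal{T}}$ satisfies $\sigma\xvy = S \cup v$. Since $|\sigma| = 2d + 2$ and $\sigma\xvy \supseteq S \cup v$, the only options are $\sigma = S \cup x$ or $\sigma = S \cup y$, placing $S$ in $\mathcal{U}$ or $\mathcal{L}$ respectively. For disjointness $\mathcal{L} \cap \mathcal{U} = \emptyset$, if both $S \cup x$ and $S \cup y$ were in $\preim{\mathcal{T}}$, they would contain the two halves $S_{-}\cup x$ and $S_{+}\cup y$ of the circuit associated to $S$, contradicting Definition~\ref{def:triang}.

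For the lattice-theoretic conditions, it suffices by finiteness to verify them on covering relations. Suppose $S \vcover R$ with $s \in S \setminus R$ and $r \in R \setminus S$, so that $S \cap R$ is both an upper facet of $S$ and a lower facet of $R$. By Definition~\ref{def:synthetic_facets} this means $s \in S_{-}$ and $r \in R_{+}$. The first gives $S_{+} \subseteq S \setminus s \subseteq R$, and hence $S_{+}\cup y \subseteq R \cup y$; combined with the automatic containment $S_{-}\cup x \subseteq S \cup x$, this shows that $S \cup x$ and $R \cup y$ cannot both lie in $\preim{\mathcal{T}}$. Hence $R \in \mathcal{L}$ forces $S \cup x \notin \preim{\mathcal{T}}$, so $S \in \mathcal{L}$ by the first two claims. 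Dually, $r \in R_{+}$ yields $R_{-} \subseteq S \cap R$, so $R_{-}\cup x \subseteq S \cup x$, and the same circuit reasoning shows $S \in \mathcal{U}$ forces $R \in \mathcal{U}$.

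The argument is essentially a translation, so the one genuinely substantive step is isolating the containments $S_{+} \subseteq S \cap R$ and $R_{-} \subseteq S \cap R$: they follow directly from Definition~\ref{def:synthetic_facets}, because classifying $S \setminus s$ as an upper facet of $S$ pushes $s$ onto the $x$-side of the circuit and so confines the $y$-half of the circuit to the shared face $S \cap R$, and dually for $R$. Once this is in hand, the verifications of $\mathcal{L} \cup \mathcal{U} = \mathcal{T}\backslash v$ and $\mathcal{L} \cap \mathcal{U} = \emptyset$ use nothing beyond counting vertices and the circuit axiom.
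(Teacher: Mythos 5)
Your proof is correct and follows essentially the same route as the paper: the union and disjointness claims come from the contraction together with the circuit $(\S_{-}\cup x, \S_{+}\cup y)$ of Definition~\ref{def:synthetic_facets}, and the lower/upper set claims come from a circuit-containment argument on a pair $\S \vcover \R$. The only cosmetic difference is that for the lower-set step you invoke the circuit attached to $\S$ (via $\S_{+}\subseteq \S\cap\R$) where the paper invokes the circuit attached to $\R$ (via $\R_{-}\subseteq \S\cap\R$); your dual step is exactly the paper's argument.
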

\begin{proof}
It is clear that $\mathcal{L} \cup \mathcal{U}$ must comprise all of the $2d$-simplices of $\mathcal{T} \backslash v$. This is because if $\S$ is a $2d$-simplex of $\mathcal{T} \backslash v$, then $\S \cup v$ is a $(2d + 1)$-simplex of $\mathcal{T}$, and so either $\S \cup x$ or $\S \cup y$ is a $(2d + 1)$-simplex of $\preim{\mathcal{T}}$. Then, as we argued earlier, we cannot have $\mathcal{L} \cap \mathcal{U} \neq \emptyset$, since then both $\S \cup x$ and $\S \cup y$ are $(2d + 1)$-simplices of $\preim{\mathcal{T}}$. But this is prevented by the circuit $\relcirc{\S}$ of Definition~\ref{def:synthetic_facets}, as $\S \cup x \supseteq \S_{-} \cup x$ and $\S \cup y \supseteq \S_{+} \cup y$.

We now show that $\mathcal{L}$ is a lower set for $\vorder$. We suppose that $\R \in \mathcal{L}$ and $\S \in \mathcal{T}\backslash v$ are such that $\S \vcover \R$. Let $F = \S \cap \R$, which is an upper facet of $\S$ and a lower facet of $\R$. Suppose for contradiction that $\S \cup x$ is a $(2d + 1)$-simplex of $\preim{\mathcal{T}}$. Then $F \cup x$ is a $2d$-simplex of $\preim{\mathcal{T}}$. Since $F$ is a lower facet of $\R$, we have that $F \cup x \supseteq \R_{-} \cup x$, where $\relcirc{\R}$ is the circuit from Definition~\ref{def:synthetic_facets}. Since $\R \in \mathcal{L}$, we have that $\R \cup y \in \preim{\mathcal{T}}$. But then, $\R \cup y \supseteq \R_{+} \cup y$, so that both halves of $\relcirc{\R}$ are contained in simplices of $\preim{\mathcal{T}}$, which is a contradiction. Hence $\CL$ is a lower set, which also implies that $\CU$ is an upper set.
\end{proof}

\subsection{Sections of vertex figures}\label{sect:vf_sections}

We now show how our partial order on the $2d$-simplices of $\mathcal{T}\backslash v$ allows us to define the notion of a section of $\mathcal{T}\backslash v$. We then prove fundamental properties of sections of $\mathcal{T}\backslash v$ which will enable us to prove that they are in bijection with triangulations $\preim{\mathcal{T}}$ of $C([m]_{v+}, 2d + 1)$ such that $\preim{\mathcal{T}}\xvy = \mathcal{T}$.

\begin{definition}\label{def:synth_sect}
Given a lower set $\mathcal{L}$ of $(\mathcal{T}\backslash v, \vorder)$, we let $\mathcal{U} = (\mathcal{T}\backslash v)\setminus \mathcal{L}$ be the upper set which is its complement, and define the associated \emph{section} $\synth$ to be the abstract simplicial complex given by the set of $(2d - 1)$-simplices $\Wa$ of $\mathcal{T} \backslash v$ such that either
\begin{itemize}
\item $\Wa = A \cap B$ where $A \in \mathcal{L}$ and $B \in \mathcal{U}$, or
\item $\Wa$ is an upper facet of $C(m, 2d + 1) \backslash v$ and an upper facet of $A \in \mathcal{L}$, or
\item $\Wa$ is a lower facet of $C(m, 2d + 1) \backslash v$ and a lower facet of $B \in \mathcal{U}$.
\end{itemize}
\end{definition}

We show that sections of $\mathcal{T}\backslash v$ are triangulations of $C([m] \setminus v, 2d - 1)$, just as sections of $\mathcal{T}\backslash m$ are triangulations of $C(m - 1, n - 2)$ for triangulations $\mathcal{T}$ of $C(m, n)$. Recall our notation for facets of cyclic polytopes and facets of vertex figures of cyclic polytopes from Definition~\ref{def:comb_cyc_poly} and Definition~\ref{def:comb_vert_fig}, respectively.

\begin{lemma}\label{lem:sects=triangs}
For a triangulation~$\mathcal{T}$ of $C(m, 2d + 1)$, sections of $\mathcal{T}\backslash v$ are triangulations of $C([m]\setminus v, 2d - 1)$.
\end{lemma}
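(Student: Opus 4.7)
The plan is to verify directly the two combinatorial conditions from Definition~\ref{def:triang} characterising a triangulation of $C([m]\setminus v, 2d-1)$. By construction every top-dimensional simplex of $\synth$ is a $2d$-subset of $[m]\setminus v$, falling into three types: (i) intersections $A\cap B$ with $A \vcover B$, $A \in \CL$, $B \in \CU$; (ii) upper facets of $C(m, 2d+1)\backslash v$ that are also upper facets of some $A \in \CL$; and (iii) lower facets of $C(m, 2d+1)\backslash v$ that are also lower facets of some $B \in \CU$. Since every maximal simplex has the same size, $\synth$ is immediately an abstract simplicial complex.

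For the facet-matching condition I would fix $W \in \synth$ and a codimension-one face $G = W \setminus w$, and examine the link of $G$ inside $\mathcal{T}\backslash v$, namely the collection of $2d$-simplices of $\mathcal{T}\backslash v$ containing $G$. Starting from whichever $2d$-simplices of $\mathcal{T}\backslash v$ witness the membership of $W$ in $\synth$ (for type (i), the simplices $A \in \CL$ and $B \in \CU$ with $W = A \cap B$), I would walk around the link of $G$ by a sequence of flips across $(2d-1)$-facets containing $G$; Lemma~\ref{lem:local_stackability} controls the transitions between $\CL$ and $\CU$ along this walk. I expect this walk to terminate in one of two ways: either one encounters a second pair of adjacent simplices on opposite sides of $\synth$ and so obtains the required second section-simplex $W' \in \synth$ containing $G$; or one exits the vertex figure through an upper facet and a lower facet of $C(m, 2d+1)\backslash v$ meeting along $G$, and then Lemma~\ref{lem:ul_facet_intersect} gives that $G$ is contained in a facet of $C([m]\setminus v, 2d-1)$.

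For the circuit condition I would argue by contradiction. Suppose $(Z_+, Z_-)$ is a circuit of $C([m]\setminus v, 2d-1)$ with $Z_+ \subseteq W_+$ and $Z_- \subseteq W_-$ for some $W_+, W_- \in \synth$. Using the evenness descriptions of upper/lower facets of the vertex figure and of its $2d$-simplices (Lemma~\ref{lem:evenness_facet_desc} and Lemma~\ref{lem:even_simp_desc}), I would lift this to a pair of $2d$-simplices of $\mathcal{T}\backslash v$ containing $W_+$ and $W_-$ respectively, and then, by adjoining $v$, to simplices of $\mathcal{T}$ whose union would contain both halves of a circuit of $C(m, 2d+1)$. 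This would contradict the fact that $\mathcal{T}$ is a triangulation of $C(m, 2d+1)$.

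The hard part will be the case analysis in the facet-matching step, especially when $G$ sits on the boundary of the vertex figure $C(m, 2d+1)\backslash v$ but not on the boundary of $C([m]\setminus v, 2d-1)$. There, one must combine the two evenness criteria to track how the $\CL/\CU$ labels alternate across boundary facets while still producing a second section-simplex; this is delicate because $C(m, 2d+1)\backslash v$ is not itself a cyclic polytope, so Gale's criterion cannot be invoked directly, and the interplay between types (i), (ii), and (iii) of section-simplices has to be followed carefully around every link.
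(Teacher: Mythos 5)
Your treatment of the circuit condition contains a genuine gap, and it is the decisive one. A circuit of $C([m]\setminus v, 2d-1)$ consists of intertwined sets of sizes $d$ and $d+1$ ($2d+1$ vertices in total), whereas a circuit of $C(m, 2d+1)$ needs sizes $d+1$ and $d+2$ ($2d+3$ vertices); adjoining the single vertex $v$ — which in any case can only be placed on one side of the pair — cannot turn the former into the latter, and there is no reason the two ambient $(2d+1)$-simplices of $\mathcal{T}$ supply two further vertices in the very constrained positions that would be needed. More tellingly, your argument never uses that $W_+$ and $W_-$ lie in the \emph{section} rather than merely in $\mathcal{T}\backslash v$, so if it worked it would prove that no two simplices of $\mathcal{T}\backslash v$ can contain the two halves of a circuit of $C([m]\setminus v, 2d-1)$ — and that is false. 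In Example~\ref{ex:v_exp} (with $v = 2$), $(\tup{4},\tup{3,5})$ is a circuit of $C(\{1,3,4,5\},1)$, and $\tup{4} \subseteq \tup{1,3,4}$, $\tup{3,5} \subseteq \tup{3,4,5}$ are simplices of $\mathcal{T}\backslash 2$, lifting to the simplices $\tup{1,2,3,4}$ and $\tup{2,3,4,5}$ of $\mathcal{T}$, with no circuit of $C(5,3)$ violated. The circuit-freeness of $\synth$ is a property of the splitting into $\CL$ and $\CU$, not something inherited from $\mathcal{T}$ alone. The natural lift of such a circuit is $(Z_+ \cup x, Z_- \cup y)$ (or with $x$ and $y$ exchanged) into $C([m]_{v+}, 2d+1)$, but to derive a contradiction from that you would need the simplices of $\synth$ joined with $\{x,y\}$ to sit inside a triangulation of $C([m]_{v+}, 2d+1)$, and the existence of that expanded triangulation is exactly what is proved afterwards (Lemma~\ref{lem:synth_sect_to_triang}) \emph{using} the present lemma, so that route is circular.

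For contrast, the paper proves the lemma by induction on $\#\CL$. In the base case $\CL = \emptyset$ the section is $\vlfacets{[m]\setminus v}{2d+1}$; there the circuit condition is checked by lifting a putative circuit with $x$ and $y$ into $C([m]_{v+}, 2d+1)$ and using Lemma~\ref{lem:facets_under_exp}, since both halves would then lie in boundary facets, while facet-matching uses Lemma~\ref{lem:ul_facet_intersect}. In the inductive step, removing a maximal simplex $S$ from $\CL$ changes the section by replacing $\vufacets{S}{2d+1}$ with $\vlfacets{S}{2d+1}$, which is precisely a bistellar flip inside $C(S, 2d-1)$, and flips preserve being a triangulation — so the circuit condition never has to be verified directly in the general case. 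Your facet-matching sketch (walking around the link of $G$, with Lemma~\ref{lem:local_stackability} controlling crossings of the section and Lemma~\ref{lem:ul_facet_intersect} at the boundary) looks repairable with care, but the circuit condition, which you treat as the routine half, is where the proposal breaks down; any fix must genuinely use the lower-set/upper-set structure of $\CL$ and $\CU$, for instance via the paper's flip induction.
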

\begin{proof}
We prove the claim by induction on $\# \CL$. In the base case, we have that $\CL =\nolinebreak\emptyset$, so that $\synth = \vlfacets{[m]\setminus v}{2d + 1}$. Hence, we must show that $\vlfacets{[m]\setminus v}{2d + 1}$ is a triangulation of $C([m]\setminus v, 2d - 1)$. We must first show that there is no circuit $(A, B)$ of $C([m]\setminus v, 2d - 1)$ such that $A$ and $B$ are both faces of simplices in $\vlfacets{[m]\setminus v}{2d + 1}$. If this were the case, then either $(A \cup x, B \cup y)$ or $(A \cup y, B \cup x)$ would be a circuit of $C([m]_{v+}, 2d + 1)$. But this contradicts Lemma~\ref{lem:facets_under_exp}, which gives that $A \cup y$ and $B \cup y$ must be contained in lower facets of $C([m]_{v+}, 2d + 1)$, which cannot contain halves of circuits.

We now show that the facets of the $(2d - 1)$-simplices in $\vlfacets{[m]\setminus v}{2d + 1}$ are either shared with other $(2d - 1)$-simplices of $\vlfacets{[m]\setminus v}{2d + 1}$, or are facets of $C([m]\setminus v, 2d - 1)$. Let $S \in \vlfacets{[m]\setminus v}{2d + 1}$ and let $s \in S$, so that $S \setminus s$ is a facet of $S$. We have that $S \cup v$ is a facet of $C(m, 2d + 1)$, so we must have that $(S \setminus s) \cup v = (S \cup v) \cap (\R \cup v)$ for a facet $\R \cup v$ of $C(m, 2d + 1)$. Hence $S \setminus s = S \cap \R$ for a facet $\R \in \vfacets{[m] \setminus v}{2d + 1}$. If $\R \in \vlfacets{[m]\setminus v}{2d + 1}$, then we are done. Otherwise, $\R \in \vufacets{[m]\setminus v}{2d + 1}$, and so $S \setminus s = S \cap \R$ is a facet of $C([m]\setminus v, 2d - 1)$ by Lemma~\ref{lem:ul_facet_intersect}. This establishes the base case.

Now, to show the inductive step, we suppose that we have a section $\synth$ such that $\# \CL \neq \emptyset$. Choose a simplex $S \in \CL$ which is maximal in $\CL$ with respect to $\vorder$. Then $\CL' := \CL \setminus S$ is a lower set of $\vorder$ and, by the induction hypothesis, $\synthb$ is a triangulation of $C([m] \setminus v, 2d - 1)$. It follows from Definition~\ref{def:synth_sect} and the fact that $\S$ is maximal in $\CL$ that $\synth = (\synthb \setminus \vlfacets{S}{2d + 1}) \cup \vufacets{S}{2d + 1}$.

Let $\even{S} = \tup{s_{0}, s_{2}, \dots, s_{2d}}$ and $\odd{S} = \tup{s_{1}, s_{3}, \dots, s_{2d - 1}}$. Then either $(\even{S} \cup x, \odd{S} \cup y)$ is a circuit of $C([m]_{v+}, 2d + 1)$  or $(\odd{S} \cup x, \even{S} \cup y)$ is a circuit of $C([m]_{v+}, 2d + 1)$. Hence, either $\vlfacets{S}{2d + 1} = \set{S\setminus s \st s \in \odd{S}} = \ufacets{S}{2d - 1}$, or $\vlfacets{S}{2d + 1} = \set{S \setminus s \st s \in \even{S}} = \lfacets{S}{2d - 1}$. Then, respectively, either $\synth = (\synthb \setminus \ufacets{S}{2d - 1}) \cup \lfacets{S}{2d - 1})$ or $\synth = (\synthb \setminus \lfacets{S}{2d - 1}) \cup \ufacets{S}{2d - 1})$. In the former case, $\synth$ is an increasing bistellar flip of $\synthb$ as a triangulation of $C([m] \setminus v, 2d - 1)$; in the latter case, $\synth$ is a decreasing bistellar flip of $\synth$ as a triangulation of $C([m] \setminus v, 2d - 1)$. Since bistellar flips send triangulations of $C([m] \setminus v, 2d - 1)$ to triangulations of $C([m] \setminus v, 2d - 1)$, we have in either case that $\synth$ is a triangulation of $C([m]\setminus v, 2d - 1)$. The result then follows by induction.
\end{proof}

We obtain the following result, which will be useful in showing how sections of $\mathcal{T}\backslash v$ correspond to expanded triangulations.

\begin{corollary}\label{cor:synth_sect_no_circ}
Let $\mathcal{T}$ be a triangulation of $C(m, 2d + 1)$ with $\synth$ a section of $\mathcal{T} \backslash v$. Then there exists no circuit $(A \cup x, B \cup y)$ of $C([m]_{v+}, 2d + 1)$ such that $A$ and $B$ are both simplices in $\synth$.
\end{corollary}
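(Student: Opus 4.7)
The plan is to argue by contradiction. Suppose $(A \cup x, B \cup y)$ is a circuit of $C([m]_{v+}, 2d + 1)$ with both $A$ and $B$ simplices of $\synth$. By Breen's characterisation of circuits of cyclic polytopes (recorded earlier in the paper), one part of this circuit has $d + 1$ elements and the other has $d + 2$ elements, and the smaller part intertwines the larger. The goal is to show that discarding $x$ and $y$ produces a circuit of $C([m] \setminus v, 2d - 1)$ both of whose halves are simplices of $\synth$, in contradiction with Lemma~\ref{lem:sects=triangs} and the second condition of Definition~\ref{def:triang}.

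First I would treat the case $A \cup x \wr B \cup y$, writing out the alternating chain $b_{0} < a_{0} < b_{1} < a_{1} < \dots < b_{d} < a_{d} < b_{d + 1}$ with $A \cup x = \tup{a_{0}, a_{1}, \dots, a_{d}}$ and $B \cup y = \tup{b_{0}, b_{1}, \dots, b_{d + 1}}$. The key observation is that since $x$ and $y$ are consecutive in the ordering of $[m - 1]_{v+}$ with no element lying strictly between them, and since $x \in A \cup x$ and $y \in B \cup y$ both appear in the chain, $x$ and $y$ must occupy adjacent positions with $x$ immediately preceding $y$. Consequently there is an index $i$ with $x = a_{i}$ and $y = b_{i + 1}$.

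Deleting the two entries $a_{i}$ and $b_{i + 1}$ from the chain leaves a strictly increasing alternating sequence containing the $d$-subset $A$ and the $(d + 1)$-subset $B$, still beginning with an element of $B$, so $A \wr B$. By Breen's theorem applied to $C([m] \setminus v, 2d - 1)$, the pair $(A, B)$ is a circuit of that polytope. The symmetric case $B \cup y \wr A \cup x$ produces in the same way a circuit $(B, A)$ of $C([m] \setminus v, 2d - 1)$. Either way, both halves of a circuit of $C([m] \setminus v, 2d - 1)$ are simplices of $\synth$; but by Lemma~\ref{lem:sects=triangs} the section $\synth$ is a triangulation of $C([m] \setminus v, 2d - 1)$, which by Definition~\ref{def:triang} forbids this. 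This contradiction completes the proof.

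The only non-routine step is the adjacency observation that forces $x$ and $y$ into consecutive positions of the intertwining pattern; everything else is bookkeeping. This is the single point at which the hypothesis that $x$ and $y$ are adjacent in the vertex ordering of $[m - 1]_{v+}$ is used.
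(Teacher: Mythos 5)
Your proposal is correct and follows essentially the same route as the paper: delete $x$ and $y$ from the circuit to obtain a circuit $(A,B)$ of $C([m]\setminus v, 2d-1)$ whose two halves lie in $\synth$, contradicting Lemma~\ref{lem:sects=triangs} via Definition~\ref{def:triang}. The only difference is that you spell out the intertwining bookkeeping (the adjacency of $x$ and $y$ in the alternating chain) that the paper leaves implicit, and that bookkeeping is accurate.
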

\begin{proof}
If there were simplices $A$ and $B$ of $\synth$, such that $(A \cup x, B \cup y)$ was a circuit of $C([m]_{v+}, 2d + 1)$, then $(A, B)$ would be a circuit of $C([m] \setminus v, 2d - 1)$, which would contradict Lemma~\ref{lem:sects=triangs}.
\end{proof}

Referring back to Proposition~\ref{prop:sections=expansions}, this corollary allows us to show that the simplices $\synth \ast \{x, y\}$ do not contain any circuits. However, we also need to show that there can be no circuits between $\synth \ast \{x, y\}$ and $\tvminusy$, and $\synth \ast \{x, y\}$ and $\tvplusx$, for which we need the following definition and lemma, which uses Corollary~\ref{cor:synth_sect_no_circ} in its proof. Of course, we also need that there can be no circuits between $\tvminusy$ and $\tvplusx$, which we subsequently deduce from this.

\begin{definition}\label{def:above_below}
Let $\mathcal{T}$ be a triangulation of $C(m, 2d + 1)$ with $\synth$ a section of $\mathcal{T} \backslash v$ and $A$ a simplex of $\mathcal{T} \backslash v$. Then we say that $A$ is \emph{submerged} by $\synth$ if $A$ is contained in a simplex of $\CL$ or a simplex of $\synth$. Similarly, we say that $A$ is \emph{supermerged} by $\synth$ if $A$ is contained in a simplex of $\CU$ or a simplex of $\synth$.
\end{definition}

These are analogues for vertex figures of $C(m, 2d + 1)$ of the usual notions of submersion and supermersion from \cite{er,njw-hst} respectively. These usual notions are defined for $C(m, n)$ by comparing heights with respect to the $(n + 1)$-th coordinate. For $C(m, n)\backslash v$ it is not clear what direction one should use to compare heights, so we recreate the notions combinatorially using the partial order $\vorder$. We now prove the following lemma concerning submersion. There is clear intuition behind the result: it should be seen as analogous to one direction of \cite[Proposition~3.7]{njw-hst}.

\begin{lemma}\label{lem:submersion}
Let $\mathcal{T}$ be a triangulation of $C(m, 2d + 1)$ with $\synth$ a section of $\mathcal{T} \backslash v$. Then there exists no circuit $(A \cup x, B \cup y)$ of $C([m]_{v+}, 2d + 1)$ such that $A$ is a simplex of $\synth$ and $B$ is submerged by $\synth$.  
\end{lemma}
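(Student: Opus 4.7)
We proceed by induction on $\#\CL$, where $\CL$ is the lower set of $(\mathcal{T}\backslash v, \vorder)$ associated to $\synth$ by Definition~\ref{def:synth_sect}.

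A preliminary observation is that any circuit $(A \cup x, B \cup y)$ of $C([m]_{v+}, 2d + 1)$ restricts to a circuit $(A, B)$ of $C([m]\setminus v, 2d - 1)$. Indeed, by Breen's theorem the circuit arises from an interleaving of the two halves; since $x$ and $y$ are adjacent in the ordering on $[m]_{v+}$ and lie in distinct halves of the circuit, they necessarily occupy adjacent positions in the interleaving, so deleting them preserves the interleaving pattern in the induced order on $[m] \setminus v$ and yields a valid circuit of size $(d, d+1)$.

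For the base case $\CL = \emptyset$, submergence of $B$ by $\synth$ reduces to $B$ being contained in a simplex of $\synth$, hence $B \in \synth$ as a face. Together with $A \in \synth$, Corollary~\ref{cor:synth_sect_no_circ} then supplies the contradiction.

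For the inductive step, pick a $\vorder$-maximal element $S \in \CL$ and set $\CL' := \CL \setminus \{S\}$, a lower set whose associated section $\synthb$ differs from $\synth$ by a bistellar flip on the subpolytope $C(S, 2d - 1)$, as in the proof of Lemma~\ref{lem:sects=triangs}. Suppose for contradiction that a bad circuit $(A \cup x, B \cup y)$ exists for $\synth$. If $B \in \synth$, the base-case argument applies; otherwise $B$ is contained in some simplex of $\CL$. If one such containment is in a simplex of $\CL'$ and $A \in \synthb$, then the inductive hypothesis applied to $\synthb$ gives the contradiction. The residual cases are those in which either $A \in \synth \setminus \synthb$ or $B$ is contained only in $S$ amongst simplices of $\CL$: since a simplex of $\synth \setminus \synthb$ is necessarily a subset of $S$ (it is an interior simplex of the newly-introduced triangulation of $C(S, 2d - 1)$), the disjointness $A \cap B = \emptyset$ together with $|A \cup B| = 2d + 1 = |S|$ forces $A \cup B = S$. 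In this situation $(A, B)$ is the unique circuit of $C(S, 2d - 1)$, and the description of the bistellar flip, together with the $\vorder$-maximality of $S$ in $\CL$, produces the contradiction via Lemma~\ref{lem:local_stackability} and the classification of upper/lower facets of $S$ in Definition~\ref{def:synthetic_facets}.

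\textbf{Main obstacle.} The delicate case is a circuit that \emph{straddles} the flip region $C(S, 2d - 1)$, namely $A \in \synth \setminus \synthb$ (so $A \subseteq S$) but $B \not\subseteq S$. Here the combinatorial analysis has to combine the precise interleaving of the circuit with the description of $\synth \setminus \synthb$ as interior simplices of the new triangulation of $C(S, 2d - 1)$, in order either to extract a witness circuit for which the inductive hypothesis applies, or to violate the partial-order structure of $\vorder$ underlying the section $\synth$.
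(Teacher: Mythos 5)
Your inductive skeleton (induction on $\#\CL$, removal of a $\vorder$-maximal simplex $S \in \CL$, base case via Corollary~\ref{cor:synth_sect_no_circ}) is the same as the paper's, but there is a genuine gap: the case you defer as the ``main obstacle'' --- where $A$ is a simplex of $\synth$ but not of $\synthb$ (so $A \subseteq S$ and in fact $A \supseteq S_{+}$, where $(S_{-}\cup x, S_{+}\cup y)$ is the circuit of Definition~\ref{def:synthetic_facets}) while $B \not\subseteq S$ --- is precisely where all the difficulty of the lemma sits, and you give no argument for it. The paper resolves it not by locating a circuit inside $C(S, 2d-1)$ but by modifying the given circuit. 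First it observes that the two failure modes ($A$ not a simplex of $\synthb$; $B$ not submerged by $\synthb$) cannot occur simultaneously, since each forces the corresponding set to contain $S_{+}$, contradicting $A \cap B = \emptyset$; hence in the delicate case $B$ is still submerged by $\synthb$. Then, with $S = \J \cup A$ and $\J = S_{-}$ when $A = S_{+}$, it swaps a suitable vertex $a$ of $A$ for a suitable vertex $\sJ$ of $S_{-}$ so that $((A \setminus a) \cup \{\sJ, x\}, B \cup y)$ is again a circuit of $C([m]_{v+}, 2d+1)$; the new simplex $(A \setminus a) \cup \sJ$ lies in the lower facet $S \setminus a$ of $S$, hence is a simplex of $\synthb$, and the induction hypothesis gives the contradiction. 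A separate subcase is also required when $S$ has $d+1$ upper facets and $A$ is a $d$-simplex contained in all but one of them (simplices of a triangulated vertex figure can be ``upside-down'', unlike simplices in a triangulation of a cyclic polytope); your sketch does not account for this possibility either.

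A secondary problem: in your ``residual cases'' you deduce $A \cup B = S$ from $A \cap B = \emptyset$ and $\#A + \#B = 2d+1 = \#S$, but this needs both $A \subseteq S$ and $B \subseteq S$, and your case split only ever supplies one of these containments. Moreover, in the branch where $B \subseteq S$ lies in no lower facet of $S$, the right conclusion is that $B \supseteq S_{+}$, so $B$ is a face of an upper facet of $S$ and hence a simplex of $\synth$, and one falls back on Corollary~\ref{cor:synth_sect_no_circ}; this is how the paper disposes of that branch, rather than via Lemma~\ref{lem:local_stackability} and the maximality of $S$ as you suggest.
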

\begin{proof}
Suppose for contradiction that we are in the situation described and that there exists a circuit $(A \cup x, B \cup y)$ of $C([m]_{v+}, 2d + 1)$ such that $A$ is a simplex of $\synth$ and $B$ is submerged by $\synth$.

We show the result by induction on $\# \CL$. In the base case we have $\CL = \emptyset$, and so both $A$ and $B$ must be simplices of $\synth$. But this contradicts Corollary~\ref{cor:synth_sect_no_circ}. For the inductive step, we may assume that $\CL \neq \emptyset$, and so choose $\S \in \CL$ which is maximal, so that $\CL' := \CL \setminus \S$ is a lower set with associated section $\synthb$. By the induction hypothesis, the claim holds for $\synthb$, which is equal to $(\synth \setminus \vufacets{S}{2d + 1}) \cup \vlfacets{S}{2d + 1}$.

We have that $A$ is a simplex of $\synth$ and $B$ is submerged by $\synth$, but we cannot have this for $\synthb$ by the induction hypothesis. Hence, we must either have that $A$ is not a simplex of $\synthb$ or that $B$ is not submerged by $\synthb$. In the latter case, we must have that $A$ is contained in upper facets of $\S$ but no lower facets, and in the former case we must have that $B$ is contained in upper facets of $\S$ but no lower facets. Note that at most one of these cases can hold, since in the first case $A$ must contain the intersection of the upper facets of $\S$, and in the second case $B$ must contain the intersection of the upper facets of $S$, whereas $A$ and $B$ are disjoint. We consider each of these cases in turn.

Suppose first that $B$ is contained in upper facets of $\S$ but no lower facets. This means that $B$ is a simplex of of $\synth$. But $A$ is also a simplex of $\synth$, so that we have a circuit $(A \cup x, B \cup y)$ of $C([m]_{v+}, 2d + 1)$ where $A$ and $B$ are both simplices of $\synth$. This contradicts Corollary~\ref{cor:synth_sect_no_circ}.

Suppose now that $A$ is only contained in upper facets of $S$. We must have that either $A$ is the intersection of the upper facets of $S$, or that $S$ has $d + 1$ upper facets and $A$ is a $d$-simplex contained in all but one of these facets. Note that this latter case is not possible for triangulations of cyclic polytopes, where $2d$-simplices always have $d$ upper facets, but it is possible for triangulations of vertex figures of cyclic polytopes: see the simplex $|345|$ in Figure~\ref{fig:ivf_sections}. Simplices of the triangulated vertex figure are sometimes upside-down, as it were.

If $A$ is the intersection of the upper facets of $S$, then we have that $S = \J \cup A$ where $(\J \cup x, A \cup y)$ is a circuit of $C([m]_{v+}, 2d + 1)$. If $a, b, \sJ$ are the smallest elements of the respective sets which are greater than $v$ (or simply the smallest if no elements are greater than $v$), then we have that $\sJ < a < b$ is a cyclic ordering by considering the circuits $(\J \cup x, A \cup y)$ and $(A \cup x, B \cup y)$. We then obtain that $((A \setminus a) \cup \{\sJ, x\}, B \cup y)$ is a circuit of $C([m]_{v+}, 2d + 1)$. This contradicts the induction hypothesis, since $B$ is submerged by $\synthb$ and $(A \setminus a) \cup \sJ$ is a simplex of $\synthb$, because it lies in the lower facet $S \setminus a$ of $S$.

We now must consider the case where $S$ has $d + 1$ upper facets and $A$ is a $d$-simplex contained in all but one of these facets. Hence, let $S = \J \cup A = S_{-} \cup S_{+}$, where $(S_{-} \cup x, S_{+} \cup y)$ is a circuit and $\J \cap A = \emptyset$. By assumption, we have that $A \supseteq S_{+}$, and
\begin{align*}
&\# S_{+} = d, & &\# S_{-} = d + 1,\\
&\# A = d + 1, & &\# \J = d.
\end{align*}
This also implies that $\# B = d$, by considering the circuit $(A \cup x, B \cup y)$. We must have that at least one of $s_{0}^{-}$ and $s_{d}^{-}$ is not an element of $A$, since $\# A \cap S_{-} = 1$. Suppose that $s_{0}^{-} \notin A$; the other case behaves similarly. Here we have $s_{0}^{-} < s_{0}^{+} = a_{0} < b_{0}$. We then have that $((A \setminus a_{0}) \cup \{s_{0}^{-},x\}, B \cup y)$ is a circuit of $C([m]_{v+}, 2d + 1)$ with the lower facet $S \setminus a_{0}$ of $S$ containing $(A \setminus a_{0}) \cup s_{0}^{-}$. Thus $(A \setminus a_{0}) \cup s_{0}^{-}$ a simplex of $\synthb$, giving a contradiction, because $B$ is submerged by $\synthb$. This concludes the proof.
\end{proof}

We now apply Lemma~\ref{lem:submersion} to prove the following lemma. The intuition here is that if we have that $(A \cup x, B \cup y)$ is a circuit of $C([m]_{v+}, 2d + 1)$, then $B$ is ``above'' $A$ in the triangulated vertex figure $\mathcal{T} \backslash v$, and so there can be no section $\synth$ of $\mathcal{T} \backslash v$ where $B$ is submerged by $\synth$ and $A$ is supermerged by $\synth$.

\begin{lemma}\label{lem:synth_circ_orientation}
Let $\mathcal{T}$ be a triangulation of $C(m, 2d + 1)$ with $\synth$ a section of $\mathcal{T} \backslash v$. Then there exists no circuit $(A \cup x, B \cup y)$ of $C([m]_{v+}, 2d + 1)$ such that $A$ is supermerged by $\synth$ and $B$ is submerged by $\synth$.
\end{lemma}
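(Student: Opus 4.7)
The plan is to prove the lemma by induction on $\#\CU$, using Lemma~\ref{lem:submersion} both as the base case and as a fallback in the ``problematic'' inductive situation. In the base case $\CU = \emptyset$, every $2d$-simplex of $\mathcal{T}\backslash v$ lies in $\CL$, so the only way $A$ can be supermerged by $\synth$ is via being contained in a simplex of $\synth$. Hence $A$ is itself a simplex of $\synth$, and Lemma~\ref{lem:submersion} immediately provides the contradiction.

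For the inductive step I would pick $S$ minimal in $(\CU, \vorder)$, set $\CL' = \CL \cup \{S\}$ and $\CU' = \CU \setminus \{S\}$, and verify that $\CU'$ remains an upper set (by minimality of $S$) while $\CL'$ is a lower set. The associated section is $\synthb = (\synth \setminus \vlfacets{S}{2d+1}) \cup \vufacets{S}{2d+1}$: the lower facets of $S$ drop out of the section since $S$ now lies in $\CL'$ (they become interior to $\CL'$ together with their neighbours across them, which by minimality of $S$ lie in $\CL$), and the upper facets of $S$ enter because they now separate $S \in \CL'$ from the simplices of $\CU'$ above them. Given a hypothetical circuit $(A \cup x, B \cup y)$ violating the lemma for $\synth$, a brief case split shows that $B$ submerged by $\synth$ implies $B$ submerged by $\synthb$: if $B \subseteq \R \in \CL$ then $\R \in \CL'$; if $B$ is contained in a lower facet of $S$ then $B \subseteq S \in \CL'$; otherwise $B$ lies in a simplex of $\synth$ that also belongs to $\synthb$.

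The main work is the case analysis for $A$. If $A$ is supermerged by $\synthb$, the induction hypothesis applied to the same circuit yields the contradiction. Otherwise, tracing all the ways $A$ could be supermerged by $\synth$ but not $\synthb$ rules out $A \subseteq \R$ for $\R \in \CU'$ (which would transfer) and rules out $A$ being contained in any simplex of $\synth \cap \synthb$, leaving only $A \subseteq S$. The same non-membership in $\synthb$, together with $\vufacets{S}{2d+1} \subseteq \synthb$, forces $A$ to miss no vertex of $S_{-}$, i.e.\ $S_{-} \subseteq A \subseteq S$. Since $(A \cup x, B \cup y)$ is a circuit of $C([m]_{v+}, 2d+1)$, the sizes satisfy $|A| + |B| = 2d + 1$ with $|A|, |B| \geqslant d$, so $|A| \leqslant d + 1 < 2d + 1 = |S|$, whence $A \subsetneq S$ and the missing vertex must lie in $S_{+}$. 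Thus $A$ is contained in a lower facet of $S$, which belongs to $\synth$ since $S \in \CU$, and so $A$ is a simplex of $\synth$. Lemma~\ref{lem:submersion} applied directly now produces the desired contradiction.

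The main obstacle is the bookkeeping that identifies $\synthb$ as the flip of $\synth$ across $S$ and correctly translates the submersion and supermersion conditions under this flip; in particular one needs the consistency of the upper/lower designations of Definition~\ref{def:poly_synth_facets} and Definition~\ref{def:synthetic_facets} on facets of $S$ that lie on the boundary of $C(m, 2d+1)\backslash v$. Once that is in place, the size argument forcing $A \subsetneq S$ is immediate from the constraints of being half a circuit, and the reduction to Lemma~\ref{lem:submersion} closes the inductive step.
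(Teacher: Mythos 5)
Your proof is correct and takes essentially the same route as the paper: the paper likewise enlarges $\CL$ by simplices of $\CU$ sitting just above the section (checking that $B$ stays submerged) until $A$ lies on a section, and then invokes Lemma~\ref{lem:submersion}. The only difference is organisational—the paper chooses the simplex to be one none of whose lower facets contains $A$, whereas you take an arbitrary minimal $S \in \CU$ and settle the case $A \subseteq S$ via the $S_{-} \subseteq A$ size argument, which again places $A$ in a lower facet of $S$ belonging to $\synth$ (using minimality of $S$ and the boundary consistency you flag).
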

\begin{proof}
Suppose for contradiction that we are in the situation described and that there exists a circuit $(A \cup x, B \cup y)$ of $C([m]_{v+}, 2d + 1)$ such that $A$ is supermerged by $\synth$ and $B$ is submerged by $\synth$. Suppose that $A$ is not a face of a simplex of $\synth$. Then there is a simplex $\S \in \mathcal{U} := (\mathcal{T}\setminus v) \setminus \CL$ such that the lower facets of $\S$ are all $(2d - 1)$-simplices of $\S$ and none of them contain $A$ as a face. We obtain that $\mathcal{L}' = \mathcal{L} \cup \S$ is also a lower set, with $A$ still supermerged by $\synthb$ and $B$ still submerged by $\synthb$. By repeating this process, we may assume that $A$ is a face of a simplex of $\synth$. But this contradicts Lemma~\ref{lem:submersion}.
\end{proof}

\subsection{Expansion at other vertices}\label{sect:vf_exp}

We can now derive the main result of this section, namely, that the different triangulations which may result from expansion at the vertex $v$ are in bijection with the sections of $\mathcal{T} \backslash v$. We prove our bijection in two halves, showing first that every expanded triangulation gives us a section.

\begin{lemma}\label{lem:triang_to_synth_sect}
Let $\mathcal{T}$ be a triangulation of $C(m, 2d + 1)$ with $\preim{\mathcal{T}}$ a triangulation of $C([m]_{v+}, 2d + 1)$ such that $\preim{\mathcal{T}}\xvy = \mathcal{T}$. Let $\mathcal{L}$ be the set of $2d$-simplices $\S$ of $\mathcal{T} \backslash v$ such that $\S \cup y$ is a $(2d + 1)$-simplex of $\preim{\mathcal{T}}$. Then $\synth = \preim{\mathcal{T}} \backslash \{x, y\}$.
\end{lemma}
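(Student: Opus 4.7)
The plan is to prove equality by showing both inclusions, exploiting that $\synth$ and $\preim{\mathcal{T}}\backslash\{x,y\}$ are both collections of $(2d-1)$-simplices of $\mathcal{T}\backslash v$ (with $2d$ vertices). The key combinatorial levers are the circuit characterisation from Definition~\ref{def:synthetic_facets}, Lemma~\ref{lem:facets_under_exp}, and the observation that a $(2d-1)$-simplex $W$ of $\mathcal{T}\backslash v$ is a facet of exactly two $2d$-simplices of $\mathcal{T}\backslash v$ when it lies in the interior of $C(m,2d+1)\backslash v$, and of exactly one when it lies on the boundary.

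For $\preim{\mathcal{T}}\backslash\{x,y\} \subseteq \synth$, I would take $W$ with $W\cup\{x,y\}\in\preim{\mathcal{T}}$ and examine the two $2d$-facets $W\cup x$ and $W\cup y$ of this $(2d+1)$-simplex. Each is either contained in a facet of $C([m]_{v+},2d+1)$ or shared with a unique other $(2d+1)$-simplex of $\preim{\mathcal{T}}$. By Lemma~\ref{lem:facets_under_exp}, $W\cup x$ lying on the boundary forces $W$ to be an upper facet of $C(m,2d+1)\backslash v$, while $W\cup y$ on the boundary forces $W$ to be a lower facet; these cannot occur simultaneously. In the shared case, say $W\cup y$ is shared with $W\cup\{y,z\}$, the simplex $A:=W\cup z$ lies in $\CL$ by definition, and the triangulation axiom applied to the circuit $(A_{-}\cup x, A_{+}\cup y)$ from $A\cup\{x,y\}=W\cup\{x,y,z\}$ (which would otherwise place both halves inside $W\cup\{x,y\}$ and $W\cup\{y,z\}$) forces $z\in A_{-}$, i.e.\ $W$ is an upper facet of $A$. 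A symmetric analysis controls $W\cup x$. Combining outcomes gives precisely one of the three bullets of Definition~\ref{def:synth_sect}.

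For the reverse inclusion $\synth \subseteq \preim{\mathcal{T}}\backslash\{x,y\}$, I would take $W\in\synth$ and, in each of the three bullet cases, pick a $2d$-simplex of $\mathcal{T}\backslash v$ containing $W$ as a facet: some $A=W\cup a\in\CL$ in bullets 1 and 2, or some $B=W\cup b\in\CU$ in bullets 1 and 3. Focus on the $\CL$ case; then $A\cup y\in\preim{\mathcal{T}}$, and its facet $W\cup y$ is either on the boundary or shared with a unique $(2d+1)$-simplex $T=W\cup\{y,t\}$. The boundary possibility is ruled out: in bullet 2 it would make $W$ simultaneously an upper and a lower facet of $C(m,2d+1)\backslash v$; in bullet 1 it would make $W$ a facet of only one $2d$-simplex of $\mathcal{T}\backslash v$, contradicting that it is a facet of both $A$ and $B$. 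If $t\neq x$, then $W\cup t$ is another $2d$-simplex of $\mathcal{T}\backslash v$ with $W$ as a facet, so by the counting principle above, $W\cup t$ must equal $A$ or $B$; either forces a contradiction (namely $T=A\cup y$ contradicting $T\neq A\cup y$, or $W\cup b\in\CL$ contradicting $\CL\cap\CU=\emptyset$ from Lemma~\ref{lem:lu_set}). Hence $t=x$ and $T=W\cup\{x,y\}\in\preim{\mathcal{T}}$.

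The main obstacle I expect is the clean bookkeeping of the case analysis for the second containment, where ruling out the boundary subcase of bullet 1 requires combining the two distinct neighbours $A$ and $B$ with the boundary-counting principle. The critical technical step is making sure that the triangulation axiom for $\preim{\mathcal{T}}$, together with the circuit $(A_{-}\cup x, A_{+}\cup y)$ from Definition~\ref{def:synthetic_facets}, is correctly applied to translate the purely combinatorial upper/lower facet conditions in $\synth$ into the assertion that the neighbouring simplex of $W\cup y$ across $A$ carries the vertex $x$. Once this bridge is established, the rest of the proof reduces to enumerating the possible identities of $t$ and ruling them out using $\CL\cap\CU=\emptyset$.
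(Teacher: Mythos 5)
Your argument is correct, and for the harder inclusion it takes a genuinely different route from the paper. The containment $\preim{\mathcal{T}}\backslash\{x,y\}\subseteq\synth$ is essentially the paper's argument (the paper also splits according to whether $\Wa\cup x$, $\Wa\cup y$ are boundary facets or shared ridges, and rules out the doubly-boundary case); your circuit computation showing that the neighbour $\Wa\cup\{y,z\}$ forces $z\in A_{-}$, so that $\Wa$ is an upper facet of $A=\Wa\cup z$, is a welcome spelling-out of a step the paper leaves implicit in its appeal to Definition~\ref{def:synth_sect}. For $\synth\subseteq\preim{\mathcal{T}}\backslash\{x,y\}$, however, the paper proceeds quite differently: it first shows that $\Wa\cup x$ and $\Wa\cup y$ are $2d$-simplices of $\preim{\mathcal{T}}$ and then verifies that $\Wa\cup\{x,y\}$ is a $(2d+1)$-simplex of $\preim{\mathcal{T}}$ by checking all its $d$- and $(d+1)$-faces, invoking the external criterion \cite[Lemma~4.4]{njw-hst} for odd-dimensional cyclic polytopes. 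You instead identify $\Wa\cup\{x,y\}$ as the unique simplex of $\preim{\mathcal{T}}$ on the other side of the ridge $\Wa\cup y$ from $A\cup y$ (or of $\Wa\cup x$ from $B\cup x$), ruling out all other candidates for the extra vertex $t$ via $\CL\cap\CU=\emptyset$ from Lemma~\ref{lem:lu_set} and the fact that a $(2d-1)$-simplex of $\mathcal{T}\backslash v$ lies in exactly two maximal cells if interior and exactly one if it lies in a facet of the vertex figure. This is more elementary in that it avoids the imported face-checking lemma and gives a transparent local picture; the price is that it leans on the pseudomanifold counting facts for both $\preim{\mathcal{T}}$ and $\mathcal{T}\backslash v$, which the paper never states explicitly and which you should justify by passing to the geometric realisation (the paper does note that $|\mathcal{T}\backslash v|$ is realised as the induced triangulation of the geometric vertex figure, and that a combinatorial lower or upper facet of $C(m,2d+1)\backslash v$ indeed lies in the boundary of that realisation, which is what makes your boundary-versus-interior dichotomy legitimate). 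With that justification added, and the mild non-degeneracy remark (as in the paper) ensuring that $\Wa$ cannot be simultaneously an upper and a lower facet of $C(m,2d+1)\backslash v$, your proof is complete.
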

\begin{proof}
To start, note that by Lemma~\ref{lem:lu_set}, the complement $\CU$ of $\CL$ in $\mathcal{T}\backslash v$ consists of the $2d$-simplices $\S$ such that $\S \cup x$ is a $(2d + 1)$-simplex of $\preim{\mathcal{T}}$.

We first prove that $\preim{\mathcal{T}}\backslash \{x, y\} \subseteq \synth$. Let $\Wa$ be a $(2d - 1)$-simplex of $\preim{\mathcal{T}} \backslash \{x, y\}$. Then $\Wa \cup \{x,y\}$ is a $(2d + 1)$-simplex of $\preim{\mathcal{T}}$. We have that $\Wa \cup x$ is either a facet of $C([m]_{v+}, 2d + 1)$ or a facet of $\R \cup x$ where $\R \in \mathcal{U}$. Likewise, either $\Wa \cup y$ is a facet of $C([m]_{v+}, 2d + 1)$ or a facet of $\S \cup y$ where $\S \in \mathcal{L}$.

Note that we cannot both have that $\Wa \cup x$ is a facet of $C([m]_{v+}, 2d + 1)$ and that $\Wa \cup y$ is a facet of $C([m]_{v+}, 2d + 1)$. To see this, suppose that $\Wa \cup x$ is an upper facet, so that it is an odd subset. This means that $x$ is an even gap in $\Wa \cup y$, since, by assumption, $y$ is an odd gap in $\Wa \cup x$. Hence, if $\Wa \cup y$ is a facet of $C([m]_{v+}, 2d + 1)$, then $x$ must be the only gap in $\Wa \cup y$, otherwise $\Wa \cup y$ must have both odd and even gaps. This means that $C([m]_{v+}, 2d + 1)$ is a $(2d + 1)$-simplex, and so $C(m, 2d + 1)$ is degenerate. The case where $\Wa \cup x$ is a lower facet is similar. 

Hence, we either have that
\begin{itemize}
\item $\Wa \cup x$ is facet of $\R \cup x$ where $\R \in \mathcal{U}$ and $\Wa \cup y$ is a facet of $\S \cup y$ where $\S \in \mathcal{L}$, or
\item $\Wa \cup x$ is a facet of $\R \cup x$ where $\R \in \mathcal{U}$ and $\Wa \cup y$ is a facet of $C([m]_{v+}, 2d + 1)$, or
\item $\Wa \cup x$ is a facet of $C([m]_{v+}, 2d + 1)$ and $\Wa \cup y$ is a facet of $\S \cup y$ where $\S \in \mathcal{L}$.
\end{itemize}
Hence, in all cases $\Wa \in \synth$, by applying Lemma~\ref{lem:facets_under_exp} and using Definition~\ref{def:synth_sect}.

We now prove that $\synth \subseteq \preim{\mathcal{T}} \backslash \{x, y\}$. Suppose that $\Wa$ is a $(2d - 1)$-simplex of $\synth$. We claim that $\Wa \cup \{x, y\}$ is a $(2d + 1)$-simplex of $\preim{\mathcal{T}}$. By the following reasoning, we have that both $\Wa \cup x$ and $\Wa \cup y$ are $2d$-simplices of $\preim{\mathcal{T}}$.
\begin{itemize}
\item If $\Wa = \R \cap \S$ where $\R \in \mathcal{L}$ and $\S \in \mathcal{U}$, then we have that $\R \cup y$ and $\S \cup x$ are $(2d + 1)$-simplices of $\preim{\mathcal{T}}$ by definition of $\mathcal{L}$ and $\mathcal{U}$.
\item If $\Wa$ is an upper facet of $C(m, 2d + 1) \backslash v$ and an upper facet of $\R$ for $\R \in \mathcal{L}$, then $\Wa \cup x$ is a $2d$-simplex of $\preim{\mathcal{T}}$ by Lemma~\ref{lem:facets_under_exp} and $\R \cup y$ is a $(2d + 1)$-simplex of $\preim{\mathcal{T}}$ by definition of $\mathcal{L}$.
\item If $\Wa$ is a lower facet of $C(m, 2d + 1) \backslash v$ and a lower facet of $\S$ for $\S \in \mathcal{U}$, then $\Wa \cup y$ is a $2d$-simplex of $\preim{\mathcal{T}}$ by Lemma~\ref{lem:facets_under_exp} and $\R \cup x$ is a $(2d + 1)$-simplex of $\preim{\mathcal{T}}$ by definition of $\mathcal{U}$.
\end{itemize}

We now show that $\Wa \cup \{x, y\}$ is a $(2d + 1)$-simplex of $\preim{\mathcal{T}}$ by applying \cite[Lemma~4.4]{njw-hst}, which states that it suffices to check that $d$- and $(d + 1)$-faces of $\Wa \cup \{x, y\}$ are in $\preim{\mathcal{T}}$. Let $A \subseteq \Wa \cup \{x, y\}$ be such that $\# A = d + 1$. If $x, y \in A$, then $A$ lies on the boundary of $C([m]_{v+}, 2d + 1)$ by Gale's Evenness Criterion, so $A$ is a $d$-simplex of $\preim{\mathcal{T}}$. If $x \notin A$, then $A$ is a $d$-face of $\Wa \cup y$, which we already know is a $(2d - 1)$-simplex of $\preim{\mathcal{T}}$. The case where $y \notin A$ may be treated similarly.

Now let $B \subseteq \Wa \cup \{x, y\}$ such that $\# B = d + 2$. Every $d$-face of $B$ is a $d$-simplex of $\preim{\mathcal{T}}$, by what we have just argued. If $x, y \in B$, then $B$ cannot be half of a circuit of $C([m]_{v+}, 2d + 1)$, since $x$ and $y$ are consecutive in $[m]_{v+}$. Applying \cite[Lemma~4.4]{njw-hst} then gives that $B$ is a $(d + 1)$-simplex of $\preim{\mathcal{T}}$. If, on the other hand, $x \notin B$ (alternatively, $y \notin B$), then $B$ is a $(d + 1)$-face of $\Wa \cup y$ (alternatively, $\Wa \cup x$), which we know is a $2d$-simplex of $\preim{\mathcal{T}}$. Therefore, by \cite[Lemma~4.4]{njw-hst}, $\Wa \cup \{x, y\}$ is a $(2d + 1)$-simplex of $\preim{\mathcal{T}}$, and so $\Wa$ is a $(2d - 1)$-simplex of $\preim{\mathcal{T}}\backslash \{x, y\}$.
\end{proof}

We now show the other half of the bijection, namely, that one can construct an expanded triangulation from every section.

\begin{lemma}\label{lem:synth_sect_to_triang}
Let $\mathcal{T}$ be a triangulation of $C(m, 2d + 1)$ with $\synth$ a section of $\mathcal{T} \backslash v$. Then there is a triangulation $\preim{\mathcal{T}}$ of $C([m]_{v+}, 2d + 1)$ such that $\preim{\mathcal{T}}\xvy = \mathcal{T}$ and $\preim{\mathcal{T}}\backslash\{x, y\} = \synth$.
\end{lemma}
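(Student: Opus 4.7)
Following the formula suggested by Proposition~\ref{prop:sections=expansions}, the plan is to define
\[
\preim{\mathcal{T}} := \mathcal{T}^{\circ} \cup (\synth \ast \{x, y\}) \cup (\mathcal{U} \ast x) \cup (\mathcal{L} \ast y),
\]
where $\mathcal{L}$ is the lower set of $(\mathcal{T}\backslash v, \vorder)$ corresponding to $\synth$ and $\mathcal{U}$ is its complement. The identities $\preim{\mathcal{T}}\xvy = \mathcal{T}$ and $\preim{\mathcal{T}}\backslash\{x,y\} = \synth$ follow by direct unpacking: simplices in $\mathcal{T}^{\circ}$ contract to themselves; simplices in $\synth \ast \{x, y\}$ contract to sets of size $2d + 1$ and so drop out of the contraction; simplices in $\mathcal{U} \ast x$ and $\mathcal{L} \ast y$ contract to $\mathcal{U} \ast v$ and $\mathcal{L} \ast v$, which together with $\mathcal{T}^{\circ}$ reconstitute $\mathcal{T}$ since $\mathcal{U} \cup \mathcal{L} = \mathcal{T}\backslash v$. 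The restriction identity is immediate, because $\synth \ast \{x, y\}$ consists precisely of the simplices of $\preim{\mathcal{T}}$ containing both $x$ and $y$.

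The substantive content is that $\preim{\mathcal{T}}$ is a triangulation of $C([m]_{v+}, 2d + 1)$. I propose to argue by induction on $\#\mathcal{L}$. For the inductive step, choose $\S \in \mathcal{L}$ maximal with respect to $\vorder$ and set $\mathcal{L}' = \mathcal{L} \setminus \S$ with associated section $\synthb$; by induction, $\synthb$ corresponds to a triangulation $\preim{\mathcal{T}}'$ of $C([m]_{v+}, 2d+1)$. Arguing as in the inductive step of Lemma~\ref{lem:sects=triangs}, we have $\synth = (\synthb \setminus \vlfacets{\S}{2d + 1}) \cup \vufacets{\S}{2d + 1}$. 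Translating, the symmetric difference of $\preim{\mathcal{T}}$ and $\preim{\mathcal{T}}'$ is exactly $\{\S \cup x\} \cup (\vlfacets{\S}{2d+1} \ast \{x,y\})$ on the $\preim{\mathcal{T}}'$ side and $\{\S \cup y\} \cup (\vufacets{\S}{2d+1} \ast \{x,y\})$ on the $\preim{\mathcal{T}}$ side. By Definition~\ref{def:synthetic_facets}, the subpolytope $C(\S \cup \{x, y\}, 2d + 1)$ is combinatorially $C(2d+3, 2d + 1)$ with unique circuit $(\S_{-} \cup x, \S_{+} \cup y)$, and the two collections above are precisely its two triangulations (one missing $\S \cup y$, the other missing $\S \cup x$). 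Hence the transition is a bistellar flip, which preserves the triangulation property.

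The main obstacle is the base case $\mathcal{L} = \emptyset$, in which $\synth = \vlfacets{[m]\setminus v}{2d+1}$, $\mathcal{U} = \mathcal{T}\backslash v$, and one must verify directly that $\preim{\mathcal{T}} = \mathcal{T}^{\circ} \cup (\vlfacets{[m]\setminus v}{2d+1} \ast \{x, y\}) \cup ((\mathcal{T}\backslash v) \ast x)$ satisfies the two conditions of Definition~\ref{def:triang}. To rule out split or contained circuits of $C([m]_{v+}, 2d+1)$, I classify them by their intersection with $\{x, y\}$: circuits involving neither $x$ nor $y$ restrict to circuits of $C(m, 2d + 1)$ disjoint from $v$, excluded since $\mathcal{T}$ is a triangulation; circuits involving both $x$ and $y$ would lie inside $\vlfacets{[m]\setminus v}{2d+1} \ast \{x, y\}$, excluded by Corollary~\ref{cor:synth_sect_no_circ}; circuits involving exactly one of $x$ or $y$ are excluded by Lemma~\ref{lem:synth_circ_orientation} combined with Lemma~\ref{lem:submersion}. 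For the facet-matching condition, I propose a case analysis over the three types of simplex, using Lemma~\ref{lem:facets_under_exp} and Corollary~\ref{cor:ul_facet_intersect} to identify how facets of $(\mathcal{T}\backslash v)\ast x$-simplices meet facets of $\vlfacets{[m]\setminus v}{2d+1} \ast \{x, y\}$-simplices across the vertex figure, and using the triangulation property of $\synth$ established in Lemma~\ref{lem:sects=triangs} to handle the internal facets within $\vlfacets{[m]\setminus v}{2d+1} \ast \{x, y\}$.
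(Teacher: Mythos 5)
Your construction of $\preim{\mathcal{T}}$ and the contraction/restriction identities coincide with the paper's, but your verification that $\preim{\mathcal{T}}$ is a triangulation is organised differently: the paper checks the two conditions of Definition~\ref{def:triang} directly for an arbitrary lower set $\CL$, whereas you induct on $\#\CL$ and treat the passage from $\CL\setminus\S$ to $\CL$ (for $\S$ maximal) as a bistellar flip inside $C(\S\cup\{x,y\},2d+1)$. That flip step is correct: by Definition~\ref{def:synthetic_facets} the two collections you write down are exactly the two triangulations of $C(\S\cup\{x,y\},2d+1)$, and the identity $\synth = (\synthb \setminus \vlfacets{\S}{2d+1}) \cup \vufacets{\S}{2d+1}$ is available from the proof of Lemma~\ref{lem:sects=triangs} (together with Lemma~\ref{lem:local_stackability} and maximality of $\S$ to see that all lower facets of $\S$ lie in $\synthb$ and all upper facets in $\synth$). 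So the inductive step mainly repackages the verification into the case $\CL=\emptyset$; it does not let you avoid the key Lemmas~\ref{lem:submersion} and \ref{lem:synth_circ_orientation}, which you still need in the base case.

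The genuine problem is in your base-case circuit analysis, where the cases are misassigned. First, it is not true that a circuit of $C([m]_{v+},2d+1)$ using both $x$ and $y$ must have both halves inside simplices of $\vlfacets{[m]\setminus v}{2d+1}\ast\{x,y\}$: one half can sit inside a simplex of $\CU\ast x$ (indeed for any $\S\in\CU$ the half $\S_{-}\cup x$ of the circuit $\relcirc{\S}$ lies in $\S\cup x$), and this mixed configuration is exactly what Lemmas~\ref{lem:submersion} and \ref{lem:synth_circ_orientation} were built to exclude; Corollary~\ref{cor:synth_sect_no_circ} alone only covers the sub-case where both halves lie in section simplices. Second, Lemmas~\ref{lem:submersion} and \ref{lem:synth_circ_orientation} are statements about circuits of the form $(A\cup x, B\cup y)$, so they say nothing about circuits meeting exactly one of $x$ and $y$; those should instead be excluded by the same contraction argument you use for circuits meeting neither (as in the paper's proof: a circuit not containing both $x$ and $y$ does not degenerate under $\xvy$, so it would contract to a circuit of $C(m,2d+1)$ whose halves lie in simplices of $\mathcal{T}$, contradicting that $\mathcal{T}$ is a triangulation). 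With the cases reassigned in this way---neither or exactly one of $x,y$ via contraction; both via Corollary~\ref{cor:synth_sect_no_circ} together with Lemmas~\ref{lem:submersion} and \ref{lem:synth_circ_orientation}---and with the facet-matching case analysis (which you only sketch, and which is essentially the $\CL=\emptyset$ instance of the paper's analysis using Lemma~\ref{lem:facets_under_exp}, Corollary~\ref{cor:ul_facet_intersect} and Lemma~\ref{lem:sects=triangs}) written out, your argument goes through.
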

\begin{proof}
Suppose that we are in the situation described and let $\CU$ be the complement of $\CL$ in $\mathcal{T} \backslash v$. We define $\preim{\mathcal{T}}$ to consist of the $(2d + 1)$-simplices \[\preim{\mathcal{T}} = \mathcal{T}^{\circ} \cup (\synth \ast \{x, y\}) \cup (\CU \ast x) \cup (\CL \ast y),\] where $\mathcal{T}^{\circ} = \set{\T \in \binom{[m]}{2d + 2} \st \T \in \mathcal{T}, v \notin \T}$. It is evident from the definition of $\preim{\mathcal{T}}$ that $\preim{\mathcal{T}}\xvy = \mathcal{T}$ and $\preim{\mathcal{T}}\backslash\{x, y\} = \synth$. We now show that $\preim{\mathcal{T}}$ is a triangulation of $C([m]_{v+}, 2d + 1)$ by explicitly verifying that it satisfies the definition of a combinatorial triangulation from Definition~\ref{def:triang}.

We first verify that, for any simplex $\T$ of $\preim{\mathcal{T}}$ and any facet $\F$ of $\T$, either $\F$ is a facet of $C([m]_{v+}, 2d + 1)$ or a facet of another $(2d + 1)$-simplex of $\preim{\mathcal{T}}$.
\begin{enumerate}[wide,  labelindent=\parindent]
\item Suppose first that $\T \in \mathcal{T}^{\circ}$. 

If $\F$ is a facet of $C(m, 2d + 1)$ in $\mathcal{T}$, $\F$ will be a facet of $C([m]_{v+}, 2d + 1)$ in $\preim{\mathcal{T}}$. Suppose instead that $\F$ is a facet of $\Tb$ for some simplex $\Tb$ in $\mathcal{T}$. Then, if $v \notin \Tb$, then $\Tb \in \mathcal{T}^{\circ} \subseteq \preim{\mathcal{T}}$. On the other hand, if $v \in \Tb$, then either $(\Tb\setminus v) \cup x$ or $(\Tb\setminus v) \cup y$ is a $(2d + 1)$-simplex of $\preim{\mathcal{T}}$, and $\F$ is a facet of either of these.
\item Suppose now that $\T \in \CU \ast x$.

If $x \notin \F$, then $\T = \F \cup x$. Then $\F \cup v$ is a $(2d + 1)$-simplex of $\mathcal{T}$, where $\F$ is either a facet of $C(m, 2d + 1)$, or a facet of a $(2d + 1)$-simplex $\Tb$, where $v \notin \Tb$. If $\F$ is a facet of $C(m, 2d + 1)$, then $\F$ is a facet of $C([m]_{v+}, 2d + 1)$. If $\F$ is a facet of a $(2d + 1)$-simplex $\Tb$ in $\mathcal{T}$, then $\Tb \in \mathcal{T}^{\circ}$, and $\F$ is a facet of $\Tb$ in $\preim{\mathcal{T}}$.

If $x \in \F$, then in $\mathcal{T}\backslash v$, $\F\setminus x$ is either a facet of $C(m, 2d + 1)\backslash v$, or a facet of some $2d$-simplex $\S$ distinct from $\T \setminus x$. We consider these two cases in turn.

If $\F\setminus x$ is a facet of $C(m, 2d + 1)\backslash v$, then it is either an upper facet or a lower facet. In the former case, by Lemma~\ref{lem:facets_under_exp}, $\F$ is a facet of $C([m]_{v+}, 2d + 1)$. In the latter case, since $\F \setminus x$ is a facet of $\T \setminus x$ and $\T \in \mathcal{U}$, we have that $\F \setminus x$ is in the section $\synth$. This then means that $\F$ is a facet of the $(2d + 1)$-simplex $(\F\setminus x) \cup \{x, y\}$ in $\preim{\mathcal{T}}$.
	
If $\F\setminus x = (\T \setminus x) \cap \S$ for some $2d$-simplex $\S$, then either $\S \in \mathcal{U}$, or $\S \in \mathcal{L}$. If $\S \in \mathcal{U}$, then $\S \cup x$ is a $(2d + 1)$-simplex of $\mathcal{T}$ distinct from $\T$ with $\F$ as a facet. If $\S \in \mathcal{L}$, then $\F \setminus x \in \synth$. In this case $(\F\setminus x) \cup \{x, y\}$ is a $(2d + 1)$-simplex of $\preim{\mathcal{T}}$ and it has $\F$ as a facet.

\item The case where $\T \in \CL \ast y$ is similar to the previous case.

\item Finally, suppose that $\T \in \synth \ast \{x, y\}$. 

If $x \notin \F$, then $\T = \F \cup x$. We have that $\F \setminus y$ is a $(2d - 1)$-simplex of $\mathcal{T}\backslash v$, and is therefore either both a facet of $C(m, 2d + 1) \backslash v$ and a facet of a $2d$-simplex $\S$ of $\mathcal{T} \backslash v$, or a shared facet of two $2d$-simplices $\R$ and $\S$ of $\mathcal{T} \backslash v$. Note also that $\F \setminus y \in \synth$, since $\T \in \synth \ast \{x, y\}$. 

If $\F \setminus y$ is both a facet of $C(m, 2d + 1) \backslash v$ and a facet of a $2d$-simplex $\S$, then either $\F \setminus y$ is a lower facet or it is an upper facet. If it is a lower facet, then $\F$ is a facet of $C([m]_{v+}, 2d + 1)$. If it is an upper facet, then we must have $\S \in \CL$, since $\F \setminus y \in \synth$. Consequently, $\S \cup y$ is a $(2d + 1)$-simplex of $\preim{\mathcal{T}}$, and $\F$ is a facet of~it.

If $\F \setminus y$ is a shared facet of two $2d$-simplices $\S$ and $\R$ in $\mathcal{T}$, then we may suppose without loss of generality that $\S \in \mathcal{L}$ and $\R \in \mathcal{U}$, since we know that $\F \setminus y \in \synth$. We then have that $\F$ is a shared facet of $\T$ and $\S \cup y$ in $\preim{\mathcal{T}}$.

The case when $y \notin \F$ is similar to the case where $x \notin \F$.

If $x, y \in \F$, then let $\Wa = \T \setminus \{x,y\}$, so that $\Wa \in \synth$. Then $\G = \F \setminus \{x, y\}$ is a facet of $\Wa$. Since, by Lemma~\ref{lem:sects=triangs}, $\synth$ is a triangulation of $C([m] \setminus v, 2d - 1)$, then there either exists $\Wb \in \synth$ such that $\Wa \cap \Wb = \G$, or that $\G$ is a facet of $C([m]\setminus v, 2d + 1)$. In the second case, we are done immediately by applying Corollary~\ref{cor:ul_facet_intersect}, which gives us that $\F = \G \cup \{x, y\}$ is a facet of $C([m]_{v+}, 2d + 1)$. In the first case, we have that $\Wb \cup \{x, y\} \in \synth \ast \{x, y\}$ and that $\F$ is a shared facet between $\T$ and $\Wb \cup \{x, y\}$.
\end{enumerate}

We must now show that there can be no pair of $(2d + 1)$-simplices $\S, \R$ in $\preim{\mathcal{T}}$ such that $\S \supseteq Z_{-}$ and $\R \supseteq Z_{+}$, where $(Z_{-}, Z_{+})$ is a circuit of $C([m]_{v+}, 2d + 1)$. Suppose for contradiction that there does exist such a pair of $(2d + 1)$-simplices $\S$ and $\R$.

We use the fact that $\preim{\mathcal{T}}\xvy = \mathcal{T}$. This implies that any such circuit $(Z_{-}, Z_{+})$ must degenerate under the contraction $\xvy$, since otherwise we would obtain a circuit in $\mathcal{T}$. This means that we have $x \in Z_{\pm}$ and $y \in Z_{\mp}$. Hence we only need to consider the cases where
\begin{enumerate}
\item $\S \in \CU \ast x$ and $\R \in \CL \ast y$;\label{op:int1}
\item $\S \in \CU \ast x$ and $\R \in \synth \ast \{x, y\}$;\label{op:int2}
\item $\S \in \synth \ast \{x, y\}$ and $\R \in \CL \ast y$; and\label{op:int3}
\item $\S, \R \in \synth \ast \{x, y\}$.\label{op:int4}
\end{enumerate}
But each case gives a contradiction to Lemma~\ref{lem:synth_circ_orientation}, or the more specific instances of Lemma~\ref{lem:submersion} and Corollary~\ref{cor:synth_sect_no_circ}. Hence, we obtain that $\preim{\mathcal{T}}$ is indeed a triangulation of $C([m]_{v+}, 2d + 1)$.
\end{proof}

\begin{proof}[Proof of Proposition~\ref{prop:sections=expansions}]
The proposition now follows by putting together the results Lemma~\ref{lem:triang_to_synth_sect}, Lemma~\ref{lem:synth_sect_to_triang}, and using Remark~\ref{rmk:odd_only}.
\end{proof}

Now, using Proposition~\ref{prop:sections=expansions}, we can apply the argument of Lemma~\ref{lem:subpolytope_exp} to the contraction $\xvy$.

\begin{proof}[Proof of Lemma~\ref{lem:subpolytope_exp_oth_vert}]
By Proposition~\ref{prop:sections=expansions}, triangulations $\preim{\mathcal{T}}$ of $C([m]_{v+},n)$ such that $\preim{\mathcal{T}}\xvy=\mathcal{T}$ are in bijection with sections $\synth$ of $\mathcal{T}\backslash v$. Moreover, given a section $\synth$ of $\mathcal{T}\backslash v$, the corresponding triangulation $\preim{\mathcal{T}}$ has the set of $n$-simplices \[\mathcal{T}^{\circ} \cup (\synth\ast \{x, y\}) \cup (\CL\ast x) \cup (\CU \ast y),\] where $\mathcal{T}^{\circ}$ denotes the $n$-simplices of $\mathcal{T}$ which do not contain $v$ and $\mathcal{U}$ is the complement of $\mathcal{L}$ in $\mathcal{T}\backslash v$.

The set-up of Lemma~\ref{lem:subpolytope_exp_oth_vert} gives us that $\mathcal{T}$ contains a cyclic subpolytope $C(\U, n)$. We let $\mathcal{T}_{\U}$ be the induced triangulation of this subpolytope in $\mathcal{T}$. If $v \notin \U$, then $\mathcal{T}_{\U} \subseteq \mathcal{T}^{\circ}$, so $C(S, n)$ is a subpolytope of $\preim{\mathcal{T}}$. Hence, we assume that $v \in \U$. There are then three options:
\begin{enumerate}
\item $\mathcal{T}_{\U}\backslash v \subseteq \CL$.\label{op:v_below}
\item $\mathcal{T}_{\U}\backslash v \subseteq \CU$.\label{op:v_above}
\item $\mathcal{T}_{\U}\backslash v$ has non-empty intersection with both $\CL$ and $\CU$.\label{op:v_intersect}
\end{enumerate}
In case (\ref{op:v_below}) we have that $C((\U \setminus v) \cup y, n)$ is a subpolytope of $\preim{\mathcal{T}}$. In case (\ref{op:v_above}), we have that $C((\U \setminus v) \cup x, n)$ is a subpolytope of $\preim{\mathcal{T}}$.

In case (\ref{op:v_intersect}), let $\CL_{\U} = \CL \cap \mathcal{T}_{\U}$ and $\CU_{\U} = \CU \cap \mathcal{T}_{\U}$. Then $\CL_{\U}$ is a lower set of the restriction of $\vorder$ to $\mathcal{T}_{\U}\backslash v$. We then obtain a section $\sect{\CL_{\U}}$ of $\mathcal{T}_{\U}\backslash v$, and it is straightforward to see that $\sect{\CL_{\U}}$ consists of the $(2d - 1)$-simplices of $\synth$ which are also $(2d - 1)$-simplices of $\mathcal{T}_{\U}\backslash v$. By Proposition~\ref{prop:sections=expansions}, we have that the section $\sect{\CL_{\U}}$ of $\mathcal{T}_{\U} \backslash v$ gives us a triangulation $\preim{\mathcal{T}}_{\U}$ of $C(\U_{v+}, n)$. Moreover, the triangulation $\preim{\mathcal{T}}_{\U}$ of $C(\U_{v+}, n)$ has simplices \[\mathcal{T}_{\U}^{\circ} \cup (\sect{\CL_{\U}} \ast \{x, y\}) \cup (\CU_{\U}\ast x) \cup (\CL_{\U} \ast y).\] It is then clear that $\mathcal{T}_{\U}^{\circ} \subseteq \mathcal{T}^{\circ}$, $\sect{\CL_{\U}} \subseteq \synth$, $\CU_{\U} \subseteq \CU$, and $\CL_{\U} \subseteq \CL$. Hence $\preim{\mathcal{T}}_{\U}$ is a subtriangulation of $\preim{\mathcal{T}}$, which gives us that $C(\U_{v+}, n)$ is a subpolytope of~$\preim{\mathcal{T}}$.
\end{proof}

\subsection{Order-preservation}

We finish the section by showing that the contraction $\xvy$ is order-preserving with respect to the second order. To do this, we show how this operation may be interpreted combinatorially, and then use this interpretation to show that it is order-preserving.

\begin{lemma}\label{lem:cont_comb_int}
Let $\preim{\mathcal{T}}$ be a triangulation of $C([m]_{v+}, n)$ with $v \in [2, m - 1]$ and let $\mathcal{T} = \preim{\mathcal{T}}\xvy$. Then \[\simp(\mathcal{T}) = \set{A \in \nonconsec{m}{\floor{n/2}} \st A = \preim{A}\xvy \text{ for some } \preim{A} \in \simp(\preim{\mathcal{T}})}.\]
\end{lemma}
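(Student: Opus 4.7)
The plan is to prove the two inclusions of the set equality after reducing the even-dimensional case. By Remark~\ref{rmk:odd_only}, when $n = 2d$ the cyclic shift $i \mapsto i + (m + 1 - v)$ modulo $m + 1$ (with $[m]_{v+}$ relabelled order-preservingly as $[m + 1]$) is an automorphism of $C([m]_{v+}, 2d)$ which carries $\{x, y\}$ onto $\{m, m + 1\}$ and the contraction $\xvy$ onto $[m \leftarrow m + 1]$; the desired description of $\simp(\mathcal{T})$ then follows immediately from Proposition~\ref{prop:cont_char_m}(\ref{op:cont_char_m:even}). We therefore restrict attention to $n = 2d + 1$. In this case $\simp(\preim{\mathcal{T}}) \subseteq \chainset{[m]_{v+}}{d}$, and since $x, y \notin \{1, m\}$ the contraction $\xvy$ carries $\chainset{[m]_{v+}}{d}$ into $\chainset{m}{d}$. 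Thus the right-hand side of the lemma lies in $\chainset{m}{d} \subseteq \nonconsec{m}{d}$, so the statement is equivalent to the one obtained by replacing $\nonconsec{m}{d}$ with $\chainset{m}{d}$.

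For the forward inclusion I would take $A \in \simp(\mathcal{T})$ and a maximal $n$-simplex $S \in \mathcal{T}$ containing $A$. By Proposition~\ref{prop:sections=expansions} there is a maximal lift $\preim{S} \in \preim{\mathcal{T}}$: namely $\preim{S} = S$ when $v \notin S$, and $\preim{S} = (S \setminus v) \cup s$ with $s = x$ when $S \setminus v \in \mathcal{U}$ or $s = y$ when $S \setminus v \in \mathcal{L}$. Defining $\preim{A}$ by the analogous rule using the same $s$ gives $\preim{A} \subseteq \preim{S}$, so $\preim{A}$ is a face of $\preim{\mathcal{T}}$ with $\preim{A}\xvy = A$. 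To conclude $\preim{A} \in \simp(\preim{\mathcal{T}})$ one checks $\preim{A} \in \chainset{[m]_{v+}}{d}$, which is a routine position computation: since $x, y$ occupy the two consecutive slots strictly between $v - 1$ and $v + 1$ in $[m]_{v+}$, substituting $x$ or $y$ for $v$ only weakens the non-consecutivity inequalities, while the endpoint conditions $\preim{a}_0 \neq 1$ and $\preim{a}_d \neq m$ survive because $x, y \notin \{1, m\}$ and $A \in \chainset{m}{d}$.

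For the reverse inclusion, let $\preim{A} \in \simp(\preim{\mathcal{T}})$ with $A = \preim{A}\xvy \in \nonconsec{m}{d}$. Since $A$ has $d + 1$ elements and the contraction identifies $x$ with $y$, at most one of $\{x, y\}$ lies in $\preim{A}$. Fix a maximal $\preim{S} \in \preim{\mathcal{T}}$ containing $\preim{A}$. If $\{x, y\} \not\subseteq \preim{S}$, then $\preim{S}\xvy$ is a maximal $n$-simplex of $\mathcal{T}$ containing $A$, and we are done. Otherwise $\{x, y\} \subseteq \preim{S}$, and by symmetry we may assume $y \notin \preim{A}$, so $\preim{A} \subseteq \preim{S} \setminus y$. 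If $\preim{S} \setminus y$ is a boundary facet of $C([m]_{v+}, 2d + 1)$, Lemma~\ref{lem:facets_under_exp} shows that $\preim{S} \setminus \{x, y\}$ is an upper facet of $C(m, 2d + 1) \backslash v$, so $\preim{S}\xvy = (\preim{S} \setminus \{x, y\}) \cup v$ is a facet of $C(m, 2d + 1)$; but then $A \subseteq \preim{S}\xvy$ lies on the boundary, contradicting $A \in \nonconsec{m}{d}$. Otherwise $\preim{S} \setminus y$ is shared with a distinct maximal simplex $\preim{S}' = (\preim{S} \setminus y) \cup w$, necessarily with $w \neq y$; so $y \notin \preim{S}'$ while $x \in \preim{S} \setminus y \subseteq \preim{S}'$, placing $\preim{S}'$ in the previous case. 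Since $\preim{A} \subseteq \preim{S}'$, we obtain $A \subseteq \preim{S}'\xvy \in \mathcal{T}$, and hence $A \in \simp(\mathcal{T})$.

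The principal obstacle will be the subcase $\{x, y\} \subseteq \preim{S}$ of the reverse inclusion, in which $\preim{S}\xvy$ has only $n$ vertices and is therefore not itself a maximal simplex of $\mathcal{T}$. The argument must either exchange $\preim{S}$ for a sibling maximal simplex of $\preim{\mathcal{T}}$ containing $\preim{A}$ but missing one of $\{x, y\}$, or derive a contradiction from $A$ being internal; Lemma~\ref{lem:facets_under_exp} is the bridge that translates the boundary-facet alternative for $\preim{S} \setminus y$ in $C([m]_{v+}, 2d + 1)$ into a boundary-facet condition for $\preim{S}\xvy$ in $C(m, 2d + 1)$. The forward inclusion, by contrast, is essentially bookkeeping once the $\mathcal{U}/\mathcal{L}$ decomposition provided by Proposition~\ref{prop:sections=expansions} is used to select the lift coherently.
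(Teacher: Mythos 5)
Your proof is correct, but it is organised quite differently from the paper's, which treats both parities at once and is much shorter. In the paper, the inclusion $\set{A \in \nonconsec{m}{\floor{n/2}} \st A = \preim{A}\xvy \text{ for some } \preim{A} \in \simp(\preim{\mathcal{T}})} \subseteq \simp(\mathcal{T})$ is taken as immediate from the definitions, and the substance is the other inclusion: any $A \in \simp(\mathcal{T})$ admits a preimage simplex $\preim{A}$ of $\preim{\mathcal{T}}$ with $\preim{A}\xvy = A$, one may drop $x$ or $y$ if both occur, and internality of $\preim{A}$ then follows from internality of $A$. Your ``forward'' inclusion is exactly this argument, except that you route the lift through the $\CL/\CU$ decomposition of Lemma~\ref{lem:lu_set}/Proposition~\ref{prop:sections=expansions}; this is heavier than necessary, since any maximal $\preim{S} \in \preim{\mathcal{T}}$ with $\preim{S}\xvy = S$ already misses one of $x, y$ for dimension reasons, and that observation is parity-independent, so the even-dimensional reduction via the rotation of Remark~\ref{rmk:odd_only} to Proposition~\ref{prop:cont_char_m} is not needed (it is valid as a route, though the shift should be by $m - v$, not $m + 1 - v$, if $\{x,y\}$ is to land on the last two vertices). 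Where you genuinely add something is the ``reverse'' inclusion: the paper declares it immediate, while you verify carefully, via the facet condition in Definition~\ref{def:triang}, that $A$ is indeed a face of $\mathcal{T}$ even when the chosen maximal simplex $\preim{S} \supseteq \preim{A}$ contains both $x$ and $y$; that case analysis is sound, and your boundary subcase can even be shortened, since if $\preim{S}\setminus y$ were a facet of $C([m]_{v+}, n)$ it would contain the internal simplex $\preim{A}$, an immediate contradiction, with no need to pass through Lemma~\ref{lem:facets_under_exp} and the vertex-figure facets. In short: same core idea for the hard inclusion, extra (correct) care where the paper says ``immediate'', and an avoidable parity split.
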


Note that this lemma applies to both $n$ even and $n$ odd. If $n$ is odd and $A \in \nonconsec{m}{\floor{n/2}}$ is such that $A = \preim{A}\xvy$ for some $\preim{A} \in \simp(\preim{\mathcal{T}})$, then we automatically have that $A \in \chainset{m}{\floor{n/2}}$, since $\preim{A} \in \chainset{[m]_{v+}}{\floor{n/2}}$.

\begin{proof}
It is immediate that \[\simp(\mathcal{T}) \supseteq \set{A \in \nonconsec{m}{\floor{n/2}} \st A = \preim{A}\xvy \text{ for some } \preim{A} \in \simp(\preim{\mathcal{T}})}\] from the definitions of $\simp(\mathcal{T})$ and $\preim{\mathcal{T}}\xvy$.

We now show that \[\simp(\mathcal{T}) \subseteq \set{A \in \nonconsec{m}{\floor{n/2}} \st A = \preim{A}\xvy \text{ for some } \preim{A} \in \simp(\preim{\mathcal{T}})}.\] If $A \in \simp(\mathcal{T})$, then there must exist a simplex $\preim{A}$ of $\preim{\mathcal{T}}$ such that $\preim{A}\xvy = A$. Without loss of generality, we may assume that $\{x,y\} \not\subseteq \preim{A}$, since in this case we may remove either $x$ or $y$ from $\preim{A}$ and still have $\preim{A}\xvy = A$. But then we must have that $\preim{A}$ is an internal $\floor{n/2}$-simplex, since $A$ is an internal $\floor{n/2}$-simplex. Hence, $\preim{A} \in \simp(\preim{\mathcal{T}})$, as desired.
\end{proof}

We can now show that $\xvy$ is order-preserving with respect to the second order.

\begin{proof}[Proof of Lemma~\ref{lem:v_cont_op}]
We split into two cases depending on whether $n$ is odd or even so that we can use the combinatorial interpretations of the second higher Stasheff--Tamari order from Theorem~\ref{thm:hst_char_even} and Theorem~\ref{thm:hst_char_odd}.

We first let $n = 2d$. We show that if $\mathcal{T} \not\leqslant_{2} \mathcal{T}'$, then $\preim{\mathcal{T}} \not\leqslant_{2} \preim{\mathcal{T}}'$. Suppose that there exists $B \in \simp(\mathcal{T}')$ and $A \in \simp(\mathcal{T})$ such that $B \wr A$. Then we have that $\preim{B} \in \simp(\preim{\mathcal{T}}')$ and $\preim{A} \in \simp(\preim{\mathcal{T}})$, with $A = \preim{A}\xvy$ and $B = \preim{B}\xvy$, by Lemma~\ref{lem:cont_comb_int}. Then we also must have have $\preim{B} \wr \preim{A}$, since at most one of $A$ and $B$ can contain $v$. This implies that $\preim{\mathcal{T}} \not\leqslant_{2} \preim{\mathcal{T}}'$, as desired.  

We now let $n = 2d + 1$. We shall show that if $\preim{\mathcal{T}} \leqslant_{2} \preim{\mathcal{T}}'$, then $\simp(\mathcal{T}) \supseteq \simp(\mathcal{T}')$. Let $A \in \simp(\mathcal{T}')$. Then $A = \preim{A}\xvy$ for some $\preim{A} \in \simp(\preim{\mathcal{T}}')$ by Lemma~\ref{lem:cont_comb_int}. Since $\simp(\preim{\mathcal{T}}) \supseteq \simp(\preim{\mathcal{T}}')$, we then have that $\preim{A} \in \simp(\preim{\mathcal{T}})$, which implies that $\preim{A}\xvy = A \in \simp(\mathcal{T})$, as desired.
\end{proof}	

\addtocontents{toc}{\protect\setcounter{tocdepth}{0}} 
\printbibliography

\end{document}